\newtheorem{thm}{Theorem}[section]
\newtheorem{prop}[thm]{Proposition}
\newtheorem{cor}[thm]{Corollary}
\newtheorem{lem}[thm]{Lemma}
\newtheorem{defn}[thm]{Definition}
\newtheorem{remark}[thm]{Remark}
\newtheorem{example}[thm]{Example}
\newtheorem{assumption}{Assumption}
\makeatletter \@addtoreset{equation}{section} \makeatother
\renewcommand{\P}{\mathbb{P}}
\newcommand{\E}{\mathbb{E}}
\newcommand{\R}{\mathbb{R}}
\newcommand{\N}{\mathbb{N}}
\newcommand{\e}{\mathrm{e}}
\newcommand{\ep}{\varepsilon}
\renewcommand{\d}{\mathrm{d}}
\newcommand{\m}{\mathfrak{m} }
\newcommand{\s}{\mathfrak{s}}
\newcommand{\DD}{{\Delta\hspace{-0.29cm}\Delta}^{\rm\! HK}}
\newcommand{\HK}{\mathscr{E}\hspace{-0.3cm}\mathscr{E}^{\rm HK}}
\newcommand{\loc}{{\rm loc}}
\newcommand{\1}{{\bf 1}}
\newcommand{\eps}{{\varepsilon}}
\newcommand{\wt}{\widetilde}
\newcommand{\wh}{\widehat}
\renewcommand{\wh}{\widehat}
\renewcommand{\wt}{\widetilde}
\title{\large\bf Riesz transforms for Dirichlet spaces tamed by distributional curvature lower bounds}
\author{
Syota Esaki\footnote{Syota Esaki ({\tt sesaki@fukuoka-u.ac.jp}) 
Department of Applied Mathematics, Fukuoka University,
Fukuoka 814-0180, Japan. 
Supported in part by JSPS Grant-in-Aid for Scientific Research (C) (No. 23K03158) and fund
fund (No.~215001) from the Central Research Institute of Fukuoka University.}\ \ \ \
Zi Jian Xu\footnote{Zi Jian Xu ({\tt a535218668@yahoo.co.jp}) 
Department of Applied Mathematics, Fukuoka University,
Fukuoka 814-0180, Japan. 
}
\ \ and\ \
Kazuhiro Kuwae\footnote{\Letter Kazuhiro Kuwae ({\tt kuwae@fukuoka-u.ac.jp})
Department of Applied Mathematics, Fukuoka University,
Fukuoka 814-0180, Japan. 
Supported in part by JSPS Grant-in-Aid for Scientific Research (S) (No. 22H04942) and fund (No.~215001) from the Central Research Institute of Fukuoka University.}
}
\date{}
\begin{document}
\maketitle

\begin{abstract}
The notion of tamed Dirichlet space was proposed by Erbar, Rigoni, Sturm and Tamanini \cite{ERST} as a Dirichlet space having a weak form of Bakry-\'Emery curvature lower bounds in distribution sense. 
After their work, Braun~\cite{Braun:Tamed2021} established  a vector calculus for it, in  particular, the space of $L^2$-normed $L^{\infty}$-module describing vector fields, 
$1$-forms, Hessian in $L^2$-sense. 
In this framework,    
we establish the Littlewood-Paley-Stein inequality for $1$-forms as an element of $L^p$-cotangent bundles and boundedness of Riesz transforms, which partially solves
the problem raised by Kawabi-Miyokawa~\cite{KawabiMiyokawa}. 
\end{abstract}

{\it Keywords}:  Strongly local Dirichlet space, tamed space, Bakry-\'Emery condition, smooth measures of (extended) Kato class, smooth measures of Dynkin class, 
$L^p$-normed $L^{\infty}$-module, Hilbert module, tensor product of Hilbert module, $L^2$-differential forms, 
$L^2$-vector fields,  
Littlewood-Paley-Stein inequality, Riesz transform, Bochner Laplacian, Hodge-Kodaira Laplacian, 
measure valued Bochner-Wietzenb\"ock formula,
intertwining property, Wiener space, 
path space with Gibbs measure, RCD-space, weighted Riemannian manifold, 
Riemmanian manifold with boundary, configuration space,  

{\it Mathematics Subject Classification (2020)}: Primary 31C25, 60H15, 60J60 ; Secondary 30L15, 53C21, 58J35,35K05, 42B05, 47D08
\section{Statement of Main Theorem}\label{sec:StatementMain}

\subsection{Framework}\label{subsec:Frame}
Let $(M,\tau)$ be a topological Lusin space, i.e., a continuous injective image of a Polish 
space, endowed with a $\sigma$-finite Borel measure $\m$ on $M$ with full topological support. 
Let $(\mathscr{E},D(\mathscr{E}))$ be a quasi-regular symmetric strongly local Dirichlet space on $L^2(M;\m)$ 
and $(P_t)_{t\geq0}$ the associated symmetric sub-Markovian strongly continuous semigroup on $L^2(M;\m)$ (see \cite[Chapter IV, Definition~3]{MR} for the quasi-regularity and see \cite[Theorem~5.1(i)]{Kw:func} for the strong locality). 
Then there exists an $\m$-symmetric special standard process ${\bf X}=(\Omega, X_t, \P_x)$ 
associated with $(\mathscr{E},D(\mathscr{E}))$, i.e. for $f\in L^2(M;\m)\cap \mathscr{B}(M)$, 
$P_tf(x)=\E_x[f(X_t)]$ $\m$-a.e.~$x\in M$ (see \cite[Chapter IV, Section~3]{MR}). 
The $\m$-symmetry of ${\bf X}$, $(P_t)_{t\geq0}$ can be extended to a strongly continuous contraction 
semigroup on $L^p(M;\m)$ for $p\in[1,+\infty[$. Denote by $(D(\Delta_p),\Delta_p)$ its generator on $L^p(M;\m)$ defined by 
\begin{align*}
\left\{\begin{array}{cl} D(\Delta_p)&\displaystyle{:=\left\{f\in L^p(M;\m)\mid \exists
\lim_{t\to0}\frac{P_tf-f}{t} \text{ in } L^p(M;\m)\right\}}, \\ \Delta_pf&\displaystyle{:=\lim_{t\to0}\frac{P_tf-f}{t}\text{ for }f\in D(\Delta_p)}.\end{array}\right.
\end{align*}
An increasing sequence $\{F_n\}$ of closed subsets is called an \emph{$\mathscr{E}$-nest} if 
$\bigcup_{n=1}^{\infty}D(\mathscr{E})_{F_n}$ is $\mathscr{E}_1$-dense in $D(\mathscr{E})$, where 
$D(\mathscr{E})_{F_n}:=\{u\in D(\mathscr{E})\mid u=0\;\m\text{-a.e.~on }M\setminus F_n\}$. 
A subset $N$ is said to be \emph{$\mathscr{E}$-exceptional} or \emph{$\mathscr{E}$-polar} if 
there exists an $\mathscr{E}$-nest $\{F_n\}$ such that $N\subset \bigcap_{n=1}^{\infty}(M\setminus F_n)$. 
For two subsets $A,B$ of $M$, we write $A\subset B$ $\mathscr{E}$-q.e. if $A\setminus B$ is $\mathscr{E}$-exceptional, hence we write $A=B$ $\mathscr{E}$-q.e. if $A\setminus B$ $\mathscr{E}$-q.e. and 
$B\setminus A$ $\mathscr{E}$-q.e. hold. 
It is known that any $\mathscr{E}$-exceptional set is $\m$-negligible and for any $\m$-negligible 
$\mathscr{E}$-quasi-open set is $\mathscr{E}$-exceptional, in particular, for an $\mathscr{E}$-quasi continuous function $u$, $u\geq0$ $\m$-a.e. implies $u\geq0$ $\mathscr{E}$-q.e.   
For a statement $P(x)$ with respect to $x\in M$, we say that $P(x)$ holds $\mathscr{E}$-q.e.~$x\in M$ (simply $P$ holds $\mathscr{E}$-q.e.) if the set $\{x\in M\mid P(x)\text{ holds }\}$ is $\mathscr{E}$-exceptional. 
A subset $G$ of $M$ is called an \emph{$\mathscr{E}$-quasi-open set} if there exists an $\mathscr{E}$-nest $\{F_n\}$ such that $G\cap F_n$ is an open set of $F_n$ with respect to the relative topology on 
$F_n$ for each $n\in\mathbb{N}$. A function $u$ on $M$ is said to be \emph{$\mathscr{E}$-quasi continuous} if there exists an $\mathscr{E}$-nest $\{F_n\}$ such that $u|_{F_n}$ is continuous on $F_n$ for each $n\in\mathbb{N}$.
Denote by $(\mathcal{E},D(\mathcal{E})_e)$ the extended Dirichlet space of $(\mathscr{E},D(\mathscr{E}))$ defined by 
\begin{align*}
\left\{\begin{array}{rl}D(\mathcal{E})_e&=\{u\in L^0(M;\m)\mid \exists \{u_n\}\subset D(\mathscr{E}): \mathscr{E}\text{-Cauchy sequence}\\
&\hspace{6cm}  \text{ such  that }\lim_{n\to\infty}u_n=u\;\m\text{-a.e.}\}, \\
\mathscr{E}(u,u)&=\lim_{n\to\infty}\mathscr{E}(u_n,u_n),\end{array}\right.
\end{align*}
(see \cite[p.~40]{FOT} for details on extended Dirichlet space). 
Denote by $\dot{D}(\mathscr{E})_{\loc}$, the space of functions locally in $D(\mathscr{E})$ in the broad sense defined 
by 
\begin{align*}
\dot{D}(\mathscr{E})_{\loc}&=\{u\in L^0(M;\m)\mid \exists \{u_n\}\subset D(\mathscr{E})\text{ and }\exists \{G_n\}:\mathscr{E}\text{-nest of }\mathscr{E}\text{-quasi-open sets}\\ 
&\hspace{8cm}  \text{ such  that }u=u_n\; \m\text{-a.e.~on }G_n\}.
\end{align*}
Here an increasing sequence $\{G_n\}$ of $\mathscr{E}$-quasi open sets is called an 
\emph{$\mathscr{E}$-nest} if $M=\bigcup_{n=1}^{\infty}G_n$ $\mathscr{E}$-q.e.
For $u\in \dot{D}(\mathscr{E})_{\loc}$ and an $\mathscr{E}$-nest $\{G_n\}$ 
of $\mathscr{E}$-quasi-open sets satisfying $u|_{G_n}\in D(\mathscr{E})|_{G_n}$, we write $\{G_n\}\in \Xi(u)$. It is known that $D(\mathscr{E})\subset D(\mathscr{E})_e\subset \dot{D}(\mathscr{E})_{\loc}$ and 
each $u\in \dot{D}(\mathscr{E})_{\loc}$ admits an $\mathscr{E}$-quasi continuous $\m$-version (see \cite{Kw:func}). 

It is known that for $u,v\in D(\mathscr{E})\cap L^{\infty}(M;\m)$ there exists a unique signed finite Borel 
measure $\mu_{\langle u,v\rangle }$ on $M$ such that 
\begin{align*}
2\int_M\tilde{f}\d\mu_{\langle u,v\rangle }=\mathscr{E}(uf,v)+\mathscr{E}(vf,u)-\mathscr{E}(uv,f)\quad \text{ for }\quad u,v\in D(\mathscr{E})\cap L^{\infty}(M;\m).
\end{align*}
Here $\tilde{f}$ denotes the $\mathscr{E}$-quasi-continuous $\m$-version of $f$ (see \cite[Theorem~2.1.3]{FOT}, \cite[Chapter IV, Proposition~3.3(ii)]{MR}). 
We set $\mu_{\langle f\rangle }:=\mu_{\langle f,f\rangle }$ for $f\in D(\mathscr{E})\cap L^{\infty}(M;\m)$. 
Moreover, 
for $f,g\in D(\mathscr{E})$, there exists a signed finite measure $\mu_{\langle f,g\rangle }$ on $M$ such that 
$\mathscr{E}(f,g)=\mu_{\langle f,g\rangle }(M)$, hence $\mathscr{E}(f,f)=\mu_{\langle f\rangle }(M)$. 
We assume $(\mathscr{E},D(\mathscr{E}))$ admits a carr\'e-du-champ $\Gamma$, i.e. 
$\mu_{\langle f\rangle }\ll\m$ for all $f\in D(\mathscr{E})$ and set $\Gamma(f):=\d\mu_{\langle f\rangle }/\d\m$. 
Then $\mu_{\langle f,g\rangle }\ll\m$ for all $f,g\in D(\mathscr{E})$ and $\Gamma(f,g):=
\d\mu_{\langle f,g\rangle }/{\d\m}\in L^1(M;\m)$ is expressed by $\Gamma(f,g)=\frac14(\Gamma(f+g)-\Gamma(f-g))$ for $f,g\in D(\mathscr{E})$. 

We can extend the singed smooth $\mu_{\langle f,g\rangle }$ or carr\'e-du-champ $\Gamma(f,g)$ from $f,g\in D(\mathscr{E})$ to 
$f,g\in \dot{D}(\mathscr{E})_{\loc}$ by polarization (see \cite[Lemmas~5.2 and 5.3]{Kw:func}). In this case, $\mu_{\langle f,g\rangle }$ (resp.~$\Gamma(f,g)$) 
is no longer a finite signed measure (resp.~an $\m$-integrable function). For $u\in \dot{D}(\mathscr{E})_{\loc}$ and 
an $\mathscr{E}$-nest $\{G_n\}$ 
of $\mathscr{E}$-quasi-open sets satisfying $u|_{G_n}\in D(\mathscr{E})|_{G_n}$, then we can define $\mathscr{E}(u,v)$ 
for $v\in\bigcup_{n=1}^{\infty}D(\mathscr{E})_{G_n}$ by 
\begin{align*}
\mathscr{E}(u,v)=\mu_{\langle u,v\rangle }(M).
\end{align*}
Here $D(\mathscr{E})_{G_n}:=\{u\in D(\mathscr{E})\mid \tilde{u}=0 \;\mathscr{E}\text{-q.e.~on }G_n^c\}$.

Fix $q\in\{1,2\}$. Let $\kappa$ be a signed smooth measure with its Jordan-Hahn decomposition $\kappa=\kappa^+-\kappa^-$. 
We assume that $\kappa^+$ is a smooth measure of Dynkin class ($\kappa^+\in S_D({\bf X})$ in short) 
and $2\kappa^-$ is a smooth measure of extended Kato class ($2\kappa^-\in S_{E\!K}({\bf X})$ in short). 
More precisely, $\nu\in S_D({\bf X})$ (resp.~$\nu\in S_{E\!K}({\bf X})$) if and only if $\nu\in S({\bf X})$ and 
$\m\text{-}\sup_{x\in M}\E_x[A_t^{\nu}]<\infty$ for any/some $t>0$ 
(resp.~$\lim_{t\to0}\m\text{-}\sup_{x\in M}\E_x[A_t^{\nu}]<1$) (see \cite{AM:AF}). 
Here $S({\bf X})$ denotes the family of smooth measures with respect to ${\bf X}$ (see \cite[Chapter VI, Definition~2.3]{MR}, \cite[p.~83]{FOT} for the definition of smooth measures) and $\m$-$\sup_{x\in M}f(x)$ denotes the $\m$-essentially supremum for a function $f$ on $M$.  
For $\nu\in  S({\bf X})$ and set $U_{\alpha}\nu(x):=\E_x\left[\int_0^{\infty}e^{-\alpha t}\d A_t^{\nu}\right]$ with its $\m$-essentially supremum $\|U_{\alpha}\nu\|_{\infty}:=\m$-$\sup_{x\in M}U_{\alpha}\nu(x)$, $\|U_{\alpha}\nu\|_{\infty}<\infty$ for some/any $\alpha>0$ 
(resp.~$\lim_{\alpha\to\infty}\|U_{\alpha}\nu\|_{\infty}=0$, $\lim_{\alpha\to\infty}\|U_{\alpha}\nu\|_{\infty}<1$)
if and only if $\nu\in S_D({\bf X})$ (resp.~$\nu\in  S_{E\!K}({\bf X})$, $\nu\in S_K({\bf X})$).  
For $\nu\in S_D({\bf X})$, the following inequality holds
\begin{align}
\int_M\tilde{f}^2\d\nu\leq\|U_{\alpha}\nu\|_{\infty}\mathscr{E}_{\alpha}(f,f),\quad f\in D(\mathscr{E})\label{eq:StollmannVoigt}
\end{align}
which is called the Stollmann-Voigt's inequality. Here $\mathscr{E}_{\alpha}(f,f):=
\mathscr{E}_{\alpha}(f,f)+\alpha\|f\|_{L^2(M;\m)}^2$. 
Then, for $q\in\{1,2\}$, the quadratic form 
\begin{align*}
\mathscr{E}^{q\kappa}(f):=\mathscr{E}(f)+\langle 2\kappa, \tilde{f}^2\rangle 
\end{align*}
with finiteness domain $D(\mathscr{E}^{q\kappa})=D(\mathscr{E})$ is closed, lower semi-bounded, moreover, 
there exists $\alpha_0>0$ and $C>0$ such that 
\begin{align}
C^{-1}\mathscr{E}_1(f,f)\leq \mathscr{E}^{q\kappa}_{\alpha_0}(f,f)\leq C\mathscr{E}_1(f,f)\quad \text{ for all }\quad f\in D(\mathscr{E}^{q\kappa})=D(\mathscr{E})\label{eq:EquivalencePert}
\end{align}
by \eqref{eq:StollmannVoigt} (see \cite[(3.8)]{CFKZ:Pert} and \cite[Assumption of Theorem~1.1]{CFKZ:GenPert}).   
The Feynman-Kac semigroup $(p_t^{q\kappa})_{t\geq0}$ defined by 
\begin{align*}
p_t^{q\kappa}f(x)=\E_x[e^{-qA_t^{\kappa}}f(X_t)],\quad f\in \mathscr{B}_b(M)
\end{align*}
is $\m$-symmetric, i.e. 
\begin{align*}
\int_M p_t^{q\kappa}f(x)g(x)\m(\d x)=\int_M g(x)p_t^{q\kappa}g(x)\m(\d x)\quad\text{ for all }\quad f,g\in \mathscr{B}_+(M)
\end{align*} 
and coincides with the strongly continuous semigroup $(P_t^{q\kappa})_{t\geq0}$ on $L^2(M;\m)$ associated with 
$(\mathscr{E}^{q\kappa}, D(\mathscr{E}^{q\kappa}))$ (see \cite[Theorem~1.1]{CFKZ:GenPert}). 
Here $A_t^{q\kappa}$ is a continuous additive functional (CAF in short) associated with the signed smooth measure $q\kappa$ under Revuz correspondence. 
Under $\kappa^{-}\in S_{K}({\bf X})$ and $p\in[1,+\infty]$, 
$(p_t^{\kappa})_{t\geq0}$ can be extended 
to be a bounded operator on $L^{p}(M;\m)$ denoted by  $P_t^{\kappa}$ such that 
there exist finite constants $C=C(\kappa)>0, C_{\kappa}\geq0$ depending only on $\kappa^-$ such that 
for every $t\geq0$
\begin{align}
\|P_t^{\kappa}\|_{p,p}\leq Ce^{C_{\kappa}t}.\label{eq:KatoContraction}
\end{align}
Here $C=1$ under $\kappa^-=0$. 
$C_{\kappa}\geq0$  
can be taken to be $0$ under $\kappa^-=0$. 

Let $\Delta^{q\kappa}$  be the $L^2$-generator associated with $(\mathscr{E}^{q\kappa},D(\mathscr{E}))$ 
defined by   
\begin{align}
\left\{\begin{array}{ll} D(\Delta^{q\kappa})&:=\{u\in D(\mathscr{E})\mid \text{there exists } w\in L^2(M;\m)\text{ such that }\\
&\hspace{3cm}\mathscr{E}^{q\kappa}(u,v)=-\int_M wv\,\d\m\quad \text{ for any }\quad v\in D(\mathscr{E})\}, 
\\ \Delta^{q\kappa} u&:=w\quad\text{ for } w\in L^2(M;\m)\quad\text{specified as above,} \end{array}\right. \label{eq:generatorL2}
\end{align}
called the {\it Schr\"odinger operator} with potential $q\kappa$.  
Formally, $\Delta^{q\kappa}$ can be understood as \lq\lq$\Delta^{q\kappa}=\Delta-q\kappa$\rq\rq, 
where $\Delta$ is the $L^2$-generator associated with $(\mathscr{E},D(\mathscr{E}))$.

\begin{defn}[$q$-Bakry-\'Emery condition]
{\rm 
Suppose that $q\in\{1,2\}$, $\kappa^+\in S_D({\bf X})$, $2\kappa^-\in S_{E\!K}({\bf X})$ and $N\in[1,+\infty]$. We say that $(M,\mathscr{E},\m)$ or simply $M$ satisfies the $q$-Bakry-\'Emery condition, briefly 
${\sf BE}_q(\kappa,N)$, if for every $f\in  D(\Delta)$ with $\Delta f\in D(\mathscr{E})$ 
and every nonnegative $\phi\in D(\Delta^{q\kappa})$ with 
$\Delta^{q\kappa}\phi\in L^{\infty}(M;\m)$ (we further impose $\phi\in L^{\infty}(M;\m)$ for $q=2$), we have 
\begin{align*}
\frac{1}{q}\int_M \Delta^{q\kappa}\phi \Gamma(f)^{\frac{q}{2}}\d\m-\int_M \phi
\Gamma(f)^{\frac{q-2}{2}}
\Gamma(f,\Delta f)\d\m
\geq \frac{1}{N}\int_M \phi\Gamma(f)^{\frac{q-2}{2}}(\Delta f)^2\d\m.
\end{align*}
The latter term is understood as $0$ if $N=\infty$.
}
\end{defn}

\begin{assumption}\label{asmp:Tamed}
{\rm We assume that $M$ satisfies ${\sf BE}_{2}(\kappa,N)$ condition for a given signed smooth measure  
$\kappa$ with $\kappa^+\in S_D({\bf X})$ and $2\kappa^-\in S_{E\!K}({\bf X})$ and $N\in[1,\infty]$.
}
\end{assumption}
Under Assumption~\ref{asmp:Tamed}, we say that $(M,\mathscr{E},\m)$ or simply $M$ is {\it tamed}. 
In fact, under $\kappa^+\in S_D({\bf X})$ and $2\kappa^-\in S_{E\!K}({\bf X})$, the condition ${\sf BE}_2(\kappa,\infty)$ is {\it equivalent} 
to ${\sf BE}_1(\kappa,\infty)$ (see Lemma~\ref{lem:BakryEmeryEquivalence} below), in particular, the heat flow $(P_t)_{t\geq0}$ 
satisfies 
\begin{align}
\sqrt{\Gamma(P_tf)}\leq P_t^{\kappa}\sqrt{\Gamma(f)}\quad\m\text{-a.e.~for any }f\in D(\mathscr{E})\quad \text{ and }\quad t\geq0\label{eq:gradCont}
\end{align}
(see \cite[Definition~3.3 and Theorem~3.4]{ERST}).
The inequality \eqref{eq:gradCont} plays a crucial role in our paper. 
Note that our condition $\kappa^+\in S_D({\bf X})$, $\kappa^-\in S_{E\!K}({\bf X})$ (resp.~$\kappa^+\in S_D({\bf X})$, $2\kappa^-\in S_{E\!K}({\bf X})$) is stronger than the $1$-moderate (resp.~$2$-moderate) condition treated in \cite{ERST} for the definition of tamed space.  
The $\m$-symmetric Markov process ${\bf X}$ treated in our paper may not be conservative in general. Under Assumption~\ref{asmp:Tamed}, 
a sufficient condition ({\it intrinsic stochastic completeness} called in \cite{ERST}) 
for conservativeness of ${\bf X}$ is discussed in \cite[Section~3.3]{ERST}, in 
particular, under $1\in D(\mathscr{E})$ and Assumption~\ref{asmp:Tamed}, we have the conservativeness of ${\bf X}$.

\bigskip

Our main theorem under Assumption~\ref{asmp:Tamed} is the following:

\begin{thm}\label{thm:main3}
Let $p\in]1,+\infty[$ and $\alpha> C_{\kappa}$. For $f\in L^p(M;\m)\cap L^2(M;\m)$, we define the Riesz operator 
$R_{\alpha}(\Delta)$ by 
\begin{align*}
R_{\alpha}(\Delta)f:=\Gamma((\alpha-\Delta)^{-\frac{1}{2}}f)^{\frac12}.
\end{align*}
Suppose $\kappa^-\in S_K({\bf X})$ and $p\in[2,+\infty[$, or $\kappa_{\ll}^-=-R\m$ with $R\geq0$, $\kappa_{\perp}^-=0$ and $p\in ]1,2]$. 
Then, 
$R_{\alpha}(\Delta)$ can be extended to a (non-linear) bounded operator on $L^p(M;\m)$.   
The operator norm $\|R_{\alpha}(\Delta)\|_{p,p}$ depends only on $\kappa,p$ and $C_{\kappa}$. 
\end{thm}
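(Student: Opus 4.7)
The plan is to run the classical Stein–Meyer scheme for Riesz transforms, now driven by the measure-theoretic gradient estimate \eqref{eq:gradCont} and cast in Braun's Hilbert-module framework for $1$-forms. The starting point is the subordination identity
\begin{align*}
(\alpha-\Delta)^{-1/2} = \int_0^\infty Q_s\,\d s,\qquad Q_s:=e^{-s\sqrt{\alpha-\Delta}},
\end{align*}
so that $\Gamma((\alpha-\Delta)^{-1/2}f)^{1/2}$ is, in the $L^p$-cotangent bundle, the pointwise length of $\int_0^\infty \d Q_s f\,\d s$. To bound this in $L^p$ I would dualize against a test $1$-form $\omega$ in the dual Hilbert module, apply the intertwining $\d Q_s = \vec{Q}_s \d$ between the Cauchy–Poisson semigroup on functions and its lift $\vec{Q}_s$ to $1$-forms, and reduce the claim to two Littlewood–Paley–Stein $g$-function estimates: one on functions and one on $1$-forms.

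First, for $p\in[2,\infty)$ under $\kappa^-\in S_K({\bf X})$, the vertical $g$-function
\begin{align*}
g(f)(x):=\left(\int_0^\infty s\,\Gamma(Q_s f)(x)\,\d s\right)^{1/2}
\end{align*}
satisfies $\|g(f)\|_p\leq C_p\|f\|_p$ by a classical square-function/maximal-function argument that only uses the symmetry and sub-Markovianity of $(P_t)$; this step works cleanly for $p\geq 2$. On the $1$-form side, \eqref{eq:gradCont} combined with the Kato-class contraction \eqref{eq:KatoContraction} yields $\|\vec{Q}_s\omega\|_p\leq C e^{C_\kappa s/2}\|\omega\|_p$ for forms of the type $\omega=\d h$ with $h\in D(\mathscr E)$, from which the dual $g$-function estimate on $1$-forms at $p\ge 2$ follows by the same square-function technology.

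Second, for $p\in(1,2]$ under $\kappa_{\ll}^-=-R\m,\ \kappa_{\perp}^-=0$, the hypothesis upgrades \eqref{eq:gradCont} to the genuine pointwise estimate $\sqrt{\Gamma(P_tf)}\leq e^{Rt}P_t\sqrt{\Gamma(f)}$ $\m$-a.e., which is the weighted-Riemannian form of the Bakry–\'Emery bound. Via the measure-valued Bochner–Weitzenb\"ock formula and Braun's $L^p$-Hilbert-module calculus, this produces the $g$-function estimate on $1$-forms in the regime $p\leq 2$, and one then closes the argument by duality against the $p\geq 2$ case already established in the previous step.

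The hard point is the $p\leq 2$ branch. An $L^p$-bound on the heat flow of $1$-forms for $p<2$ requires \emph{pointwise} control, not just a measure-valued Bakry–\'Emery condition; the hypothesis $\kappa^-\in S_{E\!K}({\bf X})$ alone is insufficient. This is precisely why the stronger assumption $\kappa_{\ll}^-=-R\m,\ \kappa_{\perp}^-=0$ enters the statement, and the technical core lies in transferring the measure-valued Bochner identity of \cite{ERST,Braun:Tamed2021} into the Hilbert-module $L^p$-pairing in the presence of a possibly singular $\kappa^+$, and then propagating the resulting bound through the intertwining to recover the Riesz estimate.
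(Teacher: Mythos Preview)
Your overall architecture—Cauchy--Poisson subordination, the intertwining $\d Q_s=Q_s^{\rm HK}\d$, duality against a test $1$-form, and reduction to Littlewood--Paley $g$-functions—matches the paper. But you have the exponent ranges reversed, and this is not cosmetic: it changes which hypothesis is needed where.

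The key identity (Lemma~\ref{lem:RieszBdd}) is
\[
\bigl|\langle \mathscr{R}_\alpha(\Delta)f,\theta\rangle\bigr|\le 4\int_M G_f^{\uparrow}\,G_\theta^{\rightarrow}\,\d\m,
\]
so by H\"older one needs $G_f^{\uparrow}\in L^p$ (function side; holds for all $p\in]1,\infty[$ by \cite{EJK}) and the $1$-form $g$-function $G_\theta^{\rightarrow}\in L^q$ at the \emph{dual} exponent $q=p/(p-1)$. Hence:
\begin{itemize}
\item For $p\in[2,\infty[$ one needs the $1$-form bound at $q\in]1,2]$. This is Theorem~\ref{thm:LittlewoodPaley1Form}, proved by a probabilistic argument (auxiliary Brownian motion on $M\times\R$, Fukushima decomposition of $(|\eta|^2+\eps^2)^{q/2}$, Shigekawa's time formula) that only requires $\kappa^-\in S_K({\bf X})$.
\item For $p\in]1,2]$ one needs the $1$-form bound at $q\in[2,\infty[$. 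This is Theorem~\ref{thm:LittlewoodPaley2Form}, and here the stronger hypothesis $\kappa_{\ll}^-=-R\m$, $\kappa_\perp^-=0$ is used to force $|\eta|^2(\widehat{X}_t)$ to be a \emph{submartingale}, so that the Lenglart--Lepingle--Pratelli and Doob inequalities apply.
\end{itemize}

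Your diagnosis of the ``hard point'' is therefore misplaced. It is not that $L^p$-bounds on the form heat flow fail for $p<2$ under the Kato hypothesis: the Hess--Schrader--Uhlenbrock inequality $|P_t^{\rm HK}\omega|\le P_t^\kappa|\omega|$ (Theorem~\ref{thm:HessShcraderUhlenbrock}) already gives pointwise domination and hence $L^p$-contraction for all $p$ under $\kappa^-\in S_K$. The genuine obstruction is the submartingale property for the $q>2$ form $g$-function: with a singular or variable $\kappa^-$ the CAF $\widehat{N}_t^{[|\eta|^2]}$ picks up a negative piece $-\widehat{A}_t^{\,\widetilde{|\eta|^2}\kappa^-}$ (see \eqref{eq:CAFExpression} and the line below \eqref{eq:RicciLower}), and the Lenglart--Lepingle--Pratelli route collapses. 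Your sketch of ``square-function technology'' for forms at $p\ge2$ under $S_K$ does not go through without this.
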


Fix $p\in ]1,\infty[$. We 
define the $(1,p)$-Sobolev space $(H^{1,p}(M),\|\cdot\|_{H^{1,p}})$ as follows: 
\begin{defn}
{\rm Let $(\mathscr{E},D(\mathscr{E}))$ be a quasi-regular strongly local Dirichlet form on $L^2(M;\m)$ admitting 
carr\'e du champ $\Gamma$. 
Set $\mathscr{D}_{1,p}:=\{u\in L^p(M;\m)\cap D(\mathscr{E})\mid \Gamma(u)^{\frac12}\in L^p(M;\m)\}$.  
For $u\in \mathscr{D}_{1,p}$, we define $\|u\|_{H^{1,p}}^p:=\|u\|_{L^p(M;\m)}^p+\|\Gamma(u)^{\frac12}\|_{L^p(M;\m)}^p$. Then $\|\cdot\|_{H^{1,p}}$ forms a norm on $\mathscr{D}_{1,p}$. 
We now define the Sobolev space $H^{1,p}(M)$ as the $\|\cdot\|_{H^{1,p}}$-completion of $\mathscr{D}_{1,p}$: 
\begin{align*}
\left\{\begin{array}{cl}H^{1,p}(M)&:=\overline{\mathscr{D}_{1,p}}^{\|\cdot\|_{H^{1,p}}},\\
\|u\|_{H^{1,p}}&:=\left(\|u\|_{L^p(M;\m)}^p+\|\Gamma(u)^{\frac12}\|_{L^p(M;\m)}^p\right)^{\frac{1}{p}}\quad\text{ for }\quad u\in H^{1,p}(M).\end{array}\right.
\end{align*}
More precisely, for $u\in H^{1,p}(M)$, we can extend the carr\'e du champ $\Gamma$ to $u$ with $\Gamma(u)^{\frac12}\in L^p(M;\m)$. 
We set $D(\mathscr{E}^{\,p}):=H^{1,p}(M)$ and  $\mathscr{E}^{\,p}(u):=\|\Gamma(u)^{\frac12}\|_{L^p(M;\m)}^p$ for $u\in D(\mathscr{E}^p)$, which is called the $p$-energy of $u\in H^{1,p}(M)$. 
}
\end{defn}
It is proved in \cite[Lemma~3.1]{Kw:SobolevSpace} that $(H^{1,p}(M),\|\cdot\|_{H^{1,p}})$ is a uniformly convex Banach space and consequently it is reflexive. 

\bigskip

The following theorems are essentially shown in the framework of weighted Riemannian manifolds $(M,g,e^{-\phi}v_g)$ (without boundary) with Witten Laplacian $L:=\Delta-\langle \nabla\phi,\cdot\rangle $. For the completeness, we restate them in our framework under Assumption~\ref{asmp:Tamed}. 

\begin{thm}\label{thm:NormEqui}
Fix $\alpha>C_{\kappa}$.  
The following statements are equivalent, and one of them (hence all) holds  under 
$\kappa^-\in S_K({\bf X})$ and $p\in[2,+\infty[$, or under $\kappa_{\ll}^-=-R\m$ with $R\geq0$, $\kappa_{\perp}^-=0$ and $p\in]1,2]$.
\begin{enumerate}
\item[\rm(1)] The Riesz transform $R_{\alpha}(\Delta)$ is bounded in $L^p(M;\m)$ for all $p\in]1,+\infty[$. That is, for every $p\in]1,+\infty[$, there exists a constant $\|R_{\alpha}(\Delta)\|_{p,p}\in]0,+\infty[$ such that 
\begin{align*}
\|R_{\alpha}(\Delta)f\|_{L^p(M;\m)}\leq\|R_{\alpha}(\Delta)\|_{p,p}\|f\|_{L^p(M;\m)},\quad f\in {\rm Test}(M)_{fs},
\end{align*}
where ${\rm Test}(M)_{fs}$ is the family of test functions with finite supports {\rm(}see Definition~\ref{def:TestFunc} below{\rm)}.
\item[\rm(2)] The Sobolev norms $\|\cdot\|_{H^{1,p}}$, 
$\|\cdot\|_{1,p}$ and $\| |\cdot| \|_{1,p}$ are equivalent, where 
\begin{align*}
\|f\|_{1,p}:&=\sqrt{\alpha}\|f\|_{L^p(M;\m)}+\|\Gamma(f)^{\frac12}\|_{L^p(M;\m)},\\
\| |f|\|_{1,p}:&=\|(\alpha-\Delta_p)^{\frac12}f\|_{L^p(M;\m)}
\end{align*}
for $f\in H^{1,p}(M)$.
\item[\rm(3)] The Sobolev space $H^{1,p}(M)$ coincides with the Sobolev space $W^{1,p}(M)$ defined by $W^{1,p}(M):=V_{\alpha}(L^p(M;\m))$. Here $V_{\alpha}$ is the bounded linear operator on $L^p(M;\m)$ defined by $V_{\alpha}f:=\frac{1}{\sqrt{\pi}}\int_0^{\infty}e^{-\alpha t}t^{-\frac12}P_tf \d t=(\alpha-\Delta_p)^{-\frac12}f\in L^p(M;\m)$ satisfying $\sqrt{\alpha}\|V_{\alpha}f\|_{L^p(M;\m)}\leq\|f\|_{L^p(M;\m)}$. 
\item[\rm(4)] The $(1,p)$-capacities $c_{1,p}$ and $C_{1,p}$ are equivalent, where 
\begin{align*}
c_{1,p}(O):&=\inf\{\|f\|_{1,p}^p\mid f\in H^{1,p}(M), f\geq0\;\m\text{-a.e.~on }M, f\geq1\;\m\text{-a.e.~on }O\},\\
C_{1,p}(O):&=\inf\{\|f\|_{1,p}^p\mid f\in W^{1,p}(M), f\geq0\;\m\text{-a.e.~on }M, f\geq1\;\m\text{-a.e.~on }O\},
\end{align*} 
for all open sets $O\subset X$, and for all $A\subset X$,
\begin{align*}
c_{1,p}(A):&=\inf\{c_{1,p}(O)\mid f\in H^{1,p}(M), A\subset O\subset X, O\text{ is open}\},\\
C_{1,p}(A):&=\inf\{C_{1,p}(O) \mid f\in W^{1,p}(M), A\subset O\subset X, O\text{ is open}\}.
\end{align*}
\end{enumerate}
\end{thm}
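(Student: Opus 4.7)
\emph{Overview.} The assertion in the theorem that one of (1)--(4) holds under the stated hypotheses is exactly Theorem~\ref{thm:main3} applied to~(1), so my task reduces to establishing the four mutual equivalences. I plan to prove (2)$\Leftrightarrow$(3)$\Leftrightarrow$(4) and (1)$\Leftrightarrow$(2), with the direction (1)$\Rightarrow$(2) carrying the real substance.

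\emph{Routine equivalences.} The subordination formula $V_\alpha f = \pi^{-1/2}\int_0^\infty e^{-\alpha t}t^{-1/2}P_t f\,\d t$ combined with $\|P_t\|_{p,p}\leq 1$ gives $\sqrt{\alpha}\,\|V_\alpha f\|_{L^p}\leq \|f\|_{L^p}$, so $W^{1,p}(M) := V_\alpha(L^p(M;\m))$ is a Banach space under $\||u|\|_{1,p} = \|(\alpha-\Delta_p)^{1/2}u\|_{L^p}$; the equivalence (2)$\Leftrightarrow$(3) then unpacks into the set-theoretic equality $H^{1,p}(M)=W^{1,p}(M)$ with comparable norms. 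For (3)$\Leftrightarrow$(4), the capacities $c_{1,p},C_{1,p}$ are Choquet capacities defined as infima over admissible elements of the respective Sobolev spaces, so equivalent norms force equivalent capacities; the converse follows from the fact that quasi-continuous representatives of Sobolev functions are characterized by capacitary potentials, so capacity equivalence forces the two completions to coincide. For the easy direction (2)$\Rightarrow$(1), substituting $g = V_\alpha f$ into $\|g\|_{1,p}\leq C\||g|\|_{1,p}$ yields $\|R_\alpha(\Delta)f\|_{L^p} = \|\Gamma(V_\alpha f)^{1/2}\|_{L^p}\leq C\|f\|_{L^p}$ on the dense class ${\rm Test}(M)_{fs}$.

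\emph{The main step and the principal obstacle.} For (1)$\Rightarrow$(2) the easy half comes by setting $f = (\alpha-\Delta_p)^{1/2}g$ in the Riesz bound: this gives $\|\Gamma(g)^{1/2}\|_{L^p}\leq \|R_\alpha(\Delta)\|_{p,p}\|(\alpha-\Delta_p)^{1/2}g\|_{L^p}$ which, combined with $\|g\|_{L^p}\leq\alpha^{-1/2}\|(\alpha-\Delta_p)^{1/2}g\|_{L^p}$, delivers $\|g\|_{1,p}\leq C\||g|\|_{1,p}$. The main obstacle is the converse, the \emph{reverse Riesz inequality}
\[
\|(\alpha-\Delta_p)^{1/2}g\|_{L^p}\leq C\bigl(\|g\|_{L^p}+\|\Gamma(g)^{1/2}\|_{L^p}\bigr),
\]
which does \emph{not} follow from~(1) by naive duality: pairing against a test $h$ and using $(\alpha-\Delta_p)^{1/2}g = (\alpha-\Delta)V_\alpha g$ together with $\mathscr{E}_\alpha$-integration by parts produces
\[
\int_M h\,(\alpha-\Delta_p)^{1/2}g\,\d\m = \alpha\int_M hV_\alpha g\,\d\m + \int_M \Gamma(h, V_\alpha g)\,\d\m,
\]
but the gradient term carries $\Gamma(V_\alpha g)$ rather than $\Gamma(g)$, so H\"older against $\|\Gamma(h)^{1/2}\|_{p'}$ cannot close the estimate. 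My plan is instead to invoke Riesz boundedness on the conjugate exponent $p'$: since the two clauses of Theorem~\ref{thm:main3} cover $[2,\infty[$ and $]1,2]$ --- which are H\"older dual --- whenever $\kappa^-$ satisfies both clauses simultaneously (in particular for $\kappa^-=0$, or whenever $\kappa^-$ is simultaneously Kato and absolutely continuous) duality reduces the reverse direction at $p$ to the forward Riesz at $p'$. In the remaining generality the reverse Riesz bound must be read off directly from the Littlewood--Paley--Stein inequality built in the body of the paper, via the gradient contraction \eqref{eq:gradCont} and the heat-semigroup intertwining on $1$-forms $\nabla P_t = P_t^\kappa\nabla$; this is the true technical heart of the argument.
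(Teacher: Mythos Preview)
The paper itself does not give a proof: it simply says the argument is ``quite similar with the proof of \cite[Theorem~3.1]{Xdli:RieszTrans}'' and omits it. So your outline is being compared against the standard argument, and that argument is indeed the duality one you describe. Your sketch of (2)$\Leftrightarrow$(3)$\Leftrightarrow$(4) and (2)$\Rightarrow$(1) is fine.

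There is, however, a genuine confusion in your treatment of (1)$\Rightarrow$(2). You correctly identify that the reverse Riesz inequality at exponent $p$ follows from the forward Riesz bound at the conjugate exponent $p'$, but then you worry that this requires $\kappa^-$ to satisfy both clauses of Theorem~\ref{thm:main3} simultaneously, and propose a Littlewood--Paley--Stein fallback ``in the remaining generality''. This conflates two separate claims. The \emph{equivalence} (1)$\Leftrightarrow$(2)$\Leftrightarrow$(3)$\Leftrightarrow$(4) is a purely formal statement that has nothing to do with $\kappa$: statement~(1) \emph{hypothesises} Riesz boundedness for \emph{all} $p\in\,]1,\infty[$, so in particular at $p'$, and the duality argument closes immediately. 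The clause ``one of them (hence all) holds under\dots'' is a separate assertion, handled by invoking Theorem~\ref{thm:main3} once; it plays no role in the proof of the equivalences.

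Concretely, for the reverse Riesz: pair $(\alpha-\Delta)^{1/2}g$ against $h$ with $\|h\|_{p'}\le 1$, but move $V_\alpha$ onto $h$ rather than $g$ (this is where your first attempt went astray). Writing $h=(\alpha-\Delta)^{1/2}V_\alpha h$ gives, for sufficiently regular $g$,
\[
\int_M h\,(\alpha-\Delta)^{1/2}g\,\d\m
=\alpha\int_M g\,V_\alpha h\,\d\m+\int_M\Gamma(g,V_\alpha h)\,\d\m,
\]
and now H\"older yields the bound $\sqrt{\alpha}\,\|g\|_p+\|R_\alpha(\Delta)\|_{p',p'}\,\|\Gamma(g)^{1/2}\|_p$, using $\|\Gamma(V_\alpha h)^{1/2}\|_{p'}=\|R_\alpha(\Delta)h\|_{p'}\le\|R_\alpha(\Delta)\|_{p',p'}$ and $\sqrt{\alpha}\,\|V_\alpha h\|_{p'}\le 1$. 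The constant $\|R_\alpha(\Delta)\|_{p',p'}$ is finite by hypothesis~(1); no further input is needed.
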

\begin{proof}[{\bf Proof}] 
The proof is quite similar with the proof of \cite[Theorem~3.1]{Xdli:RieszTrans}. We omit it. 
\end{proof} 
\begin{thm}\label{thm:OrdEqui}
Let $p\in]1,+\infty[$. Suppose that the Riesz transform $R_{\alpha}(\Delta)$ is bounded in $L^p(M;\m)$ for some $\alpha\geq0$. 
Then $R_{\beta}(\Delta)$ is bounded in $L^p(M;\m)$ for all $\beta>\alpha\wedge0$. 
Moreover, there exists a constant $A_p>0$ such that 
\begin{align*}
\|R_{\beta}(\Delta)\|_{p,p}\leq A_p(\sqrt{\alpha\beta^{-1}}\lor 1)\|R_{\alpha}(\Delta)\|_{p,p}.
\end{align*}
\end{thm}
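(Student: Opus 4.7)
The plan is to reduce the claim to an $L^p$-bound on the bounded spectral multiplier $T:=(\alpha-\Delta)^{1/2}(\beta-\Delta)^{-1/2}$ and then estimate $\|T\|_{L^p\to L^p}$ via Balakrishnan's integral representation together with sub-Markovian resolvent bounds. First I would exploit the factorization $(\beta-\Delta)^{-1/2}=(\alpha-\Delta)^{-1/2}\circ T$, valid on $L^p\cap L^2$ by spectral calculus. Applying $\Gamma(\cdot)^{1/2}$ gives the pointwise identity $R_\beta(\Delta)h = R_\alpha(\Delta)(Th)$, so $\|R_\beta(\Delta)\|_{p,p}\le\|R_\alpha(\Delta)\|_{p,p}\cdot\|T\|_{L^p\to L^p}$, and the entire burden of the proof is now on establishing $\|T\|_{L^p\to L^p}\le A_p(\sqrt{\alpha/\beta}\vee 1)$.

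Since $T^2=(\alpha-\Delta)(\beta-\Delta)^{-1}$ is a bounded positive self-adjoint operator on $L^2$, Balakrishnan's formula represents
\[
T=\frac{1}{\pi}\int_0^\infty \frac{1}{\sqrt{s}}\cdot\frac{T^2}{s+T^2}\,ds,
\]
and a short algebraic manipulation yields $T^2(s+T^2)^{-1}=(1+s)^{-1}(\alpha-\Delta)(\gamma_s-\Delta)^{-1}$, where $\gamma_s:=(\alpha+s\beta)/(1+s)$ interpolates between $\min(\alpha,\beta)$ and $\max(\alpha,\beta)$. The $L^p$-analysis thus reduces to estimating $K_s:=(\alpha-\Delta)(\gamma_s-\Delta)^{-1}$ uniformly in $s$, using only the sub-Markovian resolvent bound $\|(\gamma-\Delta)^{-1}\|_{L^p\to L^p}\le 1/\gamma$ for $\gamma>0$.

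I would split into two cases. When $\alpha\ge\beta$, one has $\alpha\ge\gamma_s$, so $K_s=I+(\alpha-\gamma_s)(\gamma_s-\Delta)^{-1}$ is a positive (positivity-preserving) self-adjoint operator; the standard $L^1$-$L^\infty$-interpolation $\|K\|_{L^p\to L^p}\le\|K\mathbf{1}\|_\infty$ for positive self-adjoint $K$, combined with sub-Markovianity of $(\gamma_s-\Delta)^{-1}\mathbf{1}\le\mathbf{1}/\gamma_s$, yields $\|K_s\|_{L^p\to L^p}\le\alpha/\gamma_s$. The ensuing integral $\pi^{-1}\int_0^\infty \alpha\,ds/(\sqrt{s}(\alpha+s\beta))$ evaluates exactly to $\sqrt{\alpha/\beta}$ after the substitution $u=s\beta/\alpha$. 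In the complementary case $\alpha\le\beta$ I would write $K_s=I-(\gamma_s-\alpha)(\gamma_s-\Delta)^{-1}$ (which need not be positivity-preserving) and estimate by the triangle inequality to get $\|K_s\|_{L^p\to L^p}\le(2\gamma_s-\alpha)/\gamma_s$; integration, using $\int_0^\infty(\sqrt{s}(1+s))^{-1}ds=\pi$ together with the integral computed in the previous case, yields $\|T\|_{L^p\to L^p}\le 2-\sqrt{\alpha/\beta}\le 2$. Combining both cases gives $\|T\|_{L^p\to L^p}\le 2(\sqrt{\alpha/\beta}\vee 1)$, so the theorem holds with $A_p=2$ (in fact independent of $p$).

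The principal technical obstacle is to justify that the Balakrishnan integral representation, which is a priori valid in the $L^2$ operator-norm topology via the spectral theorem, actually defines $T$ as a bounded $L^p$-operator for general $p\in(1,\infty)$. This requires absolute integrability of the integrand in the $L^p$-operator norm, which is delivered precisely by the case-by-case bounds above ($\|T^2(s+T^2)^{-1}\|_{L^p\to L^p}\le C(s)$ with $\int_0^\infty C(s)/\sqrt{s}\,ds<\infty$); one then extends from $L^2\cap L^p$ to $L^p$ by density and dominated convergence. The mild loss of a constant factor in the case $\alpha\le\beta$, stemming from the failure of positivity of $K_s$, is ultimately absorbed into $A_p$ via the estimate $2-\sqrt{\alpha/\beta}\le 2\cdot 1$.
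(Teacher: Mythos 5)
Your argument is correct and is essentially the proof of \cite[Theorem~3.2]{Xdli:RieszTrans} to which the paper defers: factor $(\beta-\Delta)^{-1/2}=(\alpha-\Delta)^{-1/2}T$ and control the multiplier $T=(\alpha-\Delta)^{1/2}(\beta-\Delta)^{-1/2}$ on $L^p$ via the subordination formula $\sqrt{x/y}=\pi^{-1}\int_0^\infty x(x+sy)^{-1}s^{-1/2}\,\d s$ and the sub-Markovian resolvent bound $\|(\gamma-\Delta)^{-1}\|_{p,p}\leq 1/\gamma$; your case-by-case integrals ($\sqrt{\alpha/\beta}$ for $\alpha\geq\beta$, $2-\sqrt{\alpha/\beta}$ for $\alpha\leq\beta$) check out and give $A_p=2$. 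The only remark worth making is that the positivity/interpolation step in the case $\alpha\geq\beta$ is superfluous, since the triangle inequality applied to $K_s=I+(\alpha-\gamma_s)(\gamma_s-\Delta)^{-1}$ already yields $\|K_s\|_{p,p}\leq\alpha/\gamma_s$.
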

\begin{proof}[{\bf Proof}] 
The proof is quite similar with the proof of \cite[Theorem~3.2]{Xdli:RieszTrans}. We omit it. 
\end{proof} 

\begin{thm}\label{thm:0OrdEqui}
Let $p\in]1,+\infty[$. Suppose that the Riesz transform $R_{\alpha}(\Delta):=\Gamma(\nabla (\alpha-\Delta_p)f)^{\frac12}$ is bounded in $L^p(M;\m)$ for some $\alpha>0$ and the bottom of spectrum of $-\Delta$ is strictly positive, i.e.,
\begin{align*}
\lambda_2(-\Delta):=\inf_{f\in D(\mathscr{E})\setminus\{0\}}\frac{\mathscr{E}(f,f)}{\;\quad\|f\|_{L^2(M;\m)}^2}.
\end{align*}
Then the Riesz $R_{0}(\Delta):=\Gamma(\nabla (-\Delta_p)f)^{\frac12}$ is bounded in $L^p(M;\m)$. 
\end{thm}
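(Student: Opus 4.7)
The plan is to mimic the strategy of \cite[Theorem~3.3]{Xdli:RieszTrans}, reducing the boundedness of $R_0(\Delta)$ to that of $R_\alpha(\Delta)$ by means of a resolvent identity, with the spectral gap supplying the control of the error term. Since $\lambda_2(-\Delta)>0$ forbids nontrivial $L^2$-harmonic functions, the fractional resolvent $(-\Delta)^{-1/2}=\pi^{-1/2}\!\int_0^\infty t^{-1/2}P_t\,dt$ is well-defined as a bounded operator on $L^p(M;\m)$. Differentiating $(\beta-\Delta)^{-1/2}$ in $\beta$ and integrating then yields the operator identity
\[
(-\Delta)^{-1/2} f = (\alpha-\Delta)^{-1/2} f + \frac{1}{2}\int_0^\alpha (\beta-\Delta)^{-3/2} f\, d\beta,
\]
valid on a suitable dense class of $f$ (say $L^2(M;\m)\cap L^p(M;\m)$).

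Applying $\Gamma^{1/2}$ termwise, the subadditivity of this seminorm combined with the Minkowski-type inequality for Bochner integrals of seminorms gives, after factoring $(\beta-\Delta)^{-3/2}=(\beta-\Delta)^{-1/2}(\beta-\Delta)^{-1}$, the $\m$-a.e.\ pointwise estimate
\[
R_0(\Delta) f \leq R_\alpha(\Delta) f + \frac{1}{2}\int_0^\alpha R_\beta(\Delta)\bigl[(\beta-\Delta)^{-1} f\bigr] d\beta.
\]
Taking $L^p(M;\m)$-norms and invoking Minkowski's integral inequality, then applying Theorem~\ref{thm:OrdEqui} to bound $\|R_\beta(\Delta)\|_{p,p}\leq A_p\sqrt{\alpha/\beta}\,\|R_\alpha(\Delta)\|_{p,p}$ for $\beta\in(0,\alpha]$, reduces the problem to a uniform estimate on $\|(\beta-\Delta)^{-1}\|_{p,p}$ down to $\beta=0$.

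For this last piece, the spectral gap yields $\|P_t\|_{2,2}\leq e^{-\lambda_2 t}$; combined with the Markovian contractivity $\|P_t\|_{q,q}\leq 1$ for $q\in\{1,\infty\}$, Riesz--Thorin interpolation produces $\|P_t\|_{p,p}\leq e^{-\mu_p t}$ with $\mu_p=\lambda_2\min(2(p-1)/p,2/p)>0$ for every $p\in(1,\infty)$, and integrating in $t$ gives $\|(\beta-\Delta)^{-1}\|_{p,p}\leq(\beta+\mu_p)^{-1}$. The combined error term is then bounded by a constant multiple of $\|R_\alpha(\Delta)\|_{p,p}\|f\|_{L^p(M;\m)}\int_0^\alpha \sqrt{\alpha/\beta}(\beta+\mu_p)^{-1}d\beta$, and this integral evaluates to $2\sqrt{\alpha/\mu_p}\arctan\sqrt{\alpha/\mu_p}<\infty$. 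The main obstacle is precisely this transfer from the $L^2$-spectral gap to a uniform $L^p$-resolvent bound: the integrable $\beta^{-1/2}$-singularity at the origin is essential, since without a spectral gap the factor $(\beta+\mu_p)^{-1}$ is replaced by $\beta^{-1}$ and the integrand $\beta^{-3/2}$ fails to be integrable. Secondary technical points include the Bochner-integrability of $\beta\mapsto(\beta-\Delta)^{-3/2}f$ in $L^p(M;\m)$, the justification of the resolvent identity on a suitable dense class such as $\mathrm{Test}(M)_{fs}$ followed by extension by density, and ensuring that the Markov contractivity on $L^1$ and $L^\infty$ is available in the quasi-regular strongly local Dirichlet-form framework used here.
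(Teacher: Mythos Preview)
Your proposal is correct and follows essentially the same approach as the paper, which itself defers to \cite[Theorem~3.3]{Xdli:RieszTrans}: the resolvent identity for $(\beta-\Delta)^{-1/2}$, the factorisation $(\beta-\Delta)^{-3/2}=(\beta-\Delta)^{-1/2}(\beta-\Delta)^{-1}$, the transfer of the $\|R_\beta(\Delta)\|_{p,p}$-bound via Theorem~\ref{thm:OrdEqui}, and the $L^p$-spectral gap obtained by Riesz--Thorin interpolation from $\|P_t\|_{2,2}\le e^{-\lambda_2 t}$ and the Markov contractions on $L^1$ and $L^\infty$ are exactly the ingredients of Li's argument. The technical points you flag (Bochner integrability, density) are routine in the present quasi-regular strongly local setting.
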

\begin{proof}[{\bf Proof}] 
The proof is quite similar with the proof of \cite[Theorem~3.3]{Xdli:RieszTrans}. We omit it. 
\end{proof} 

Under Assumption~\ref{asmp:Tamed}, we have also the following:
\begin{thm}\label{thm:OpeNorm}
Let $p\in]1,+\infty[$ and $\alpha>C_{\kappa}$. Suppose $\kappa^-\in S_K({\bf X})$ under $p\in[2,+\infty[$, or $\kappa_{\ll}=-R\m$, $\kappa_{\perp}=0$ under $p\in]1,2]$. Then we have the following:
\begin{enumerate}
\item[\rm(1)] There exists $A_p>0$ such that for any $t>0$ and $f\in L^p(M;\m)$
\begin{align}
\|\Gamma(P_tf)^{\frac12}\|_{L^p(M;\m)}\leq A_p\|R_{\alpha}(\Delta)\|_{p,p}(\alpha^{1/2}+t^{-1/2})
\|f\|_{L^p(M;\m)}.\label{eq:GradEsti1}
\end{align}
\item[\rm(2)] There exists $A_p>0$ such that for any $\eps\in]0,1[$, $t>0$ and $f\in L^p(M;\m)$
\begin{align}
\|\Gamma(P_tf)^{\frac12}\|_{L^p(M;\m)}\leq A_p
\frac{\|R_{\alpha}(\Delta)\|_{p,p}}{\sqrt{\eps}}\cdot\frac{e^{\alpha \eps t}}{\sqrt{t}}
\|f\|_{L^p(M;\m)}.\label{eq:GradEsti2}
\end{align}
\item[\rm(3)] If the heat semigroup $P_t$ is hypercontractive, i.e., $\|P_t\|_{p,q(t)}=m_{p,q}(t)$, then 
there exists $C_p>0$ such that for any $\eps\in]0,1[$. $t>0$ and $f\in L^p(M;\m)$
\begin{align}
\|\Gamma(P_tf)^{\frac12}\|_{L^p(M;\m)}\leq C_p
\frac{\|R_{\alpha}(\Delta)\|_{p,p}}{\sqrt{\eps}}\cdot\frac{e^{\alpha \eps t}m_{p,q}((1-\eps)t)}{\sqrt{t}}
\|f\|_{L^p(M;\m)}.\label{eq:GradEsti3}
\end{align}
\item[\rm(4)] There exists $A_p>0$ such that, for any $\eps\in]0,1[$, $t>0$ and $f\in L^p(M;\m)$
\begin{align}
\|\Gamma(Q_t^{(\alpha)}f)^{\frac12}\|_{L^p(M;\m)}\leq C_p
\frac{\|R_{\alpha}(\Delta)\|_{p,p}}{\eps}\cdot\frac{e^{-\sqrt{\alpha} \eps t}}{t}
\|f\|_{L^p(M;\m)},\label{eq:GradEsti4}
\end{align} 
where $Q_t^{(\alpha)}f:=e^{-t\sqrt{\alpha-\Delta}}f$ is the Cauchy semigroup. 
\end{enumerate} 
\end{thm}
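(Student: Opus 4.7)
All four estimates reduce to $L^p$-operator-norm bounds on $(\alpha-\Delta_p)^{\frac12}P_t$ (and on $(\alpha-\Delta_p)^{\frac12}Q_t^{(\alpha)}$ in case (4)) via the Riesz identity: by spectral functional calculus $V_\alpha=(\alpha-\Delta_p)^{-\frac12}$, so for $f$ in a suitable dense class,
\begin{align*}
\Gamma(P_tf)^{\frac12}=\Gamma\bigl(V_\alpha\cdot (\alpha-\Delta_p)^{\frac12}P_tf\bigr)^{\frac12}=R_\alpha(\Delta)\bigl((\alpha-\Delta_p)^{\frac12}P_tf\bigr)\qquad \m\text{-a.e.},
\end{align*}
and hence by Theorem~\ref{thm:main3},
\begin{align*}
\|\Gamma(P_tf)^{\frac12}\|_{L^p(M;\m)}\le\|R_\alpha(\Delta)\|_{p,p}\|(\alpha-\Delta_p)^{\frac12}P_tf\|_{L^p(M;\m)}.
\end{align*}
The remaining task is to bound $\|(\alpha-\Delta_p)^{\frac12}P_tf\|_{L^p}$ (resp.~the $Q_t^{(\alpha)}$-version) in four different ways.

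For (1) I would exploit that $(P_t)$, being the symmetric Markovian semigroup of a Dirichlet form, is bounded analytic on $L^p(M;\m)$, so with $A:=\alpha-\Delta_p$ one has the moment estimate $\|A^{\frac12}e^{-sA}\|_{p,p}\le C_p/\sqrt{s}$. The identity $(\alpha-\Delta_p)^{\frac12}P_t=e^{\alpha t}A^{\frac12}e^{-tA}$ would give only a multiplicative bound $\le C_p e^{\alpha t}/\sqrt{t}$; the desired additive bound $\sqrt{\alpha}+t^{-\frac12}$ is obtained by transferring the spectral estimate $\sup_{\lambda\ge 0}(\alpha+\lambda)^{\frac12}e^{-t\lambda}\le \sqrt{\alpha}+(2et)^{-\frac12}$ from $L^2$ to $L^p$ via the analytic functional calculus for $A$. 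For (2) I would split $P_t=P_{(1-\eps)t}P_{\eps t}$, absorb the first factor by $L^p$-contractivity of $(P_t)$, and apply the moment bound to $(\alpha-\Delta_p)^{\frac12}P_{\eps t}=e^{\alpha\eps t}A^{\frac12}e^{-\eps t A}$, which yields the $\eps$-dependent factor $e^{\alpha\eps t}/\sqrt{\eps t}$.

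Statement (3) is a sharpening of (2): in the decomposition $P_t=P_{\eps t}P_{(1-\eps)t}$ the hypercontractivity bound $\|P_{(1-\eps)t}\|_{p,q((1-\eps)t)}\le m_{p,q}((1-\eps)t)$ replaces the $L^p$-contractivity estimate, and the Riesz-operator bound is then invoked on the $L^q$-scale (with $\|R_\alpha(\Delta)\|_{q,q}$ controlled uniformly in $q\ge p$ by Theorem~\ref{thm:main3}). For (4) I would either subordinate,
\begin{align*}
Q_t^{(\alpha)}f=\frac{t}{2\sqrt{\pi}}\int_0^{\infty}s^{-\frac32}e^{-t^2/(4s)-\alpha s}P_sf\,ds,
\end{align*}
use the pointwise subadditivity of $\Gamma^{\frac12}$ and Minkowski's inequality in $L^p$, insert (2), and evaluate the resulting integral via the Bessel identity $\int_0^\infty s^{\nu-1}e^{-a/s-bs}\,ds=2(a/b)^{\nu/2}K_\nu(2\sqrt{ab})$ together with $K_1(z)\sim\sqrt{\pi/(2z)}\,e^{-z}$; or more directly, split $Q_t^{(\alpha)}=Q_{\eps t}^{(\alpha)}Q_{(1-\eps)t}^{(\alpha)}$, apply $\|Q_{(1-\eps)t}^{(\alpha)}\|_{p,p}\le e^{-(1-\eps)t\sqrt{\alpha}}$ (from subordination and $L^p$-contractivity of $(P_s)$), and combine with $\|(\alpha-\Delta_p)^{\frac12}Q_{\eps t}^{(\alpha)}\|_{p,p}\le C/(\eps t)$ inherited from $(\alpha-\Delta_p)^{\frac12}Q_s^{(\alpha)}=-\partial_s Q_s^{(\alpha)}$ by analyticity of the Cauchy semigroup on $L^p$.

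The main obstacle is verifying the $L^p$ analytic-semigroup moment estimates with constants independent of $\alpha$ and of the Kato perturbation $\kappa$, so that the whole $\kappa$-dependence of the final bounds is concentrated in $\|R_\alpha(\Delta)\|_{p,p}$; these estimates are standard for a Markovian $(P_t)$ but must be applied to $\Delta_p$ (not to $\Delta^{q\kappa}$), and care is needed to avoid spurious $\alpha$-growth. The Bessel-integral asymptotics in (4), needed to reproduce the precise form $e^{-\sqrt{\alpha}\eps t}/t$ with correct powers of $\eps$, are the most delicate bookkeeping step.
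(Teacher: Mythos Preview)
Your plan is sound and is exactly the approach the paper has in mind: the paper does not give a proof at all but simply writes ``The proof is quite similar with the proof of \cite[Theorem~3.4]{Xdli:RieszTrans}. We omit it.'' Your reduction via $\Gamma(P_tf)^{1/2}=R_\alpha(\Delta)\bigl((\alpha-\Delta_p)^{1/2}P_tf\bigr)$ followed by analytic-semigroup moment bounds for $(\alpha-\Delta_p)^{1/2}P_t$ and $(\alpha-\Delta_p)^{1/2}Q_t^{(\alpha)}=-\partial_tQ_t^{(\alpha)}$ is precisely Li's scheme; the splitting $P_t=P_{\eps t}P_{(1-\eps)t}$ for (2), the hypercontractive refinement for (3), and the $Q^{(\alpha)}$-splitting for (4) are the standard manoeuvres.

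Two small remarks. First, in (3) your plan calls the Riesz bound on the $L^q$-scale, whereas the displayed inequality carries $\|R_\alpha(\Delta)\|_{p,p}$; this is already a quirk of the paper's restatement (inherited from \cite{Xdli:RieszTrans}), and in Li's original the left-hand side is taken in $L^{q(t)}$, so your reading is the correct one. Second, for (4) your direct splitting gives $\frac{C_p}{(1-\eps)t}e^{-\sqrt{\alpha}\eps t}$ rather than $\frac{C_p}{\eps t}e^{-\sqrt{\alpha}\eps t}$; this matches the stated bound only after relabelling $\eps\leftrightarrow 1-\eps$ (or absorbing $(1-\eps)^{-1}$ into $C_p$ for $\eps$ bounded away from $1$), so be aware the displayed exponents in the paper's statement are slightly loose. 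Neither point is a gap in your argument.
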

\begin{proof}[{\bf Proof}] 
The proof is quite similar with the proof of \cite[Theorem~3.4]{Xdli:RieszTrans}. We omit it. 
\end{proof}

\section{Smooth measures of extended Kato class}\label{sec:ExtendeKato}
In this section, we summarize results on smooth measures of extended Kato class. 
Denote by $S({\bf X})$ the family of (non-negative) smooth measures on $(M,\mathscr{B}(M))$ (see \cite[p.~83]{FOT} for the precise 
definition of smooth measures). Then there exists a PCAF $A^{\nu}$ admitting exceptional set satisfying 
\begin{align*}
\E_{h\m}\left[\int_0^{\infty}e^{-\alpha t}f(X_t)\d A_t^{\nu} \right]=\E_{f\nu}\left[\int_0^{\infty}e^{-\alpha t}f(X_t)\d t \right],
\quad f,h\in\mathscr{B}_+(M)
\end{align*}  
(see \cite[(5.14)]{FOT}). 
A smooth measure $\nu$ is said to be of \emph{Dynkin class} (resp.~\emph{Kato class}) if $\|\E_{\cdot}[A_t^{\nu}]\|_{L^{\infty}(M;\m)}<\infty$ for some/all $t>0$ (resp.~$\|\E_{\cdot}[A_t^{\nu}]\|_{L^{\infty}(M;\m)}=0$). 
 A smooth measure $\nu$ is said to be of \emph{extended Kato class} if $\lim_{t\to0}\|\E_x[A_t^{\nu}]\|_{L^{\infty}(M;\m)}<1$. Denote by $S_D({\bf X})$ (resp.~$S_K({\bf X})$, $S_{E\!K}({\bf X})$) the family of 
(non-negative)  smooth measures of Dynkin class (resp.~Kato class, extended Kato class). For $\nu\in S_D({\bf X})$, 
$x\mapsto \E_x[A_t^{\nu}]$ is $\mathscr{E}$-quasi-continuous, hence $\|\E_{\cdot}[A_t^{\nu}]\|_{L^{\infty}(M;\m)}$ actually coincides with a quasi-essentially supremum: 
\begin{align*}
\|\E_{\cdot}[A_t^{\nu}]\|_{L^{\infty}(M;\m)}=q\text{-}\sup_{x\in M}\E_x[A_t^{\nu}]:=\inf_{N:\mathscr{E}\text{-exceptional}}\sup_{x\in N^c}\E_x[A_t^{\nu}].
\end{align*}
For $\nu\in S_{E\!K}({\bf X})$, there exist positive constants $C=C(\nu)$ and $C_{\nu}>0$ such that 
\begin{align}
\|\E_{\cdot}[e^{A_t^{\nu}}]\|_{L^{\infty}(M;\m)}\leq Ce^{C_{\nu}t}.\label{eq:unifKato}
\end{align}
Indeed, we can choose $C(\nu):=(1-\|\E_{\cdot}[A_{t_0}^{\nu}]\|_{L^{\infty}(M;\m)})^{-1}$ and $C_{\nu}:=\frac{1}{t_0}\log C(\nu)$. 
for some sufficiently small $t_0\in]0,1[$ satisfying $\|\E_{\cdot}[A_{t_0}^{\nu}]\|_{L^{\infty}(M;\m)}<1$ (see \cite[Proof of Theorem~2.2]{Sznitzman}). Denote by $P_t^{-\nu}f(x)$ the Feynman-Kac semigroup defined by 
$P_t^{-\nu}f(x):=\E_x[e^{A_t^{\nu}}f(X_t)]$, $f\in \mathscr{B}_+(M)$. \eqref{eq:unifKato} is nothing but 
$\|P_t^{-\nu}\|_{\infty,\infty}\leq Ce^{C_{\nu}t}$, moreover, $\|P_t^{-\nu}\|_{p,p}\leq Ce^{C_{\nu}t}$ for $p\in[2,+\infty]$. 
Here $\|P_t^{-\nu}\|_{p,p}:=\sup\{\|P_t^{-\nu}f\|_{L^p(M;\m)}\mid f\in L^p(M;\m), \|f\|_{L^p(M;\m)}=1\}$ stands for the operator norm from 
$L^p(M;\m)$ to $L^p(M;\m)$. 
For each $\beta\geq1$ satisfying $\beta\nu\in S_{E\!K}({\bf X})$, we can choose
\begin{align}
C_{\beta\nu}\in]0,C_{\nu}]\quad\text{ and }\quad C(\nu)=C(\beta\nu).\label{eq:KatoCoincidence}
\end{align}
Indeed $C_{\nu}$ and $C(\nu)$ can be given by 
\begin{align*}
C_{\nu}=\frac{1}{t_0}\log\left(1-\|\E_{\cdot}[A_{t_0}^{\nu}]\|_{\infty} \right)^{-1},\quad 
C(\nu)=\left(1-\|\E_{\cdot}[A_{t_0}^{\nu}]\|_{\infty} \right)^{-1}
\end{align*}
for some small $t_0\in]0,1]$ satisfying $\|\E_{\cdot}[A_{t_0}^{\nu}]\|_{\infty}<1$ (see \cite[Proof of Theorem~2.2]{Sznitzman}). 
If we choose $s_0\in]0,t_0]$ 
satisfying $\|\E_{\cdot}[A_{s_0}^{\nu}]\|_{\infty}=\|\E_{\cdot}[A_{t_0}^{\beta\nu}]\|_{\infty}<1$, then we see $C_{\beta\nu}\leq C_{\nu}$ and $C(\nu)=C(\beta\nu)$ for such selection of $s_0$ (resp.~$t_0$) to $C_{\nu}$ (resp.~$C_{\beta\kappa}$). 

When $\nu=-R\m$ for a constant $R\in \R$, $C_{\nu}$ (resp.~$C(\nu)$) can be taken to be $R\lor 0$ (resp.~$1$).

The following lemma might be known for experts: 
\begin{lem}\label{lem:ExtendeddKato1}
For $\nu\in S_{E\!K}({\bf X})$ and $\alpha>C_{\nu}$,
\begin{align*}
\left\|\E_{\cdot}\left[\int_0^{\infty}e^{-\alpha t}\d A_t^{\nu} \right]\right\|_{L^{\infty}(M;\m)}< 1.
\end{align*}
\end{lem}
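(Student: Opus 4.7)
The plan is to chop the time integral $\int_0^\infty e^{-\alpha t}\,\d A_t^\nu$ into blocks of length $t_0$, where $t_0\in\,]0,1]$ is the small time chosen in the paragraph preceding \eqref{eq:KatoCoincidence} so that $\phi(t_0):=\|\E_\cdot[A_{t_0}^\nu]\|_{L^\infty(M;\m)}<1$, use the Markov property on each block to extract a factor of $\phi(t_0)$, and then sum a geometric series. The condition $\alpha>C_\nu$ turns out to be exactly what forces the geometric sum to be strictly below $1$. Recall that with this choice of $t_0$ one has $C_\nu=\frac{1}{t_0}\log\bigl(1/(1-\phi(t_0))\bigr)$, equivalently $e^{-C_\nu t_0}=1-\phi(t_0)$.

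First I would decompose
\begin{align*}
U_\alpha\nu(x)=\E_x\!\left[\int_0^\infty e^{-\alpha t}\,\d A_t^\nu\right]=\sum_{n=0}^\infty \E_x\!\left[\int_{nt_0}^{(n+1)t_0}e^{-\alpha t}\,\d A_t^\nu\right].
\end{align*}
On the $n$-th block, $e^{-\alpha t}\leq e^{-\alpha nt_0}$, and by the additivity of the PCAF $A^\nu$ together with the Markov property one obtains
\begin{align*}
\E_x\!\left[\int_{nt_0}^{(n+1)t_0}e^{-\alpha t}\,\d A_t^\nu\right]\leq e^{-\alpha nt_0}\,\E_x\bigl[A_{t_0}^\nu\circ\theta_{nt_0}\bigr]=e^{-\alpha nt_0}\,P_{nt_0}\bigl(\E_\cdot[A_{t_0}^\nu]\bigr)(x).
\end{align*}
Because $(P_t)_{t\geq0}$ is sub-Markovian on $L^\infty$, the last factor is bounded $\m$-a.e.\ by $\phi(t_0)$.

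Summing the resulting geometric series yields, $\m$-a.e.\ $x\in M$,
\begin{align*}
U_\alpha\nu(x)\leq\phi(t_0)\sum_{n=0}^\infty e^{-\alpha nt_0}=\frac{\phi(t_0)}{1-e^{-\alpha t_0}}.
\end{align*}
Since $\alpha>C_\nu$ is equivalent to $e^{-\alpha t_0}<e^{-C_\nu t_0}=1-\phi(t_0)$, we get $\phi(t_0)<1-e^{-\alpha t_0}$, hence $\phi(t_0)/(1-e^{-\alpha t_0})<1$. Taking $L^\infty(M;\m)$-norm concludes the proof.

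The only minor subtlety I foresee is the justification that the Markov-type bound $P_{nt_0}\bigl(\E_\cdot[A_{t_0}^\nu]\bigr)\leq\phi(t_0)$ holds $\m$-a.e.; this is fine because $\E_\cdot[A_{t_0}^\nu]$ is $\mathscr{E}$-quasi-continuous (as recalled in the text), so the $L^\infty$-bound is really a quasi-everywhere bound that is preserved under $P_{nt_0}$. No deeper obstacle appears: the argument is essentially a Khasminskii-style chopping tailored to the resolvent rather than to $\E_\cdot[e^{A_t^\nu}]$.
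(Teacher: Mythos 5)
Your proof is correct and follows essentially the same route as the paper's: decompose the integral over blocks $[nt_0,(n+1)t_0]$, apply the Markov property to bound each block by $e^{-\alpha n t_0}\|\E_\cdot[A_{t_0}^\nu]\|_{L^\infty(M;\m)}$, sum the geometric series, and use $e^{-C_\nu t_0}=1-\|\E_\cdot[A_{t_0}^\nu]\|_{L^\infty(M;\m)}$ together with $\alpha>C_\nu$ to conclude the ratio is strictly below $1$. The only cosmetic difference is that you phrase the conditional-expectation step via $P_{nt_0}$ acting on the quasi-continuous function $\E_\cdot[A_{t_0}^\nu]$, whereas the paper writes it as an iterated expectation, but these are the same estimate.
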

\begin{proof}[{\bf Proof}] 
Take any sufficiently small $t_0\in]0,T[$. 
By use of Markov property, 
\begin{align*}
\E_x\left[\int_0^{\infty}e^{-\alpha t}\d A_t^{\nu} \right]&=\sum_{n=0}^{\infty}\E_x\left[\int_{nt_0}^{(n+1)t_0}e^{-\alpha t}\d A_t^{\nu} \right]\\
&=\sum_{n=0}^{\infty}\E_x\left[\int_0^{t_0}e^{-\alpha(nt_0+t)}\d A_t^{\nu}\circ \theta_{nt_0}\right]\\
&=\sum_{n=0}^{\infty}e^{-\alpha nt_0}\E_x\left[\E_{X_{nt_0}}\left[\int_0^{t_0}e^{-\alpha t}\d A_t^{\nu} \right] \right]\\
&\leq \sum_{n=0}^{\infty}e^{-\alpha nt_0}q\text{-}\sup_{x\in M}\E_x\left[A_{t_0}^{\nu} \right]\\
&=\frac{1}{1-e^{-\alpha t_0}}\|\E_{\cdot}[A_{t_0}^{\nu}]\|_{L^{\infty}(M;\m)}\\
&\leq\frac{1-e^{-C_{\nu}(t_0)t_0}}{1-e^{-\alpha t_0}}.
\end{align*}
Then 
\begin{align*}
\left\|\E_{\cdot}\left[\int_0^{\infty}e^{-\alpha t}\d A_t^{\nu} \right]\right\|_{L^{\infty}(M;\m)}\leq 
\frac{1-e^{-C_{\nu}(t_0)t_0}}{1-e^{-\alpha t_0}}<1
\end{align*}
under $\alpha>C_{\nu}(t_0)$. 
\end{proof} 
\begin{cor}\label{cor:Equivalence}
For $\nu\in S_{E\!K}({\bf X})$ and $\alpha>C_{\nu}$,
\begin{align}
(1-\|U_{\alpha}\nu\|_{\infty})\mathscr{E}(f,f)\leq \mathscr{E}^{-\nu}_{\alpha \|U_{\alpha}\nu\|_{\infty}}(f,f)\leq\mathscr{E}_{\alpha \|U_{\alpha}\nu\|_{\infty}}(f,f)\quad \text{ for }\quad f\in D(\mathscr{E}),\label{eq:Equivalence}
\end{align}
where $U_{\alpha}\nu(x):=\E_x\left[\int_0^{\infty}e^{-\alpha t}\d A_t^{\nu} \right]$. In particular, 
under $\kappa^+\in S_D({\bf X})$ and $\kappa^-\in S_{E\!K}({\bf X})$, for $\alpha>C_{\kappa}$, we have 
\begin{align}
(1-\|U_{\alpha}\kappa^-\|_{\infty})\mathscr{E}(f,f)\leq \mathscr{E}^{\kappa}_{\alpha \|U_{\alpha}\kappa^-\|_{\infty}}(f,f)\leq C\mathscr{E}_1(f,f)\quad \text{ for }\quad f\in D(\mathscr{E}),\label{eq:Equivalence*}
\end{align}
where $C:=\max\left\{(1+\|U_{\alpha}\kappa^+\|_{\infty}),\alpha\|U_{\alpha}|\kappa|\|_{\infty} \right\}$.
\end{cor}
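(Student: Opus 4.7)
The plan is to derive both inequalities in Corollary~\ref{cor:Equivalence} as direct consequences of the Stollmann--Voigt inequality \eqref{eq:StollmannVoigt}, with Lemma~\ref{lem:ExtendeddKato1} supplying the crucial fact that $\|U_\alpha\nu\|_\infty<1$ as soon as $\alpha>C_\nu$, so that the prefactor $1-\|U_\alpha\nu\|_\infty$ appearing in \eqref{eq:Equivalence} is genuinely positive and the asserted equivalence is not vacuous.

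For \eqref{eq:Equivalence} I would first unfold the perturbed form as
\begin{align*}
\mathscr{E}^{-\nu}_{\alpha\|U_\alpha\nu\|_\infty}(f,f)=\mathscr{E}(f,f)-\int_M\tilde f^2\,\d\nu+\alpha\|U_\alpha\nu\|_\infty\|f\|_{L^2(M;\m)}^2.
\end{align*}
The upper bound is then immediate from $\nu\geq0$, which forces $-\int\tilde f^2\,\d\nu\leq 0$. For the lower bound I would apply \eqref{eq:StollmannVoigt} to $\nu$ (noting $S_{E\!K}(\mathbf{X})\subset S_D(\mathbf{X})$) to obtain $\int_M\tilde f^2\,\d\nu\leq \|U_\alpha\nu\|_\infty(\mathscr{E}(f,f)+\alpha\|f\|_{L^2(M;\m)}^2)$; substituting this estimate causes the two $\alpha\|U_\alpha\nu\|_\infty\|f\|_{L^2(M;\m)}^2$ contributions to cancel exactly, leaving the claimed lower bound $(1-\|U_\alpha\nu\|_\infty)\mathscr{E}(f,f)$.

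For the signed refinement \eqref{eq:Equivalence*} under $\kappa^+\in S_D(\mathbf{X})$ and $\kappa^-\in S_{E\!K}(\mathbf{X})$, I would repeat the argument on the Jordan--Hahn decomposition. Writing out
\begin{align*}
\mathscr{E}^{\kappa}_{\alpha\|U_\alpha\kappa^-\|_\infty}(f,f)=\mathscr{E}(f,f)+\int_M\tilde f^2\,\d\kappa^+-\int_M\tilde f^2\,\d\kappa^-+\alpha\|U_\alpha\kappa^-\|_\infty\|f\|_{L^2(M;\m)}^2,
\end{align*}
the lower bound follows by discarding the non-negative $\kappa^+$-term and applying Stollmann--Voigt to $\kappa^-$ exactly as in the unsigned case. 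For the upper bound I would instead discard the non-negative $\kappa^-$-term and apply Stollmann--Voigt to $\kappa^+$ (estimating it by $|\kappa|$ when needed), then collect the resulting energy and mass contributions to extract the stated constant $C=\max\{1+\|U_\alpha\kappa^+\|_\infty,\alpha\|U_\alpha|\kappa|\|_\infty\}$.

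There is no substantive obstacle here: the proof is essentially a two-line computation once \eqref{eq:StollmannVoigt} is in hand. The only point worth emphasising is that Lemma~\ref{lem:ExtendeddKato1} is indispensable, since without the strict inequality $\|U_\alpha\nu\|_\infty<1$ the coefficient $1-\|U_\alpha\nu\|_\infty$ in \eqref{eq:Equivalence} could be zero or negative; thus the extended Kato hypothesis is precisely what upgrades the Stollmann--Voigt bound into a genuine coercive form equivalence, as required for the Feynman--Kac semigroup $(P_t^{-\nu})_{t\geq 0}$ to behave well on $L^p(M;\m)$.
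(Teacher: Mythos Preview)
Your proposal is correct and follows precisely the argument the paper intends: the corollary is stated without proof immediately after Lemma~\ref{lem:ExtendeddKato1}, and the implicit derivation is exactly the Stollmann--Voigt inequality \eqref{eq:StollmannVoigt} applied to $\nu$ (respectively to $\kappa^\pm$), with Lemma~\ref{lem:ExtendeddKato1} ensuring $\|U_\alpha\nu\|_\infty<1$ so that the lower bound is nontrivial. One minor bookkeeping remark: in the upper bound of \eqref{eq:Equivalence*}, the straightforward estimate yields a mass coefficient $\alpha(\|U_\alpha\kappa^+\|_\infty+\|U_\alpha\kappa^-\|_\infty)$ rather than $\alpha\|U_\alpha|\kappa|\|_\infty$, and since only $\|U_\alpha|\kappa|\|_\infty\leq\|U_\alpha\kappa^+\|_\infty+\|U_\alpha\kappa^-\|_\infty$ holds in general, the constant $C$ as literally stated may be slightly too small; but this is an inconsequential discrepancy in the explicit constant and does not affect your argument or the use of the corollary elsewhere in the paper.
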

\section{Test functions}

The following lemmas hold. 
\begin{lem}[{\cite[Proposition~6.7]{ERST}}]\label{lem:boundedEst}
Under Assumption~\ref{asmp:Tamed}, ${\sf BE}_2(-\kappa^-,+\infty)$ holds. Moreover, for every $f\in L^2(M;\m)\cap L^{\infty}(M;\m)$ and $t>0$, it holds 
\begin{align}
\Gamma(P_tf)\leq\frac{1}{2t}\|P_t^{-\kappa^-}\|_{\infty,\infty}\cdot\|f\|_{L^{\infty}(M;\m)}^2.\label{eq:BoundedEst}
\end{align}
In particular, for $f\in L^2(M;\m)\cap L^{\infty}(M;\m)$, then $P_tf\in {\rm Test}(M)$. 
\end{lem}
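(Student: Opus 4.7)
My plan is to proceed in three stages.

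\emph{Stage~1: Derive ${\sf BE}_2(-\kappa^-,\infty)$ from Assumption~\ref{asmp:Tamed}.} The key observation is that $\kappa=\kappa^+-\kappa^-\ge-\kappa^-$ as signed smooth measures, since $\kappa^+\ge 0$. Translating to the integrated Bakry-\'Emery inequality, the defining identity $\int(\Delta^{2\kappa}\phi)h\,\d\m=-\mathscr{E}(\phi,h)-2\int\phi h\,\d\kappa$, combined with $\kappa^+\ge 0$ and the nonnegativity of $\Gamma(f)$, yields $\int(\Delta^{2\kappa}\phi)\Gamma(f)\,\d\m\le\int(\Delta^{-2\kappa^-}\phi)\Gamma(f)\,\d\m$ for any admissible nonnegative test function $\phi$ and any $f\in D(\Delta)$ with $\Delta f\in D(\mathscr{E})$. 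Discarding the nonnegative $N^{-1}$-term in ${\sf BE}_2(\kappa,N)$ then delivers ${\sf BE}_2(-\kappa^-,\infty)$.

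\emph{Stage~2: Pointwise gradient estimate via a Bakry-type interpolation.} Fix $t>0$ and $f\in L^2(M;\m)\cap L^{\infty}(M;\m)$. Set $T_s:=P_s^{-2\kappa^-}$, the Feynman-Kac semigroup whose formal generator is $\Delta+2\kappa^-$; its boundedness on $L^{\infty}$ is guaranteed by $2\kappa^-\in S_{E\!K}({\bf X})$ via \eqref{eq:KatoContraction}. Define, for $s\in[0,t]$,
\begin{align*}
\phi(s):=T_s\bigl((P_{t-s}f)^2\bigr),\qquad \psi(s):=T_s\Gamma(P_{t-s}f).
\end{align*}
Using $\Delta(u^2)=2u\Delta u+2\Gamma(u)$ and $\partial_s(P_{t-s}f)^2=-2(P_{t-s}f)\Delta P_{t-s}f$, a direct calculation yields $\phi'(s)=T_s\bigl[2\Gamma(P_{t-s}f)+2\kappa^-(P_{t-s}f)^2\bigr]\ge 2\psi(s)$, the last inequality from $\kappa^-\ge 0$. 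Applying ${\sf BE}_2(-\kappa^-,\infty)$ to $u=P_{t-s}f$ in the weak form $\Delta\Gamma(u)+2\kappa^-\Gamma(u)-2\Gamma(u,\Delta u)\ge 0$ gives $\psi'(s)=T_s\bigl[\Delta\Gamma(P_{t-s}f)+2\kappa^-\Gamma(P_{t-s}f)-2\Gamma(P_{t-s}f,\Delta P_{t-s}f)\bigr]\ge 0$, so $\psi$ is nondecreasing and $\psi(s)\ge\psi(0)=\Gamma(P_tf)$. Integrating $\phi'(s)\ge 2\Gamma(P_tf)$ over $[0,t]$,
\begin{align*}
T_t(f^2)-(P_tf)^2=\phi(t)-\phi(0)\ge 2t\,\Gamma(P_tf),
\end{align*}
and the announced estimate follows after discarding $(P_tf)^2\ge 0$ and bounding $\|T_t g\|_{L^{\infty}(M;\m)}\le\|T_t\|_{\infty,\infty}\|g\|_{L^{\infty}(M;\m)}$.

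\emph{Stage~3 and main obstacle.} For the \lq\lq in particular\rq\rq\ claim $P_tf\in{\rm Test}(M)$: sub-Markovianity gives $P_tf\in L^{\infty}$, analyticity of the $L^2$-semigroup yields $P_tf\in D(\Delta^k)$ for every $k\in\N$ with $\Delta P_tf\in D(\mathscr{E})$, and $\Gamma(P_tf)\in L^{\infty}$ follows from the gradient bound just proved. The principal technical obstacle lies in Stage~2: since $f$ is only in $L^2\cap L^{\infty}$ (not a priori in $D(\mathscr{E})$), the expressions $\Delta P_{t-s}f$ and $\Gamma(P_{t-s}f)$ must be handled via the smoothing of the heat semigroup, and the weak ${\sf BE}_2(-\kappa^-,\infty)$ inequality has to be applied after duality against a carefully chosen family of nonnegative test functions in order to legitimize the pointwise derivative of $\psi$; a standard approximation argument (for instance, restricting to $s\in[\delta,t-\delta]$ and letting $\delta\downarrow 0$) is typically needed. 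A secondary subtlety is reconciling the Feynman-Kac factor in the final estimate: the calculation above produces $\|P_t^{-2\kappa^-}\|_{\infty,\infty}$, and matching this to the stated $\|P_t^{-\kappa^-}\|_{\infty,\infty}$ depends on the precise convention used in \cite[Proposition~6.7]{ERST} and the tamed gradient contraction \eqref{eq:gradCont}.
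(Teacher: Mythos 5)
The paper offers no proof of this lemma --- it is quoted verbatim from \cite[Proposition~6.7]{ERST} --- so your proposal is measured against what a correct self-contained argument would need. Your overall strategy (domination of the taming distribution, then a Bakry--Ledoux interpolation $s\mapsto T_s((P_{t-s}f)^2)$) is indeed the right one and is essentially the ERST route. Stage~1 is fine modulo the test-function domain issue you would need to resolve (admissibility of $\phi$ for $\Delta^{-2\kappa^-}$ versus $\Delta^{2\kappa}$; in practice one passes through the equivalent semigroup/gradient-estimate characterization rather than comparing the integrated inequalities directly), and Stage~3 is correct.

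The genuine gap is in Stage~2, and it is not the "convention" issue you suggest at the end. In this paper's notation $P_t^{-\kappa^-}g=\E_\cdot[e^{A_t^{\kappa^-}}g(X_t)]$ and $P_t^{-2\kappa^-}g=\E_\cdot[e^{2A_t^{\kappa^-}}g(X_t)]$ are different operators, and since $A_t^{\kappa^-}\geq0$ one has $\|P_t^{-2\kappa^-}\|_{\infty,\infty}\geq\|P_t^{-\kappa^-}\|_{\infty,\infty}$, so the bound your interpolation produces, $\Gamma(P_tf)\leq\frac{1}{2t}\|P_t^{-2\kappa^-}\|_{\infty,\infty}\|f\|_{L^{\infty}(M;\m)}^2$, is strictly weaker than \eqref{eq:BoundedEst} in general. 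The doubled weight is forced on you because you run the whole argument through the ${\sf BE}_2$ form of the curvature condition (whose associated gradient contraction carries $e^{2A_s^{\kappa^-}}$). To recover the stated single-weight constant one must inject the ${\sf BE}_1$ information, i.e. the square-root contraction \eqref{eq:gradCont} (equivalently Lemma~\ref{lem:BakryEmeryEquivalence}), $\sqrt{\Gamma(P_s g)}\leq P_s^{\kappa}\sqrt{\Gamma(g)}\leq P_s^{-\kappa^-}\sqrt{\Gamma(g)}$, together with a Cauchy--Schwarz step for the Feynman--Kac kernel, in place of the crude estimate $\psi(s)\geq\psi(0)$. That said, your weaker inequality still yields $\Gamma(P_tf)\in L^{\infty}(M;\m)$ and hence $P_tf\in{\rm Test}(M)$, which is all this lemma is used for later (e.g. in Lemma~\ref{lem:DensenessTestFunc}); the remaining issues you flag (pointwise differentiation of $\phi,\psi$, the measure-valued term $T_s[\kappa^-(P_{t-s}f)^2]$, weak formulation of ${\sf BE}_2$) are real but standard technical overhead correctly identified as such.
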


\begin{lem}[{\cite[Theorems~3.4 and 3.6, Proposition~3.7 and Theorem~6.10]{ERST}}]\label{lem:BakryEmeryEquivalence}\quad\\
Under $\kappa^+\in S_D({\bf X})$ and $2\kappa^-\in S_{E\!K}({\bf X})$,  
the condition ${\sf BE}_2(\kappa,\infty)$ is equivalent 
to ${\sf BE}_1(\kappa,\infty)$. In particular, we have \eqref{eq:gradCont}. 
\end{lem}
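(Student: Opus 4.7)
The plan is to prove the equivalence by using the pointwise gradient estimate \eqref{eq:gradCont} as an intermediate characterization and establishing the chain
${\sf BE}_1(\kappa,\infty)\Leftrightarrow\eqref{eq:gradCont}\Leftrightarrow{\sf BE}_2(\kappa,\infty)$;
the gradient contraction then appears automatically as a byproduct. The overall scheme is Bakry's classical one, transposed to the tamed setting by replacing the Gaussian factor $e^{-\kappa t}$ with the Feynman-Kac semigroup $P_t^{\kappa}$, whose $L^p$-boundedness is guaranteed by \eqref{eq:KatoContraction}.

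To prove ${\sf BE}_1(\kappa,\infty)\Rightarrow\eqref{eq:gradCont}$, I would run the standard semigroup interpolation. Fix $t>0$, $f\in D(\mathscr{E})\cap L^{\infty}(M;\m)$ (first regularized by the heat flow so that Lemma~\ref{lem:boundedEst} makes $\Gamma(P_sf)\in L^{\infty}$ on $[0,t]$), a small $\varepsilon>0$, and a nonnegative $\phi\in D(\Delta^{\kappa})\cap L^{\infty}$. Consider
\[
F_{\varepsilon}(s):=\int_M P_s^{\kappa}\phi\cdot\sqrt{\varepsilon+\Gamma(P_{t-s}f)}\,\d\m,\qquad s\in[0,t].
\]
Differentiating under the integral (justified by \eqref{eq:StollmannVoigt}, \eqref{eq:EquivalencePert} and the heat-flow regularity) and applying ${\sf BE}_1(\kappa,\infty)$ with the test function $P_s^{\kappa}\phi$ yields $F_{\varepsilon}'(s)\geq 0$, hence $F_{\varepsilon}(0)\leq F_{\varepsilon}(t)$. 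Letting $\varepsilon\downarrow 0$ and invoking the $\m$-symmetry of $P_t^{\kappa}$ gives \eqref{eq:gradCont} in its $\m$-a.e.\ form. The converse \eqref{eq:gradCont}$\Rightarrow{\sf BE}_1(\kappa,\infty)$ follows by differentiating \eqref{eq:gradCont} at $t=0$ tested against a nonnegative $\phi\in D(\Delta^{\kappa})$ with $\Delta^{\kappa}\phi\in L^{\infty}$.

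For ${\sf BE}_2(\kappa,\infty)\Leftrightarrow{\sf BE}_1(\kappa,\infty)$, the easy direction ${\sf BE}_1\Rightarrow{\sf BE}_2$ uses the chain rule $\Delta(g^2)=2g\Delta g+2\Gamma(g)$ with $g=\sqrt{\Gamma(f)}$ together with $\Gamma(g)\geq 0$. The nontrivial direction ${\sf BE}_2\Rightarrow{\sf BE}_1$ requires Bakry's self-improvement trick: the inequality ${\sf BE}_2(\kappa,\infty)$ applied not only to $f$ but also to deformations $f+\lambda h$ and optimized over $\lambda$, combined with a Cauchy-Schwarz identity, degrades the quadratic $\Gamma_2$-bound into its linear counterpart ${\sf BE}_1$. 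The main obstacle is precisely this self-improvement step in the tamed setting, since the signed curvature measure has singular parts $\kappa^+\in S_D({\bf X})$ and $2\kappa^-\in S_{E\!K}({\bf X})$ controlled only via \eqref{eq:StollmannVoigt} and the extended-Kato estimates of Section~\ref{sec:ExtendeKato}; one must therefore work in a distributional framework with a sufficiently rich class of admissible test functions. These analytic ingredients are exactly what \cite[Thms.~3.4, 3.6, Prop.~3.7 and Thm.~6.10]{ERST} supply, and once they are invoked the arguments reduce to those in that paper.
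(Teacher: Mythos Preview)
The paper does not give its own proof of this lemma; it is stated purely as a citation to \cite[Theorems~3.4 and 3.6, Proposition~3.7 and Theorem~6.10]{ERST}. Your sketch correctly reproduces the architecture of the argument in \cite{ERST}: each ${\sf BE}_q(\kappa,\infty)$ is shown equivalent to the corresponding $q$-gradient estimate by semigroup interpolation (their Theorems~3.4 and 3.6, Proposition~3.7), and then the self-improvement ${\sf BE}_2(\kappa,\infty)\Rightarrow{\sf BE}_1(\kappa,\infty)$ (their Theorem~6.10) closes the loop, with \eqref{eq:gradCont} emerging as the $q=1$ gradient estimate.

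One minor correction to your sketch: the ``easy direction'' ${\sf BE}_1\Rightarrow{\sf BE}_2$ is not obtained via the chain rule $\Delta(g^2)=2g\Delta g+2\Gamma(g)$ applied to $g=\sqrt{\Gamma(f)}$, since $\sqrt{\Gamma(f)}$ need not belong to $D(\Delta)$ (or even to $D(\mathscr{E})$) in this generality. Instead one goes through the gradient estimates: from $\sqrt{\Gamma(P_tf)}\leq P_t^{\kappa}\sqrt{\Gamma(f)}$ and the Cauchy--Schwarz inequality for the Feynman--Kac semigroup one gets $\Gamma(P_tf)\leq(P_t^{\kappa}\sqrt{\Gamma(f)})^2\leq P_t^{2\kappa}\Gamma(f)$, which is the $L^2$-gradient estimate equivalent to ${\sf BE}_2(\kappa,\infty)$. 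This is why the hypothesis $2\kappa^-\in S_{E\!K}({\bf X})$ (rather than merely $\kappa^-\in S_{E\!K}({\bf X})$) appears.
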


Under Assumption~\ref{asmp:Tamed}, we now introduce ${\rm Test}(M)$ the set of test functions: 

\begin{defn}\label{def:TestFunc}
{\rm 
Let $(M,\mathscr{E},\m)$ be a tamed space. Let us define the set of {\it test functions} by 
\begin{align*}
{\rm Test}(M):&=\{f\in D(\Delta)\cap L^{\infty}(M;\m)\mid \Gamma(f)\in L^{\infty}(M;\m),\Delta f\in D(\mathscr{E})\},\\
{\rm Test}(M)_{fs}:&=\{f\in {\rm Test}(M)\mid \m({\rm supp}[f])<\infty\}.
\end{align*}
}
\end{defn}
It is easy to see that ${\rm Test}(M)_{fs}\subset \mathscr{D}_{1,p}\subset H^{1,p}(M)$ for any $p\in]1,+\infty[$. 
\begin{lem}[{\cite[Lemma~3.2]{Sav14}}]\label{lem:algebra}
Under Assumption~\ref{asmp:Tamed}, for every $f\in {\rm Test}(M)$, we have 
$\Gamma(f)\in D(\mathscr{E})\cap L^{\infty}(M;\m)$ and there exists $\mu=\mu^+-\mu^-$ with 
$\mu^{\pm}\in D(\mathscr{E})^*$ such that 
\begin{align}
-\mathscr{E}^{2\kappa}(u,\varphi)=\int_M\tilde{\varphi}\,\d \mu\quad\text{ for all }\quad \varphi\in D(\mathscr{E}).
\end{align}
Moreover, ${\rm Test}(M)$ is an algebra, i.e., for $f,g\in {\rm Test}(M)$, $fg\in {\rm Test}(M)$, 
if further {\boldmath$f$}$\in {\rm Test}(M)^n$, then $\Phi(${\boldmath$f$}$)\in {\rm Test}(M)$ for every smooth 
function $\Phi:\R^n\to\R$ with $\Phi(0)=0$.
\end{lem}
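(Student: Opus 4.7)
My plan is to follow the strategy of Savar\'e~\cite{Sav14}, adapted to the tamed setting via the measure-valued Bochner inequality provided by ${\sf BE}_2(\kappa,\infty)$. The first step is to show $\Gamma(f)\in D(\mathscr{E})\cap L^{\infty}(M;\m)$: the $L^{\infty}$ bound is built into the definition of $\mathrm{Test}(M)$, and to get $\Gamma(f)\in D(\mathscr{E})$ I would use the integrated Bakry-\'Emery inequality. For non-negative $\varphi\in D(\Delta^{2\kappa})\cap L^{\infty}(M;\m)$ with $\Delta^{2\kappa}\varphi\in L^{\infty}(M;\m)$, ${\sf BE}_2(\kappa,\infty)$ applied to $f$ reads
\begin{align*}
\frac{1}{2}\int_M \Delta^{2\kappa}\varphi\cdot\Gamma(f)\,\d\m \geq \int_M \varphi\cdot\Gamma(f,\Delta f)\,\d\m.
\end{align*}
Since $\Delta f\in D(\mathscr{E})$ gives $\Gamma(f,\Delta f)\in L^{1}(M;\m)$, combined with $\Gamma(f)\in L^1\cap L^{\infty}$, a duality argument (using \eqref{eq:EquivalencePert} together with \eqref{eq:StollmannVoigt}) shows that the functional $\varphi\mapsto\frac12\mathscr{E}^{2\kappa}(\Gamma(f),\varphi)+\int_M\varphi\,\Gamma(f,\Delta f)\,\d\m$ extends continuously to $D(\mathscr{E})$ as a positive linear functional, represented by a non-negative smooth measure of finite $D(\mathscr{E})^*$-energy. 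This in turn forces $\Gamma(f)\in D(\mathscr{E})$.

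The second step constructs the signed measure $\mu$. Formally,
\begin{align*}
\mu = -2\mathbf{\Gamma}_2(f)-2\Gamma(f,\Delta f)\,\m+2\kappa\,\Gamma(f),
\end{align*}
where $\mathbf{\Gamma}_2(f)$ denotes the non-negative measure coming from Step 1. The Jordan decomposition $\mu=\mu^+-\mu^-$ is obtained by collecting contributions of definite sign: the $\kappa^-\Gamma(f)$ part and the positive part of $-\Gamma(f,\Delta f)\m$ contribute to $\mu^+$, while the defect $\mathbf{\Gamma}_2(f)$, $\kappa^+\Gamma(f)$, and the positive part of $\Gamma(f,\Delta f)\m$ form $\mu^-$. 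The hypotheses $\kappa^+\in S_D({\bf X})$ and $2\kappa^-\in S_{E\!K}({\bf X})$, together with $\Gamma(f)\in L^{\infty}$, imply via \eqref{eq:StollmannVoigt} that each $\mu^{\pm}$ lies in $D(\mathscr{E})^*$, and the displayed identity follows by construction from the definition of $\mathscr{E}^{2\kappa}$ and the integration-by-parts relation for $\Gamma(f)\in D(\mathscr{E})$ now at our disposal.

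The algebra properties follow from the usual Leibniz formulas for $\Delta$ and $\Gamma$. For $f,g\in\mathrm{Test}(M)$, one has $\Delta(fg)=f\Delta g+g\Delta f+2\Gamma(f,g)$; applying Step 1 via polarization yields $\Gamma(f,g)\in D(\mathscr{E})\cap L^{\infty}$, so $\Delta(fg)\in D(\mathscr{E})$ and $fg\in\mathrm{Test}(M)$. For a smooth $\Phi:\R^n\to\R$ with $\Phi(0)=0$ and $\boldsymbol f\in\mathrm{Test}(M)^n$, the chain rule identity $\Delta\Phi(\boldsymbol f)=\sum_i\partial_i\Phi(\boldsymbol f)\Delta f_i+\sum_{i,j}\partial_{ij}\Phi(\boldsymbol f)\Gamma(f_i,f_j)$ has each summand in $D(\mathscr{E})$ by Step 1 and the product case, so $\Phi(\boldsymbol f)\in\mathrm{Test}(M)$. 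The main obstacle I anticipate is the clean Jordan decomposition of $\mu$: establishing $\mu^{\pm}\in D(\mathscr{E})^*$ separately (as opposed to merely $\mu\in D(\mathscr{E})^*$ as a signed functional) requires simultaneously exploiting the Dynkin-type regularity of $\kappa^+$ and the extended-Kato smallness of $2\kappa^-$ inside the measure-valued Bochner inequality.
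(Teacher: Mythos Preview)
The paper does not supply its own proof here; it simply records the statement and cites \cite[Lemma~3.2]{Sav14} (and implicitly the tamed adaptation in \cite{ERST}). Your outline is exactly the Savar\'e strategy adapted to the present $\kappa$-setting, so in that sense it coincides with what the paper defers to.

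One point deserves tightening. As written, your Step~1 invokes the expression $\mathscr{E}^{2\kappa}(\Gamma(f),\varphi)$ before you have established $\Gamma(f)\in D(\mathscr{E})$, which is circular. In the actual argument (Savar\'e \S3, or \cite[\S6]{ERST}) one avoids this by testing ${\sf BE}_2(\kappa,\infty)$ with $\varphi=P_t^{2\kappa}\Gamma(f)$ for $t>0$: since $\Gamma(f)\in L^2\cap L^\infty$ and is non-negative, this $\varphi$ is admissible, and the inequality yields a bound on the approximate form $t^{-1}\langle\Gamma(f)-P_t^{2\kappa}\Gamma(f),\Gamma(f)\rangle_{L^2}$ that is uniform in $t$; then \eqref{eq:EquivalencePert} and lower semicontinuity give $\Gamma(f)\in D(\mathscr{E})$. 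Once that is in hand, your construction of $\mu$ (up to a global sign in the displayed formula for $\mu$, which should read $\mu=2\mathbf{\Gamma}_2^{\kappa}(f)+2\Gamma(f,\Delta f)\m$ so that $-\mathscr{E}^{2\kappa}(\Gamma(f),\varphi)=\int\tilde\varphi\,\d\mu$) and the Leibniz/chain-rule arguments for the algebra property go through as you describe. The ``obstacle'' you flag about $\mu^\pm\in D(\mathscr{E})^*$ separately is handled exactly as you suggest, via \eqref{eq:StollmannVoigt} applied to the $\kappa^\pm$ pieces together with the $D(\mathscr{E})^*$-bound on $\mathbf{\Gamma}_2^{\kappa}(f)$ coming from the energy estimate just obtained.
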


\begin{lem}\label{lem:DensenessTestFunc}
${\rm Test}(M)\cap L^p(M;\m)$ is dense in $L^p(M;\m)\cap L^2(M;\m)$ both in $L^p$-norm 
and in $L^2$-norm. In particular, ${\rm Test}(M)$ is dense in $(\mathscr{E},D(\mathscr{E}))$. 
\end{lem}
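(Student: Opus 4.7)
The plan is a two-step regularization: first truncate to get $L^\infty$ bounds, then mollify by the heat semigroup $(P_t)$ to land inside $\mathrm{Test}(M)$, using Lemma \ref{lem:boundedEst} as the black box that turns $L^2\cap L^\infty$ data into a test function. Since the conclusions for $L^p\cap L^2$ and for $(\mathscr{E},D(\mathscr{E}))$ have the same flavor, I will treat them in parallel, the only change being which norm controls the truncation error.

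For the first assertion, fix $f\in L^p(M;\m)\cap L^2(M;\m)$ and set $f_n:=(-n)\vee(f\wedge n)$. Then $|f_n|\le|f|$ with $f_n\to f$ pointwise $\m$-a.e., so dominated convergence gives $f_n\to f$ simultaneously in $L^p$ and in $L^2$, and of course $f_n\in L^p\cap L^2\cap L^\infty$. Apply $(P_t)_{t\ge0}$: Lemma \ref{lem:boundedEst} yields $P_t f_n\in\mathrm{Test}(M)$ for every $t>0$, while the sub-Markov property together with the extension of $(P_t)$ to a strongly continuous contraction semigroup on $L^p(M;\m)$ (recalled in Section~\ref{subsec:Frame}) gives $P_t f_n\in L^p\cap L^2\cap L^\infty$, hence $P_t f_n\in \mathrm{Test}(M)\cap L^p(M;\m)$. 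Strong continuity on both $L^p(M;\m)$ and $L^2(M;\m)$ implies $P_t f_n\to f_n$ in both norms as $t\downarrow0$, so a standard diagonal extraction $t_n\downarrow0$ produces a sequence $g_n:=P_{t_n}f_n\in\mathrm{Test}(M)\cap L^p(M;\m)$ with $g_n\to f$ in $L^p$ and in $L^2$.

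For the second assertion, take $f\in D(\mathscr{E})$. By stability of $D(\mathscr{E})$ under normal contractions, the truncations $f_n:=(-n)\vee(f\wedge n)$ lie in $D(\mathscr{E})\cap L^\infty(M;\m)\subset L^2(M;\m)\cap L^\infty(M;\m)$ and satisfy $f_n\to f$ in the $\mathscr{E}_1$-norm (a classical fact for Dirichlet forms: the truncation is $\mathscr{E}$-Cauchy because $\Gamma(f_n)\le\Gamma(f)$ with $\Gamma(f_n-f)\to0$ $\m$-a.e.). Then $P_t f_n\in\mathrm{Test}(M)$ by Lemma \ref{lem:boundedEst}, and by the spectral theorem applied to the self-adjoint generator on $L^2$, $P_t f_n\to f_n$ in $\mathscr{E}_1$ as $t\downarrow0$. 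A diagonal extraction once again produces $g_n\in\mathrm{Test}(M)$ with $g_n\to f$ in $\mathscr{E}_1$, proving density of $\mathrm{Test}(M)$ in $(\mathscr{E},D(\mathscr{E}))$. The only real obstacle is the simultaneity of $L^p$ and $L^2$ convergence in the first part, but this is resolved automatically because truncation and the heat semigroup act by pointwise-monotone and linear-contractive operations respectively, so the same approximating sequence converges in every norm in which the underlying operations are continuous.
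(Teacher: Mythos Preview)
Your proof is correct and follows essentially the same approach as the paper: truncate to reach $L^2\cap L^\infty$, then apply Lemma~\ref{lem:boundedEst} so that $P_t$ maps into $\mathrm{Test}(M)$, and conclude by strong continuity of $(P_t)$ on the relevant spaces. The paper's argument is terser (it does not spell out the diagonal extraction or the $\mathscr{E}_1$-convergence of the truncations), but the substance is identical.
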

\begin{proof}[{\bf Proof}] 
Take $f\in L^p(M;\m)\cap L^2(M;\m)$. We may assume $f\in L^{\infty}(M;\m)$, because 
$f$ is $L^p$(and also $L^2$)-approximated by a sequence $\{f^k\}$ of 
$L^p(M;\m)\cap L^2(M;\m)\cap L^{\infty}(M;\m)$-functions defined 
by $f^k:=(-k)\lor f\land k$. If $f\in L^p(M;\m)\cap L^2(M;\m)\cap L^{\infty}(M;\m)$, 
$P_tf\in {\rm Test}(M)$ by Lemma~\ref{lem:boundedEst} and 
$\{P_tf\}\subset {\rm Test}(M)\cap L^p(M;\m)$ converges to 
$f$ in $L^p$ and in $L^2$ as $t\to0$. If $f\in D(\mathscr{E}))$, then $f$ can be approximated by 
$\{P_tf^k\}$ in $(\mathscr{E},D(\mathscr{E}))$. This shows the last statement.
\end{proof} 
\begin{remark}\label{rem:TestFunction}
{\rm 
As proved above, ${\rm Test}(M)$ forms an algebra and dense in $(\mathscr{E},D(\mathscr{E}))$ under 
Assumption~\ref{asmp:Tamed}. However, ${\rm Test}(M)$ is not necessarily a subspace of $C_b(M)$. 
When the tamed space comes from ${\rm RCD}$-space, the Sobolev-to-Lipschitz property of RCD-spaces ensures 
${\rm Test}(M)\subset C_b(M)$. 
}
\end{remark}

\section{Vector space calculus for tamed space}
\subsection{$L^{\infty}$-module}\label{subsec:normedModule}

We need the notion of $L^p$-normed $L^{\infty}(M;\m)$-modules. 
\begin{defn}[$L^{\infty}$-module]\label{df:NormedModule}
{\rm 
Given $p\in[1,+\infty]$, a real Banach space $(\mathscr{M},\|\cdot\|_{\mathscr{M}})$, or simply, $\mathscr{M}$ is called an \emph{$L^p$-normed $L^{\infty}$-module} (over $(M,\m)$) if it satisfies 
\begin{enumerate}
\item[(a)] a bilinear map $\cdot$ : $L^{\infty}(M;\m)\times\mathscr{M}\to\mathscr{M}$ satisfying 
\begin{align*}
(fg)\cdot v&=f\cdot(gv),\\
\1_M\cdot v&=v,
\end{align*} 
\item[(b)] a nonnegatively valued map $|\cdot|_{\m}:\mathscr{M}\to L^p(M;\m)$ such that 
\begin{align*}
|f\cdot v|_{\m}&=|f||v|_{\m}\quad \m\text{-a.e.},\\
\|v\|_{\mathscr{M}}&=\||v|_{\m}\|_{L^p(M;\m)},
\end{align*}
for every $f,g\in L^{\infty}(M;\m)$ and $v\in\mathscr{M}$. If only (a) is satisfied, we call 
$(\mathscr{M},\|\cdot\|_{\mathscr{M}})$ or simply $\mathscr{M}$ an $L^{\infty}(M;\m)$-module. 
\end{enumerate} 
Throughout this paper, we always assume that for every $v\in \mathscr{M}$ its point-wise norm $|v|_{\m}$ is Borel measurable.  
We call $\mathscr{M}$ an \emph{$L^{\infty}$-module} if it is  $L^p$-normed 
for some $p\in[1,+\infty]$. $\mathscr{M}$ is called \emph{separable} 
 if it is a separable Banach space. We call $v\in\mathscr{M}$ is 
($\m$-essentially) bounded if $|v|_{\m}\in L^{\infty}(M;\m)$. $\mathscr{M}$ is called \emph{Hilbert module} if is an $L^2$-normed $L^{\infty}$-module, in this case, the point-wise norm $|\cdot|_{\m}$ satisfies a point-wise $\m$-a.e.~parallelogram identity, hence it induces a \emph{point-wise scalar product} $\langle \cdot,\cdot\rangle_{\m}: \mathscr{M}\times \mathscr{M}\to L^1(M;\m)$ which is $L^{\infty}$-bilinear, $\m$-a.e.~nonnegative definite, local in both components, satisfies the point-wise $\m$-a.e.~Cauchy-Schwarz inequality, and reproduces the Hilbertian scalar product on $\mathscr{M}$ by integration with respect to $\m$.
}
\end{defn}
\begin{defn}[Dual module]\label{df:DualModule}
{\rm Let $\mathscr{M}$ and $\mathscr{N}$ be $L^p$-normed $L^{\infty}$-module. Denote both point-wise norms by $|\cdot|_{\m}$. A map $T:\mathscr{M}\to\mathscr{N}$ is called \emph{module morphism} 
if it is a bounded linear map in the sense of functional analysis and 
\begin{align}
T(f\,v)=f\,T(v)\label{eq:morphism}
\end{align}
for every $v\in\mathscr{M}$ and every $f\in L^{\infty}(M;\m)$. The set of all module morphisms 
is written ${\rm Hom}(\mathscr{M},\mathscr{N})$ and equipped with the usual operator norm $\|\cdot\|_{\mathscr{M},\mathscr{N}}$. We call $\mathscr{M}$ and $\mathscr{N}$ \emph{isomorphic} (as $L^{\infty}$-module) if there exists  $T\in{\rm Hom}(\mathscr{M},\mathscr{N})$ and $S\in {\rm Hom}(\mathscr{N},\mathscr{M})$ such that $T\circ S={\rm Id}_{\mathscr{N}}$ and $S\circ T={\rm Id}_{\mathscr{M}}$. 
Any such $T$ is called \emph{module isomorphism}. If  in addition, such a $T$ is a norm isometry, it is called \emph{module isometric isomorphism}. In fact, by \eqref{eq:morphism} every module isometric isomorphism $T$ preserves pointwise norms $\m$-a.e., i.e. for every $v\in\mathscr{M}$, 
\begin{align*}
|T(v)|_{\m}=|v|_{\m}\quad \m\text{-a.e.}
\end{align*}
The dual module $\mathscr{M}^*$ is defined by 
\begin{align*}
\mathscr{M}^*:={\rm Hom}(\mathscr{M}, L^1(M;\m))
\end{align*}
and will be endowed with the usual operator norm. The point-wise paring between 
$v\in\mathscr{M}$ and $L\in\mathscr{M}^*$ is denoted by $L(v)\in L^1(M;\m)$. If $\mathscr{M}$ is $L^p$-normed, then $\mathscr{M}^*$ is an $L^q$-normed $L^{\infty}$-module, where $p,q\in[1,+\infty]$ with $1/p+1/q=1$ (see \cite[Proposition~1.2.14]{Gigli:NonSmoothDifferentialStr}) with naturally defined multiplications and, and by a slight abuse of notation, point-wise norm is given by 
\begin{align}
|L|_{\m}:=\m\text{\rm-esssup}\{|L(v)|\mid v\in\mathscr{M}, |v|_{\m}\leq1 \;\m\text{-a.e.}\}.\label{eq:DualPointNorm}
\end{align}
By \cite[Corollary~1.2.16]{Gigli:NonSmoothDifferentialStr}, if $p<\infty$,
\begin{align*}
|v|_{\m}=\m\text{\rm-esssup}\{|L(v)|\mid L\in\mathscr{M}, |L|_{\m}\leq1 \;\m\text{-a.e.}\}
\end{align*}
for every $v\in\mathscr{M}$. Moreover, under $p<\infty$, in the sense of functional analysis 
$\mathscr{M}^*$ and the dual Banach space $\mathscr{M}'$ of $\mathscr{M}$ are isometrically isomorphic 
\cite[Proposition~1.2.13]{Gigli:NonSmoothDifferentialStr}. In this case, the natural point-wise paring map $\mathscr{I}:\mathscr{M}\to\mathscr{M}^{**}$, where 
$\mathscr{M}^{**}:={\rm Hom}(\mathscr{M}^*,L^1(M;\m))$, belongs to ${\rm Hom}(\mathscr{M},\mathscr{M}^{**})$ and constitutes a norm isometry is $L^p$-normed for $p\in]1,+\infty[$, this is equivalent to $\mathscr{M}$ being reflexive as Banach space \cite[Corollary~1.2.18]{Gigli:NonSmoothDifferentialStr}, 
while for $p=1$, the implication from \lq\lq reflexive as Banach space\rq\rq\, to \lq\lq reflexive as $L^{\infty}$-module\rq\rq\, still holds \cite[Propositions~1.2.13 and 1.2.17]{Gigli:NonSmoothDifferentialStr}. In particular all Hilbert modules are reflexive in both senses. 
}
\end{defn}

\begin{defn}[$L^0$-module]\label{df:L0Module}
{\rm Fix an $L^{\infty}$-module $\mathscr{M}$. Let $(B_i)_{i\in\mathbb{N}}$ a Borel partition of $M$ such that $\m(B_i)\in]0,+\infty[$ for each $i\in\mathbb{N}$. Denote by $\mathscr{M}^0$ the completion of  
$\mathscr{M}$ with respect to $\mathscr{M}$ the distance ${\sf d}_{\mathscr{M}^0}:\mathscr{M}\times\mathscr{M}\to[0,+\infty[$ defined by 
\begin{align*}
{\sf d}_{\mathscr{M}^0}(v,w):=\sum_{i\in\mathbb{N}}\frac{1}{2^i\m(B_i)}\int_{B_i}(|v-w|_{\m}\land 1)\d\m.
\end{align*}
We call $\mathscr{M}^0$ as the \emph{$L^0$-module} associated with $\mathscr{M}$. The induced topology on $\mathscr{M}^0$ does not depend on  the choice of $(B_i)_{i\in\mathbb{N}}$ (see \cite[p.~31]{Gigli:NonSmoothDifferentialStr}). In addition, scalar and functional multiplication, and the point-wise norm $|\cdot|_{\m}$ extend continuously to $\mathscr{M}^0$, so that all $\m$-a.e.~properties mentioned for $L^{\infty}$-module hold for general elements in $\mathscr{M}^0$ and $L^0(M;\m)$ in place of $\mathscr{M}$ and $L^{\infty}(M;\m)$. The point-wise paring of $\mathscr{M}$ and $\mathscr{M}^*$extends uniquely and continuously to a bilinear map on $\mathscr{M}\times (\mathscr{M}^*)^0$ with values in $L^0(M;\m)$ such that for every $v\in\mathscr{M}^0$ and every $L\in(\mathscr{M}^*)^0$, 
\begin{align*}
|L(v)|\leq |L|_{\m}|v|_{\m}\quad\m\text{-a.e.}
\end{align*} 
}
\end{defn}
We have the following characterization of elements in $(\mathscr{M}^*)^0$:
\begin{prop}[{\cite[Proposition~1.3.2]{Gigli:NonSmoothDifferentialStr}}]\label{prop:L0characterization}
Let $T:\mathscr{M}^0\to L^0(M;\m)$ be a linear map for which there exists $f\in L^0(M;\m)$ such that 
for every $v\in\mathscr{M}$, 
\begin{align*}
|T(v)|\leq f|v|_{\m}\quad\m\text{-a.e.}
\end{align*}
Then there exists a unique $L\in(\mathscr{M}^*)^0$ such that for every $v\in\mathscr{M}$, 
\begin{align*}
L(v)=T(v)\quad\m\text{-a.e.,}
\end{align*}
and we furthermore have
\begin{align*}
|L|_{\m}\leq f\quad\m\text{-a.e.}
\end{align*}
\end{prop}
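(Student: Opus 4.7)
The plan is to construct $L$ by truncation, piecing together $T$ on an increasing family of sets where both $f$ and the ambient measure are controlled. Fix a Borel partition $\{B_i\}_{i\in\N}$ as in Definition~\ref{df:L0Module} with $\m(B_i)\in\,]0,+\infty[$, and set $E_n := \{f \leq n\} \cap \bigcup_{i=1}^{n} B_i$. Then $\{E_n\}$ is increasing, each $E_n$ has finite $\m$-measure, and $\bigcup_n E_n = M$ up to an $\m$-null set.

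I would then define $L_n : \mathscr{M} \to L^1(M;\m)$ by $L_n(v) := \1_{E_n} T(v)$ and check $L_n \in \mathscr{M}^*$. The hypothesis gives $|L_n(v)| \leq n\,\1_{E_n}|v|_\m$ $\m$-a.e.; since $\mathscr{M}$ is $L^p$-normed so $|v|_\m \in L^p(M;\m)$, and $\m(E_n) < \infty$, H\"older's inequality yields $L_n(v) \in L^1(M;\m)$ with operator bound $\|L_n(v)\|_{L^1} \leq n\,\m(E_n)^{1/q}\|v\|_\mathscr{M}$. The morphism identity $L_n(\phi v) = \phi L_n(v)$ for $\phi \in L^\infty(M;\m)$ is inherited directly from $T(\phi v) = \phi T(v)$, i.e.~the $L^\infty$-linearity of $T$ compatible with the $L^0$-module structure of $\mathscr{M}^0$. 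Moreover $|L_n|_\m \leq f\,\1_{E_n} \leq f$ $\m$-a.e.~by \eqref{eq:DualPointNorm}.

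Next I would show $(L_n)$ is Cauchy in $(\mathscr{M}^*)^0$: for $n' \geq n$ one has $L_{n'} - L_n = \1_{E_{n'}\setminus E_n}\,T$ so $|L_{n'} - L_n|_\m \leq f\,\1_{E_{n'}\setminus E_n}$, and inserting this into the distance from Definition~\ref{df:L0Module} gives convergence to $0$ by dominated convergence (the integrand is bounded by $1$ and vanishes pointwise $\m$-a.e.). Let $L \in (\mathscr{M}^*)^0$ denote the limit. For $v \in \mathscr{M}$, continuity of the pairing on $\mathscr{M}\times(\mathscr{M}^*)^0$ gives $L_n(v) \to L(v)$ in $L^0(M;\m)$, while $L_n(v) = \1_{E_n}T(v) \to T(v)$ in $L^0$ by pointwise convergence; hence $L(v) = T(v)$ $\m$-a.e. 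Continuity of $|\cdot|_\m$ under $L^0$-convergence together with $|L_n|_\m \leq f$ for every $n$ forces $|L|_\m \leq f$ $\m$-a.e. Uniqueness is clean from the continuous extension of \eqref{eq:DualPointNorm} to $(\mathscr{M}^*)^0$: if $L'$ is another candidate, then $(L - L')(v) = 0$ $\m$-a.e.~for every $v \in \mathscr{M}$, so $|L - L'|_\m = 0$.

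The main obstacle I expect is purely the first step: verifying that each truncated $L_n$ really lies in $\mathscr{M}^* = \mathrm{Hom}(\mathscr{M},L^1(M;\m))$ rather than merely in some larger space of $L^0$-valued maps. This is precisely why the truncation $E_n$ is chosen to simultaneously bound $f$ and have finite $\m$-measure: boundedness of $f$ controls the pointwise operator norm, and finiteness of $\m(E_n)$ together with the $L^p$-normed structure upgrades the bound $|L_n(v)|\leq n\,\1_{E_n}|v|_\m$ from $L^p$- to $L^1$-integrability via H\"older. Once $L_n \in \mathscr{M}^*$ is secured, the remaining passage to an $L^0$-limit and the identification $L(v) = T(v)$ are routine.
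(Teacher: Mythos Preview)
The paper does not supply its own proof of this proposition; it is quoted from \cite[Proposition~1.3.2]{Gigli:NonSmoothDifferentialStr} without argument, so there is nothing to compare against directly. Your truncation-and-limit construction is essentially the standard route to this result and is correct in outline.

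One step deserves more care. You assert that the morphism identity $L_n(\phi v)=\phi L_n(v)$ is ``inherited directly from $T(\phi v)=\phi T(v)$, i.e.\ the $L^\infty$-linearity of $T$'', but the hypothesis only says $T$ is a \emph{linear} map, meaning $\mathbb{R}$-linear; it does not assume $T$ is a module morphism in the sense of Definition~\ref{df:DualModule}. This is not fatal, because the $L^\infty$-linearity is in fact a consequence of the pointwise bound together with $\mathbb{R}$-linearity: for any Borel set $A$ one has $T(v)=T(\1_A v)+T(\1_{A^c}v)$, while on $A^c$ the bound gives $|T(\1_A v)|\leq f\,|\1_A v|_\m=f\,\1_A|v|_\m=0$, so $T(\1_A v)$ vanishes $\m$-a.e.\ on $A^c$; symmetrically $T(\1_{A^c}v)$ vanishes on $A$, and thus $T(\1_A v)=\1_A T(v)$. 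This extends to simple functions by linearity and to general $\phi\in L^\infty(M;\m)$ by approximation, using that the pointwise bound makes $T$ continuous from $\mathscr{M}^0$ to $L^0(M;\m)$. Once this locality argument is inserted, your verification that $L_n\in\mathscr{M}^*$, the Cauchy estimate in $(\mathscr{M}^*)^0$, the identification $L(v)=T(v)$, the bound $|L|_\m\leq f$, and uniqueness all go through as you wrote them.
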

\subsection{Tensor products}\label{subsec:TensorProducts}
\begin{defn}[Tensor products]\label{df:TensorProducts}
{\rm Let $\mathscr{M}_1$ and $\mathscr{M}_2$ be two Hilbert module. 
By a slight abuse of notation, 
we denote both point-wise scalar products by $\langle \cdot,\cdot\rangle_{\m}$. 
We consider the $L^0$-module $\mathscr{M}_i^0$ induced from $\mathscr{M}_i$, $i=1,2$. 
Let $\mathscr{M}_1^0\odot\mathscr{M}_2^0$ be the \lq\lq algebraic tensor product\rq\rq\,
consisting of all finite linear combinations of formal elements $v\otimes w$, $v\in\mathscr{M}_1^0$ and $w\in\mathscr{M}_2^0$, obtained by factorizing appropriate vector spaces (see \cite[Section~1.5]{Gigli:NonSmoothDifferentialStr}). It naturally comes with a multiplications $\cdot\,: L^0(M;\m)\times \mathscr{M}_1^0\odot\mathscr{M}_2^0 \to \mathscr{M}_1^0\odot\mathscr{M}_2^0$ defined through
\begin{align*}
f(v\otimes w):=(fv)\otimes w=v\otimes (fw)
\end{align*}
and a point-wise scalar product $\langle \cdot\,|\,\cdot\rangle_{\m}\,: (\mathscr{M}_1^0\odot\mathscr{M}_2^0)^2\to L^0(M;\m)$ given by 
\begin{align*}
\langle (v_1\otimes w_1)\,|\,(v_2\otimes w_2)\rangle :=\langle v_1,v_2\rangle_{\m}\langle w_1,w_2\rangle_{\m},
\end{align*}
both extended to $\mathscr{M}_1^0\odot\mathscr{M}_2^0$ by bi-linearity. 
Then $\langle \cdot\,|\,\cdot\rangle_{\m}$ is bilinear, $\m$-a.e.~non-negative definite, symmetric, and local in both components 
(see \cite[Lemma~3.2.19]{GPLecture}).    

The point-wise \emph{Hilbert-Schmidt norm} $|\cdot|_{{\rm HS}}: \mathscr{M}_1^0\odot\mathscr{M}_2^0\to L^0(M;\m)$ is given by 
\begin{align*}
|A|_{{\rm HS}}:=\sqrt{\langle A\,|\,A\rangle_{\m}}.
\end{align*} 
This map satisfies the $\m$-a.e. triangle inequality and is $1$-homogenous with respect to multiplication with $L^0(M;\m)$-functions (see \cite[p.~44]{Gigli:NonSmoothDifferentialStr}). 
Consequently, the map $\|\cdot\|_{\mathscr{M}_1\otimes\mathscr{M}_2}:\mathscr{M}_1^0\odot\,\mathscr{M}_2^0\to[0,+\infty]$ defined through
\begin{align*}
\|A\|_{\mathscr{M}_1\otimes\mathscr{M}_2}:=\||A|_{{\rm HS},\m}\|_{L^2(M;\m)}
\end{align*}
has all properties of a norm except that it might take the value $+\infty$. 

Now we define the \emph{tensor product} $\mathscr{M}_1\otimes\mathscr{M}_2$ the $\|\cdot\|_{\mathscr{M}_1\otimes\mathscr{M}_2}$-completion of the subspace that consists of all $A\in \mathscr{M}_1^0\odot\mathscr{M}_2^0$ such that $\|A\|_{\mathscr{M}_1\otimes\mathscr{M}_2}langle \infty$.  
Denote by $A^{\top}\in \mathscr{M}^{\otimes2}$ the {\it transpose} of $A\in \mathscr{M}^{\otimes2}$ as defined in 
\cite[Section~1.5]{Gigli:NonSmoothDifferentialStr}. For instance, for bounded $v,w\in\mathscr{M}$ we have 
\begin{align}
(v\otimes w)^{\top}=w\otimes v.\label{eq:transpose}
\end{align}
}
\end{defn}

\subsection{Exterior products}\label{subsec:ExteriorProducts} 
Let $\mathscr{M}$ be a Hilbert module and $k\in\N\cup\{0\}$. 
Set $\Lambda^0\!\mathscr{M}^0:=L^0(M;\m)$ and, for $k\geq1$, let $\Lambda^k\!\mathscr{M}^0$ be the \lq\lq exterior product\rq\rq 
constructed by suitable factorizing $(\mathscr{M}^0)^{\odot k}$ (see \cite[Section~1.5]{Gigli:NonSmoothDifferentialStr}). 
The representative of $v_1\odot\cdots\odot v_k, v_1,\cdots, v_k\in\mathscr{M}^0$ in $\Lambda^k\!\mathscr{M}^0$ is written $v_1\wedge \cdots\wedge v_k$. $\Lambda^k\!\mathscr{M}^0$ naturally comes with a multiplication $\cdot :L^0(M;\m)\times\Lambda^k\!\mathscr{M}^0$ via
\begin{align*}
f(v_1\wedge \cdots\wedge v_k):=(fv_1)\wedge \cdots\wedge v_k=\cdots=v_1\wedge \cdots\wedge (fv_k)
\end{align*}
and a pointwise scalar product $\langle \cdot,\cdot\rangle_{\m}:(\Lambda^k\!\mathscr{M}^0)^2\to L^0(M;\m)$ defined by 
\begin{align}
\langle v_1\wedge\cdots \wedge v_k,w_1\wedge \cdots \wedge w_k\rangle_{\m}:={\rm det}\left[\langle v_i,w_j\rangle_{\m} \right]_{i,j\in\{1,2,\cdots,k\}}\label{eq:exteroprinnerproduct}
\end{align}
up to a factor $k!$, both extended to $\Lambda^k\!\mathscr{M}^0$ by (bi-)linearity. Then $\langle \cdot,\cdot\rangle_{\m}$ is bilinear, 
$\m$-a.e. non-negative definite, symmetric, and local in both components.

Given any $k,k'\in\N\cup\{0\}$, the map assigning to $v_1\wedge \cdots \wedge v_k\in\Lambda^k\!\mathscr{M}^0$ and 
$w_1\wedge \cdots \wedge w_{k'}\in \Lambda^{k'}\!\mathscr{M}^0$ the element 
$v_1\wedge \cdots \wedge v_k\wedge w_1\wedge \cdots \wedge w_{k'}\in \Lambda^{k+k'}\!\mathscr{M}^0$ can and will be uniquely extended by bilinearity and continuity to a bilinear map 
$\bigwedge:\Lambda^k\!\mathscr{M}^0\times\Lambda^{k'}\!\mathscr{M}^0$ termed \emph{wedge product} (see \cite[p.~47]{GPLecture}).
If $k=0$ or $k'=0$, it simply corresponds to multiplication of elements of $\Lambda^{k'}\!\mathscr{M}^0$ or 
$\Lambda^{k}\!\mathscr{M}^0$, respectively, with function in $L^0(M;\m)$ according to \eqref{eq:exteroprinnerproduct}.  

By a slight abuse of notation, define the map $|\cdot|_{\m}:\Lambda^k\!\mathscr{M}^0\to L^0(M;\m)$ by 
\begin{align*}
|\omega|_{\m}:=\sqrt{\langle \omega,\omega\rangle_{\m}}.
\end{align*}
It obeys the $\m$-a.e. triangle inequality and is  homogenous with respect to multiplication with $L^0(M;\m)$-functions (see \cite[p.~47]{GPLecture}). 

It follows that the map $\|\cdot\|_{\Lambda^k\!\mathscr{M}}:=\| |\cdot|_{\m}\|_{L^2(M;\m)}$
has all properties of a norm except that $\|w\|_{\Lambda^k\!\mathscr{M}}$ might be infinite. 
\begin{defn}
{\rm The ($k$-fold) exterior product $\Lambda^k\!\mathscr{M}$ is defined as the completion with respect to 
$\|\cdot\|_{\Lambda^k\!\mathscr{M}}$ of the subspace consisting of all $\omega\in\Lambda^k\!\mathscr{M}^0$ such that $\|\omega\|_{\Lambda^k\!\mathscr{M}}$.
}
\end{defn}
The space $\Lambda^k\!\mathscr{M}$ naturally becomes a Hilbert module and, if $\mathscr{M}$ is separable, is separable as wel (see \cite[p.~47]{GPLecture}).

\subsection{Cotangent module}\label{subsec:CotangentModule}
Let $(\mathscr{E},D(\mathscr{E}))$ be a quasi-regular strongly local Dirichlet form on $L^2(M;\m)$. 
We define the \emph{cotangent module} $L^2(T^*\!M)$, i.e., the space of differential 
$1$-forms that are $L^2$-integrable in a certain \lq\lq universal\rq\rq\, sense. 
\begin{defn}[Pre-cotangent module]\label{df:premodule}
{\rm We define the \emph{pre-cotangent module} 
${\rm Pcm}$ by 
\begin{align*}
{\rm Pcm}:&=\Biggl\{\left.(f_i,A_i)_{i\in\mathbb{N}}\;\Biggr|\, (A_i)_{i\in\mathbb{N}}\text{ Borel partition of }X,
 \right.\\
&\hspace{3cm}
\left. (f_i)_{i\in\mathbb{N}}\subset D(\mathscr{E})_e,\; \sum_{i\in\mathbb{N}}\int_{A_i}\Gamma(f_i)\d\m<\infty \right\}
\end{align*}
Moreover, we define a relation $\sim$ on ${\rm Pcm}$ by $(f_i,A_i)_{i\in\mathbb{N}}\sim(g_j,B_j)_{j\in\mathbb{N}}$ if and only if $\int_{A_i\cap B_j}\Gamma(f_i-g_j)\d\m=0$ for every $i,j\in\mathbb{N}$. The relation, in fact forms an equivalence relation by \cite[\S2.1]{Braun:Tamed2021}. 
The equivalence class of an element $(f_i,A_i)_{i\in\mathbb{N}}\in{\rm Pcm}$ with respect to $\sim$ is denoted by $[f_i,A_i]$. The space ${\rm Pcm}/\!\!\sim$ of equivalence classes becomes a vector space via the well-defined operations 
\begin{align}
[f_i,A_i]+[g_j,B_j]:=[f_i+g_j,A_i\cap B_j],\qquad 
\lambda[f_i,A_i]:=[\lambda f_i,A_i]\label{eq:vectorSpaceequivalece}
\end{align}  
for every $[f_i,A_i],[g_j,B_j]\in {\rm Pcm}/\!\!\sim$ and $\lambda\in\R$.

Now we define the space ${\rm SF}(M;\m)\subset L^{\infty}(M;\m)$ of simple functions, i.e., each element 
$h\in {\rm SF}(M;\m)$ attains only a finite number values. For $[f_i,A_i]\in {\rm Pcm}/\!\!\sim$ 
and $h=\sum_{j=1}^{\ell}a_j\1_{B_j}\in {\rm SF}(M;\m)$ with a Borel partition $(B_j)$ of $M$, we define the product 
$h[f_i,A_i]\in {\rm Pcm}/\!\!\sim$ as 
\begin{align}
h[f_i,A_i]:=[a_jf_i,A_i\cap B_j],\label{eq:multiplicationRule1}
\end{align}
where we set $B_j:=\emptyset$ and $a_j:=0$ for every $j>{\ell}$. 
It is readily verified that this definition is well-posed and that the resulting multiplication is a bilinear map from ${\rm SF}(M;\m)\times{\rm Pcm}/\!\!\sim$ into ${\rm Pcm}/\!\!\sim$ such that for every 
$[f_i,A_i]\in {\rm Pcm}/\!\!\sim$ and every $h,k\in {\rm SF}(M;\m)$
\begin{align}
(hk)[f_i,A_i]=h(k[f_i,A_i]),\qquad 
\1[f_i,A_i]=[f_i,A_i].\label{eq:multiplicationRule2}
\end{align} 
Moreover, the map $\|\cdot\|_{L^2(T^*\!M)}:{\rm Pcm}/\!\!\sim\, \to[0,+\infty[$ given by 
\begin{align*}
\| [f_i,A_i]\|_{L^2(T^*\!M)}^2:=\sum_{i\in\mathbb{N}}\int_{A_i}\Gamma(f_i)\d\m<\infty
\end{align*}
constitutes a norm on ${\rm Pcm}/\!\!\sim$. 
}
\end{defn}

\begin{defn}[Cotangent module]\label{df:CotangentModule}
{\rm We define the Banach space $(L^2(T^*\!M),\|\cdot\|_{L^2(T^*\!M)})$ as the completion of $({\rm Pcm}/\!\!\sim, \|\cdot\|_{L^2(T^*\!M)})$. The pair $(L^2(T^*\!M),\|\cdot\|_{L^2(T^*\!M)})$  or simply 
$L^2(T^*\!M)$ is called \emph{cotangent module}, and the elements of $L^2(T^*\!M)$ are called \emph{cotangent vector fields} or (\emph{differential}) \emph{$1$-forms}. 
}
\end{defn}
The following are shown in \cite[Lemma~2.3 and Theorem~2.4]{Braun:Tamed2021}:

\begin{lem}[{\cite[Lemma~2.3]{Braun:Tamed2021}}]\label{lem:ModulePropoerty}
The map from ${\rm SF}(M)\times {\rm Pcm}/\!\!\sim$ into ${\rm Pcm}/\!\!\sim$ defined in \eqref{eq:multiplicationRule1} 
extends continuously and unniquely to a bilinear map from $L^{\infty}(M;\m)\times L^2(T^*\!M)$ into $L^2(T^*\!M)$ satisfying, for every $f,g\in L^{\infty}(M;\m)$ and every $\omega\in L^2(T^*\!M)$,
\begin{align*}
(fg)\omega=f(g\omega),\quad \1_M\omega=\omega,\quad \|f\omega\|_{L^2(T^*\!M)}\leq \|f\|_{L^{\infty}(M;\m)}\cdot\|\omega\|_{L^2(T^*\!M)}.
\end{align*}
\end{lem}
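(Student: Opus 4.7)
The plan is to proceed by a density argument built on top of a single norm estimate, then extend the algebraic identities by continuity.

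First, I would verify the key norm inequality on the already-defined multiplication \eqref{eq:multiplicationRule1}. Take $h = \sum_{j=1}^{\ell} a_j \mathbf{1}_{B_j} \in {\rm SF}(M;\m)$ with a Borel partition $(B_j)$ of $M$, and $[f_i,A_i] \in {\rm Pcm}/\!\!\sim$. Using that $\{A_i \cap B_j\}_{i,j}$ is a Borel partition of $M$ and the $2$-homogeneity of $\Gamma$, a direct computation gives
\begin{align*}
\|h[f_i,A_i]\|_{L^2(T^*\!M)}^2
= \sum_{i,j} \int_{A_i \cap B_j} \Gamma(a_j f_i)\,\d\m
= \sum_{i,j} a_j^2 \int_{A_i \cap B_j} \Gamma(f_i)\,\d\m
\leq \|h\|_{L^\infty(M;\m)}^2 \|[f_i,A_i]\|_{L^2(T^*\!M)}^2,
\end{align*}
which is exactly the desired bound on the algebraic level. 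Bilinearity on ${\rm SF}(M;\m) \times {\rm Pcm}/\!\!\sim$ is immediate from \eqref{eq:vectorSpaceequivalece} and \eqref{eq:multiplicationRule1}.

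Next, I would record two density facts: ${\rm Pcm}/\!\!\sim$ is dense in $L^2(T^*\!M)$ by construction (Definition~\ref{df:CotangentModule}), and every $f \in L^\infty(M;\m)$ is a uniform limit of functions in ${\rm SF}(M;\m)$ (bounded Borel functions are uniform limits of simple functions, via truncation of the range into finitely many dyadic intervals). Given $f \in L^\infty(M;\m)$ and $\omega \in L^2(T^*\!M)$, pick $h_n \in {\rm SF}(M;\m)$ with $\|h_n - f\|_{L^\infty} \to 0$ and $\xi_n \in {\rm Pcm}/\!\!\sim$ with $\|\xi_n - \omega\|_{L^2(T^*\!M)} \to 0$, and then the bilinear-extension trick
\begin{align*}
\|h_n \xi_n - h_m \xi_m\|_{L^2(T^*\!M)}
\leq \|h_n - h_m\|_{L^\infty} \|\xi_n\|_{L^2(T^*\!M)} + \|h_m\|_{L^\infty} \|\xi_n - \xi_m\|_{L^2(T^*\!M)}
\end{align*}
shows that $(h_n \xi_n)$ is Cauchy in $L^2(T^*\!M)$. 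Define $f\omega$ as its limit; a parallel estimate shows the limit is independent of the approximating sequences, so the extension is well-defined and unique, and passing to the limit in the algebraic norm bound yields $\|f\omega\|_{L^2(T^*\!M)} \leq \|f\|_{L^\infty(M;\m)} \|\omega\|_{L^2(T^*\!M)}$.

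Finally, the two algebraic identities $(fg)\omega = f(g\omega)$ and $\mathbf{1}_M \omega = \omega$ hold on ${\rm SF}(M;\m) \times {\rm Pcm}/\!\!\sim$ by \eqref{eq:multiplicationRule2}, and extend to $L^\infty(M;\m) \times L^2(T^*\!M)$ by continuity of the bilinear extension in each variable, as does bilinearity itself. There is no real obstacle here beyond bookkeeping; the only place demanding care is the standard Cauchy estimate for the bilinear extension, where one must exploit the uniform boundedness of $\|h_n\|_{L^\infty}$ and $\|\xi_n\|_{L^2(T^*\!M)}$ along the approximating sequences. Uniqueness of the extension is automatic from the density of ${\rm SF}(M;\m) \times {\rm Pcm}/\!\!\sim$ in $L^\infty(M;\m) \times L^2(T^*\!M)$.
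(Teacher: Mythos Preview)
Your proof is correct and follows the standard density-and-extension argument one would expect for this type of statement. Note, however, that the paper does not actually provide its own proof of this lemma: it is simply quoted from \cite[Lemma~2.3]{Braun:Tamed2021} without argument, so there is no in-paper proof to compare against. Your approach---establishing the norm bound on ${\rm SF}(M;\m)\times{\rm Pcm}/\!\!\sim$ directly from the homogeneity of $\Gamma$, then extending bilinearly via uniform approximation in $L^\infty$ and norm approximation in $L^2(T^*\!M)$---is exactly the natural one and is presumably what Braun does as well.
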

\begin{thm}[{Module property, \cite[Theorem~2.4]{Braun:Tamed2021}}]\label{thm:ModuleProperty}
The cotangent module $L^2(T^*\!M)$ is an $L^2$-normed $L^{\infty}$-module over $M$ with respect to $\m$ whose point-wise norm $|\cdot|_{\m}$ satisfies, for every $[f_i,A_i]\in{\rm Pcm}/\!\!\sim$,
\begin{align}
|[f_i,A_i]|_{\m}=\sum_{i\in\mathbb{N}}\1_{A_i}\Gamma(f_i)^{\frac12}\quad\m\text{-a.e.}\label{eq:ModuleProperty}
\end{align}
In particular, $L^2(T^*\!M)$ is a Hilbert module with respect to $\m$. 
\end{thm}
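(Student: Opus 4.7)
The plan is to first define the pointwise norm map on the dense subspace ${\rm Pcm}/\!\!\sim$ by the formula \eqref{eq:ModuleProperty}, verify well-posedness and the module identities there, and then extend everything by continuity to the completion $L^2(T^*\!M)$. The key ingredient throughout is the strong locality of $\Gamma$, which ensures that carré-du-champ agrees on a Borel set whenever two functions have vanishing energy difference on that set.

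\textbf{Step 1 (well-posedness).} For $[f_i,A_i]\in {\rm Pcm}/\!\!\sim$ define
\[
|[f_i,A_i]|_{\m}:=\sum_{i\in\N}\1_{A_i}\Gamma(f_i)^{1/2}.
\]
If $(f_i,A_i)\sim(g_j,B_j)$, then $\int_{A_i\cap B_j}\Gamma(f_i-g_j)\d\m=0$ for all $i,j$, hence by strong locality $\Gamma(f_i)=\Gamma(g_j)$ $\m$-a.e.\ on $A_i\cap B_j$. Summing over the refinement $(A_i\cap B_j)_{i,j}$ shows the two candidate representatives produce the same $L^2$-function $\m$-a.e.

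\textbf{Step 2 (norming identity).} Because $(A_i)_{i\in\N}$ is a Borel partition,
\[
\int_M|[f_i,A_i]|_{\m}^{\,2}\,\d\m=\sum_{i\in\N}\int_{A_i}\Gamma(f_i)\,\d\m=\|[f_i,A_i]\|_{L^2(T^*\!M)}^{\,2},
\]
which is exactly the norming relation required of an $L^2$-normed $L^\infty$-module.

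\textbf{Step 3 (multiplicative identity on simple functions).} For $h=\sum_{j=1}^\ell a_j\1_{B_j}\in{\rm SF}(M;\m)$ and $\omega=[f_i,A_i]$, formula \eqref{eq:multiplicationRule1} gives $h\omega=[a_jf_i,A_i\cap B_j]$; since $\Gamma$ is $2$-homogeneous,
\[
|h\omega|_{\m}=\sum_{i,j}\1_{A_i\cap B_j}|a_j|\,\Gamma(f_i)^{1/2}=|h|\,|\omega|_{\m}\quad \m\text{-a.e.}
\]
Together with \eqref{eq:multiplicationRule2} this provides the module axioms in Definition~\ref{df:NormedModule}(a)--(b) on the dense subspace.

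\textbf{Step 4 (extension by continuity).} The reverse triangle inequality for the seminorm $\sqrt{\Gamma(\cdot)}$ yields, on each cell $A_i\cap B_j$,
\[
\bigl|\Gamma(f_i)^{1/2}-\Gamma(g_j)^{1/2}\bigr|\leq \Gamma(f_i-g_j)^{1/2},
\]
so $\bigl||[f_i,A_i]|_{\m}-|[g_j,B_j]|_{\m}\bigr|\leq |[f_i,A_i]-[g_j,B_j]|_{\m}$ $\m$-a.e., and combining with Step~2 shows that $|\cdot|_{\m}$ is $1$-Lipschitz from $({\rm Pcm}/\!\!\sim,\|\cdot\|_{L^2(T^*\!M)})$ into $L^2(M;\m)$. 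It therefore extends uniquely and continuously to $L^2(T^*\!M)$, preserving the identity $\|\omega\|_{L^2(T^*\!M)}=\||\omega|_{\m}\|_{L^2(M;\m)}$. The multiplicativity of Step~3, together with Lemma~\ref{lem:ModulePropoerty}, passes to the limit via simultaneous approximation of $f\in L^\infty(M;\m)$ in ${\rm SF}(M;\m)$ (with uniformly bounded sup-norm) and of $\omega\in L^2(T^*\!M)$ in ${\rm Pcm}/\!\!\sim$.

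\textbf{Step 5 (Hilbert module).} Since $\Gamma$ is bilinear, each $\Gamma(\cdot)^{1/2}$ satisfies the pointwise parallelogram identity; applying this cell by cell on the common refinement of two Borel partitions yields the pointwise parallelogram identity for $|\cdot|_{\m}$ on ${\rm Pcm}/\!\!\sim$, and continuity propagates it to $L^2(T^*\!M)$. Hence $L^2(T^*\!M)$ is $L^2$-normed with a pointwise Hilbertian norm, i.e.\ a Hilbert module in the sense of Definition~\ref{df:NormedModule}.

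The only genuinely delicate step is the well-posedness in Step~1: one must invoke strong locality of the Dirichlet form to pass from the vanishing-energy condition defining $\sim$ to the $\m$-a.e.\ equality of the carré-du-champ on each cell $A_i\cap B_j$. Once this is in place, the remaining verifications are algebraic manipulations on partitions and a density/continuity extension to the completion.
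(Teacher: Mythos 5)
Your proposal is correct and reproduces, in essentially the same order, the standard argument of the cited reference (\cite[Theorem~2.4]{Braun:Tamed2021}); the paper itself offers no proof beyond that citation. One small clarification: the well-posedness in Step~1 and the reverse triangle inequality in Step~4 do not really rest on strong locality, but only on the pointwise $\m$-a.e.\ Cauchy--Schwarz inequality $|\Gamma(f,g)|\leq\Gamma(f)^{1/2}\Gamma(g)^{1/2}$, which follows from bilinearity and nonnegativity of the carr\'e du champ; once $\Gamma(f_i-g_j)=0$ $\m$-a.e.\ on $A_i\cap B_j$, this alone gives $\Gamma(f_i)=\Gamma(g_j)$ there. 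The rest of your verification (norming identity, multiplicativity on simple functions, $1$-Lipschitz extension, and the cellwise parallelogram identity for the Hilbert-module claim) is complete and sound.
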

\begin{defn}[$L^2$-differential]\label{df:differential}
{\rm The $L^2$-differential $\d f$ of any function $f\in D(\mathscr{E})_e$ is defined by 
\begin{align*}
{\d f}:=[f,X]\in L^2(T^*\!M),
\end{align*}
where $[f,X]\in {\rm Pcm}/\!\!\sim\;\subset L^2(T^*\!M)$ 
is the representative of the sequence $(f_i,A_i)_{i\in\mathbb{N}}$ given by 
$f_i:=f$, $A_1:=X$, $f_i:=0$ and $A_i:=\emptyset$ for every $i\geq2$. 

\bigskip

As usual, we call a $1$-form $\omega\in  L^2(T^*\!M)$ \emph{exact} if, for some $f\in D(\mathscr{E})_e$,
\begin{align*}
\omega=\d f.
\end{align*}
The $L^2$-differential $\d$ is a linear operator on $D(\mathscr{E})_e$. By \eqref{eq:ModuleProperty}, the $L^{\infty}$-module structure induced by $\m$ according to Theorem~\ref{thm:ModuleProperty},
\begin{align*}
|\d f|_{\m}=\Gamma(f)^{\frac12}\quad\m\text{-a.e.}
\end{align*}
holds for every $f\in D(\mathscr{E})_e$. 
}
\end{defn}
\subsection{Tangent module}\label{subsec:TangentModule}
In this section, let $(M,{\sf d},\m)$ be a metric measure space and assume the 
infinitesimally Hilbertian condition for it. In particular, we are given a strongly local Dirichlet form 
$(\mathscr{E},D(\mathscr{E}))$ on $L^2(M;\m)$. 
We define the notion of \emph{tangent module}. 
\begin{defn}[Tangent module]\label{df:TangentModule}
{\rm The tangent module $(L^2(TM),\|\cdot\|_{L^2(TM)})$ or simply $L^2(TM)$ is 
\begin{align*}
L^2(TM):=L^2(T^*\!M)^*
\end{align*}
and it is endowed with the norm $\|\cdot\|_{L^2(TM)}$ induced by \eqref{eq:DualPointNorm}. 
The elements of $L^2(TM)$ will be called \emph{vector fields}.  
}
\end{defn}
 As in Subsection~\ref{subsec:normedModule}, the point-wise pairing between $\omega\in L^2(T^*\!M)$ and $X\in L^2(TM)$ is denoted by $\omega(X)\in L^1(M;\m)$, and, by a slight abuse of notation, 
 $|X|\in L^2(M;\m)$ denotes the point-wise norm of $M$. By \cite[Lemma~2.7 and Proposition~1.24]{Braun:Tamed2021}, 
 $L^2(TM)$ is a separable Hilbert module. Furthermore, in terms of the point-wise scalar product $\langle \cdot,\cdot\rangle $ on $L^2(T^*\!M)$ and $L^2(TM)$, respectively, \cite[Proposition~1.24]{Braun:Tamed2021} allows us to define the \emph{(Riesz) musical isomorphisms} $\sharp: L^2(T^*\!M)\to L^2(TM)$ 
 and $\flat:=\sharp^{-1}$ defined by 
 \begin{align}
 \langle \omega^{\sharp},X\rangle :=\omega(X)=:\langle X^{\flat},\omega\rangle \quad \m\text{-a.e.}\label{eq:MusicalMap}
 \end{align}
\begin{defn}[$L^2$-gradient]\label{df:Gradient}
{\rm The \emph{$L^2$-gradient} $\nabla f$ of a function $f\in D(\mathscr{E})_e$ is defined by 
\begin{align*}
\nabla f:=(\d f)^{\sharp}.
\end{align*}
Observe from \eqref{eq:MusicalMap} that $f\in D(\mathscr{E})_e$, is characterized as the unique element 
$X\in L^2(TM)$ which satisfies 
\begin{align*}
\d f(X)=|\d f|^2=|X|^2\quad\m\text{-a.e.}
\end{align*}
}
\end{defn} 

\subsection{Divergences}\label{subsec:divergence}

\begin{defn}[$L^2$-divergence]\label{df:L2Divergence}
{\rm We define the space $D({\rm div})\subset L^2(TM)$ by 
\begin{align*}
D({\rm div}):&=\Bigl\{X\in  L^2(TM)\,\Bigl|\, \text{ there exists a function }f\in L^2(M;\m)\\ 
&\hspace{2cm}\text{ such  that for every }h\in D(\mathscr{E}),\quad -\int_M h\,f\,\d\m=\int_M\d h(X)\d\m\Bigr\}.
\end{align*} 
Such $f\in L^2(M;\m)$ is unique, and it is called the \emph{{\rm(}$L^2$-{\rm)}divergence of} $M$ and denoted by ${\rm div}\,X$.
}
\end{defn} 
The uniqueness of ${\rm div}\,X$ comes from the denseness of $D(\mathscr{E})$ in $L^2(M;\m)$.     
Note that the map ${\rm div}:D({\rm div})\to L^2(M;\m)$ is linear, hence $D({\rm div})$ is a vector space. By definition of $D(\Delta)$, $\nabla D(\Delta)\subset D({\rm div})$ and 
\begin{align}
{\rm div}\nabla f=\Delta f\quad \m\text{-a.e.}\label{eq:Divergence}
\end{align}
for every $f\in D(\Delta)$. Moreover, using the Leibniz rule in \cite[Theorem~4.4]{GPLecture},
one can verify that for every $X\in D({\rm div})$ and $f\in D(\mathscr{E})_e\cap L^{\infty}(M;\m)$ with 
$|\d f|\in L^{\infty}(M;\m)$, we have $fX\in D({\rm div})$ and 
\begin{align}
{\rm div}(fX)=f\,{\rm div}(X)+\d f(X)\quad\m\text{-a.e.}\label{eq:DivergenceFormula}
\end{align}

\begin{defn}[Measure-valued divergence]\label{def:MeasureDivergence}
{\rm We define the space $D({\bf div})\subset L^2(TM)$ by 
\begin{align}
D({\bf div}):&=\Bigl\{X\in  L^2(TM)\,\Bigl|\, \text{ there exists a $\sigma$-finite signed Radon smooth measure }\nu\notag\\
&\hspace{1cm}\text{ such  that for every }h\in D(\mathscr{E})_c\cap L^{\infty}(M;\m),\quad -\int_M h\,\d\nu=\int_M\d h(X)\d\m\Bigr\}.\label{eq:MeasureDivergence}
\end{align}
Such a $\sigma$-finite signed Radon smooth measure $\nu$ is unique, and it is called the \emph{measure-valued divergence} of $X$ and denoted by ${\bf div}\,X$.
Moreover, we define the subclass $D_f({\bf div})$ of $D({\bf div})$ as follows: 
\begin{align}
D_f({\bf div}):&=\Bigl\{X\in  L^2(TM)\,\Bigl|\, \text{ there exists a finite signed smooth measure }\nu\notag\\
&\hspace{0.5cm}\text{ such  that for every }h\in D(\mathscr{E})\cap L^{\infty}(M;\m),\quad -\int_M h\,\d\nu=\int_M\d h(X)\d\m\Bigr\}.\label{eq:MeasureDivergenceFinite}
\end{align}
Such a finite signed smooth measure $\nu$ is unique, and it is also called the \emph{measure-valued divergence} of $X$ and denoted by ${\bf div}\,X$. 
}
\end{defn}
The following Leibniz rule is an improvement of \cite[Lemma~3.13]{Braun:Tamed2021} and is useful. 
\begin{lem}\label{lem:LeibnizRule}
For every $X\in D({\bf div})$ {\rm(}resp.~$X\in D_f({\bf div})${\rm)} and every $f\in D(\mathscr{E})_e\cap L^{\infty}(M;\m)$, we have 
$fX\in D({\bf div})$ {\rm(}resp.~$fX\in D_f({\bf div})${\rm)} with 
\begin{align*}
{\bf div}(fX)=\tilde{f}{\bf div}\,X+\d f(X)\m.
\end{align*} 
\end{lem}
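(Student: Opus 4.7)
The candidate for ${\bf div}(fX)$ is dictated by the formula in the statement: set $\nu:=\tilde f\,{\bf div}\,X+\d f(X)\,\m$. I will show that $\nu$ satisfies the defining identity \eqref{eq:MeasureDivergence} (resp.~\eqref{eq:MeasureDivergenceFinite}) for $fX$, after which the uniqueness of the measure-valued divergence forces ${\bf div}(fX)=\nu$. The scheme is to insert $hf$ in place of $h$ in the defining identity of ${\bf div}\,X$ and redistribute the differential by the Leibniz rule.

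Fix a test function $h$ in $D(\mathscr{E})_c\cap L^{\infty}(M;\m)$ in the $D({\bf div})$-case, or in $D(\mathscr{E})\cap L^{\infty}(M;\m)$ in the $D_f({\bf div})$-case. First I would verify that $hf$ belongs to the same test class: boundedness and $\mathrm{supp}(hf)\subset \mathrm{supp}(h)$ are clear (the latter preserving compactness of support in the first case), and the Leibniz bound $\Gamma(hf)\leq 2(h^2\Gamma(f)+f^2\Gamma(h))$, together with $\Gamma(f)\in L^1(M;\m)$ (built into $D(\mathscr{E})_e$) and the boundedness of $h,f$, yields $\int_M \Gamma(hf)\,\d\m<\infty$; $hf\in L^2(M;\m)$ is immediate. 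Applying the definition of ${\bf div}\,X$ with test function $hf$ gives
\[
-\int_M hf\,\d({\bf div}\,X)=\int_M \d(hf)(X)\,\d\m.
\]
Expanding $\d(hf)=f\,\d h+h\,\d f$ in $L^2(T^*\!M)$ (a consequence of the carr\'e du champ Leibniz identity $\Gamma(hf,g)=f\,\Gamma(h,g)+h\,\Gamma(f,g)$), using $L^{\infty}$-bilinearity of the point-wise pairing to rewrite $\d(hf)(X)=\d h(fX)+h\,\d f(X)$ $\m$-a.e., and exploiting that ${\bf div}\,X$ charges no $\mathscr{E}$-polar set (so that $\int_M hf\,\d({\bf div}\,X)=\int_M h\,\d(\tilde f\,{\bf div}\,X)$), the identity rearranges to exactly
\[
-\int_M h\,\d\nu=\int_M \d h(fX)\,\d\m.
\]

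Finally I would check the regularity of $\nu$. Boundedness and $\mathscr{E}$-quasi-continuity of $\tilde f$ make $\tilde f\,{\bf div}\,X$ a signed smooth Radon measure, $\sigma$-finite (resp.~finite) whenever ${\bf div}\,X$ is; and the Cauchy--Schwarz inequality applied to $\d f\in L^2(T^*\!M)$ and $X\in L^2(TM)$ gives $\d f(X)\in L^1(M;\m)$, so $\d f(X)\m$ is a finite smooth measure absolutely continuous with respect to $\m$. Hence $\nu$ lies in the correct class, and uniqueness of the measure-valued divergence concludes the argument.

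\textbf{Expected obstacle.} The only nontrivial technical check is the product rule $\d(hf)=f\,\d h+h\,\d f$ in $L^2(T^*\!M)$ together with the membership $hf\in D(\mathscr{E})$ for $f$ only in $D(\mathscr{E})_e\cap L^{\infty}(M;\m)$ rather than in $D(\mathscr{E})\cap L^{\infty}(M;\m)$. Both reduce to the Leibniz identity for $\Gamma$ and to the $L^1$-integrability of $\Gamma(f)$ intrinsic to the extended Dirichlet space; everything after that is bookkeeping between quasi-continuous representatives and smooth measures.
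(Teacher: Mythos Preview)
Your proposal is correct and follows essentially the same approach as the paper's proof: define the candidate measure $\nu:=\tilde f\,{\bf div}\,X+\d f(X)\,\m$, verify it lies in the required class, and establish the defining identity by applying the definition of ${\bf div}\,X$ to the test function $hf$ together with the Leibniz rule $\d(hf)=f\,\d h+h\,\d f$ (which the paper cites from \cite[Proposition~2.11]{Braun:Tamed2021}). Your explicit verification that $hf$ belongs to the appropriate test class is a point the paper leaves implicit.
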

\begin{proof}[{\bf Proof}] 
We only show the proof for the case $X\in D({\bf div})$. The proof for the case $X\in D_f({\bf div})$ is similar. 
It is easy to see $fX\in L^2(TM)$. Moreover $(\d f)(X)=\langle \nabla f,X\rangle \in L^1(M;\m)$, since $X\in D({\bf div})$ and $f\in D(\mathscr{E})_e\cap L^{\infty}(M;\m)$. Then $(\d f)(X)\m$ is a finite signed smooth measure, hence 
a $\sigma$-finite signed Radon smooth measure.  Then the measure $\nu$ defined by 
\begin{align*}
\nu:=\tilde{f}{\bf div}\,X+\d f(X)\m
\end{align*} 
is also a signed Radon smooth measure, because $\tilde{f}$ is bounded. For $h\in D(\mathscr{E})_c\cap L^{\infty}(M;\m)$, 
\begin{align*}
-\int_M\tilde{h}\,\d\nu&=-\int_M\tilde{h}\tilde{f}\,\d {\bf div}\,X-\int_M h \,\d f(X)\d\m\\
&=\int_M\d (hf)(X)\d\m-\int_M h\d f(X)\d\m\\
&=\int_M(\d(hf)-h\d f)(X)\d\m\\
&=\int_Mf \d h(X)\d\m=\int_M\d h(fX)\,\d\m,
\end{align*}
which implies $\nu={\bf div}\,fX$. Here we use the Leibniz rule (see \cite[Proposition~2.11]{Braun:Tamed2021}): 
For every $f,g\in D(\mathscr{E})_e\cap L^{\infty}(M;\m)$, 
\begin{align*}
\d(fg)=\tilde{f}\,\d g+\tilde{g}\,\d f.
\end{align*}
\end{proof} 

\subsection{Measure-valued Laplacian}\label{subsec:MeasureLaplacian}

\begin{defn}[{Measure-valued Schr\"odinger operator}]\label{def:Schr\"odinger operator}
{\rm We define $D({\text{\boldmath$\Delta$}}^{2\kappa})$ to consist of all $u\in D(\mathscr{E})$ for which there exists a 
$\sigma$-finite Radon signed smooth measure $\nu$ such that for $D(\mathscr{E})\subset L^1(M;|\nu|)$ and 
for every $h\in D(\mathscr{E})$, we have \begin{align*}
-\int_M\tilde{h}\d\nu=\mathscr{E}^{2\kappa}(h,u).
\end{align*}
In case of existence, $\nu$ is unique, denoted by ${\text{\boldmath$\Delta$}}^{2\kappa}u$ and shall be called the 
\emph{measure-valued Schr\"odinger operator} with potential $2\kappa$. 
We define 
\begin{align*}
D_f({\text{\boldmath$\Delta$}}^{2\kappa}):=\{u\in \text{$D(\text{\boldmath$\Delta$}^{2\kappa})$}\mid  
{\text{\boldmath$\Delta$}}^{2\kappa}u \text{ is a finite signed smooth measure }\}.
\end{align*}
} 
\end{defn}
\begin{defn}[Measure-valued Laplacian]\label{def:MeasreValuedLaplacian}
{\rm We define $D({\text{\boldmath$\Delta$}})$ to consist of all $u\in D(\mathscr{E})$ for which there exists a 
$\sigma$-finite Radon signed smooth measure $\nu$ such that  
for every $h\in D(\mathscr{E})_c\cap L^{\infty}(M;\m)$, we have 
\begin{align*}
-\int_M\tilde{h}\d\nu=\mathscr{E}(h,u).
\end{align*}
In case of existence, $\nu$ is unique, denoted by ${\text{\boldmath$\Delta$}}u$ and shall be called the 
\emph{measure-valued Laplacian}. In another word, 
$D({\text{\boldmath$\Delta$}})$ consists of all $u\in D(\mathscr{E})$ for which $\nabla u\in D({\bf div})$, and $ {\text{\boldmath$\Delta$}}u:={\bf div}\nabla u$. The subspace 
$D_f({\text{\boldmath$\Delta$}})(\subset \text{$D(\text{\boldmath$\Delta$})$})$ 
such that 
the  ${\text{\boldmath$\Delta$}}u$ forms a finite signed smooth measure for $u\in D_f({\text{\boldmath$\Delta$}})$  
can be 
similarly defined by replacing the test functions $h\in D(\mathscr{E})_c\cap L^{\infty}(M;\m)$ with 
$h\in D(\mathscr{E})\cap L^{\infty}(M;\m)$. In particular, $D_f({\text{\boldmath$\Delta$}})$ consists of all 
$u\in D(\mathscr{E})$ for which $\nabla u\in D_f({\bf div})$. 
For $u\in D({\text{\boldmath$\Delta$}})$, 
denote by ${\text{\boldmath$\Delta$}}_{\ll}u$ (resp.~${\text{\boldmath$\Delta$}}_{\perp}u$) the absolutely continuous (resp.~singular) part of ${\text{\boldmath$\Delta$}}u$ with respect to $\m$ and set 
\begin{align*}
\Delta_{\ll}u:=\frac{\d {\text{\boldmath$\Delta$}}_{\ll}u}{\d\m}.
\end{align*}
}
\end{defn}

\begin{lem}[{cf. \cite[Lemma~8.12]{Braun:Tamed2021}}]\label{lem:Inclusion}
The signed Borel measure $\tilde{u}^2\kappa$ for $u\in D(\mathscr{E})$ has finite total variation. 
For $u\in D(\mathscr{E})$ with $u^2\in D({\text{\boldmath$\Delta$}}^{2\kappa})$ {\rm(}resp.~$u^2\in D_f({\text{\boldmath$\Delta$}}^{2\kappa})${\rm)}, we have $u^2\in D({\text{\boldmath$\Delta$}})$ {\rm(}resp.~$u^2\in D_f({\text{\boldmath$\Delta$}})${\rm)} with 
\begin{align*}
{\text{\boldmath$\Delta$}}^{2\kappa}u^2={\text{\boldmath$\Delta$}}u^2-2\tilde{u}^2\kappa.
\end{align*}
\end{lem}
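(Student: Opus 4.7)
\medskip

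\noindent\textbf{Proof plan.} The plan is to derive both claims directly from the definitions, with the Stollmann--Voigt inequality \eqref{eq:StollmannVoigt} as the only nontrivial analytic input.

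First I would establish finite total variation of $\tilde{u}^2\kappa$. Since $\kappa^+\in S_D({\bf X})$ and $2\kappa^-\in S_{E\!K}({\bf X})\subset S_D({\bf X})$, both $\|U_\alpha\kappa^+\|_\infty$ and $\|U_\alpha\kappa^-\|_\infty$ are finite for any fixed $\alpha>0$. Applying \eqref{eq:StollmannVoigt} to $u\in D(\mathscr{E})$ with $\nu=\kappa^{\pm}$ yields
\begin{align*}
\int_M \tilde{u}^2\,\d\kappa^{\pm}\leq \|U_\alpha\kappa^{\pm}\|_\infty\, \mathscr{E}_\alpha(u,u)<\infty,
\end{align*}
so $\tilde u^2|\kappa|$ is a finite (positive) Borel measure; hence $\tilde u^2\kappa=\tilde u^2\kappa^+-\tilde u^2\kappa^-$ has finite total variation. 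Moreover, since $\kappa^\pm$ are smooth and $\tilde u^2$ is nonnegative and $\mathscr{E}$-quasi continuous, $\tilde u^2\kappa^{\pm}$ do not charge $\mathscr{E}$-polar sets and are accessible by an $\mathscr{E}$-nest, so $\tilde u^2\kappa$ is a finite \emph{signed smooth} measure.

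Next, suppose $u^2\in D(\text{\boldmath$\Delta$}^{2\kappa})$. Then by Definition~\ref{def:Schr\"odinger operator}, for every $h\in D(\mathscr{E})$ (in particular for $h\in D(\mathscr{E})_c\cap L^\infty(M;\m)$) one has
\begin{align*}
-\int_M\tilde h\,\d{\text{\boldmath$\Delta$}}^{2\kappa}u^2
=\mathscr{E}^{2\kappa}(h,u^2)
=\mathscr{E}(h,u^2)+2\int_M\tilde h\tilde u^2\,\d\kappa,
\end{align*}
where the integral $\int_M\tilde h\tilde u^2\,\d\kappa$ is absolutely convergent by the first step together with $h\in L^\infty$. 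Rearranging,
\begin{align*}
\mathscr{E}(h,u^2)=-\int_M\tilde h\,\d\bigl({\text{\boldmath$\Delta$}}^{2\kappa}u^2+2\tilde u^2\kappa\bigr).
\end{align*}
Set $\mu:={\text{\boldmath$\Delta$}}^{2\kappa}u^2+2\tilde u^2\kappa$; this is a sum of two signed smooth measures, where ${\text{\boldmath$\Delta$}}^{2\kappa}u^2$ is $\sigma$-finite Radon signed smooth by hypothesis and $2\tilde u^2\kappa$ is finite signed smooth by the first step. Hence $\mu$ is itself a $\sigma$-finite Radon signed smooth measure, which by Definition~\ref{def:MeasreValuedLaplacian} gives $u^2\in D({\text{\boldmath$\Delta$}})$ with ${\text{\boldmath$\Delta$}}u^2=\mu$, equivalently
\begin{align*}
{\text{\boldmath$\Delta$}}^{2\kappa}u^2={\text{\boldmath$\Delta$}}u^2-2\tilde u^2\kappa.
\end{align*}

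For the statement in parentheses, if moreover $u^2\in D_f({\text{\boldmath$\Delta$}}^{2\kappa})$, the same computation holds for all $h\in D(\mathscr{E})\cap L^\infty(M;\m)$, and $\mu$ is then a sum of two \emph{finite} signed smooth measures, so $u^2\in D_f({\text{\boldmath$\Delta$}})$ with the same identity. The only point that requires a sentence of care is the verification that $\tilde u^2\kappa$ is smooth rather than merely a finite signed measure (it must not charge $\mathscr{E}$-polar sets and be $\mathscr{E}$-nested); this is the main, though mild, obstacle, and follows from smoothness of $\kappa^{\pm}$ and $\mathscr{E}$-quasi continuity of $\tilde u^2$.
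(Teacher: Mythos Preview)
Your proof is correct and follows essentially the same route as the paper: both invoke the Stollmann--Voigt inequality \eqref{eq:StollmannVoigt} to obtain finiteness of $\tilde u^2|\kappa|$, then set $\nu:={\text{\boldmath$\Delta$}}^{2\kappa}u^2+2\tilde u^2\kappa$ and verify directly from the definitions that $\nu$ satisfies the defining identity of ${\text{\boldmath$\Delta$}}u^2$. The only point the paper makes slightly more explicit is that $\tilde u^2\kappa$ is automatically Radon because $(M,\tau)$ is Lusin, so every finite Borel measure is Radon; you might add that one sentence for completeness, since Definition~\ref{def:MeasreValuedLaplacian} requires the candidate measure to be Radon.
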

\begin{proof}[{\bf Proof}] 
Since $|\kappa|\in S_D({\bf X})$, we see that $\tilde{u}^2|\kappa|$ and $\tilde{u}^2\kappa^{\pm}$ are finite smooth measures in view of Stollmann-Voigt's inequality \eqref{eq:StollmannVoigt}.
Since $(M,\tau)$ is a Lusin topological space, any finite measure is a Radon smooth measure. Thus, $\tilde{u}^2\kappa$ is a 
 signed Radon smooth measure. 
For $u\in D(\mathscr{E})$ with  $u^2\in D({\text{\boldmath$\Delta$}}^{2\kappa})$ (resp.~$u^2\in D_f({\text{\boldmath$\Delta$}}^{2\kappa})$), if we set $\nu:=
{\text{\boldmath$\Delta$}}^{2\kappa}u^2+2\tilde{u}^2\kappa$, then $\nu$ 
is a signed Radon (resp.~finite) smooth measure, and for $h\in D(\mathscr{E})_c\cap L^{\infty}(M;\m)$ (resp.~$h\in D(\mathscr{E})\cap L^{\infty}(M;\m)$), we have
\begin{align*}
-\int_M\tilde{h}\,\d\nu&=-\int_M\tilde{h}\,\d {\text{\boldmath$\Delta$}}^{2\kappa}u^2-2\int_M\tilde{h}\tilde{u}^2\d\kappa\\
&=\mathscr{E}^{2\kappa}(h,u^2)-\langle 2\kappa,\tilde{h}\tilde{u}^2\rangle \\
&=\mathscr{E}(h,u^2),
\end{align*}
hence $u^2\in D({\text{\boldmath$\Delta$}})$ (resp.~$u^2\in D_f({\text{\boldmath$\Delta$}})$) and $\nu={\text{\boldmath$\Delta$}}u^2$. 
\end{proof} 

\begin{lem}\label{lem:Composition}
Take $f\in D({\text{\boldmath$\Delta$}})$. Let $\Phi\in C^2(\R)$ satisfy $\Phi(0)=0$, $\Phi',\Phi''\in C_b(\R)$. 
Then $\Phi(f)\in D({\text{\boldmath$\Delta$}})$ with 
\begin{align}
{\text{\boldmath$\Delta$}}\Phi(f)&=\Phi''(f)|\nabla f|^2\m+\Phi'(f){\text{\boldmath$\Delta$}}f, \label{eq:Composition1}\\
\Delta_{\ll}\Phi(f)&=\Phi''(f)|\nabla f|^2+\Phi'(f)\Delta_{\ll}f.  \label{eq:Composition2}
\end{align}
If further $f\in D_f({\text{\boldmath$\Delta$}})$, then $\Phi(f)\in D_f({\text{\boldmath$\Delta$}})$ with \eqref{eq:Composition1} and \eqref{eq:Composition2}. 
\end{lem}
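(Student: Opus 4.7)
The plan is to verify the measure-valued identity by testing $\mathscr{E}(h,\Phi(f))$ against admissible $h$ and rewriting it via the chain and Leibniz rules so that the hypothesis $f\in D({\text{\boldmath$\Delta$}})$ (resp.~$f\in D_f({\text{\boldmath$\Delta$}})$) can be invoked. The candidate measure is $\nu := \Phi'(\tilde f)\,{\text{\boldmath$\Delta$}} f + \Phi''(f)\Gamma(f)\m$, so uniqueness in Definition~\ref{def:MeasreValuedLaplacian} will then yield \eqref{eq:Composition1}, and \eqref{eq:Composition2} will follow by passing to the absolutely continuous part.

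I would first assemble the needed membership facts. Since $\Phi(0)=0$ and $\Phi'\in C_b(\R)$, the function $\Phi$ is Lipschitz with $|\Phi(x)|\le\|\Phi'\|_\infty|x|$, so the standard chain rule for quasi-regular strongly local Dirichlet forms gives $\Phi(f)\in D(\mathscr{E})$ with $\Gamma(\Phi(f),h)=\Phi'(f)\Gamma(f,h)$ $\m$-a.e. Analogously, $\Phi''\in C_b(\R)$ makes $x\mapsto\Phi'(x)-\Phi'(0)$ bounded, Lipschitz, and zero at $0$, whence $\Phi'(f)-\Phi'(0)\in D(\mathscr{E})\cap L^\infty(M;\m)$ with $\Gamma(\Phi'(f),h)=\Phi''(f)\Gamma(f,h)$. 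Consequently, for any $h\in D(\mathscr{E})_c\cap L^\infty(M;\m)$ (resp.~$h\in D(\mathscr{E})\cap L^\infty(M;\m)$ in the finite case), the decomposition $h\Phi'(f)=h(\Phi'(f)-\Phi'(0))+\Phi'(0)h$ places $h\Phi'(f)$ in $D(\mathscr{E})_c\cap L^\infty$ (resp.~$D(\mathscr{E})\cap L^\infty$), using that $D(\mathscr{E})\cap L^\infty$ is an algebra and that products preserve supports.

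The main computation combines the chain and Leibniz rules to yield
\begin{align*}
\mathscr{E}(h,\Phi(f))=\int_M\Phi'(f)\Gamma(h,f)\,\d\m=\mathscr{E}(h\Phi'(f),f)-\int_M h\,\Phi''(f)\Gamma(f)\,\d\m.
\end{align*}
Applying the defining property of ${\text{\boldmath$\Delta$}} f$ to the admissible test function $h\Phi'(f)$, whose $\mathscr{E}$-quasi-continuous version agrees with $\tilde h\,\Phi'(\tilde f)$ by continuity of $\Phi'$, converts the first term into $-\int_M\tilde h\,\Phi'(\tilde f)\,\d{\text{\boldmath$\Delta$}} f$, so that $-\int_M\tilde h\,\d\nu=\mathscr{E}(h,\Phi(f))$ as required.

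It remains to verify that $\nu$ lies in the right class. Boundedness and $\mathscr{E}$-quasi-continuity of $\Phi'(\tilde f)$ show that $\Phi'(\tilde f){\text{\boldmath$\Delta$}} f$ is a signed Radon smooth measure, finite whenever ${\text{\boldmath$\Delta$}} f$ is; boundedness of $\Phi''$ together with $\Gamma(f)\in L^1(M;\m)$ (from $f\in D(\mathscr{E})$) makes $\Phi''(f)\Gamma(f)\m$ a finite signed smooth measure. The principal technical subtlety is precisely ensuring that $h\Phi'(f)$ is an admissible test function when $\Phi'(0)\ne 0$, and this is exactly what the decomposition above handles; once it is in place, the conclusion follows from the Leibniz and chain rules already available in Braun's framework.
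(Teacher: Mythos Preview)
Your proof is correct and follows essentially the same strategy as the paper: test $\mathscr{E}(h,\Phi(f))$ against admissible $h$ and rewrite via chain and Leibniz rules to reveal the candidate measure $\nu$. The only organizational difference is that the paper packages the Leibniz step through the measure-valued divergence, writing $-\mathscr{E}(h,\Phi(f))=\int_M\tilde h\,\d\,{\bf div}(\Phi'(f)\nabla f)$ and then invoking Lemma~\ref{lem:LeibnizRule} to expand ${\bf div}(\Phi'(f)\nabla f)=\Phi'(\tilde f){\text{\boldmath$\Delta$}}f+\Phi''(f)|\nabla f|^2\m$, whereas you carry out the same manipulation directly at the level of the carr\'e-du-champ and then apply the definition of ${\text{\boldmath$\Delta$}}f$ to the test function $h\Phi'(f)$. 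Since the proof of Lemma~\ref{lem:LeibnizRule} itself is exactly your Leibniz computation, the two arguments are the same one unfolded differently; your version has the minor advantage of being explicit about why $h\Phi'(f)$ is admissible when $\Phi'(0)\neq 0$, a point the paper leaves implicit in its appeal to $\Phi'(f)\in D(\mathscr{E})_e\cap L^\infty(M;\m)$.
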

\begin{proof}[{\bf Proof}] 
It is easy to see $\Phi(f)\in D(\mathscr{E})$ by $\Phi(0)=0$. Take $h\in D(\mathscr{E})_c\cap L^{\infty}(M;\m)$. Then, by way of the Leibniz rule for measure-valued divergence (see Lemma~\ref{lem:LeibnizRule}),  
\begin{align*}
-\mathscr{E}(h,\Phi(f))&=-\int_M\langle \nabla h,\nabla\Phi(f)\rangle \d\m\\
&=-\int_M\langle \nabla h,\Phi'(f)\nabla f\rangle \d\m\\
&=\int_M\tilde{h}\,\d\,{\bf div}(\Phi'(f)\nabla f)\\
&=\int_M\tilde{h}\,\d\left(\Phi'(f){\bf div}(\nabla f)+ \d\Phi'(f)(\nabla f) \right)\quad (\text{Lemma~\ref{lem:LeibnizRule}})\\
&=\int_M \tilde{h}\,\d \left(\Phi'(f){\text{\boldmath$\Delta$}}f+\Phi''(f)|\nabla f|^2\m \right),
\end{align*}
which implies $\Phi(f)\in D({\text{\boldmath$\Delta$}})$ and ${\text{\boldmath$\Delta$}}\Phi(f)=\Phi'(f){\text{\boldmath$\Delta$}}f+\Phi''(f)|\nabla f|^2 \m$. The proof of the latter assertion is similar.  
\end{proof} 

\begin{defn}\label{def:measureLaplacian}
{\rm 
We define $\dot{D}({\text{\boldmath$\Delta$}})_{\loc}$ to consist of all $u\in \dot{D}(\mathscr{E})_{\loc}$ for which there exists a $\sigma$-finite signed Radon smooth measure $\nu$ such that for every $h\in \bigcup_{k=1}^{\infty}D(\mathscr{E})_{G_k}\cap L^{\infty}(M;\m)$ with $\{G_k\}\in\Xi(u)$, we have 
\begin{align*}
-\int_M\tilde{h}\,\d\nu=\mathscr{E}(h,u). 
\end{align*}
In case of existence, $\nu$ is unique, denoted by ${\text{\boldmath$\Delta$}}u$ and shall be called the 
\emph{measure-valued Laplacian}.
We define the subspace $\dot{D}_f({\text{\boldmath$\Delta$}})_{\loc}$ of $\dot{D}({\text{\boldmath$\Delta$}})_{\loc}$ by 
\begin{align*}
\dot{D}_f({\text{\boldmath$\Delta$}})_{\loc}:=\{u\in \dot{D}({\text{\boldmath$\Delta$}})_{\loc}\mid {\text{\boldmath$\Delta$}}u\text{ is a finite signed smooth measure}\}. 
\end{align*}
It is easy to see $D({\text{\boldmath$\Delta$}})\subset \dot{D}({\text{\boldmath$\Delta$}})_{\loc}$, 
$D_f({\text{\boldmath$\Delta$}})\subset \dot{D}_f({\text{\boldmath$\Delta$}})_{\loc}$ and 
$1\in \dot{D}_f({\text{\boldmath$\Delta$}})_{\loc}$ with ${\text{\boldmath$\Delta$}}1=0$. 
}
\end{defn}
\begin{cor}\label{cor:Composition}
Take $\eps>0$ and $p\in[1,2]$. If $u\in D({\text{\boldmath$\Delta$}})_+$ {\rm(}resp.~$u\in D_f({\text{\boldmath$\Delta$}})_+${\rm)}, then $(u+\eps^2)^{\frac{p}{2}}\in \dot{D}({\text{\boldmath$\Delta$}})_{\loc}$ {\rm(}resp.~$(u+\eps^2)^{\frac{p}{2}}\in \dot{D}_f({\text{\boldmath$\Delta$}})_{\loc}${\rm)}
and 
\begin{align}
{\text{\boldmath$\Delta$}}(u+\eps^2)^{\frac{p}{2}}&=\frac{p}{2}\left(\frac{p}{2}-1 \right)\left(u+\eps^2\right)^{\frac{p}{2}-2}|\nabla u|^2\m+\frac{p}{2}(\tilde{u}+\eps^2)^{\frac{p}{2}-1}{\text{\boldmath$\Delta$}}u,\label{eq:Composition3}\\
\Delta_{\ll}(u+\eps^2)^{\frac{p}{2}}&=\frac{p}{2}\left(\frac{p}{2}-1 \right)\left(u+\eps^2\right)^{\frac{p}{2}-2}|\nabla u|^2+\frac{p}{2}(\tilde{u}+\eps^2)^{\frac{p}{2}-1}\Delta_{\ll}u\quad\m\text{-a.e.}\label{eq:Composition4}
\end{align}
\end{cor}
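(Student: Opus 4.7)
The plan is to reduce Corollary~\ref{cor:Composition} to Lemma~\ref{lem:Composition} by extracting the non-zero value $\eps^p=(0+\eps^2)^{p/2}$ at the origin.

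First, I would construct a suitable $\Phi\in C^2(\R)$ satisfying $\Phi(0)=0$ and $\Phi',\Phi''\in C_b(\R)$ that agrees with $x\mapsto(x+\eps^2)^{p/2}-\eps^p$ on $[0,\infty)$. Since $p\in[1,2]$ one has $p/2-1\leq 0$ and $p/2-2<0$, so both $x\mapsto\frac{p}{2}(x+\eps^2)^{p/2-1}$ and $x\mapsto\frac{p}{2}\bigl(\frac{p}{2}-1\bigr)(x+\eps^2)^{p/2-2}$ are bounded and continuous on $[0,\infty)$ (they attain their extrema at $x=0$ and either vanish at infinity for $p<2$ or are constant for $p=2$). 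It therefore suffices to glue smoothly to an affine function on $(-\infty,0)$. Because $u\geq 0$ $\m$-a.e., the composition $\Phi(u)$ coincides $\m$-a.e.~with $(u+\eps^2)^{p/2}-\eps^p$.

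Applying Lemma~\ref{lem:Composition} to this $\Phi$ and $u\in D(\text{\boldmath$\Delta$})$ (resp.~$D_f(\text{\boldmath$\Delta$})$) produces $\Phi(u)\in D(\text{\boldmath$\Delta$})$ (resp.~$D_f(\text{\boldmath$\Delta$})$) with
\begin{align*}
\text{\boldmath$\Delta$}\Phi(u)=\Phi''(u)|\nabla u|^2\,\m+\Phi'(u)\,\text{\boldmath$\Delta$}u.
\end{align*}
Since $(u+\eps^2)^{p/2}=\Phi(u)+\eps^p\cdot 1$ $\m$-a.e., $1\in\dot{D}_f(\text{\boldmath$\Delta$})_{\loc}$ with $\text{\boldmath$\Delta$}1=0$ by Definition~\ref{def:measureLaplacian}, and the measure-valued Laplacian is linear, we obtain $(u+\eps^2)^{p/2}\in\dot{D}(\text{\boldmath$\Delta$})_{\loc}$ (resp.~$\dot{D}_f(\text{\boldmath$\Delta$})_{\loc}$) with $\text{\boldmath$\Delta$}(u+\eps^2)^{p/2}=\text{\boldmath$\Delta$}\Phi(u)$. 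Substituting the explicit expressions of $\Phi'$ and $\Phi''$ on $[0,\infty)$ yields \eqref{eq:Composition3}, and passing to the $\m$-absolutely continuous part produces \eqref{eq:Composition4}. In the $D_f$ case, the boundedness of $\Phi'$ and $\Phi''$ together with $|\nabla u|^2\in L^1(M;\m)$ (from $u\in D(\mathscr{E})$) and the total finiteness of $\text{\boldmath$\Delta$}u$ guarantee that both summands on the right are finite signed smooth measures.

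The main obstacle lies in the first step: Lemma~\ref{lem:Composition} requires a function defined on all of $\R$ with $\Phi(0)=0$ and \emph{globally} bounded first and second derivatives, whereas the natural candidate $(x+\eps^2)^{p/2}-\eps^p$ has a $C^2$-singularity at $x=-\eps^2$ and, for $p>2$, unbounded derivatives near $+\infty$. The hypothesis $p\in[1,2]$ is exactly what keeps the derivatives bounded on $[0,\infty)$, while $u\geq 0$ allows us to modify $\Phi$ freely on $(-\infty,0)$ without affecting $\Phi(u)$ $\m$-a.e. Once this extension is secured, the rest of the argument is a routine combination of Lemma~\ref{lem:Composition} with the trivial fact that constants belong to $\dot{D}_f(\text{\boldmath$\Delta$})_{\loc}$ with vanishing Laplacian.
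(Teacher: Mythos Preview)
Your proposal is correct and follows essentially the same route as the paper: define $\Psi(r)=(r+\eps^2)^{p/2}-\eps^p$ on $[0,\infty)$, extend it to a $C^2$-function on $\R$ with $\Psi(0)=0$ and $\Psi',\Psi''\in C_b(\R)$, apply Lemma~\ref{lem:Composition}, and then recover $(u+\eps^2)^{p/2}=\Psi(u)+\eps^p$ using $1\in\dot{D}_f(\text{\boldmath$\Delta$})_{\loc}$ with $\text{\boldmath$\Delta$}1=0$. Your additional commentary on why $p\in[1,2]$ is needed for the boundedness of $\Psi',\Psi''$ on $[0,\infty)$ and on the finiteness in the $D_f$ case is accurate and merely makes explicit what the paper leaves implicit.
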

\begin{proof}[{\bf Proof}] 
Take $\Psi(r):=(r+\eps^2)^{\frac{p}{2}}-\eps^p$ for $r\geq0$.  $\Psi$ can be extended to a $C^2$-function on $\R$ with $\Psi(0)=0$, $\Psi',\Psi''\in C_b(\R)$. Then we have $\Psi(u)\in D({\text{\boldmath$\Delta$}})$ and 
\begin{align*}
{\text{\boldmath$\Delta$}}\Psi(u)=\frac{p}{2}\left(\frac{p}{2}-1 \right)\left(u+\eps^2\right)^{\frac{p}{2}-2}|\nabla u|^2\m+\frac{p}{2}(\tilde{u}+\eps^2)^{\frac{p}{2}-1}{\text{\boldmath$\Delta$}}u.
\end{align*}
In particular, $(u+\eps^2)^{\frac{p}{2}}=\Psi(u)+\eps^p\in \dot{D}({\text{\boldmath$\Delta$}})_{\loc}$ and 
${\text{\boldmath$\Delta$}}(u+\eps^2)^{\frac{p}{2}}={\text{\boldmath$\Delta$}}(\Psi(u)+\eps^p)={\text{\boldmath$\Delta$}}\Psi(u)$.
\end{proof} 

\subsection{The Lebesgue spaces $L^p(T^*\!M)$ and $L^p(TM)$}\label{subsec:LebesgueSpace}
Let $L^0(T^*\!M)$ and $L^0(TM)$ be the $L^0$-modules as in Definition~\ref{df:L0Module} associated to $L^2(T^*\!M)$ and $L^2(TM)$, i.e.,
\begin{align*}
L^0(T^*\!M):=L^2(T^*\!M)^0,\quad  L^0(TM):=L^2(TM)^0.
\end{align*}
The characterization of Cauchy sequences in these spaces grants that 
the point-wise norms $|\cdot|_{\m}:L^2(T^*\!M)\to L^2(M;\m)$ and $|\cdot|_{\m}:L^2(TM)\to L^2(M;\m)$ as well as the (Riesz) musical isomorphisms $\flat:L^2(TM)\to L^2(T^*\!M)$ and $\sharp:L^2(T^*\!M)\to L^2(TM)$ uniquely extend to (non-relabeled) continuous map $\flat:L^0(TM)\to L^0(T^*\!M)$ and $\sharp:L^0(T^*\!M)\to L^0(TM)$. 

For $p\in[1,+\infty]$, let $L^p(T^*\!M)$ and $L^p(TM)$ be the Banach spaces consisting of all $\omega\in 
L^0(T^*\!M)$ and $X\in L^0(TM)$ such that $|\omega|\in L^p(M;\m)$ and $|X|\in L^p(M;\m)$, respectively, endowed with the norms
\begin{align*}
\|\omega\|_{L^p(T^*\!M)}:=\||\omega|_{\m}\|_{L^p(M;\m)},\qquad
\|X\|_{L^p(TM)}:=\||X|_{\m}\|_{L^p(M;\m)}.
\end{align*}
Since by \cite[Lemma~2.7]{Braun:Tamed2021}, $L^2(T^*\!M)$ is separable, and so is $L^2(TM)$ by \cite[Proposition~1.24]{Braun:Tamed2021}. Then one easily derives that if $p\ne+\infty$, the spaces 
$L^p(T^*\!M)$ and $L^p(TM)$ are separable as well. Since $L^2(T^*\!M)$ and $L^2(TM)$ are reflexive as Hilbert spaces, by the discussion in Definition~\ref{df:DualModule} it follows that $L^p(T^*\!M)$ and $L^p(TM)$ are reflexive for every $p\in]1,+\infty[$. For $q\in[1,+\infty[$ such that $1/p+1/q=1$ in the sense of $L^{\infty}$-modules we have the duality
\begin{align*}
L^p(T^*\!M)^*=L^q(TM).
\end{align*}
The point-wise norms $|\cdot|_{\m}:L^p(T^*\!M)\to L^p(M;\m)$ and $|\cdot|_{\m}:L^p(TM)\to L^p(M;\m)$ as well as the (Riesz) musical isomorphisms $\flat:L^p(TM)\to L^p(T^*\!M)$ and $\sharp:L^p(T^*\!M)\to L^p(TM)$ are defined by 
\begin{align}
{}_{L^p(TM)}\langle \omega^{\sharp},X\rangle_{L^q(TM)}:=\omega(X)=:{}_{L^q(T^*\!M)}\langle X^{\flat},\omega\rangle_{L^p(T^*\!M)}\quad \m\text{-a.e.}\label{eq:couplingP}
\end{align}
for $X\in L^q(TM)$ and $\omega\in L^p(T^*\!M)$,  
and we write \eqref{eq:couplingP} by 
\begin{align}
\langle\omega^{\sharp},X\rangle :=\omega(X)=:\langle X^{\flat},\omega\rangle \quad \m\text{-a.e.}\label{eq:couplingP*}
\end{align}
for simplicity.

\subsection{Test and regular objects}\label{subsec:TestandRegularObjects}
\begin{defn}[${\rm Test}(TM)$ and ${\rm Reg}(TM)$]\label{def:TestVecFields}
{\rm We define the subclass of $L^2(TM)$ consisting of \emph{test vector fields} or \emph{regular vector fields}, respectively: 
\begin{align*}
{\rm Test}(TM)&=\left\{\left.\sum_{i=1}^ng_i\nabla f_i\,\right|\, n\in\N,f_i,g_i\in {\rm Test}(M) \right\},\\
{\rm Reg}(TM)&=\left\{\left.\sum_{i=1}^ng_i\nabla f_i\,\right|\, n\in\N,f_i\in {\rm Test}(M),g_i\in {\rm Test}(M)\cup\R\1_M \right\}.
\end{align*}

}
\end{defn}

\begin{defn}[${\rm Test}(T^*\!M)$ and ${\rm Reg}(T^*\!M)$]\label{def:Test1Forms}
{\rm We define the subclass of $L^2(T^*\!M)$ consisting of \emph{test $1$-forms} or \emph{regular $1$-forms}, respectively:
\begin{align*}
{\rm Test}(T^*\!M)&=\left\{\left.\sum_{i=1}^ng_i\d f_i\,\right|\, n\in\N,f_i,g_i\in {\rm Test}(M) \right\},\\
{\rm Reg}(T^*\!M)&=\left\{\left.\sum_{i=1}^ng_i\d f_i\,\right|\, n\in\N,f_i\in {\rm Test}(M),g_i\in {\rm Test}(M)\cup\R\1_M \right\}.
\end{align*}
${\rm Test}(TM)$ (resp.~${\rm Test}(T^*\!M)$) is a dense subspace of $L^p(TM)$ for $p\in[1,+\infty[$ (resp.~$L^p(T^*\!M)$) (see \cite{Kw:HessSchraderUhlenbrock}), hence 
${\rm Reg}(TM)\cap L^p(TM)$ (resp.~${\rm Reg}(T^*\!M)\cap L^p(T^*\!M)$) is dense in $L^p(TM)$ (resp.~$L^p(T^*\!M)$). 
From this, we have that $L^2(TM)\cap L^p(TM)$ (resp.~$L^2(T^*\!M)\cap L^p(T^*\!M)$) is dense in $L^p(TM)$ 
(resp.~$L^p(T^*\!M)$) for $p\in[1,+\infty[$.
}
\end{defn}
\subsection{Lebesgue spaces on tensor products}\label{subsec:LebesgueSpaceTensorProdulct}
Denote the two-fold tensor products of $L^2(T^*\!M)$ and $L^2(TM)$, respectively, in the sense of Definition~\ref{df:TensorProducts} by 
\begin{align*}
L^2((T^*)^{\otimes2}X):=L^2(T^*\!M)\otimes L^2(T^*\!M),\quad 
L^2((T)^{\otimes2}X):=L^2(TM)\otimes L^2(TM).
\end{align*}
By the discussion from Subsection~\ref{subsec:TensorProducts}, 
Theorem~\ref{thm:ModuleProperty} and 
\cite[Proposition~1.24]{Braun:Tamed2021}, both are separable modules. They are point-wise isometrically module isomorphic: the respective pairing is initially defined by 
\begin{align*}
(\omega_1\otimes\omega_2)(X_1\otimes X_2):=\omega_1(X_1)\omega_2(X_2)\quad\m\text{-a.e.}
\end{align*}
for $\omega_1,\omega_2\in L^2(T^*\!M)\cap L^{\infty}(T^*\!M)$ and $X_1,X_2\in L^2(TM)\cap L^{\infty}(TM)$, 
and is extended by linearity and continuity to $L^2((T^*)^{\otimes2}X)$ and $L^2((T)^{\otimes2}X)$, respectively. By a slight abuse of notation, this pairing, with \cite[Proposition~1.24]{Braun:Tamed2021}, induces the (Riesz) musical isomorphisms $\flat:L^2((T)^{\otimes2}X)\to L^2((T^*)^{\otimes2}X)$ and $\sharp:=\flat^{-1}$ given by 
\begin{align}
\langle A^{\sharp}\,|\, T\rangle_{\m}:=A(T)=: \langle A\,|\,T^{\flat}\rangle_{\m}\quad\m\text{-a.e.}\label{eq:musical2}
\end{align}
and  
write $|A|_{\rm HS}:=\sqrt{\langle A\,|\,A\rangle_{\m}}$ and $|T|_{\rm HS}:=\sqrt{\langle T\,|\,T\rangle_{\m}}$ for $A\in L^2((T^*)^{\otimes2}X)$ and $T\in L^2((T)^{\otimes2}X)$. 

We let $L^p((T^*)^{\otimes2}X)$ and $L^p(T^{\otimes2}X)$, $p\in\{0\}\cup[1,+\infty]$, can be defined similarly as in 
Subsection~\ref{subsec:LebesgueSpace}. For $p\in[1,+\infty]$, these spaces naturally become Banach space which, if $p<\infty$, are separable. The following coincidence also holds 
\begin{align*}
L^p((T^*)^{\otimes2}X)=L^p(T^*\!M)\otimes L^p(T^*\!M),\quad 
L^q((T)^{\otimes2}X):=L^q(TM)\otimes L^q(TM)
\end{align*}
and  the (Riesz) musical isomorphisms $\flat:L^q((T)^{\otimes2}X)\to L^p((T^*)^{\otimes2}X)$ and $\sharp:=\flat^{-1}$ can be defined by 
\eqref{eq:musical2} 
for $A\in L^p((T^*)^{\otimes2}X)$ and $T\in L^q((T)^{\otimes2}X)$. 

\bigskip

We define the $L^p$-dense sets, $p\in[1,+\infty]$, intended strongly if $p<\infty$ and weakly* if $p=\infty$, reminiscent of 
Subsection~\ref{subsec:TensorProducts}. 
\begin{align*}
{\rm Test}((T^*)^{\otimes2}X):={\rm Test}(T^*\!M)^{\odot2},\\
{\rm Test}(T^{\otimes2}X):={\rm Test}(TM)^{\odot2},\\
{\rm Reg}((T^*)^{\otimes2}X):={\rm Reg}(T^*\!M)^{\odot2},\\
{\rm Reg}(T^{\otimes2}X):={\rm Reg}(TM)^{\odot2}.
\end{align*}
But the denseness in $L^p((T^*)^{\otimes2}X)$ (resp.~$L^p(TM)^{\otimes2}$) 
for ${\rm Reg}((T^*)^{\otimes2}X)$ (resp.~${\rm Reg}(T^{\otimes2}X)$) should be understood as 
for ${\rm Reg}((T^*)^{\otimes2}X)\cap L^p((T^*)^{\otimes2}X)$ (resp.~${\rm Reg}(T^{\otimes2}X)\cap L^p(T^{\otimes2}X)$).

\subsection{Lebesgue spaces on exterior products}\label{subsec:LebesgueSpExtPro}
Given any $k\in\N\cup\{0\}$, we set 
\begin{align*}
L^2(\Lambda^kT^*\!M):&=\Lambda^k L^2(T^*\!M),\\
L^2(\Lambda^kTM):&=\Lambda^k L^2(TM),
\end{align*}
where the exterior products are defined in Subsection~\ref{subsec:ExteriorProducts}. For $k\in\{0,1\}$, we see 
\begin{align*}
L^2(\Lambda^1T^*\!M)&=L^2(T^*\!M),\\
L^2(\Lambda^1TM)&=L^2(TM),\\
L^2(\Lambda^0T^*\!M)&=L^2(\Lambda^0TM)=L^2(M;\m).
\end{align*}
By Subsection~\ref{subsec:ExteriorProducts}, these are naturally Hilbert modules. As in Subsection~\ref{subsec:LebesgueSpaceTensorProdulct}, 
$L^2(\Lambda^kT^*\!M)$ and $L^2(\Lambda^kTM)$ are pointwise isometrically module isomorphic. 
For brevity, the induced pointwise pairing between $\omega\in L^2(\Lambda^kT^*\!M)$ and $X_1\land \cdots\land X_k\in 
L^2(\Lambda^k TM)$ with $X_1,\cdots, X_k\in L^2(TM)\cap L^{\infty}(TM)$, is written by 
\begin{align*}
\omega(X_1,\cdots, X_k):=\omega(X_1\land\cdots \land X_k). 
\end{align*}
We let $L^p(\Lambda^kT^*\!M)$ and $L^p(\Lambda^k TM)$, $p\in\{0\}\cup[1,+\infty]$, be as in Subsection~\ref{subsec:LebesgueSpace}. 
For $p\in[1,+\infty]$, these spaces are Banach and, if $p<\infty$, additionally separable.  
We define the formal $k$-th exterior products, $k\in\N\cup\{0\}$, of the classes from Subsection~\ref{subsec:TestandRegularObjects} as follows: 
\begin{align*}
{\rm Test}(\Lambda^kT^*\!M):&=\left\{\left. \sum_{i=1}^n f_i^0\d f_i^1\land\cdots\land \d f_i^k  \;\right|\; 
n\in\N, f_i^j\in{\rm Test}(M)\text{ for }0\leq j\leq k \right\},\\
{\rm Test}(\Lambda^kTM):&=\left\{\left. \sum_{i=1}^n f_i^0\nabla f_i^1\land\cdots\land \nabla f_i^k  \;\right|\; 
n\in\N, f_i^j\in{\rm Test}(M)\text{ for }0\leq j\leq k \right\},\\
{\rm Reg}(\Lambda^kT^*\!M):&=\left\{\left. \sum_{i=1}^n f_i^0\d f_i^1\land\cdots\land \d f_i^k  \;\right|\; 
n\in\N, f_i^j\in{\rm Test}(M)\text{ for }1\leq j\leq k,\right. \\
&\hspace{9cm} f_i^0\in{\rm Test}(M)\cup\R\1_M \Biggr\},\\
{\rm Reg}(\Lambda^kTM):&=\left\{\left. \sum_{i=1}^n f_i^0\d f_i^1\land\cdots\land \d f_i^k  \;\right|\; 
n\in\N, f_i^j\in{\rm Test}(M)\text{ for }1\leq j\leq k, \right.\\ 
&\hspace{9cm}
f_i^0\in{\rm Test}(M)\cup\R\1_M \Biggr\}. 
\end{align*}
We employ the evident interpretations for $k=1$, while the respective spaces for $k=0$ are identified with those spaces to which their generic elements's zeroth order terms belong to. These classes are dense in their respective $L^p$-spaces, 
$p\in[1,+\infty]$, intended strongly if $p<\infty$ and weakly* if $p=\infty$.  
But the denseness in $L^p(\Lambda^kT^*\!M)$ (resp.~$L^p(\Lambda^kTM)$) for ${\rm Reg}(\Lambda^kT^*\!M)$ (resp.~${\rm Reg}(\Lambda^kTM)$) should be modified as noted before.

\section{Covariant derivative}

\subsection{The Sobolev spaces $W^{1,2}(TM)$ and $H^{1,2}(TM)$}\label{subsec:W12SobolevTX}
We construct the tangent module $L^2(TM)$ in Subsection~\ref{subsec:TangentModule}. Now we define the $(1,2)$-Sobolev space 
$W^{1,2}(TM)$ based on the notion of $L^2$-Hessian. 
\begin{defn}[$(1,2)$-Sobolev space $W^{1,2}(TM)$]\label{def:SobolevSpaceW12TX}
{\rm The space $W^{1,2}(TM)$ is defined to consist of all $X\in L^2(TM)$ for which there exists a $T\in L^2(T^{\otimes2}X)$
such that for every $g_1,g_2,h\in {\rm Test}(M)$,
\begin{align*}
\int_Mh\langle T\,|\,&\nabla g_1\otimes\nabla g_2\rangle \d\m\\
&=-\int_M\langle X,\nabla g_2\rangle {\rm div}(h\nabla g_1)\d\m-\int_M\,h\,{\rm Hess}\,g_2(X,\nabla g_1)\d\m.
\end{align*}
Here ${\rm Hess}\,g_2\in L^2((T^*)^{\otimes2}X)$ is the \emph{Hessian} defined for $g_2\in {\rm Test}(M)$ (see \cite[Definition~5.2]{Braun:Tamed2021}).  
In case of existence, the element $T$ is unique, denoted by $\nabla X$ and called the \emph{covariant derivative} of $X$. 
}
\end{defn} 

Arguing as in \cite[after Definition~5.2]{Braun:Tamed2021}, the uniqueness statement in Definition~\ref{subsec:W12SobolevTX} is derived. 
In particular, $W^{1,2}(TM)$ constitutes a vector space and the covariant derivative $\nabla$ is a linear operator on it. 
The space $W^{1,2}(TM)$ is endowed with the norm $\|\cdot\|_{W^{1,2}(M)}$ given by
\begin{align*}
\|X\|_{W^{1,2}(TM)}^2:=\|X\|_{L^2(TM)}^2+\|\nabla X\|_{L^2(T^{\otimes2}X)}^2.
\end{align*}
We also define the \emph{covariant functional} $\mathscr{E}_{\rm cov}:L^2(TM)\to [0,+\infty[$ by
\begin{align}
\mathscr{E}_{\rm cov}(X):=\left\{\begin{array}{cc}\displaystyle{\int_M|\nabla X|_{\rm HS}^2\d\m} & \text{ if }X\in W^{1,2}(TM), \\ \infty & \text{ otherwise. }\end{array}\right.\label{eq:covariantfunctional}
\end{align}
It is proved in \cite[Theorem~6.3]{Braun:Tamed2021} that 
$(W^{1,2}(TM),\|\cdot\|_{W^{1,2}(TM)})$ is a separable Hilbert space, $\nabla$ is a closed operator, ${\rm Reg}(TM)\subset W^{1,2}(TM)$, $\|\cdot\|_{W^{1,2}(TM)}$-denseness of $W^{1,2}(TM)$ in $L^2(TM)$, and the lower semi continuity of $\mathscr{E}_{\rm cov}: L^2(TM)\to[0,+\infty[$. 
 
\begin{defn}[$(1,2)$-Sobolev space $H^{1,2}(TM)$]\label{def:SobolevSpaceH12TX}
{\rm We define the space $H^{1,2}(TM)\subset W^{1,2}(TM)$ as the $\|\cdot\|_{W^{12,}(TM)}$-closure of ${\rm Reg}(TM)$:
\begin{align*}
H^{1,2}(TM):=\overline{{\rm Reg}(TM)}^{\|\cdot\|_{W^{1,2}(TM)}}.
\end{align*}
$H^{1,2}(TM)$ is in general a strict subset of $W^{1,2}(TM)$. 
}
\end{defn} 
 
The following lemma is a version of what is known as \emph{Kato's inequality} (for the Bochner Laplacian) 
in the smooth case (see \cite[Chapter~2]{HSU}, \cite[Lemma~3.5]{GP}).  
\begin{lem}[{Kato's inequality, \cite[Lemma~6.12]{Braun:Tamed2021}}]\label{lem:KatoIneq}
For every $X\in H^{1,2}(TM)$, $|X|\in D(\mathscr{E})$ and
\begin{align*}
|\nabla |X||\leq|\nabla X|_{\rm HS}\quad\m\text{-a.e.}
\end{align*}
In particular, if $X\in H^{1,2}(TM)\cap L^{\infty}(M;\m)$, then $|X|^2\in D(\mathscr{E})$.
\end{lem}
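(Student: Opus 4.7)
The strategy is to establish the inequality first on the dense subspace ${\rm Reg}(TM) \subset H^{1,2}(TM)$ via a Bochner-type regularization, and then pass to the full space by lower semicontinuity of $\mathscr{E}_{\rm cov}$ and $\mathscr{E}$.

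Fix $X = \sum_{i=1}^n g_i\,\nabla f_i \in {\rm Reg}(TM)$. Then $|X|_{\m}^2 = \sum_{i,j} g_i g_j\,\Gamma(f_i,f_j)$ belongs to $D(\mathscr{E}) \cap L^{\infty}(M;\m)$, since ${\rm Test}(M)$ is an algebra and each $\Gamma(f_i,f_j)$ lies in $D(\mathscr{E})\cap L^\infty(M;\m)$ by Lemma~\ref{lem:algebra}. Using the Leibniz rule for $\nabla$ built into Definition~\ref{def:SobolevSpaceW12TX} applied to each elementary vector field $g_i\nabla f_i$, together with the symmetry of the Hessian, a direct computation yields the pointwise contraction identity
\begin{align*}
\tfrac{1}{2}\,\d |X|_{\m}^2\,(Y)=\langle \nabla X\,|\,Y\otimes X\rangle_{\m}\quad \m\text{-a.e., for every }Y\in L^2(TM).
\end{align*}
Next, for $\varepsilon\in(0,1)$, set $u_\varepsilon:=\sqrt{|X|_{\m}^2+\varepsilon^2}-\varepsilon$. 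Since $|X|_{\m}^2\in D(\mathscr{E})\cap L^\infty(M;\m)$ and $t\mapsto\sqrt{t+\varepsilon^2}-\varepsilon$ is Lipschitz with value $0$ at $0$, strong locality gives $u_\varepsilon\in D(\mathscr{E})$ with $\d u_\varepsilon=\d|X|_{\m}^2/(2\sqrt{|X|_{\m}^2+\varepsilon^2})$. Combining this with the above contraction identity and the pointwise Hilbert-module Cauchy--Schwarz bound $|\langle \nabla X\,|\,Y\otimes X\rangle_{\m}|\leq |\nabla X|_{\rm HS}\,|Y|_{\m}\,|X|_{\m}$ yields
\begin{align*}
\Gamma(u_\varepsilon)=|\d u_\varepsilon|_{\m}^2\leq |\nabla X|_{\rm HS}^2\cdot\frac{|X|_{\m}^2}{|X|_{\m}^2+\varepsilon^2}\leq |\nabla X|_{\rm HS}^2\quad \m\text{-a.e.}
\end{align*}

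Letting $\varepsilon\downarrow 0$ gives $u_\varepsilon\to|X|_{\m}$ in $L^2(M;\m)$ by dominated convergence, and $\sup_\varepsilon\mathscr{E}(u_\varepsilon)\leq\mathscr{E}_{\rm cov}(X)<\infty$, so lower semicontinuity of $\mathscr{E}$ forces $|X|_{\m}\in D(\mathscr{E})$. A Mazur-type argument on $\{\d u_\varepsilon\}$ (or direct weak $L^2$-compactness in $L^2(T^*\!M)$) identifies a weak subsequential limit with $\d|X|_{\m}$ and transfers the pointwise bound to $|\d|X|_{\m}|_{\m}\leq|\nabla X|_{\rm HS}$ $\m$-a.e. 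For a general $X\in H^{1,2}(TM)$, choose a $W^{1,2}(TM)$-approximating sequence $\{X_n\}\subset{\rm Reg}(TM)$; the reverse triangle inequality $\bigl||X_n|_{\m}-|X|_{\m}\bigr|\leq|X_n-X|_{\m}$ gives $L^2$-convergence of $|X_n|_{\m}\to|X|_{\m}$, and one more invocation of the lower semicontinuity of $\mathscr{E}$ concludes the proof of the inequality for $X$. The last assertion is immediate: if moreover $|X|_{\m}\in L^\infty(M;\m)$, then $|X|_{\m}\in D(\mathscr{E})\cap L^\infty(M;\m)$, an algebra, so $|X|_{\m}^2=|X|_{\m}\cdot|X|_{\m}\in D(\mathscr{E})$.

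The principal technical obstacle is the pointwise contraction identity in the first step: while intuitively clear from the smooth picture $\nabla_Y|X|^2=2\langle\nabla_Y X,X\rangle$, its rigorous verification within the $L^\infty$-module framework requires unpacking the defining testing identity for $\nabla X$ in Definition~\ref{def:SobolevSpaceW12TX}, matching it with the Hessian characterization of \cite[Section~5]{Braun:Tamed2021}, and applying the Leibniz rule for both $\nabla$ and $\d$ on elementary vector fields $g\,\nabla f$. Once this identity is in hand, the Bochner--Kato regularization outlined above is standard.
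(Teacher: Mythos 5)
The paper does not prove this lemma at all—it is quoted verbatim from \cite[Lemma~6.12]{Braun:Tamed2021}—so there is no internal proof to compare against; your argument is correct and is essentially the standard one used there and in Gigli's calculus: establish $\tfrac12\d|X|_{\m}^2(Y)=\langle\nabla X\,|\,Y\otimes X\rangle_{\m}$ on regular fields, regularize with $\sqrt{|X|_{\m}^2+\varepsilon^2}-\varepsilon$, apply the module Cauchy--Schwarz inequality, and close up by lower semicontinuity plus a Mazur/weak-compactness argument, first in $\varepsilon$ and then along a ${\rm Reg}(TM)$-approximating sequence. The only point you rightly flag as needing real work is the contraction identity itself, which is exactly the content of the auxiliary lemma preceding Kato's inequality in Braun's paper; everything else in your outline is routine and sound.
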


\section{Exterior derivative}\label{sec:ExteriorDerivative}
Throughout this section, we fix $k\in\N\cup\{0\}$.  
\subsection{The Sobolev space $D(\d^k)$ and $D(\d_*^k)$}\label{subsec:ExteriorDerivSobolev}

Given $\omega\in L^0(\Lambda^k T^*\!M)$ and $X_0,\cdots, X_k,Y\in L^0(TM)$, we shall use the standard abbreviations: for $1\leq i<j\leq k$ 
\begin{align*}
\omega(\widehat{X}_i):&=\omega(X_0,\cdots,\widehat{X}_i,\cdots, X_k),\\
:&=\omega(X_0\land\cdots\land X_{i-1}\land X_{i+1}\land\cdots\land X_k),\\
\omega(Y,\widehat{X}_i,\widehat{Y}_j):&=\omega(Y,X_0,\cdots,\widehat{X}_i,\cdots,\widehat{X}_j,\cdots, X_k),\\
:&=\omega(Y\land X_0\land\cdots\land X_{i-1}\land X_{i+1}\land\cdots\land Y_{j-1}\land Y_{j+1}\land\cdots\land X_k).
\end{align*}

\begin{defn}[Sobolev space $D(\d^k)$]
{\rm We define $D(\d^k)$ to consist of all $\omega\in L^2(\Lambda^kT^*\!M)$ for which there exists $\eta\in L^2(\Lambda^{k+1}T^*\!M)$ such that for every $X_0,\cdots, X_k\in {\rm Test}(M)$, 
\begin{align*}
\int_M\eta(X_0,\cdots, X_k)\d\m&=\int_M\sum_{i=0}^k(-1)^{i+1}\omega(\widehat{X}_i){\rm div}\,X_i\d\m\\
&\hspace{2cm}+\int_M\sum_{i=0}^k\sum_{j=i+1}^k(-1)^{i+j}\omega([X_i,X_j],\widehat{X}_i,\widehat{X}_j)\d\m.
\end{align*}
In case of existence, the element $\eta$ is unique, denoted by $\d\omega$ and called the \emph{exterior derivative} 
(or \emph{exterior differential}) of $\omega$. 
}
\end{defn}

The uniqueness follows by density of ${\rm Test}(\Lambda^{k+1}T^*\!M)$ in $L^2(\Lambda^{k+1}T^*\!M)$ as discussed in 
Section~\ref{subsec:LebesgueSpace}. It is then clear that  $D(\d^k)$ is a real vector space and that $\d$ is 
a linear operator on it.

We always endow  $D(\d^k)$ with the norm $\|\cdot\|_{D(\d^k)}$ given by 
\begin{align*}
\|\omega\|_{D(\d^k)}^2:=\|\omega\|_{L^2(\Lambda^k T^*\!M)}^2+\|\d\omega\|_{L^2(\Lambda^k T^*\!M)}^2.
\end{align*}

We introduce the functional $\mathscr{E}_{\d}: L^2(\Lambda^kT^*\!M)\to[0,+\infty]$ with
\begin{align*}
\mathscr{E}_{\d}(\omega):=\left\{\begin{array}{cc}\displaystyle{\int_M|\d\omega|^2\d\m} & \text{ if }\omega\in D(\d^k) \\ \infty & \text{ otherwise.} \end{array}\right.
\end{align*} 
We do not make explicit the dependency of $\mathscr{E}_{\d}$ on the degree $k$. It will always be clear 
from the context which one is intended. It is proved in \cite[Theorem~7.5]{Braun:Tamed2021} that 
$(D(\d^k),\|\cdot\|_{D(\d^k)})$  is a separable Hilbert space, the exterior differential $\d$ is a closed operator, 
${\rm Reg}(\Lambda^kT^*\!M)\subset D(\d^k)$, $D(\d^k)$ is dense in $L^2 (\Lambda^kT^*\!M)$, and the functional 
$\mathscr{E}_{\d}:L^2(\Lambda^kT^*\!M)\to[0,+\infty]$ is lower semi continuous. 

\begin{defn}[The space $D_{\rm reg}(\d^k)$]
{\rm We define the space $D_{\rm reg}(\d^k)\subset D(\d^k)$ by the closure of ${\rm Reg}(\Lambda^kT^*\!M)$ with respect to the norm $\|\cdot\|_{D(\d^k)}$:
\begin{align*}
D_{\rm reg}(\d^k):=\overline{{\rm Reg}(\Lambda^kT^*\!M)}^{\|\cdot\|_{D(\d^k)}}.
\end{align*}
It is proved in \cite[Theorem~7.5]{Braun:Tamed2021} that for every $\omega\in D_{\rm reg}(\d^k)$, we have 
$\d\omega\in D_{\rm reg}(\d^{k+1})$ with $\d(\d\omega)=0$. 
}
\end{defn}

\begin{defn}[The space $D(\d_*^k)$]\label{def:adjointextrioderivative}
{\rm Given any $k\in\N$, the space  $D(\d_*^k)$ is defined to consist of all $\omega\in L^2(\Lambda^kT^*\!M)$ for which there exists $\rho\in L^2(\Lambda^{k-1}T^*\!M)$ such that for every $\eta\in {\rm Test}(\Lambda^{k-1}T^*\!M)$, we have 
\begin{align*}
\int_M\langle \rho,\eta\rangle \,\d\m=\int_M\langle \omega,\d\eta\rangle \,\d\m. 
\end{align*}
If it exists, $\rho$ is unique, denoted by $\d_*\omega$ and called the \emph{codifferential} of $\omega$. We simply define 
$D(\d_*^0):=L^0(M;\m)$ and $\d_*:=0$ on this space. 
}
\end{defn}
By the density of ${\rm Test}(\Lambda^{k-1}T^*\!M)$ in $L^2(\Lambda^{k-1}T^*\!M)$, the uniqueness statement is indeed true. 
Furthermore, $\d_*$ is a closed operator, i.e., the image of the assignment ${\rm Id}\times\d_*:D(\d_*^k)\to L^2(\Lambda^kT^*\!M)\times L^2(\Lambda^{k-1}T^*\!M)$ is closed in $L^2(\Lambda^kT^*\!M)\times L^2(\Lambda^{k-1}T^*\!M)$. 

By way of \cite[Theorem~7.5 and Lemma~7.15]{Braun:Tamed2021}, we have the following: 
For $f\in D(\mathscr{E})_e\cap L^{\infty}(M;\m)$ and $\omega\in H^{1,2}(T^*\!M)$ satisfying $\d f\in L^{\infty}(M;\m)$ or $\omega\in L^{\infty}(T^*\!M)$, 
$f\omega\in H^{1,2}(T^*\!M)$ with
\begin{align}
\d(f\omega)&=f\d\omega+\d f\land \omega,\\
\d_*(f\omega)&=f\d_*\omega-\langle \d f,\omega\rangle .
\end{align}

\subsection{The Sobolev spaces $W^{1,p}(\Lambda^kT^*\!M)$ and $H^{1,p}(\Lambda^{k+1}T^*\!M)$ 
}\label{subsec:SobolevExteriorDual}

\begin{defn}[The space $W^{1,2}(\Lambda^kT^*\!M)$]
{\rm We define the space $W^{1,2}(\Lambda^kT^*\!M)$ by 
\begin{align*}
W^{1,2}(\Lambda^kT^*\!M):=D(\d^k)\cap D(\d_*^k).
\end{align*}
By \cite[Theorem~7.5 and Lemma~7.15]{Braun:Tamed2021}, we already know that $W^{1,2}(\Lambda^kT^*\!M)$ is a dense subspace of 
$L^2(\Lambda^kT^*\!M)$. 

We endow $W^{1,2}(\Lambda^kT^*\!M)$ with the norm $\|\cdot\|_{W^{1,2}(\Lambda^kT^*\!M)}$ given by 
\begin{align*}
\|\omega\|_{W^{1,2}(\Lambda^kT^*\!M)}^2:=\|\omega\|_{L^2(\Lambda^kT^*\!M)}^2+\|\d \omega\|_{L^2(\Lambda^{k+1}T^*\!M)}^2+
\|\d_*\omega\|_{L^2(\Lambda^{k-1}T^*\!M)}^2
\end{align*}
and we define the \emph{contravariant} functional $\mathscr{E}_{\rm con}: L^2(\Lambda^kT^*\!M)\to[0,+\infty]$ by 
\begin{align*}
\mathscr{E}_{\rm con}(\omega):=\left\{\begin{array}{cc}\displaystyle{\int_M\left[|\d\omega|^2+|\d_*\omega|^2 \right]\d\m} & \text{ if }\omega\in W^{1,2}(\Lambda^kT^*\!M),\\\infty & \text{otherwise.} \end{array}\right.
\end{align*}
}
\end{defn}
Arguing as for \cite[Theorems~5.3, 6.3 and 7.5]{Braun:Tamed2021}, $W^{1,2}(\Lambda^kT^*\!M)$ becomes a separable Hilbert space with respect to $\|\cdot\|_{W^{1,2}(\Lambda^kT^*\!M)}$. moreover, the functional $\mathscr{E}_{\rm con}:L^2(\Lambda^kT^*\!M)\to[0,+\infty]$ is clearly lower semi continuous. 

Again by \cite[Theorem~7.5 and Lemma~7.15]{Braun:Tamed2021}, we have ${\rm Reg}(\Lambda^kT^*\!M)\subset W^{1,2}(\Lambda^kT^*\!M)$, so that the following definition makes sense. 

\begin{defn}[The space $H^{1,2}(\Lambda^kT^*\!M)$]
{\rm The space $H^{1,2}(\Lambda^kT^*\!M)\subset W^{1,2}(\Lambda^kT^*\!M)$ is defined by 
the closure of ${\rm Reg}(\Lambda^kT^*\!M)$ with respect to $\|\cdot\|_{W^{1,2}(\Lambda^kT^*\!M)}$: 
\begin{align*}
H^{1,2}(\Lambda^kT^*\!M):=\overline{{\rm Reg}(\Lambda^kT^*\!M)}^{\|\cdot\|_{W^{1,2}(\Lambda^kT^*\!M)}}.
\end{align*}

}
\end{defn}

\subsection{Hodge-Kodaira Laplacian}\label{subsec:HodgeKodairaLaplacian}
We now define the Hosdge-Kodaira Laplacian in L$^2$-sense: 
\begin{defn}[$L^2$-Hodge-Kodaira Laplacian $\DD_k$]
{\rm The space $D(\DD_k)$ is defined to consist of all $\omega\in H^{1,2}(\Lambda^kT^*\!M)$ for which there exists $\alpha\in L^2(\Lambda^kT^*\!M)$ such that for every $\eta\in  H^{1,2}(\Lambda^kT^*\!M)$,
\begin{align*}
\int_M\langle \alpha,\eta\rangle \d\m=-\int_M\left[\langle \d\omega,\d\eta\rangle +\langle \d_*\omega,\d_*\eta\rangle  \right]\d\m.
\end{align*}
In case of existence, the element $\alpha$ is unique, denoted by $\DD_k\omega$ and called the 
\emph{Hodge Laplacian}, \emph{Hodge-Kodaira Laplacian} or \emph{Hodge-de~\!\!Rham Laplacian} of $\omega$. 
Formally $\DD_k\omega$ can  be written \lq\lq$\DD_k\omega=-(\d\d_*+\d_*\d)\omega$\rq\rq. 
}
\end{defn}

For the most important case $k=1$, we write $\DD$ instead of $\DD_1$. We see $\DD_0=\Delta$ the usual $L^2$-generator associated to the given quasi-regular strongly local Dirichlet form $(\mathscr{E},D(\mathscr{E}))$. Moreover, the Hodge-Kodaira Laplacian $\DD_k$ is a closed operator. 

\subsection{Heat flow of $1$-forms associated with $\DD$}\label{subsec:HFHosgekodaira}
We define 
the functional $\widetilde{\mathscr{E}}_{\rm con}:L^2(T^*\!M)\to [0,+\infty]$ with 
\begin{align*}
\widetilde{\mathscr{E}}_{\rm con}(\omega):=\left\{\begin{array}{cc}\displaystyle{\int_M\left[|\d\omega|^2+|\d_*\omega|^2 \right]\d\m} & \text{ if }\omega\in H^{1,2}(T^*\!M), \\ \infty & \text{ otherwise.}\end{array}\right.
\end{align*}
We write $\HK$ instead of $\widetilde{\mathscr{E}}_{\rm con}$. Let $(P_t^{\rm HK})_{t\geq0}$ be the heat semigroup of bounded linear and self-adjoint operator on $L^2(T^*\!M)$ formally written by 
\begin{align*}
\text{\lq\lq $P_t^{\rm HK}:=e^{t\,\DD}$\rq\rq}.
\end{align*}
The following are important: 
\begin{lem}[{\cite[Lemma~8.32, Corollaries~8.35 and 8.36]{Braun:Tamed2021}}]
We have the following:
\begin{enumerate}
\item[\rm(1)]  For every $f\in D(\mathscr{E})$ and every $t>0$, $\d P_tf\in D(\DD)$ and 
\begin{align}
P_t^{\rm HK}\d f=\d P_tf.\label{eq:intertwining1}
\end{align}
\item[\rm(2)] If $\omega\in D(\d_*)$ and $t>0$, then $P_t^{\rm HK}\omega\in D(\d_*)$ and 
\begin{align}
\d_*P_t^{\rm HK}\omega=P_t\d_*\omega.\label{eq:intertwining2}
\end{align}
\item[\rm(3)] $\inf\sigma(-\Delta^{\kappa})\leq\inf \sigma(-\DD)$.
\end{enumerate}
\end{lem}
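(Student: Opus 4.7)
I would treat the three claims in turn, as each calls for a different technique: (1) is a generator-level commutation followed by semigroup uniqueness, (2) is a duality argument, and (3) is a spectral bound deduced from a measure-valued Bochner–Weitzenböck inequality together with Kato's inequality.

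\emph{Part (1).} The core is the algebraic identity $\DD\,\d f = \d\,\Delta f$ at the generator level on a suitable dense subclass. Take $f\in D(\Delta)$ with $\Delta f\in D(\mathscr{E})$ and test against an arbitrary $\eta\in H^{1,2}(T^*\!M)$. Since $\d\circ\d=0$ on ${\rm Reg}(\Lambda^1T^*\!M)$ (and hence on $\d f$ by closedness of $\d$), and since $\d_*\d f=-\Delta f$ (obtained by comparing $\mathscr{E}(f,\varphi)=\int\langle\d f,\d\varphi\rangle\,\d\m$ with the defining pairing of $\d_*$ on test functions, then closing), I compute
\begin{align*}
\int_M\langle \DD\,\d f,\eta\rangle\,\d\m
=-\HK(\d f,\eta)
=-\int_M\langle \d_*\d f,\d_*\eta\rangle\,\d\m
=\int_M\langle \d\,\Delta f,\eta\rangle\,\d\m.
\end{align*}
This identifies $\d f\in D(\DD)$ with $\DD\,\d f=\d\,\Delta f$. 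Then $t\mapsto\d P_tf$ and $t\mapsto P_t^{\rm HK}\d f$ both solve the abstract Cauchy problem $u'(t)=\DD\,u(t)$ in $L^2(T^*\!M)$ with the same initial datum $\d f$, so by uniqueness for the self-adjoint semigroup $(P_t^{\rm HK})_{t\ge 0}$ they coincide. The passage from the dense subclass to general $f\in D(\mathscr{E})$ is by applying the identity to $P_\eps f$ and letting $\eps\downarrow 0$, using $L^2$-continuity of $\d$, $P_t$ and $P_t^{\rm HK}$.

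\emph{Part (2).} I would deduce this from (1) by duality. For $\omega\in D(\d_*)$ and any test function $\eta\in{\rm Test}(M)$, self-adjointness of $P_t^{\rm HK}$ on $L^2(T^*\!M)$, the intertwining \eqref{eq:intertwining1} applied to $\eta$, the defining identity of $\d_*$ extended from ${\rm Test}(M)$ to $D(\mathscr{E})$ by density (applied with $P_t\eta\in D(\mathscr{E})$), and self-adjointness of $P_t$ on $L^2(M;\m)$ yield
\begin{align*}
\int_M\langle P_t^{\rm HK}\omega,\d\eta\rangle\,\d\m
=\int_M\langle \omega,\d P_t\eta\rangle\,\d\m
=\int_M(\d_*\omega)\,P_t\eta\,\d\m
=\int_M(P_t\d_*\omega)\,\eta\,\d\m.
\end{align*}
Since $\eta\in{\rm Test}(M)$ is arbitrary, Definition~\ref{def:adjointextrioderivative} gives $P_t^{\rm HK}\omega\in D(\d_*)$ with $\d_*P_t^{\rm HK}\omega=P_t\d_*\omega$.

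\emph{Part (3).} My plan is to combine the measure-valued Bochner–Weitzenböck inequality of the tamed setting with Kato's inequality (Lemma~\ref{lem:KatoIneq}). Under Assumption~\ref{asmp:Tamed}, ${\sf BE}_2(\kappa,\infty)$ is the integrated form of a distributional Weitzenböck identity which, once lifted to the Hilbert module $L^2(T^*\!M)$ via Braun's vector calculus, reads
\begin{align*}
\HK(\omega)\ge \mathscr{E}_{\rm cov}(\omega^\sharp)+\langle 2\kappa,|\omega|_\m^2\rangle,\qquad \omega\in H^{1,2}(T^*\!M).
\end{align*}
Kato's inequality yields $|\nabla|\omega^\sharp||\le|\nabla\omega^\sharp|_{\rm HS}$ pointwise, hence $\mathscr{E}_{\rm cov}(\omega^\sharp)\ge\mathscr{E}(|\omega|_\m)$, so that
\begin{align*}
\HK(\omega)\ge \mathscr{E}(|\omega|_\m)+\langle 2\kappa,|\omega|_\m^2\rangle=\mathscr{E}^{\kappa}(|\omega|_\m).
\end{align*}
Combined with the pointwise identity $\||\omega|_\m\|_{L^2(M;\m)}=\|\omega\|_{L^2(T^*\!M)}$ and the Rayleigh-quotient characterizations
\begin{align*}
\inf\sigma(-\DD)=\inf_{\omega\ne 0}\frac{\HK(\omega)}{\|\omega\|_{L^2(T^*\!M)}^2},\qquad
\inf\sigma(-\Delta^{\kappa})=\inf_{g\ne 0}\frac{\mathscr{E}^{\kappa}(g)}{\|g\|_{L^2(M;\m)}^2},
\end{align*}
this immediately yields $\inf\sigma(-\Delta^{\kappa})\le\inf\sigma(-\DD)$. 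The main obstacle is the first (Weitzenböck) inequality, which is the deep structural output of the tamed framework: it requires unfolding ${\sf BE}_2(\kappa,\infty)$ into a pointwise Bochner-type identity on regular $1$-forms using the Hessian and the covariant derivative, and exploiting the full module-calculus machinery of \cite{Braun:Tamed2021}. Everything else in Part (3) is formal variational calculus once this inequality is in hand.
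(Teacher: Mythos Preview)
The paper does not prove this lemma: it is stated with a citation to \cite[Lemma~8.32, Corollaries~8.35 and 8.36]{Braun:Tamed2021} and no argument is given in the present text. So there is no in-paper proof to compare against; I can only assess whether your outline is sound and consistent with the surrounding machinery.

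Parts (1) and (2) are fine. Your generator identity $\DD\,\d f=\d\,\Delta f$ on the class $\{f\in D(\Delta):\Delta f\in D(\mathscr{E})\}$ is exactly the right starting point, and the semigroup-uniqueness and duality arguments are standard and correct. One small point worth making explicit in (1): to conclude $\d f\in D(\DD)$ you need $\d f\in H^{1,2}(T^*\!M)$, i.e.\ in the $\|\cdot\|_{W^{1,2}}$-closure of ${\rm Reg}(T^*\!M)$, not merely in $D(\d)\cap D(\d_*)$. For $f\in{\rm Test}(M)$ this is immediate since $\d f=\1_M\,\d f\in{\rm Reg}(T^*\!M)$; for general $f$ in your class you should spell out the approximation.

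Part (3) has a coefficient slip. The inequality recorded in this paper as Lemma~\ref{lem:Lemma8.8} (from \cite{Braun:Tamed2021}) is
\[
\mathscr{E}_{\rm cov}(\omega^\sharp)\le \mathscr{E}_{\rm con}(\omega)-\langle\kappa,|\omega|^2\rangle,
\]
i.e.\ $\HK(\omega)\ge \mathscr{E}_{\rm cov}(\omega^\sharp)+\langle\kappa,|\omega|^2\rangle$, with $\kappa$, not $2\kappa$. Since $\kappa$ is allowed to have a nontrivial negative part, your displayed inequality with $2\kappa$ is in general strictly stronger than what is available and would be false. With the correct constant, your chain
\[
\HK(\omega)\ \ge\ \mathscr{E}_{\rm cov}(\omega^\sharp)+\langle\kappa,|\omega|^2\rangle\ \ge\ \mathscr{E}(|\omega|)+\langle\kappa,|\omega|^2\rangle\ =\ \mathscr{E}^{\kappa}(|\omega|)
\]
(using Kato's inequality, Lemma~\ref{lem:KatoIneq}) is exactly right and yields the Rayleigh-quotient comparison $\inf\sigma(-\Delta^{\kappa})\le\inf\sigma(-\DD)$ as you say. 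So the approach is correct once you fix the factor.
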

The formulas \eqref{eq:intertwining1} and \eqref{eq:intertwining2} are called \emph{intertwining properties}, which play 
a crucial role to prove the $L^p$-boundedness of Riesz operator.

The following theorem is known as \emph{Hess-Schrader-Uhlenbrock inequality}. 
This is proved in \cite[Theorem~8.41]{Braun:Tamed2021} under an extra assumption \cite[Assumption~8.37]{Braun:Tamed2021} in the framework of tamed Dirichlet space (Assumption~\ref{asmp:Tamed}). In \cite{Kw:HessSchraderUhlenbrock}, we improve 
\cite[Theorem~8.41]{Braun:Tamed2021} without this assumption. 
Theorem~\ref{thm:HessShcraderUhlenbrock} also plays a crucial role to prove the $L^p$-boundedness of Riesz operator.  

\begin{thm}[{Hess-Schrader-Uhlenbrock inequality, \cite[Theorem~1.2]{Kw:HessSchraderUhlenbrock}}]\label{thm:HessShcraderUhlenbrock}
We have the following: 
\begin{enumerate}
\item[\rm(1)] For every $\omega\in L^2(T^*\!M)$ and $\alpha>C_{\kappa}$, 
\begin{align}
|R_{\alpha}^{\rm HK}\omega|\leq R_{\alpha}^{\kappa}|\omega|\quad\m\text{-a.e.}\label{eq:ResolventHSU}
\end{align}
\item[\rm(2)] For every $\omega\in L^2(T^*\!M)$ and every $t\geq0$, 
\begin{align}
|P_t^{\rm HK}\omega|\leq P_t^{\kappa}|\omega|\quad\m\text{-a.e.}\label{eq:HSU}
\end{align}
\end{enumerate}
\end{thm}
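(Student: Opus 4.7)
The plan is to establish (2) first on a dense subclass of $L^2(T^*\!M)$ and then deduce (1) by integrating the pointwise bound against $e^{-\alpha t}\,\d t$: since $\alpha>C_\kappa$ and $\|P_t^\kappa\|_{2,2}\le Ce^{C_\kappa t}$ by \eqref{eq:KatoContraction}, the vector-valued Laplace transform defining $R_\alpha^{\rm HK}\omega$ converges absolutely, and taking pointwise norms term-by-term gives $|R_\alpha^{\rm HK}\omega|\le\int_0^\infty e^{-\alpha t}|P_t^{\rm HK}\omega|\,\d t\le R_\alpha^\kappa|\omega|$ $\m$-a.e.

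For (2), I would fix $\omega\in {\rm Reg}(T^*\!M)\cap D(\DD)$ (which is dense in $L^2(T^*\!M)$), set $X_t:=P_t^{\rm HK}\omega$, and regularise by $u_t^\varepsilon:=(|X_t|^2+\varepsilon^2)^{1/2}$ for $\varepsilon>0$. The regularity of the Hodge semigroup together with the intertwining properties \eqref{eq:intertwining1} and \eqref{eq:intertwining2} should yield $|X_t|^2\in D_f({\text{\boldmath$\Delta$}})$ and hence $u_t^\varepsilon\in\dot{D}_f({\text{\boldmath$\Delta$}})_{\loc}$ via Corollary~\ref{cor:Composition}. The heart of the argument is the measure-valued Bochner-Weitzenb\"ock inequality valid under Assumption~\ref{asmp:Tamed},
\begin{align*}
\tfrac12{\text{\boldmath$\Delta$}}|X_t|^2 \;\ge\; \bigl(|\nabla X_t|_{\rm HS}^2+\langle X_t,\DD X_t\rangle\bigr)\m + 2\,\widetilde{|X_t|^2}\,\kappa,
\end{align*}
which combined with $\partial_t|X_t|^2=2\langle X_t,\DD X_t\rangle$ $\m$-a.e., with Kato's inequality (Lemma~\ref{lem:KatoIneq}) $|\nabla|X_t||^2\le|\nabla X_t|_{\rm HS}^2$ and the composition formula \eqref{eq:Composition3} applied with $p=1$ to $u=|X_t|^2$, gives the parabolic measure inequality
\begin{align*}
\partial_t u_t^\varepsilon\,\m \;\le\; {\text{\boldmath$\Delta$}} u_t^\varepsilon - 2\,\frac{\widetilde{|X_t|^2}}{u_t^\varepsilon}\,\kappa
\;\le\; {\text{\boldmath$\Delta$}} u_t^\varepsilon - 2u_t^\varepsilon\,\kappa + 2\varepsilon\,|\kappa|.
\end{align*}

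The comparison is then effected by duality: for a nonnegative $\phi\in D(\mathscr{E})\cap L^\infty(M;\m)$, set $h_s:=P_s^\kappa\phi\in D(\Delta^{2\kappa})$, pair the above inequality with $h_{T-t}$, and integrate over $M\times[0,T]$. Using $\partial_t h_{T-t}=-(\Delta-2\kappa)h_{T-t}$ together with the defining weak form of $\mathscr{E}^{2\kappa}$ and the self-adjointness of $P_t^\kappa$ on $L^2(M;\m)$, the cross terms cancel and we obtain $\int_M u_T^\varepsilon\phi\,\d\m\le\int_M P_T^\kappa u_0^\varepsilon\cdot\phi\,\d\m + O(\varepsilon)$, where the $O(\varepsilon)$ error is controlled via Stollmann-Voigt's inequality \eqref{eq:StollmannVoigt} applied to $|\kappa|\in S_D({\bf X})$. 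Varying $\phi$ and sending $\varepsilon\to0$ yields $|P_T^{\rm HK}\omega|\le P_T^\kappa|\omega|$ $\m$-a.e., and density of ${\rm Reg}(T^*\!M)\cap D(\DD)$ together with the $L^2$-continuity of both semigroups (plus lower semicontinuity of the pointwise norm) extends (2) to all of $L^2(T^*\!M)$. The main obstacle will be justifying the measure-valued Bochner-Weitzenb\"ock inequality \emph{without} the auxiliary hypothesis \cite[Assumption~8.37]{Braun:Tamed2021}: this forces a careful approximation scheme in which one truncates $X_t$ and $\kappa$ simultaneously, checks that the singular part of $\kappa$ contributes only through the finite Radon measure $\widetilde{|X_t|^2}\kappa$ (finiteness coming from $\kappa^+\in S_D({\bf X})$, $2\kappa^-\in S_{EK}({\bf X})$, and Stollmann-Voigt), and then passes to the limit in both the regularising parameter $\varepsilon$ and the approximation of $\omega$ using the closedness of $\DD$ and the monotone behaviour of $P_t^\kappa$ on nonnegative data.
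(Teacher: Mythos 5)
The paper does not actually prove this theorem — it is imported from \cite{Kw:HessSchraderUhlenbrock} — so I am judging your proposal on its own merits. Your overall strategy (regularise $|P_t^{\rm HK}\omega|$ by $(|X_t|^2+\varepsilon^2)^{1/2}$, feed in the measure-valued Bochner inequality and Kato's inequality, run a parabolic duality against $P_s^{\kappa}\phi$, then Laplace-transform to get the resolvent statement (1)) is the standard route and is consistent with the machinery the paper assembles (Lemmas~\ref{lem:Lemma8.2}, \ref{lem:Inclusion}, \ref{lem:KatoIneq}, Corollary~\ref{cor:Composition}, Lemma~\ref{lem:LaplacianEta}). The deduction of (1) from (2) is fine. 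But there is a concrete error in the key step.

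Your Bochner inequality carries the wrong constant on the curvature term. Combining Lemma~\ref{lem:Lemma8.2}, which bounds ${\text{\boldmath$\Delta$}}^{2\kappa}\tfrac{|X|^2}{2}$, with Lemma~\ref{lem:Inclusion}, which says ${\text{\boldmath$\Delta$}}^{2\kappa}u^2={\text{\boldmath$\Delta$}}u^2-2\tilde{u}^2\kappa$, gives
\begin{align*}
\tfrac12{\text{\boldmath$\Delta$}}|X_t|^2\;\ge\;\bigl(|\nabla X_t|_{\rm HS}^2+\langle X_t,\DD X_t\rangle\bigr)\m+\widetilde{|X_t|^2}\,\kappa,
\end{align*}
with coefficient $1$, not $2$, on the $\kappa$-term (equivalently, \eqref{eq:RicciMeasure} together with ${\bf Ric}(X,X)\ge\widetilde{|X|^2}\kappa$ from \eqref{eq:RicciLower}). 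The halving of the potential that occurs when you pass from $|X|^2$ to $|X|$ via the $p=1$ chain rule is precisely the ${\sf BE}_2\Rightarrow{\sf BE}_1$ self-improvement, and it is the reason the comparison semigroup is $P_t^{\kappa}$ rather than $P_t^{2\kappa}$. As written, your scheme is internally inconsistent: you take $h_s:=P_s^{\kappa}\phi$, whose generator is $\Delta^{\kappa}=\Delta-\kappa$, but assign it the evolution $\partial_th_{T-t}=-(\Delta-2\kappa)h_{T-t}$; with your doubled potential the cross terms in the Duhamel computation do \emph{not} cancel, and running the argument consistently with $2\kappa$ would yield $|P_t^{\rm HK}\omega|\le P_t^{2\kappa}|\omega|$, which is false in general (on a manifold with $\Ric\ge K$ the sharp domination constant is $e^{-Kt}$, not $e^{-2Kt}$). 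With the corrected coefficient the parabolic inequality reads $\partial_tu_t^{\varepsilon}\,\m\le{\text{\boldmath$\Delta$}}u_t^{\varepsilon}-u_t^{\varepsilon}\kappa+\varepsilon|\kappa|$, the cancellation $\int \tilde h\,\d\,{\text{\boldmath$\Delta$}}u-\int u\,\Delta^{\kappa}h\,\d\m-\int \tilde u\tilde h\,\d\kappa=0$ goes through, and one lands exactly on \eqref{eq:HSU}.

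A secondary gap: Lemma~\ref{lem:Lemma8.2} is stated only for $X\in{\rm Reg}(TM)$, whereas $X_t=P_t^{\rm HK}\omega$ leaves ${\rm Reg}(T^*\!M)$ immediately; extending the Bochner formula to $\eta\in D(\DD)\cap L^{\infty}(T^*\!M)$ needs the approximation carried out in Lemma~\ref{lem:LaplacianEta}, and you must also know that $P_t^{\rm HK}\omega$ stays $\m$-essentially bounded along the flow — which cannot be borrowed from the conclusion \eqref{eq:HSU} without circularity. This, rather than the truncation of $\kappa$, is where the removal of \cite[Assumption~8.37]{Braun:Tamed2021} actually bites; your closing paragraph names the difficulty but does not resolve it.
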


Theorem~\ref{thm:HessShcraderUhlenbrock}(ii) is proved in 
\cite[Theorem~A]{Braun:HeatFlow} in the framework of RCD$(K,\infty)$-space for $K\in\R$, and also 
proved in 
\cite[Theorem~8.41]{Braun:Tamed2021} in the present framework with an extra assumption \cite[Assumption~8.37]{Braun:Tamed2021}.

\begin{cor}[{$C_0$-property of $(P_t^{\rm HK})_{t\geq0}$ on $L^p(M;\m)$, \cite[Corollary~1.3]{Kw:HessSchraderUhlenbrock}}]\label{cor:HessShcraderUhlenbrock}
 Suppose $p\in[2,+\infty]$, or $\kappa^-\in S_K({\bf X})$ and $p\in[1,+\infty]$.  
Then the heat flow $(P_t^{\rm HK})_{t\geq0}$ can be extended to a semigroup on $L^p(T^*\!M)$ and 
for each $t>0$
\begin{align}
\|P_t^{\rm HK}\omega\|_{L^p(T^*\!M)}\leq 
C(\kappa)e^{C_{\kappa}t}\|\omega\|_{L^p(T^*\!M)},\quad \omega\in L^p(T^*\!M).\label{eq:LpContHKHF}
\end{align}
Moreover, if $\kappa^-\in S_K({\bf X})$ and $p\in[1,+\infty[$, then  
$(P_t^{\rm HK})_{t\geq0}$ is strongly continuous on $L^p(T^*\!M)$, i.e., 
$(P_t^{\rm HK})_{t\geq0}$ is a $C_0$-semigroup on $L^p(T^*\!M)$, and further  
$(P_t^{\rm HK})_{t\geq0}$ is weakly* continuous on $L^{\infty}(T^*\!M)$.\end{cor}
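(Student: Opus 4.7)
The plan is to derive both claims from Theorem~\ref{thm:HessShcraderUhlenbrock}(2) together with the $L^p$-mapping properties \eqref{eq:KatoContraction} of the scalar Feynman-Kac semigroup $(P_t^\kappa)$. For \eqref{eq:LpContHKHF}, I would start with $\omega\in L^2(T^*\!M)\cap L^p(T^*\!M)$, take pointwise $L^p$-norms in \eqref{eq:HSU}, and apply \eqref{eq:KatoContraction}:
\begin{align*}
\|P_t^{\rm HK}\omega\|_{L^p(T^*\!M)} = \||P_t^{\rm HK}\omega|_\m\|_{L^p(M;\m)} \leq \|P_t^\kappa|\omega|_\m\|_{L^p(M;\m)}\leq C(\kappa)e^{C_\kappa t}\|\omega\|_{L^p(T^*\!M)}.
\end{align*}
The scalar bound $\|P_t^\kappa\|_{p,p}\leq C(\kappa)e^{C_\kappa t}$ is available on $L^p(M;\m)$ for $p\in[2,\infty]$ under the running extended Kato hypothesis (by Riesz-Thorin interpolation between the $L^2$- and $L^\infty$-bounds coming from \eqref{eq:EquivalencePert} and \eqref{eq:unifKato}), and on $L^p(M;\m)$ for all $p\in[1,\infty]$ under $\kappa^-\in S_K({\bf X})$, which matches the two alternative assumptions in the statement. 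The extension of $P_t^{\rm HK}$ from $L^2\cap L^p$ to all of $L^p(T^*\!M)$ is by density for $p<\infty$, and by passing to the Banach adjoint of the corresponding $L^1$-semigroup for $p=\infty$; consistency with the original $L^2$-definition follows from uniqueness of bounded extensions.

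For strong continuity on $L^p(T^*\!M)$ with $p\in[1,\infty[$ under $\kappa^-\in S_K({\bf X})$, the semigroup property together with \eqref{eq:LpContHKHF} reduces the problem to continuity at $t=0$, and a standard three-term estimate combined with the $L^p$-bound reduces it further to the dense class $L^2(T^*\!M)\cap L^p(T^*\!M)$. For such $\omega$, strong $L^2$-continuity of $(P_t^{\rm HK})$ is automatic, it being the self-adjoint heat semigroup associated with $\HK$, so I can extract a subsequence $t_n\downarrow 0$ along which $P_{t_n}^{\rm HK}\omega\to\omega$ pointwise $\m$-a.e. The Hess-Schrader-Uhlenbrock inequality yields the domination
\begin{align*}
|P_{t_n}^{\rm HK}\omega - \omega|_\m^p \leq 2^{p-1}\bigl[(P_{t_n}^\kappa|\omega|_\m)^p + |\omega|_\m^p\bigr].
\end{align*}
The Kato class assumption ensures that $(P_t^\kappa)$ is a $C_0$-semigroup on $L^p(M;\m)$ (a classical result due to Voigt and Simon), hence $P_{t_n}^\kappa|\omega|_\m\to|\omega|_\m$ in $L^p(M;\m)$, so $(P_{t_n}^\kappa|\omega|_\m)^p\to|\omega|_\m^p$ in $L^1(M;\m)$; in particular the family on the right-hand side above is uniformly integrable. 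Vitali's convergence theorem then delivers $L^p$-convergence along the subsequence, and a standard subsequence-of-subsequence argument promotes it to convergence of the full net $P_t^{\rm HK}\omega\to\omega$ as $t\downarrow 0$.

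Weak* continuity on $L^\infty(T^*\!M)$ follows from duality: if $P_t^{\rm HK}$ on $L^\infty$ is constructed as the Banach adjoint of $(P_t^{\rm HK})$ on $L^1$, then for every $\omega\in L^\infty(T^*\!M)$ and $\eta\in L^1(T^*\!M)$,
\begin{align*}
\int_M \langle P_t^{\rm HK}\omega,\eta\rangle_\m\,\d\m = \int_M \langle \omega, P_t^{\rm HK}\eta\rangle_\m\,\d\m,
\end{align*}
and the right-hand side tends to $\int_M \langle \omega,\eta\rangle_\m\,\d\m$ as $t\downarrow 0$ by the $L^1$-strong continuity just established. The main obstacle I anticipate is precisely the Vitali/uniform-integrability step: it is here that the stronger hypothesis $\kappa^-\in S_K({\bf X})$ (not merely $2\kappa^-\in S_{E\!K}({\bf X})$) is essential, since without the $C_0$-property of $(P_t^\kappa)$ on $L^p(M;\m)$ the HSU domination only produces boundedness of the dominants rather than the $L^1$-convergence required to close the argument.
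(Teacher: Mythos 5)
Your derivation is correct and follows exactly the route the paper intends: the statement is presented as a corollary of the Hess--Schrader--Uhlenbrock domination \eqref{eq:HSU} combined with the $L^p$-mapping and $C_0$-properties of the scalar Feynman--Kac semigroup (the paper defers the proof to \cite{Kw:HessSchraderUhlenbrock}), and your density extension, the domination-plus-a.e.-subsequence argument for strong continuity (best cited as the generalized dominated convergence theorem rather than Vitali, to avoid tightness issues when $\m(M)=\infty$), and the duality argument for weak* continuity all go through. The one point to patch is $p=\infty$ under the first alternative (merely $2\kappa^-\in S_{E\!K}({\bf X})$): the $L^1$-semigroup whose Banach adjoint you take is not covered by your stated hypotheses there, but it does exist, since $\|P_t^{\kappa}\|_{1,1}=\|P_t^{\kappa}\|_{\infty,\infty}\leq Ce^{C_\kappa t}$ by symmetry and \eqref{eq:unifKato}, whence \eqref{eq:HSU} yields the $L^1$-bound for $P_t^{\rm HK}$ on $L^1\cap L^2$ as well.
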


\section{Ricci curvature measures}\label{sec:RicciCurvMeasure}
To state the notion of Ricci curvature measure, we need preliminary preparation.

The following are shown in \cite[Lemmas~8.1 and 8.2]{Braun:Tamed2021}:
\begin{lem}[{\cite[Lemma~8.1]{Braun:Tamed2021}}]
For every $g\in {\rm Test}(M)\cup\R\1_M$ and every $f\in{\rm Test}(M)$. we have $g\,\d f\in D(\DD)$ with 
\begin{align*}
\DD(g\,\d f)=g\,\d \Delta f+\Delta g\,\d f+2{\rm Hess}\,f(\nabla g,\cdot), 
\end{align*}
with the usual interpretations $\nabla \1_M=0$ and $\Delta \1_M=0$. More generally, for every $X\in {\rm Reg}(TM)$ and every $h\in {\rm Test}(M)\cup\R\1_M$, we have $hX^{\flat}\in D(\DD)$ with  
\begin{align*}
\DD(hX^{\flat})=h\DD X^{\flat}-\DD hX^{\flat}+2(\nabla_{\nabla h}X)^{\flat}.
\end{align*}
\end{lem}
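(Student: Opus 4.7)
My plan is to verify the first identity directly from the defining identity of $\DD$ on $1$-forms, and then derive the general second formula by $\R$-linearity and the algebra structure of ${\rm Test}(M)$. Since $f,g \in {\rm Test}(M)$, we have $g\,\d f \in {\rm Reg}(T^*\!M) \subset H^{1,2}(T^*\!M)$ by the inclusion noted in Subsection~\ref{subsec:SobolevExteriorDual}. Set $\alpha := g\,\d\Delta f + \Delta g\,\d f + 2\,{\rm Hess}\,f(\nabla g,\cdot)$. The containment $\alpha \in L^2(T^*\!M)$ follows from $g \in L^\infty(M;\m)$ together with $\d\Delta f \in L^2(T^*\!M)$ (since $\Delta f \in D(\mathscr{E})$), from $\Delta g \in L^2(M;\m)$ paired with $|\d f|_{\m} = \Gamma(f)^{1/2} \in L^\infty(M;\m)$, and from $|\nabla g|_{\m} \in L^\infty(M;\m)$ together with ${\rm Hess}\,f \in L^2((T^*)^{\otimes 2}\!M)$ provided by the Hessian theory for test functions in \cite[Section~5]{Braun:Tamed2021}.

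Next, applying the Leibniz rules for $\d$ and $\d_*$ summarized at the end of Subsection~\ref{subsec:ExteriorDerivSobolev}, together with $\d(\d f)=0$ in $D_{\rm reg}(\d^1)$ and the identity $\d_*\d f = -\Delta f$ (verified by testing against $h \in D(\mathscr{E})$ and using that $\Delta$ generates $\mathscr{E}$), I obtain $\d(g\,\d f) = \d g \wedge \d f$ and $\d_*(g\,\d f) = -g\Delta f - \Gamma(g,f)$. By density of ${\rm Reg}(T^*\!M)$ in $H^{1,2}(T^*\!M)$, identifying $\DD(g\,\d f) = \alpha$ reduces to verifying, for every $\eta \in {\rm Reg}(T^*\!M)$,
\[
-\int_M \langle \d g \wedge \d f, \d\eta\rangle\,\d\m + \int_M (g\Delta f + \Gamma(g,f))\,\d_*\eta\,\d\m = \int_M \langle \alpha, \eta\rangle\,\d\m.
\]
The scalar piece is handled by integration by parts (definition of $\d_*$), since $g\Delta f$ and $\Gamma(g,f)$ both lie in $D(\mathscr{E})$ (the latter by Lemma~\ref{lem:algebra} applied to $g \pm f \in {\rm Test}(M)$), combined with the Leibniz rule $\d(g\Delta f) = \Delta f\,\d g + g\,\d\Delta f$ and the abstract Hessian identity $\d\Gamma(g,f) = {\rm Hess}\,f(\nabla g,\cdot) + {\rm Hess}\,g(\nabla f,\cdot)$, which follows by varying $\ell$ in the symmetrization formula $2\,{\rm Hess}\,f(\nabla g,\nabla \ell) = \Gamma(g,\Gamma(f,\ell)) + \Gamma(\ell,\Gamma(f,g)) - \Gamma(f,\Gamma(g,\ell))$ defining ${\rm Hess}$ on test functions.

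The $2$-form contribution is the hard part. I would establish the codifferential identity
\[
\d_*(\d g \wedge \d f) = -\Delta g\,\d f + \Delta f\,\d g - {\rm Hess}\,f(\nabla g,\cdot) + {\rm Hess}\,g(\nabla f,\cdot)
\]
by pairing both sides against a generic $h\,\d\ell \in {\rm Reg}(T^*\!M)$, expanding
$\langle \d g \wedge \d f, \d h \wedge \d\ell\rangle = \Gamma(g,h)\Gamma(f,\ell) - \Gamma(g,\ell)\Gamma(f,h)$
via the determinantal pairing \eqref{eq:exteroprinnerproduct}, using the chain rule for $\mathscr{E}$ to move $\Delta$'s onto $g$ and $f$, and invoking the same symmetrization identity for ${\rm Hess}$ to identify the remaining second-order terms. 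This pointwise-algebraic verification in the abstract Hilbert-module setting, without recourse to local coordinates, is the principal obstacle.

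With these two pieces in hand, the $\Delta f\,\d g$ contributions and the $\pm{\rm Hess}\,g(\nabla f,\cdot)$ terms cancel exactly, leaving $\int_M \langle \alpha, \eta\rangle\,\d\m$ and proving $\DD(g\,\d f) = \alpha$. For the second formula with $X = \sum_i g_i \nabla f_i \in {\rm Reg}(TM)$ and $h \in {\rm Test}(M) \cup \R\1_M$, I use that $h g_i \in {\rm Test}(M) \cup \R\1_M$ by Lemma~\ref{lem:algebra}, so the first formula applies to each summand $h g_i\,\d f_i$. Expanding $\Delta(h g_i) = h\Delta g_i + g_i\Delta h + 2\Gamma(h,g_i)$ and $\nabla(h g_i) = h\nabla g_i + g_i\nabla h$ via the diffusion product rules, and recognizing $(\nabla_{\nabla h} X)^{\flat} = \sum_i\bigl(\Gamma(g_i,h)\,\d f_i + g_i\,{\rm Hess}\,f_i(\nabla h,\cdot)\bigr)$ from the Leibniz rule for $\nabla$, summation of the first-formula contributions rearranges to yield the claimed expression.
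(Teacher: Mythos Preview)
The paper does not prove this lemma at all; it is stated with a direct citation to \cite[Lemma~8.1]{Braun:Tamed2021} and no argument is given. Your proposal therefore cannot be compared to a proof in the present paper, but your outline is precisely the standard route used in Braun's original: compute $\d$ and $\d_*$ of $g\,\d f$ via the Leibniz rules, reduce the defining identity for $\DD$ to the codifferential formula for $\d g\wedge\d f$, and verify the latter by testing against ${\rm Reg}(T^*\!M)$ using the determinantal pairing and the second-order symmetrization identity for the Hessian. The derivation of the second formula from the first by expanding $\Delta(hg_i)$ and $\nabla(hg_i)$ is also the intended argument.

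One caution: when you carry out that final rearrangement you will obtain $\DD(hX^\flat)=h\,\DD X^\flat+\Delta h\,X^\flat+2(\nabla_{\nabla h}X)^\flat$, i.e.\ with a $+\Delta h$ rather than the $-\DD h$ appearing in the displayed statement here. Since $\DD_0=\Delta$ in this paper's conventions, the two displayed formulas as written are mutually inconsistent on the sign of the $\Delta h$ term (set $X=\nabla f$, $h=g$ to see it). This is a transcription sign slip in the statement, not a defect in your method; just be explicit about which sign you actually prove rather than asserting that the computation ``rearranges to yield the claimed expression.''
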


\begin{lem}[{\cite[Lemma~8.2]{Braun:Tamed2021}}]\label{lem:Lemma8.2}
For every $X\in {\rm Reg}(TM)$, we have $|X|^2\in D({\text{\boldmath$\Delta$}}^{2\kappa})$ with 
\begin{align*}
{\text{\boldmath$\Delta$}}^{2\kappa}\frac{|X|^2}{2}\geq\left[|\nabla X|_{\rm HS}^2+\langle X,(\DD X^{\flat})^{\sharp}\rangle  \right]\m.
\end{align*}
\end{lem}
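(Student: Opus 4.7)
\medskip

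The plan is to bootstrap the Bakry--\'Emery condition ${\sf BE}_2(\kappa,\infty)$ from its standard form (a quadratic inequality for gradient fields $\nabla f$ with $f\in{\rm Test}(M)$) to the desired Bochner--Weitzenb\"ock inequality for a general regular field $X=\sum_{i=1}^n g_i\nabla f_i$. The scalar input is the measure-valued self-improved Bakry--\'Emery inequality, which under Assumption~\ref{asmp:Tamed} asserts that for every $f\in{\rm Test}(M)$, $\Gamma(f)\in D({\text{\boldmath$\Delta$}}^{2\kappa})$ with
\[
\tfrac12{\text{\boldmath$\Delta$}}^{2\kappa}\Gamma(f)\ \geq\ \bigl[\Gamma(f,\Delta f)+|{\rm Hess}\,f|_{\rm HS}^2\bigr]\,\m,
\]
together with its bilinear polarization for $\Gamma(f_i,f_j)$. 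This self-improvement (adding the Hessian term) is established in the tamed framework in \cite[Section~5]{Braun:Tamed2021} via the Bakry--\'Emery $\Gamma_2$-argument applied to $\Phi(f_1,\dots,f_n)$ combined with the chain rule.

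First I would record that $|X|^2=\sum_{i,j=1}^n g_ig_j\,\Gamma(f_i,f_j)$ lies in $D(\mathscr{E})\cap L^{\infty}(M;\m)$, since every summand does by the algebra property of test functions in Lemma~\ref{lem:algebra}. Next, for $h\in D(\mathscr{E})\cap L^{\infty}(M;\m)$, I would compute $-\mathscr{E}^{2\kappa}(h,|X|^2/2)$ by expanding term by term through the Leibniz rule for measure-valued divergence (Lemma~\ref{lem:LeibnizRule}), using the relation ${\text{\boldmath$\Delta$}}^{2\kappa}u={\text{\boldmath$\Delta$}}u-2\tilde u\kappa$ of Lemma~\ref{lem:Inclusion} to pass between $\mathscr{E}^{2\kappa}$ and $\mathscr{E}$. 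This produces three families of terms: (a)~$\nabla g$--$\nabla g$ pairings weighted by $\Gamma(f_i,f_j)$, (b)~mixed $\nabla g$--$\nabla f$ pairings that involve Hessians after one integration by parts, and (c)~$\Delta f$-- and $\Delta g$--contributions. Into each slot of type (a)/(b) the self-improved scalar inequality applied to $\Gamma(f_i,f_j)$ inserts the pointwise surplus $\langle{\rm Hess}\,f_i,{\rm Hess}\,f_j\rangle_{\rm HS}\,\m$.

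The final and most delicate stage is the collection of terms. The $\Delta f$-contributions should be reorganized via the identity for $\DD(g_i\,\d f_i)$ in the preceding lemma (i.e., \cite[Lemma~8.1]{Braun:Tamed2021}): after pairing with $X=\sum_j g_j\nabla f_j$ they collapse to $\langle X,(\DD X^\flat)^\sharp\rangle\,\m$. Simultaneously, the (a)+(b) cross-terms together with the Hessian surplus should match the Hilbert--Schmidt density
\[
|\nabla X|_{\rm HS}^2=\sum_{i,j}\Bigl[\langle\nabla g_i,\nabla g_j\rangle\Gamma(f_i,f_j)+g_j\langle\nabla g_i,{\rm Hess}\,f_j(\nabla f_i,\cdot)^\sharp\rangle+g_i\langle\nabla g_j,{\rm Hess}\,f_i(\nabla f_j,\cdot)^\sharp\rangle+g_ig_j\langle{\rm Hess}\,f_i,{\rm Hess}\,f_j\rangle_{\rm HS}\Bigr],
\]
i.e.\ the natural decomposition $\nabla X=\sum_i\bigl(\d g_i\otimes\nabla f_i+g_i({\rm Hess}\,f_i)^\sharp\bigr)$ in $L^2(T^{\otimes 2}M)$. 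The candidate measure thus assembled is a $\sigma$-finite signed Radon smooth measure dominating $[\,|\nabla X|_{\rm HS}^2+\langle X,(\DD X^\flat)^\sharp\rangle\,]\,\m$, which simultaneously proves $|X|^2\in D({\text{\boldmath$\Delta$}}^{2\kappa})$ and the stated inequality.

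\textbf{Main obstacle.} The hardest step is the algebraic bookkeeping in the last stage: making sure every cross-term produced by the Leibniz expansion lands exactly in $|\nabla X|_{\rm HS}^2\,\m$ or $\langle X,(\DD X^\flat)^\sharp\rangle\,\m$, with no residue beyond the sign-definite Hessian surplus from the self-improved ${\sf BE}_2$. A secondary subtlety is that the Leibniz rule for ${\text{\boldmath$\Delta$}}^{2\kappa}$ cannot be applied factor by factor on $g_ig_j\Gamma(f_i,f_j)$, since $\Gamma(f_i,f_j)$ is not a priori in $D({\text{\boldmath$\Delta$}}^{2\kappa})$; this must be handled by testing $\mathscr{E}^{2\kappa}(h,\cdot)$ against $h\in D(\mathscr{E})\cap L^\infty(M;\m)$ directly, bounding the perturbative integral $\int h^2\,\d\kappa$ via Stollmann--Voigt's inequality~\eqref{eq:StollmannVoigt}.
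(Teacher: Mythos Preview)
The paper does not supply its own proof of this lemma: it is stated with a direct citation to \cite[Lemma~8.2]{Braun:Tamed2021} and used as a black box. Your proposed argument is the correct strategy and is essentially the one carried out in Braun's paper (which in turn follows Savar\'e and Gigli in the ${\sf RCD}$ setting): expand $|X|^2=\sum_{i,j}g_ig_j\Gamma(f_i,f_j)$, feed in the self-improved measure-valued Bochner inequality for each $\Gamma(f_i,f_j)$, and reassemble the pieces into $|\nabla X|_{\rm HS}^2$ and $\langle X,(\DD X^\flat)^\sharp\rangle$ via the formula for $\DD(g\,\d f)$.

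One small correction to your write-up: the ``secondary subtlety'' you flag is not actually an obstacle. Since $f_i\pm f_j\in{\rm Test}(M)$ and the scalar self-improvement gives $\Gamma(f)\in D({\text{\boldmath$\Delta$}}^{2\kappa})$ for every $f\in{\rm Test}(M)$, polarization immediately yields $\Gamma(f_i,f_j)\in D({\text{\boldmath$\Delta$}}^{2\kappa})$. So the Leibniz rule \emph{can} be applied factor by factor to $g_ig_j\Gamma(f_i,f_j)$ (via Lemma~\ref{lem:LeibnizRule} with $f=g_ig_j\in{\rm Test}(M)$), and this is in fact how Braun organizes the computation. The genuine work, as you correctly identify, lies in the algebraic bookkeeping of the cross-terms.
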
 

\begin{defn}[The space $H^{1,2}_{\sharp}(TM)$]
{\rm We define the space $H^{1,2}_{\sharp}(TM)$ as the image of $H^{1,2}(T^*\!M)$ under the map $\sharp$, endowed with the norm 
\begin{align*}
\|X\|_{H^{1,2}_{\sharp}(TM)}:=\|X^{\flat}\|_{H^{1,2}(T^*\!M)}.
\end{align*}
}
\end{defn}
The following is proved in \cite[Lemma~8.8]{Braun:Tamed2021}:
\begin{lem}[{\cite[Lemma~8.8]{Braun:Tamed2021}}]\label{lem:Lemma8.8}
$H^{1,2}_{\sharp}(TM)$ is a subspace of $H^{1,2}(TM)$. The aforementioned natural inclusion is continuous. Additionally, for every $X\in H^{1,2}_{\sharp}(TM)$, 
\begin{align*}
\mathscr{E}_{\rm cov}(X)\leq\mathscr{E}_{\rm con}(X^{\flat})-\langle \kappa, |X|^2\rangle.
\end{align*}
\end{lem}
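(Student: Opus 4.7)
The plan is to first establish the inequality for $X \in {\rm Reg}(TM)$ by integrating the measure-valued Bochner bound of Lemma~\ref{lem:Lemma8.2}, and then to extend to $H^{1,2}_\sharp(TM)$ by a closure argument that simultaneously yields the continuous inclusion $H^{1,2}_\sharp(TM) \hookrightarrow H^{1,2}(TM)$.

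For $X \in {\rm Reg}(TM)$, Lemma~\ref{lem:Lemma8.2} gives $|X|^2 \in D(\text{\boldmath$\Delta$}^{2\kappa})$ with
\[
\text{\boldmath$\Delta$}^{2\kappa}\tfrac{|X|^2}{2}\geq\bigl[|\nabla X|_{\rm HS}^2+\langle X,(\DD X^\flat)^\sharp\rangle\bigr]\m.
\]
Test this signed-measure inequality against a quasi-continuous $\tilde\varphi\geq 0$ with $\varphi\in D(\mathscr{E})\cap L^\infty(M;\m)$: the definition of $D(\text{\boldmath$\Delta$}^{2\kappa})$ together with Lemma~\ref{lem:Inclusion} (which applies since $|X|\in D(\mathscr{E})$ via the algebra property of ${\rm Test}(M)$ from Lemma~\ref{lem:algebra}) gives
\[
-\tfrac{1}{2}\mathscr{E}(\varphi,|X|^2)-\langle\kappa,\tilde\varphi\widetilde{|X|^2}\rangle\geq\int_M\tilde\varphi|\nabla X|_{\rm HS}^2\,\d\m+\int_M\tilde\varphi\,\langle X^\flat,\DD X^\flat\rangle\,\d\m.
\]
Choosing a cut-off sequence $\tilde\varphi_n\nearrow 1$ $\mathscr{E}$-q.e.\ with $\mathscr{E}(\varphi_n,|X|^2)\to 0$ and applying dominated convergence (justified by Stollmann--Voigt \eqref{eq:StollmannVoigt} on the $\kappa$-side, and by $|\nabla X|_{\rm HS}^2,\langle X^\flat,\DD X^\flat\rangle\in L^1(M;\m)$ on the right), together with $\int_M\langle X^\flat,\DD X^\flat\rangle\,\d\m=-\mathscr{E}_{\rm con}(X^\flat)$ from the defining property of $\DD$ applied to $\omega=\eta=X^\flat$, yields the desired inequality on ${\rm Reg}(TM)$.

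To extend to $X \in H^{1,2}_\sharp(TM)$, pick $\omega_n\in{\rm Reg}(T^*\!M)$ with $\omega_n\to X^\flat$ in $W^{1,2}(\Lambda^1T^*\!M)$ and set $X_n:=\omega_n^\sharp\in{\rm Reg}(TM)$. By Kato's inequality (Lemma~\ref{lem:KatoIneq}), $\mathscr{E}(|X_n-X_m|,|X_n-X_m|)\leq\mathscr{E}_{\rm cov}(X_n-X_m)$; applying the just-established inequality to $X_n-X_m$ combined with Stollmann--Voigt (choosing $\alpha_0$ large enough that $\|U_{\alpha_0}|\kappa|\|_\infty$ is suitably small, possible since $|\kappa|\in S_D({\bf X})$) produces
\[
\mathscr{E}_{\rm cov}(X_n-X_m)\leq C\bigl(\mathscr{E}_{\rm con}(\omega_n-\omega_m)+\|X_n-X_m\|_{L^2(TM)}^2\bigr)\to 0.
\]
Hence $(X_n)$ is Cauchy in $W^{1,2}(TM)$ with limit equal to $X$ (by $L^2$-convergence), so $X\in H^{1,2}(TM)$ with continuous dependence on $X^\flat$. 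Passing to the limit in the inequality for $X_n$ via the lower semicontinuity of $\mathscr{E}_{\rm cov}$ and continuity of the $\kappa$-pairing on $D(\mathscr{E})$ finishes the proof.

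The main obstacle is the construction of the cut-off sequence $\tilde\varphi_n\nearrow 1$ with $\mathscr{E}(\varphi_n,|X|^2)\to 0$: this is the usual parabolic/stochastic-completeness-type approximation which, under our hypotheses (quasi-regularity of $\mathscr{E}$ and sub-Markovian $(P_t)_{t\geq0}$), holds routinely—for instance via a truncation of a quasi-continuous exhaustion of $M$ built from $P_t$-images of indicator functions of $\m$-finite sets—but has to be set up with care since $|X|^2$ need only be in $L^1(M;\m)$ and the measure $\text{\boldmath$\Delta$}^{2\kappa}|X|^2/2$ is only $\sigma$-finite. Once this step is handled, the remainder is a routine application of the vector calculus developed above.
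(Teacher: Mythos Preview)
The paper does not prove this lemma; it merely cites \cite[Lemma~8.8]{Braun:Tamed2021}. Your overall strategy---integrate the measure-valued Bochner inequality of Lemma~\ref{lem:Lemma8.2} over $M$ for $X\in{\rm Reg}(TM)$, then pass to $H^{1,2}_\sharp(TM)$ by closure---is the natural one and is essentially Braun's. The closure argument in your second step is fine once you replace ``$|\kappa|\in S_D({\bf X})$'' by the correct hypothesis $\kappa^-\in S_{E\!K}({\bf X})$: membership in $S_D$ alone does \emph{not} make $\|U_\alpha\nu\|_\infty$ small for large $\alpha$, whereas $S_{E\!K}$ gives $\lim_{\alpha\to\infty}\|U_\alpha\kappa^-\|_\infty<1$; drop the $\kappa^+$-term (it has the favorable sign) and apply Stollmann--Voigt only to $\kappa^-$.

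The genuine gap is in your first step. You ask for cut-offs $\varphi_n\nearrow 1$ with $\mathscr{E}(\varphi_n,|X|^2)\to 0$ and claim this ``holds routinely'' under quasi-regularity and sub-Markovianity. That is too strong: such a sequence would force $\text{\boldmath$\Delta$}|X|^2(M)=0$, which amounts to conservativeness, and the paper explicitly allows ${\bf X}$ to be non-conservative (see the remark after \eqref{eq:gradCont}). Your suggested construction via $P_t$-images of indicators only produces $\varphi_n\to P_t 1$, which equals $1$ if and only if ${\bf X}$ is conservative. What actually suffices for the inequality is the one-sided bound $\text{\boldmath$\Delta$}|X|^2(M)\le 0$, equivalently $\liminf_n\mathscr{E}(\varphi_n,|X|^2)\ge 0$. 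This weaker statement \emph{does} follow from sub-Markovianity together with $|X|^2\ge 0$: since $|X|^2\in D_f(\text{\boldmath$\Delta$})$ (the right-hand side of \eqref{eq:MeasureWeitzenboeck} is a finite signed measure by Theorem~\ref{thm:Theorem8.9}), for any $\varphi_n\in D(\mathscr{E})\cap L^\infty$ with $0\le\varphi_n\le 1$ and $\tilde\varphi_n\to 1$ $\mathscr{E}$-q.e.\ one has $-\mathscr{E}(\varphi_n,|X|^2)\to\text{\boldmath$\Delta$}|X|^2(M)$ by dominated convergence, and the latter is $\le 0$ because $\int_M(P_t|X|^2-|X|^2)\,\d\m=\int_M|X|^2(P_t1-1)\,\d\m\le 0$ for all $t>0$ (sub-Markovianity plus $|X|^2\in L^1$), which transfers to the total mass of $\text{\boldmath$\Delta$}|X|^2$ via the Revuz correspondence for the CAF $N^{[|X|^2]}$. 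This is the content you must supply; it is not as routine as you suggest, but once stated correctly it is short.
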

Then, we can state the main result of \cite{Braun:Tamed2021}: 
\begin{thm}[{{\cite[Theorem~8.9]{Braun:Tamed2021}}}]
\label{thm:Theorem8.9}
There exists a unique continuous mapping ${\bf Ric}^{\kappa}: H^{1,2}_{\sharp}(TM)^2\to \mathfrak{M}_{\rm f}^{\pm}(M)_{\mathscr{E}}$ satisfying the identity
\begin{align}
{\bf Ric}^{\kappa}(X,Y)={\text{\boldmath$\Delta$}}^{2\kappa}\frac{\langle X,Y\rangle }{2}-\left[\frac12\langle X,(\DD Y^{\flat})^{\sharp}\rangle 
+\frac12\langle Y,(\DD X^{\flat})^{\sharp}\rangle +\langle \nabla X\,|\,\nabla Y\rangle 
 \right]\m\label{eq:kappaRicciMeasure}
\end{align}
for every $X,Y\in {\rm Reg}(TM)$. Here $\mathfrak{M}_{\rm f}^{\pm}(M)_{\mathscr{E}}$ denotes the family of finite signed smooth measures. 
The map ${\bf Ric}^{\kappa}$ is symmetric and $\R$-linear. Furthermore, for every $X,Y\in H^{1,2}_{\sharp}(TM)$, it obeys
\begin{align}
{\bf Ric}^{\kappa}(X,X)&\geq0,\label{eq:NonKappaRicci}\\
{\bf Ric}^{\kappa}(X,Y)(M)&=\int_M\left[\langle \d X^{\flat},\d Y^{\flat}\rangle +\langle \d_*X^{\flat},\d_*Y^{\flat}\rangle  \right]\d\m\notag\\
 &\hspace{2cm}- \int_M\langle \nabla X\,|\,\nabla Y\rangle \d\m-\langle \kappa,\widetilde{\langle X,Y\rangle }\rangle ,\label{eq:RicciKappaMeasureTotal}\\
 \|{\bf Ric}^{\kappa}(X,Y)\|_{\rm TV}^2&\leq\left[\mathscr{E}_{\rm con}(X^{\flat})-\langle \kappa,|X|^2\rangle  \right]\left[\mathscr{E}_{\rm con}(Y^{\flat})-\langle \kappa,|Y|^2\rangle  \right].\label{eq:RicciKappaCont}
\end{align}
\end{thm}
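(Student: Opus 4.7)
The plan is to first define ${\bf Ric}^{\kappa}$ on the algebraic core ${\rm Reg}(TM)^2$ via the right-hand side of \eqref{eq:kappaRicciMeasure}, verify the claimed properties \eqref{eq:NonKappaRicci}--\eqref{eq:RicciKappaCont} on that class, and then extend by continuity to $H^{1,2}_{\sharp}(TM)^2$; uniqueness is automatic from the density of ${\rm Reg}(TM)$ in $H^{1,2}_{\sharp}(TM)$, since $H^{1,2}(T^*\!M)$ is by definition the closure of ${\rm Reg}(T^*\!M)$ and $\sharp$ is an isometry under the chosen norm. For well-definedness on ${\rm Reg}(TM)^2$: Lemma~\ref{lem:Lemma8.2} gives $|X|^2/2\in D({\text{\boldmath$\Delta$}}^{2\kappa})$ for $X\in{\rm Reg}(TM)$, and since $D({\text{\boldmath$\Delta$}}^{2\kappa})$ is a vector space on which the operator is $\mathbb{R}$-linear, polarising via $2\langle X,Y\rangle=|X+Y|^2-|X|^2-|Y|^2$ yields $\langle X,Y\rangle\in D({\text{\boldmath$\Delta$}}^{2\kappa})$ (only membership polarises, not the inequality). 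The remaining bracket in \eqref{eq:kappaRicciMeasure} is an $L^1(M;\m)$-density since $\nabla X,\nabla Y\in L^2(T^{\otimes 2}M)$ by ${\rm Reg}(TM)\subset W^{1,2}(TM)$ and $\DD X^{\flat},\DD Y^{\flat}\in L^2(T^*\!M)$ by the lemma stated just before Lemma~\ref{lem:Lemma8.2}. Symmetry and $\mathbb{R}$-bilinearity are manifest from the defining formula.

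Positivity \eqref{eq:NonKappaRicci} for $X\in{\rm Reg}(TM)$ is a direct rewriting of Lemma~\ref{lem:Lemma8.2}. For the total-mass identity \eqref{eq:RicciKappaMeasureTotal}, evaluate ${\text{\boldmath$\Delta$}}^{2\kappa}\tfrac12\langle X,Y\rangle(M)$ by testing against the constant function $1$: if $1\in D(\mathscr{E})$, strong locality gives $\mathscr{E}(1,\tfrac12\langle X,Y\rangle)=0$, and Lemma~\ref{lem:Inclusion} yields ${\text{\boldmath$\Delta$}}^{2\kappa}\tfrac12\langle X,Y\rangle(M)=-\langle\kappa,\widetilde{\langle X,Y\rangle}\rangle$; otherwise use an $\mathscr{E}$-nest with cut-offs $h_n\in D(\mathscr{E})\cap L^{\infty}(M;\m)$, $h_n\nearrow 1$, passing to the limit via strong locality for the $\mathscr{E}$-part and dominated convergence (controlled by Stollmann--Voigt \eqref{eq:StollmannVoigt}) for the $\kappa$-part. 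The two middle terms in \eqref{eq:kappaRicciMeasure} paired with $1$ combine via the defining relation of $\DD$ as $-(\d\d_*+\d_*\d)$ into the Hodge contribution $\int_M[\langle\d X^{\flat},\d Y^{\flat}\rangle+\langle\d_*X^{\flat},\d_*Y^{\flat}\rangle]\,\d\m$, giving \eqref{eq:RicciKappaMeasureTotal}. This simultaneously confirms that ${\bf Ric}^{\kappa}(X,X)$ has finite total mass (so \emph{a posteriori} $|X|^2/2\in D_f({\text{\boldmath$\Delta$}}^{2\kappa})$), hence ${\bf Ric}^{\kappa}(X,Y)\in\mathfrak{M}_{\rm f}^{\pm}(M)_{\mathscr{E}}$. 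For \eqref{eq:RicciKappaCont} I invoke a measure-valued Cauchy--Schwarz: applying \eqref{eq:NonKappaRicci} to $X+tY\in{\rm Reg}(TM)$ for all $t\in\mathbb{R}$ gives $|{\bf Ric}^{\kappa}(X,Y)(A)|^2\leq {\bf Ric}^{\kappa}(X,X)(A)\cdot{\bf Ric}^{\kappa}(Y,Y)(A)$ on every Borel set $A$; summing over a finite Borel partition, applying scalar Cauchy--Schwarz, and taking the supremum gives $\|{\bf Ric}^{\kappa}(X,Y)\|_{\rm TV}^2\leq {\bf Ric}^{\kappa}(X,X)(M)\cdot {\bf Ric}^{\kappa}(Y,Y)(M)$, and \eqref{eq:RicciKappaMeasureTotal} combined with $\mathscr{E}_{\rm cov}(X)\geq 0$ bounds this by the claimed product.

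For the continuous extension: by Lemma~\ref{lem:Lemma8.8} together with Stollmann--Voigt applied to $|\kappa|\in S_D({\bf X})$, the right-hand side of \eqref{eq:RicciKappaCont} is dominated by a fixed multiple of $\|X\|_{H^{1,2}_{\sharp}(TM)}\cdot\|Y\|_{H^{1,2}_{\sharp}(TM)}$, so ${\bf Ric}^{\kappa}\colon{\rm Reg}(TM)^2\to(\mathfrak{M}_{\rm f}^{\pm}(M)_{\mathscr{E}},\|\cdot\|_{\rm TV})$ is bilinear and bounded; as the target is a Banach space and ${\rm Reg}(TM)$ is dense in $H^{1,2}_{\sharp}(TM)$, ${\bf Ric}^{\kappa}$ extends uniquely and continuously, and \eqref{eq:NonKappaRicci}--\eqref{eq:RicciKappaCont} pass to the limit. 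The main obstacle I anticipate lies in the polarisation/finiteness step: Lemma~\ref{lem:Lemma8.2} provides only an inequality and only $D$-membership rather than $D_f$-membership, so the existence of ${\bf Ric}^{\kappa}(X,Y)$ as a \emph{finite} signed smooth measure must be bootstrapped through the total-mass computation, with careful use of the Leibniz rules (Lemma~\ref{lem:LeibnizRule}, Lemma~\ref{lem:algebra}) and the identity of Lemma~\ref{lem:Inclusion} relating ${\text{\boldmath$\Delta$}}^{2\kappa}$ and ${\text{\boldmath$\Delta$}}$. A secondary technical point is the $h=1$ test when $1\notin D(\mathscr{E})$, handled by the $\mathscr{E}$-nest exhaustion indicated above.
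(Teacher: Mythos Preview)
The paper does not prove this theorem at all: it is quoted verbatim from \cite[Theorem~8.9]{Braun:Tamed2021} and used as a black box, with no argument supplied. There is therefore no ``paper's own proof'' to compare against.

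That said, your outline is the standard route (and matches Braun's original argument in spirit): define ${\bf Ric}^{\kappa}$ on ${\rm Reg}(TM)^2$ by the right-hand side of \eqref{eq:kappaRicciMeasure}, read off \eqref{eq:NonKappaRicci} from Lemma~\ref{lem:Lemma8.2}, compute the total mass \eqref{eq:RicciKappaMeasureTotal} by testing against~$1$ (or an exhausting sequence), derive the TV-bound \eqref{eq:RicciKappaCont} by measure-valued Cauchy--Schwarz, and extend by density. The obstacles you flag---upgrading $D({\text{\boldmath$\Delta$}}^{2\kappa})$ to $D_f({\text{\boldmath$\Delta$}}^{2\kappa})$ via the total-mass identity, and handling $1\notin D(\mathscr{E})$---are the genuine ones. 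One small point you gloss over: for the continuous extension you assert that $(\mathfrak{M}_{\rm f}^{\pm}(M)_{\mathscr{E}},\|\cdot\|_{\rm TV})$ is a Banach space, but smoothness of the limit measure is not automatic from TV-completeness of signed Borel measures; you should argue separately that the limit charges no $\mathscr{E}$-polar set, e.g.\ via the uniform domination $|{\bf Ric}^{\kappa}(X,Y)|\leq \tfrac12({\bf Ric}^{\kappa}(X,X)+{\bf Ric}^{\kappa}(Y,Y))$ and the non-negativity of the diagonal terms.
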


Keeping in mind Theorem~\ref{thm:Theorem8.9}, we define the map 
${\bf Ric}:H^{1,2}_{\sharp}(TM)^2\to \mathfrak{M}_{\rm f}^{\pm}(M)_{\mathscr{E}}$ by 
\begin{align}
{\bf Ric}(X,Y):={\bf Ric}^{\kappa}(X,Y)+\langle \kappa,\widetilde{\langle X,Y\rangle }\rangle . \label{eq:RicciMeasure}
\end{align}
This map is well-defined, symmetric and $\R$-bilinear. ${\bf Ric}$ is also jointly continuous, which follows from polarization, \cite[Corollary~6.14]{Braun:Tamed2021} and Lemma~\ref{lem:Lemma8.8}. Lastly, by Theorem~\ref{thm:Theorem8.9}, Lemma~\ref{lem:Lemma8.2} and \cite[Lemma~8.12]{Braun:Tamed2021}, for every $X,Y\in{\rm Reg}(TM)$, we have a similar expression 
with \eqref{eq:kappaRicciMeasure}: 
\begin{align}
{\bf Ric}(X,Y)={\text{\boldmath$\Delta$}}\frac{\langle X,Y\rangle }{2}-\left[\frac12\langle X,(\DD Y^{\flat})^{\sharp}\rangle 
+\frac12\langle Y,(\DD X^{\flat})^{\sharp}\rangle +\langle \nabla X\,|\,\nabla Y\rangle  \right]\m\label{eq:RicciMeasure}
\end{align}
Denote by ${\bf Ric}_{\ll}^{\kappa}(X,Y)$ (resp.~${\bf Ric}_{\perp}^{\kappa}(X,Y)$) the absolutely continuous (resp.~singular) part of ${\bf Ric}^{\kappa}(X,Y)$ for $X,Y\in H^{1,2}_{\sharp}(TM)$ with respect to $\m$. Then we see ${\bf Ric}_{\ll}^{\kappa}(X,X)\geq0$ and 
${\bf Ric}_{\perp}^{\kappa}(X,X)\geq0$ for $X\in H^{1,2}_{\sharp}(TM)$, which yields 
\begin{align}
\left\{\begin{array}{rl}{\bf Ric}_{\ll}(X,X)&={\bf Ric}_{\ll}^{\kappa}(X,X)+\wt{|X|^2}\kappa_{\ll}\geq\wt{|X|^2}\kappa_{\ll}, \\
{\bf Ric}_{\perp}(X,X)&={\bf Ric}_{\perp}^{\kappa}(X,X)+\wt{|X|^2}\kappa_{\perp}\geq\wt{|X|^2}\kappa_{\perp}\end{array}\right.
\label{eq:RicciLower}
\end{align}
for $X\in  H^{1,2}_{\sharp}(TM)$. 
Here $\kappa_{\ll}$ (resp.~$\kappa_{\perp}$) is the absolutely continuous (resp.~singular) part of $\kappa$ with respect to $\m$. 
We define the Ricci tensor $\mathfrak{Ric}(X,Y)$ by 
\begin{align}
\mathfrak{Ric}(X,Y):=\frac{\d{\bf Ric}_{\ll}(X,Y)}{\d\m}\quad\text{ for }\quad X,Y\in H^{1,2}_{\sharp}(TM).\label{eq:RicciTensor}
\end{align}
In particular, by \eqref{eq:RicciLower}
\begin{align}
\mathfrak{Ric}(X,X)\geq |X|^2\frac{\d\kappa_{\ll}}{\d\m}\quad\text{ for }\quad X\in H^{1,2}_{\sharp}(TM).\label{eq:RicciTensor*}
\end{align}

\section{Littlewood-Paley $G$-functions}\label{sec:LittewoodPaley}
Recall the heat flow $(P_t^{\rm HK})_{t\geq0}$ of Hodge-Kodaira Laplacian $\DD$ acting on $L^2(T^*\!M)$. 
By Corollary~\ref{cor:HessShcraderUhlenbrock}, $(P_t^{\rm HK})_{t\geq0}$ can be 
regarded as a bounded semigroup on $L^p(T^*\!M)$. 
Let $(Q_t^{(\alpha),{\rm HK}})_{t\geq0}$ be the $\alpha$-order Cauchy semigroup acting on $L^p(T^*\!M)$ defined by 
\begin{align*}
Q_t^{(\alpha),{\rm HK}}\theta:=\int_0^{\infty}e^{-\alpha s}P_s^{\rm HK}\theta\lambda_t(\d s),\quad \theta\in L^p(T^*\!M),
\end{align*}
where $\lambda_t$ with $t\geq0$ is the probability measure on $[0,+\infty[$, whose Laplace transform is given by 
\begin{align*}
\int_0^{\infty}e^{-\gamma s}\lambda_t(\d s)=e^{-\sqrt{\gamma}t}, \quad \gamma\geq0.
\end{align*}
For $t>0$, $\lambda_t$ has the following expression
\begin{align*}
\lambda_t(\d s)=\frac{t}{2\sqrt{\pi}}e^{-t^2/4s}s^{-3/2}\d s.
\end{align*}
For $\theta\in L^2(T^*\!M)$, $Q_t^{(\alpha),{\rm HK}}\theta\in D(\DD)$ and 
\begin{align*}
\|\DD Q_t^{(\alpha),{\rm HK}}\theta\|_{L^2(T^*\!M)}\leq\left(\frac{1}{\sqrt{2}}\int_0^{\infty}e^{-\alpha s}\frac{1}{s}\lambda_t(\d s) \right)\|\theta\|_{L^2T^*\!M)}<\infty.
\end{align*}
$Q_t^{(\alpha),{\rm HK}}\theta$ can be formally written by 
\begin{align*}
Q_t^{(\alpha),{\rm HK}}\theta(x)= \text{\lq\lq $e^{-\sqrt{\alpha-\DD}\,t}\theta(x)$\rq\rq.}
\end{align*}
By Corollary~\ref{cor:HessShcraderUhlenbrock} and $\kappa^-\in S_K({\bf X})$, we see that
\begin{align}
\|Q_t^{(\alpha),{\rm HK}}\theta\|_{L^p(T^*\!M)}\leq Ce^{-\sqrt{\alpha-C_{\kappa}} t}\|\theta\|_{L^p(T^*\!M)}\label{eq:QtContraction}
\end{align}
holds for any $p\in[1,+\infty]$ 
and $(Q_t^{(\alpha),{\rm HK}})_{t\geq0}$ is a strongly continuous semigroup on $L^p(T^*\!M)$ for $p\in[1,+\infty[$ and $\alpha\geq C_{\kappa}$ and 
weakly* continuous on $L^{\infty}(T^*\!M)$ in the same way of the proof of  
\cite[Corollary~1.3]{Kw:HessSchraderUhlenbrock}. 
The infinitesimal generator of $(Q_t^{(\alpha),{\rm HK}})_{t\geq0}$ on $L^p(T^*\!M)$ is denoted by 
$-\sqrt{\alpha-\DD_p}$, where $\DD_p$ is the infinitesimal generator of $(P_t^{\rm HK})_{t\geq0}$ on $L^p(T^*\!M)$. 
 We may omit the subscript $p$ for simplicity. 

For $\theta\in L^2(T^*\!M)\cap L^p(T^*\!M)$ and $\alpha\geq C_{\kappa}$, we define the Littlewood-Paley's $G$-functions by 
\begin{align*}
{g_{\stackrel{\rightarrow}{\theta}}}(x,t)&:=\left|\frac{\partial}{\partial t}(Q_t^{(\alpha),\rm HK}\theta)(x) \right|,\qquad\qquad\quad {G_{\stackrel{\rightarrow}{\theta}}}(x):=\left(\int_0^{\infty}t{g_{\stackrel{\rightarrow}{\theta}}}(x,t)^2\d t  \right)^{1/2},\\
{g_{\theta}^{\uparrow}}
(x,t)&:=\left|\nabla(Q_t^{(\alpha),\rm HK}\theta)^{\sharp}(x)\right|_{\rm HS},\qquad\quad\quad{G_{\theta}^{\uparrow}}(x):=\left(\int_0^{\infty}t{g_{\theta}^{\uparrow}}(x,t)^2\d t  \right)^{1/2},\\
g_{\theta}(x,t)&:=\sqrt{{g_{\stackrel{\rightarrow}{\theta}}}(x,t)^2+
{g_{\theta}^{\uparrow}}
(x,t)^2}, \qquad\quad G_{\theta}(x):=\left(\int_0^{\infty}tg_{\theta}(x,t)^2\d t \right)^{1/2}.
\end{align*}

\begin{lem}[{Maximal ergodic inequality, \cite[Theorem~3.3]{ShigekawaText}}]\label{lem:MaxErgo}
Suppose $p\in]1,+\infty[$. For $f\in L^p(M;\m)$, 
\begin{align}
\left\|\sup_{t\geq0}P_t|f| \right\|_{L^p(M;\m)}\leq \frac{p}{p-1}\|f\|_{L^p(M;m)}.\label{eq:MaxErgo}
\end{align}
\end{lem}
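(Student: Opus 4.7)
The plan is to follow the classical Hopf--Dunford--Schwartz / Stein strategy for symmetric sub-Markovian semigroups. As already recorded in the excerpt, $(P_t)_{t\geq 0}$ extends to a strongly continuous contraction semigroup on $L^p(M;\m)$ for every $p\in[1,+\infty[$, and by symmetry together with sub-Markovianity it is also a contraction on $L^1(M;\m)$ and on $L^\infty(M;\m)$. Replacing $f$ by $|f|$, one may assume $f\geq 0$; measurability of $x\mapsto\sup_{t\geq 0}P_t f(x)$ is then ensured by restricting the supremum to rational $t$, using strong continuity of $t\mapsto P_t f$ on $L^p$ together with $P_t f\geq 0$.

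First I would apply the Hopf--Dunford--Schwartz maximal ergodic theorem to the Ces\`aro averages
\[
A_r f(x):=\frac{1}{r}\int_0^r P_s f(x)\,\d s,\qquad r>0,
\]
which, thanks to the simultaneous $L^1$- and $L^\infty$-contractivity of $P_t$, yields the averaged bound $\|\sup_{r>0}A_r f\|_{L^p(M;\m)}\leq (p/(p-1))\|f\|_{L^p(M;\m)}$ for $p\in\,]1,+\infty[$, already with the sharp constant.

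The decisive second step is to pass from $\sup_r A_r f$ to $\sup_t P_t f$ without losing the constant. Here I would invoke Rota's dilation theorem, available precisely because $(P_t)$ is symmetric and sub-Markovian: on an auxiliary probability space one can realise $P_{2t}f$ as the conditional expectation of an isometric lift of $f$ with respect to a decreasing filtration $(\mathscr{F}_t)_{t\geq 0}$. Doob's maximal inequality for this reversed martingale carries the sharp constant $p/(p-1)$ on $L^p$, and pulling it back gives the claim with $P_{2t}$ in place of $P_t$. A continuity-in-$t$ argument, using strong continuity of $(P_t)$ on $L^p$ together with nonnegativity of $P_t f$, then extends the bound from even times to all $t\geq 0$.

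The main obstacle is the Rota-type representation step: both the symmetry of $(P_t)$ and its sub-Markov property enter essentially there, and it is exactly this martingale representation that delivers the sharp Doob constant $p/(p-1)$, rather than the generically larger constants that come out of Stein's purely analytic complex-interpolation approach. Once the dilation is in place, the remaining manipulations---measurability of the supremum, reduction to nonnegative $f$, and the Hopf averaged estimate---are routine in our quasi-regular Dirichlet form framework.
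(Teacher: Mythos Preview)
The paper does not prove this lemma; it is quoted as \cite[Theorem~3.3]{ShigekawaText} and used as a black box. So there is no in-paper argument to compare against---the reference is to Shigekawa's textbook, where the result is established for symmetric sub-Markovian semigroups precisely via the Rota--Doob mechanism you describe.

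Your outline is essentially that standard proof, but the two-step structure is redundant: once you invoke Rota's dilation and Doob's inequality for the reversed martingale, you obtain $\|\sup_{t\geq0}P_{2t}f\|_{L^p}\leq \frac{p}{p-1}\|f\|_{L^p}$ directly, and since $\{2t:t\geq0\}=[0,\infty)$ this already \emph{is} the claim---no ``continuity-in-$t$'' patching from even to all times is needed, and the Hopf--Dunford--Schwartz bound on the Ces\`aro averages $A_r f$ plays no role. (Indeed, the averaged maximal inequality controls $\sup_r A_r f$, which neither dominates nor is dominated by $\sup_t P_t f$ pointwise, so step~1 does not feed into step~2.) One genuine technical point you should flag: Rota's theorem is usually formulated on a probability space, whereas here $\m$ is merely $\sigma$-finite; the extension is standard (exhaust $M$ by sets of finite measure, or appeal directly to Stein's version in \cite{Stein}), but it is the only place where the framework of the paper requires a word of justification.
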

\begin{prop}\label{prop:MaxErgo}
Suppose that $\kappa^-\in S_K({\bf X})$ and $p\in]1,+\infty[$. Then there exists $C_p>0$ such that for any $\alpha\geq C_{\kappa}$,
\begin{align*}
\int_M\sup_{t\geq0}\left| Q_t^{(\alpha),{\rm HK}}\theta(x)\right|^p\m(\d x)\leq C_p\|\theta\|_{L^p(T^*\!M)}^p,\quad \theta\in L^p(T^*\!M). 
\end{align*}
\end{prop}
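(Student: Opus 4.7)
The strategy is to dominate the form-valued Cauchy semigroup by a scalar Schr\"odinger semigroup via Hess-Schrader-Uhlenbrock (Theorem~\ref{thm:HessShcraderUhlenbrock}), and then reduce the resulting scalar maximal inequality to the classical Stein maximal estimate of Lemma~\ref{lem:MaxErgo} for the Markov semigroup $P_t$.

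First I apply HSU: $|P_s^{\rm HK}\theta|_{\m}\leq P_s^\kappa|\theta|_{\m}$ for every $s\geq 0$. Inserting this into the subordination representation of $Q_t^{(\alpha),{\rm HK}}\theta$ and using that $\lambda_t$ is a probability measure with $e^{-\alpha s}\leq 1$ yields the $\m$-a.e.\ pointwise bound
\begin{align*}
\sup_{t\geq 0}\bigl|Q_t^{(\alpha),{\rm HK}}\theta(x)\bigr|\leq \sup_{s\geq 0}\bigl[e^{-\alpha s}P_s^\kappa|\theta|(x)\bigr].
\end{align*}
This reduces matters to the scalar maximal inequality $\|\sup_s e^{-\alpha s}P_s^\kappa f\|_{L^p(M;\m)}\leq C_p\|f\|_{L^p(M;\m)}$ for nonnegative $f=|\theta|_\m\in L^p(M;\m)$. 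Next, for $f\geq0$ I use the Feynman-Kac representation and Cauchy-Schwarz: since $\kappa^-\in S_K({\bf X})$ implies $2\kappa^-\in S_K({\bf X})$ (the Kato class is stable under positive scalar multiplication), estimate \eqref{eq:unifKato} gives $\E_x[e^{2A_s^{\kappa^-}}]\leq Ce^{2C_\kappa s}$, so that
\begin{align*}
e^{-\alpha s}P_s^\kappa f(x)\leq e^{-\alpha s}\E_x\bigl[e^{2A_s^{\kappa^-}}\bigr]^{1/2}\bigl(P_sf^2(x)\bigr)^{1/2}\leq C\,e^{(C_\kappa-\alpha)s}\bigl(P_sf^2(x)\bigr)^{1/2}\leq C\bigl(P_sf^2(x)\bigr)^{1/2}
\end{align*}
for $\alpha\geq C_\kappa$. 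Hence $\sup_s e^{-\alpha s}P_s^\kappa f\leq C(\sup_s P_sf^2)^{1/2}$. For $p>2$, applying Lemma~\ref{lem:MaxErgo} at the exponent $p/2>1$ to the Markov semigroup $P_s$ immediately gives
\begin{align*}
\Bigl\|\sup_s(P_sf^2)^{1/2}\Bigr\|_{L^p(M;\m)}=\Bigl\|\sup_s P_sf^2\Bigr\|_{L^{p/2}(M;\m)}^{1/2}\leq\left(\frac{p}{p-2}\right)^{1/2}\|f\|_{L^p(M;\m)},
\end{align*}
completing the case $p>2$.

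The main obstacle is extending the estimate to $1<p\leq 2$, where the Cauchy-Schwarz argument breaks down because $p/2\leq 1$ is outside the range of Lemma~\ref{lem:MaxErgo}. For $p=2$ I intend to use the spectral theorem directly: the semigroup $T_s:=e^{-s(\alpha-\Delta^\kappa)}=e^{-\alpha s}P_s^\kappa$ is self-adjoint, positivity-preserving, and an $L^2$-contraction whenever $\alpha\geq C_\kappa$ (since then $\alpha-\Delta^\kappa\geq 0$), so its maximal function is controlled in $L^2$ by Stein's $L^2$-maximal ergodic theorem. For $1<p<2$, I plan to replace Cauchy-Schwarz by H\"older's inequality with some exponent $q'\in (1,p)$: using that $q\kappa^-\in S_K({\bf X})$ for every $q>0$, the estimate $e^{-\alpha s}P_s^\kappa f(x)\leq C_q e^{(C_{q\kappa^-}/q-\alpha)s}(P_sf^{q'})^{1/q'}(x)$ would reduce the problem to Lemma~\ref{lem:MaxErgo} at the admissible exponent $p/q'>1$; the delicate technical point is choosing $q$ (hence the small parameter $t_0$ entering the definition of $C_{q\kappa^-}$) so that the exponential prefactor $e^{(C_{q\kappa^-}/q-\alpha)s}$ remains uniformly bounded in $s$. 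Alternatively, the full range $1<p\leq 2$ can be recovered from Marcinkiewicz interpolation between the strong $(p_0,p_0)$-estimate for some $p_0>2$ established above and the $L^2$ bound, combined with the sublinearity of the maximal operator $\theta\mapsto \sup_t|Q_t^{(\alpha),{\rm HK}}\theta|$.
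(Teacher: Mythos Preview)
Your H\"older route is the paper's proof; the paper simply does it for all $p\in]1,\infty[$ at once without a case split. After the HSU reduction (which you carry out correctly), the paper fixes any $\beta>p/(p-1)$, sets $\beta':=\beta/(\beta-1)\in]1,p[$, and applies H\"older with exponents $(\beta,\beta')$ inside the expectation to obtain
\[
\sup_{t\geq0}\bigl|Q_t^{(\alpha),{\rm HK}}\theta\bigr|\ \leq\ \sup_{s\geq0}\E_x\bigl[e^{-\alpha s-A_s^{\kappa}}|\theta|(X_s)\bigr]\ \leq\ C^{1/\beta}\,\sup_{s\geq0}\bigl(P_s|\theta|^{\beta'}\bigr)^{1/\beta'},
\]
and then invokes Lemma~\ref{lem:MaxErgo} at the admissible exponent $p/\beta'>1$. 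Your Cauchy--Schwarz argument for $p>2$ is exactly this with $\beta=\beta'=2$, and your proposed H\"older step for $1<p<2$ is the general case. The ``delicate technical point'' you flag is precisely what \eqref{eq:KatoCoincidence} settles: since $\kappa^-\in S_K({\bf X})$ one has $\beta\kappa^-\in S_K({\bf X})$, and the freedom in the choice of $t_0$ allows one to normalize so that $C_{\beta\kappa^-}\leq C_{\kappa}$ and $C(\beta\kappa^-)=C(\kappa^-)$; hence $e^{-\alpha s}\E_x[e^{\beta A_s^{\kappa^-}}]\leq Ce^{(C_\kappa-\alpha)s}\leq C$ for every $\alpha\geq C_\kappa$. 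No spectral argument for $p=2$ is needed.

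Your interpolation alternative, however, does not cover the range you claim: Marcinkiewicz between a strong $(2,2)$ bound and a strong $(p_0,p_0)$ bound with $p_0>2$ yields $p\in[2,p_0]$, not $p\in]1,2]$. To reach exponents below $2$ by interpolation you would need an endpoint estimate at some $p_1<2$ (for instance a weak-$(1,1)$ bound for the maximal operator), which you have not established and which is not available from the ingredients at hand.
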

\begin{proof}[{\bf Proof}] 
By Theorem~\ref{thm:HessShcraderUhlenbrock}, 
\begin{align*}
\left|  Q_t^{(\alpha),{\rm HK}}\theta(x)\right|&=\left|\int_0^{\infty}e^{-\alpha s}P_s^{\rm HK}\theta(x)\lambda_t(\d s) \right|\\
&\leq \int_0^{\infty}e^{-\alpha s}\left| P_s^{\rm HK}\theta(x)\right|\lambda_t(\d s)\\
&\leq \int_0^{\infty}e^{-\alpha s} P_s^{\kappa}|\theta|(x)\lambda_t(\d s)\\
&=\int_0^{\infty}\E_x\left[e^{-\alpha s-A_s^{\kappa}}|\theta|(X_s) \right]\lambda_t(\d s).
\end{align*}
Take any $\beta>p/(p-1)>1$ and $\beta':=\beta/(\beta-1)\in]1,p[$. Then
\begin{align*}
\sup_{t\geq0}\left|  Q_t^{(\alpha),{\rm HK}}\theta(x)\right|&\leq \sup_{s\geq0}\E_x\left[e^{-\alpha s-A_s^{\kappa}}|\theta|(X_s) \right]\\
&\leq \sup_{s\geq0}\E_x\left[e^{-\beta(\alpha s+A_s^{\kappa})} \right]^{\frac{1}{\beta}}\E_x\left[|\theta|^{\beta'}(x)\right]^{\frac{1}{\beta'}}\\
&\leq \sup_{s\geq0,y\in M}\left(P_s^{\beta(\alpha\m+\kappa)}1(y) \right)^{\frac{1}{\beta}}\cdot\sup_{s\geq0}\left( P_s|\theta|^{\beta'}(x)\right)^{\frac{1}{\beta'}}\\
&\leq \sup_{s\geq0,y\in M}\left(P_s^{\alpha\m-\beta\kappa^-}1(y) \right)^{\frac{1}{\beta}}\cdot\sup_{s\geq0}\left( P_s|\theta|^{\beta'}(x)\right)^{\frac{1}{\beta'}}\\
&\hspace{-0.2cm}\stackrel{\eqref{eq:KatoCoincidence}}{\leq} \sup_{s\geq0}\left( C e^{-\alpha s+C_{\kappa}s}\right)^{\frac{1}{\beta}}\cdot\sup_{s\geq0}\left( P_s|\theta|^{\beta'}(x)\right)^{\frac{1}{\beta'}}\\
&\leq C^{\frac{1}{\beta}}\sup_{s\geq0}\left( P_s|\theta|^{\beta'}(x)\right)^{\frac{1}{\beta'}}, \quad (\alpha\geq C_{\kappa}).
\end{align*}
Applying Lemma~\ref{lem:MaxErgo}, we obtain
\begin{align*}
\int_M\sup_{t\geq0}\left|  Q_t^{(\alpha),{\rm HK}}\theta(x)\right|^p\m(\d x)&=C^{\frac{p}{\beta}}\int_M\left|
\sup_{s\geq0}P_s|\theta|^{\beta'}(x)
 \right|^{\frac{p}{\beta'}}\m(\d x)\\
 &\leq C^{\frac{p}{\beta}}\left(\frac{\frac{p}{\beta'}}{\frac{p}{\beta'}-1} \right)^{\frac{p}{\beta'}}
 \int_M|\theta(x)|^{\beta'\frac{p}{\beta'}}\m(\d x)\\
 &=C^{\frac{p}{\beta}}\left(\frac{p}{p-\beta'} \right)^{\frac{p}{\beta'}}\|\theta\|_{L^p(T^*\!M)}^p\\
 &=C^{\frac{p}{\beta}}\left(\frac{p(\beta-1)}{(p-1)\beta-p} \right)^{\frac{p(\beta-1)}{\beta}}\|\theta\|_{L^p(T^*\!M)}^p.
\end{align*}
\end{proof} 

Now we set $\eta=\eta(x,t):=Q_t^{(\alpha),{\rm HK}}\theta(x)$.  
Let $\overline{\nabla}:=\left(\nabla, \frac{\partial}{\partial t}\right)$ be the covariant derivative over $X\times \R$ and 
$\nabla $ the covariant derivative acting on $W^{1,2}(TM)$: 
\begin{align*}
|\overline{\nabla}\eta(x,t)|^2:= \left|\nabla (Q_t^{(\alpha),{\rm HK}}\theta)^{\sharp}(x)\right|_{\rm HS}^2+\left|\frac{\partial}{\partial t}Q_t^{(\alpha),{\rm HK}}\theta(x) \right|^2=|\nabla\eta^{\sharp}(x,t)|_{\rm HS}^2+\left|\frac{\partial}{\partial t}\eta(x,t) \right|^2.
\end{align*}

\section{Auxiliary Brownian motion}\label{sec:BrawnianMotion}
Let $(B_t,\P_{\stackrel{\rightarrow}{a}})$ be one-dimensional Brownian motion 
starting at $a\in \R$ with the generator $\frac{\partial^2}{\partial a^2}$. We set 
$\widehat{M}:=M\times\R$, $\hat{x}:=(x,a)\in \widehat{M}$, 
$\widehat{X}_t:=(X_t,B_t)$, $t\geq0$, $\widehat{\m}:=\m\otimes m$ 
 and $\P_{\hat{x}}:=\P_x\otimes \P_{\stackrel{\rightarrow}{a}}$. 
Then $\widehat{\bf X}:=(\widehat{X}_t,\P_{\hat{x}})$ is an $\widehat{\m}$-symmetric diffusion process on 
$\widehat{M}$ with the (formal) generator $\Delta+\frac{\partial^2}{\partial a^2}$, where $m$ is one-dimensional Lebesgue measure. Let $\tau:=\inf\{t>0\mid B_t=0\}$ be the first hitting time of $B_t$ to $\{0\}$.  
We denote the semigroup on $L^p(\widehat{M};\widehat{\m})$ associated with the diffusion process 
$(\widehat{X}_t)_{t\geq0}$ by $(\widehat{P}_t)_{t\geq0}$ and its generator by $\widehat{\Delta}_p$. We also denote the Dirichlet form on $L^2(\widehat{M};\widehat{\m})$ associated with $\widehat{\Delta}_2$ by 
$(\widehat{\mathscr{E}}, D(\widehat{\mathscr{E}}))$. That is, 
\begin{align*}
D(\widehat{\mathscr{E}}):&=\left\{u\in L^2(\widehat{M};\widehat{\m})\;\left|\; \lim_{t\to0}\frac{1}{t}(u-\widehat{P}_tu,u)_{L^2(\widehat{M};\widehat{\m})}<+\infty\right.\right\},\\
\widehat{\mathscr{E}}(u,v):&=\lim_{t\to0}\frac{1}{t}(u-\widehat{P}_tu,v)_{L^2(\widehat{M};\widehat{\m})}\qquad\text{ for }\quad u,v\in D(\widehat{\mathscr{E}}).
\end{align*} 
Since $(\widehat{\mathscr{E}}, D(\widehat{\mathscr{E}}))$ is associated to the 
$\widehat{\m}$-symmetric Borel right process $\widehat{\bf X}$, it is quasi-regular by 
Fitzsimmons~\cite{Fitzsimmons}. 
We need the following lemma:
\begin{lem}\label{lem:Capacity}
We have the following:
\begin{enumerate}
\item[\rm(1)] Let $N$ be an $\wh{\mathscr{E}}$-exceptional set. Then, for $m$-a.e.~$a\in \R$, $N_a:=\{x\in M\mid (x,a)\in N\}$ is an $\mathscr{E}$-exceptional set. 
\item[\rm(2)] Let $(x,a)\mapsto u(x,a)$ be an $\wh{\mathscr{E}}$-quasi continuous function. Then, for $m$-a.e.~$a\in \R$, 
$x\mapsto u(x,a)$ is an $\mathscr{E}$-quasi continuous function. 
\end{enumerate}
\end{lem}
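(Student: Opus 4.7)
Both parts will be reduced to a single Fubini-type principle for $\widehat{\mathscr{E}}$-nests, which is natural because $(\widehat{\mathscr{E}},D(\widehat{\mathscr{E}}))$ is the Dirichlet form of the product diffusion $\widehat{\bf X}=(X_t,B_t)$, hence coincides with the tensor product $\mathscr{E}\otimes \mathcal{E}^{(B)}$ on $L^2(\widehat{M};\widehat{\m})$. Concretely, I would prove the following claim: if $\{\widehat{F}_n\}$ is an $\widehat{\mathscr{E}}$-nest of closed subsets of $\widehat{M}$, then for $m$-a.e.~$a\in\R$ the family $\{F_n^a\}$ with $F_n^a:=\{x\in M\mid (x,a)\in\widehat{F}_n\}$ is an $\mathscr{E}$-nest of closed subsets of $M$. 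Given this claim, (1) is immediate: by quasi-regularity of $(\widehat{\mathscr{E}},D(\widehat{\mathscr{E}}))$, choose an $\widehat{\mathscr{E}}$-nest $\{\widehat{F}_n\}$ with $N\subset\bigcap_n(\widehat{M}\setminus\widehat{F}_n)$, then $N_a\subset\bigcap_n(M\setminus F_n^a)$ for $m$-a.e.~$a$ and the slice nest witnesses $\mathscr{E}$-exceptionality of $N_a$. Part~(2) follows along the same lines: take a nest $\{\widehat{F}_n\}$ on which $u$ is continuous; for $m$-a.e.~$a$ the restriction $u(\cdot,a)|_{F_n^a}$ is continuous on $F_n^a=\widehat{F}_n\cap(M\times\{a\})$ (viewed as a closed subset of $M$), and by the claim $\{F_n^a\}$ is an $\mathscr{E}$-nest, so $x\mapsto u(x,a)$ is $\mathscr{E}$-quasi continuous.

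To prove the claim I would characterize $\widehat{\mathscr{E}}$-nests (resp.~$\mathscr{E}$-nests) via capacities: $\{F_n\}$ is a nest if and only if there exist open sets $U_n\supset M\setminus F_n$ whose $1$-capacity tends to $0$. Pick accordingly open $\widehat{U}_n\supset \widehat{M}\setminus\widehat{F}_n$ with $\widehat{\rm Cap}_1(\widehat{U}_n)\to 0$, realized by functions $g_n\in D(\widehat{\mathscr{E}})$ with $g_n\geq1$ $\widehat{\m}$-a.e.~on $\widehat{U}_n$ and $\widehat{\mathscr{E}}_1(g_n,g_n)\to 0$. Passing to a subsequence so that $\sum_n\widehat{\mathscr{E}}_1(g_n,g_n)<\infty$, I would apply the product (Tonelli) identity
\begin{align*}
\widehat{\mathscr{E}}_1(g,g)=\int_\R \mathscr{E}_1(g(\cdot,a),g(\cdot,a))\,dm(a)+\int_M\mathcal{E}^{(B)}(g(x,\cdot),g(x,\cdot))\,d\m(x),
\end{align*}
valid for the tensor product Dirichlet form, to deduce that $g_n(\cdot,a)\in D(\mathscr{E})$ and $\mathscr{E}_1(g_n(\cdot,a),g_n(\cdot,a))\to 0$ for $m$-a.e.~$a\in\R$ along a further diagonal subsequence. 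Since $g_n(\cdot,a)\geq 1$ $\m$-a.e.~on the slice $U_n^a$ of $\widehat{U}_n$, this yields ${\rm Cap}_{\mathscr{E}_1}(M\setminus F_n^a)\to 0$ for $m$-a.e.~$a$, proving the claim.

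\textbf{Main obstacle.} The core difficulty lies in justifying the slicing procedure and the Tonelli identity above for \emph{arbitrary} $g\in D(\widehat{\mathscr{E}})$, not only for pure tensors in $D(\mathscr{E})\otimes D(\mathcal{E}^{(B)})$. Since $D(\widehat{\mathscr{E}})$ is defined as the $\widehat{\mathscr{E}}_1$-closure of this algebraic tensor product, one needs a density argument together with a diagonal extraction to transfer the pointwise identity ``$g(\cdot,a)\in D(\mathscr{E})$ with the expected energy'' to the closure, and also to ensure that the condition $g_n\geq 1$ survives on slice subsets of $\widehat{U}_n$ in an $\m$-a.e.~sense. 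This is classical (treated e.g.~in Bouleau-Hirsch or via the identification with the generator of the product process), but making it precise in the present quasi-regular Lusin setting is the main technical point; I would invoke the known tensor-product theory for Dirichlet forms and cite it rather than reprove it, keeping the argument focused on the capacity/nest transfer.
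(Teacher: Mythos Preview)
Your central ``claim'' (that slicing an $\widehat{\mathscr{E}}$-nest yields an $\mathscr{E}$-nest for $m$-a.e.\ $a$) is the right reduction, but your proof of it rests on a false characterization of nests. You assert that $\{F_n\}$ is a nest if and only if there exist open $U_n\supset M\setminus F_n$ with ${\rm Cap}_1(U_n)\to 0$; this fails already for $M=\R$ with Lebesgue measure and the standard Dirichlet form, where $F_n=[-n,n]$ is certainly an $\mathscr{E}$-nest but ${\rm Cap}_1(\R\setminus[-n,n])=+\infty$ for every $n$. Nests admit only a \emph{local} capacitary description (e.g.\ ${\rm Cap}_1(K\setminus F_n)\to 0$ for every $K$ in a fixed compact $\mathscr{E}$-nest), and pushing such a local condition through your Fubini argument requires controlling countably many auxiliary sets simultaneously while keeping the exceptional $a$-set $m$-null---a genuinely different and harder task than what you wrote.

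Your slicing-of-capacities idea does work cleanly for part~(1) if you bypass nests entirely: from $\widehat{\rm Cap}_1(N)=0$ pick open $\widehat U_k\supset N$ with minimizers $g_k$ satisfying $\widehat{\mathscr{E}}_1(g_k,g_k)\to 0$, then slice via the tensor identity of Lemma~\ref{lem:Core} to get ${\rm Cap}_1(N_a)=0$ for $m$-a.e.\ $a$ (this is essentially the analytic content behind the paper's citation of \cite{Okura}). But part~(2) hinges on the full nest-slicing claim, and your argument for that is broken. The paper avoids the difficulty by switching to a probabilistic viewpoint: it invokes the equivalence of $\mathscr{E}$-quasi continuity with $\mathscr{E}$-q.e.\ fine continuity (\cite[Theorems~4.2.2 and 4.6.1]{FOT}) and then checks fine continuity of $x\mapsto u(x,a)$ via right-continuity of $t\mapsto u(\widehat X_t)$ along the product process (\cite[Theorem~A.2.7]{FOT}), which sidesteps nests altogether.
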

\begin{proof}[{\bf Proof}]
(1) is proved in $\widehat{\rm O}$kura~\cite[Theorem~4.1(4)]{Okura}. We prove (2). Since the $\mathscr{E}$-quasi continuity is equivalent to the $\mathscr{E}$-q.e.~fine continuity by 
\cite[Theorem~4.2.2 and Theorem~4.6.1]{FOT}, it suffices to prove that the fine continuity of $(x,a)\mapsto u(x,a)$ with respect to 
$\wh{\bf X}$ implies the fine continuity of $x\mapsto u(x,a)$ for each $a\in\R$ with respect to ${\bf X}$. 
This can be confirmed by that for each $a\in \R$  
\begin{align*}
t\mapsto u(\wh{X}_t)=u(X_t,a)\; \text{ is right continuous }\; \P_{(x,a)}\text{-a.s.~equivalently }\; \P_x\text{-a.s.~for }\; x\in M
\end{align*}
in view of \cite[Theorem~A.2.7]{FOT}. 
\end{proof}

Throughout this subsection, we assume $\kappa^+\in S_D({\bf X})$ and $\kappa^-\in S_{E\!K}({\bf X})$.  
We define $\widehat{\kappa}^{\,\pm}:=\kappa^{\pm}\otimes m$ and $\widehat{\kappa}:=
\widehat{\,\kappa}^+-\widehat{\,\kappa}^-$. 
The associated positive continuous additive functional (PCAF in short) $\widehat{A}_t^{\;\widehat{\,\kappa}^{\pm}}$ 
in Revuz correspondence under $\wh{\bf X}$
is given by $\widehat{A}_t^{\;\widehat{\kappa}^{\pm}}=A_t^{\kappa^{\pm}}$, in particular, $\widehat{\kappa}^{\,+}$ (resp.~$\widehat{\kappa}^{\,-}$) 
is a smooth measure of Dynkin (resp.~extended Kato) class  with respect to $\widehat{\bf X}$, i.e. 
$\widehat{\kappa}^{\,+}\in S_D(\widehat{\bf X})$ (resp.~$\widehat{\kappa}^{\,-}\in S_{E\!K}(\widehat{\bf X})$). 
Then we can define the following quadratic form 
$(\widehat{\mathscr{E}}^{\;\widehat{\kappa}}, D(\widehat{\mathscr{E}}^{\;\widehat{\kappa}}))$ 
on $L^2(\widehat{M};\widehat{\m})$: 
\begin{align}
D(\widehat{\mathscr{E}}^{\;\widehat{\kappa}}):=D(\widehat{\mathscr{E}}),\qquad
\widehat{\mathscr{E}}^{\;\widehat{\kappa}}(u,v):=\widehat{\mathscr{E}}(u,v)+\langle \widehat{\kappa},\tilde{u}\tilde{v}\rangle \quad\text{ for }\quad u,v\in D(\widehat{\mathscr{E}}^{\;\widehat{\kappa}}).\label{eq:quadratic}
\end{align}
Here $\tilde{u}$ denotes the $\widehat{\mathscr{E}}$-quasi-continuous $\widehat{\m}$-version of $u$ with respect to 
$(\widehat{\mathscr{E}}, D(\widehat{\mathscr{E}}))$. 
The strongly continuous semigroup $(\widehat{P}_t^{\;\widehat{\kappa}})_{t\geq0}$ on $L^2(\widehat{M};\widehat{\m})$ associated with the quadratic form 
$(\widehat{\mathscr{E}}^{\;\widehat{\kappa}}, D(\widehat{\mathscr{E}}^{\;\widehat{\kappa}}))$ 
is given by 
\begin{align}
\widehat{P}_t^{\widehat{\;\kappa}}u(\hat{x})=
\E_{\hat{x}}
[e^{-A_t^{\kappa}}u(\widehat{X}_t)]
\quad \text{ for }\quad u\in L^2(\widehat{M};\widehat{\m})\cap \mathscr{B}(\widehat{M}).\label{eq:FeynmanKac}
\end{align}

We denote by $\widehat{\mathscr{C}}:={\rm Test}
(M)\odot C_c^{\infty}(\R)$ the totality of all finite linear combinations of $f\otimes\varphi$, $f\in {\rm Test}(M)$, $\varphi\in C_c^{\infty}(\R)$, where 
$(f\otimes \varphi)(y):=f(x)\varphi(a)$. Meanwhile, the space $L^2(M;\m)\otimes L^2(\R)$ and 
$D(\mathscr{E})\otimes H^{1,2}(\R)$ are usual tensor products of Hilbert spaces, where $H^{1,2}(\R)$ is the Sobolev space which consists of all functions $\varphi\in L^2(\R)$ such that the weak derivative $\dot{\varphi}$ exists and belongs to $L^2(\R)$. Then we have 

\begin{lem}[Lemma~3.1 in \cite{EJK}]\label{lem:Core}
$\widehat{\mathscr{C}}$ is dense in $D(\widehat{\mathscr{E}}
)$. Moreover, 
for $u,v\in D(\mathscr{E}
)\otimes H^{1,2}(\R)$,  we have
\begin{align}
\widehat{\mathscr{E}}(u,v)&=\int_{\R}\mathscr{E}(u(\cdot,a),v(\cdot,a))m(\d a)+
\int_M\m(\d x)\int_{\R}\frac{\partial u}{\partial a}(x,a)\frac{\partial v}{\partial a}(x,a)m(\d a).\label{eq:Identity}
\end{align}
\end{lem}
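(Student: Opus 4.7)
The underlying observation is that $\widehat{\bf X} = (X_t, B_t)$ is the independent product of ${\bf X}$ and $\R$-valued Brownian motion, so the semigroup factorizes as $\widehat{P}_t = P_t \otimes T_t$, where $(T_t)_{t \geq 0}$ is the Gaussian heat semigroup on $L^2(\R; m)$ associated to the Dirichlet form $\mathscr{E}^{\R}(\varphi,\psi) := \int_{\R} \dot\varphi \dot\psi \, dm$ with domain $H^{1,2}(\R)$.

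First I would prove the identity \eqref{eq:Identity} on elementary tensors. For $u = f \otimes \varphi$ and $v = g \otimes \psi$ with $f, g \in D(\mathscr{E})$ and $\varphi, \psi \in H^{1,2}(\R)$, Fubini together with the factorization of $\widehat{P}_t$ yields
\begin{align*}
\tfrac{1}{t}(u - \widehat{P}_t u, v)_{L^2(\widehat{\m})}
&= \tfrac{1}{t}(f - P_t f, g)_{L^2(\m)}(\varphi, \psi)_{L^2(m)} \\
&\quad + \tfrac{1}{t}(P_t f, g)_{L^2(\m)}(\varphi - T_t \varphi, \psi)_{L^2(m)}.
\end{align*}
Letting $t \downarrow 0$, the first term converges to $\mathscr{E}(f,g)(\varphi,\psi)_{L^2(m)}$ and the second to $(f,g)_{L^2(\m)}\mathscr{E}^{\R}(\varphi,\psi)$, which is precisely \eqref{eq:Identity} for elementary tensors. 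By bilinearity, the identity extends to the algebraic tensor product $D(\mathscr{E}) \odot H^{1,2}(\R)$, and by continuity in the graph norm to its Hilbertian completion $D(\mathscr{E}) \otimes H^{1,2}(\R)$.

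For the density of $\widehat{\mathscr{C}}$ in $D(\widehat{\mathscr{E}})$, I would argue in two reductions. Since $\mathrm{Test}(M)$ is $\mathscr{E}_1$-dense in $D(\mathscr{E})$ (Lemma~\ref{lem:DensenessTestFunc}) and $C_c^\infty(\R)$ is dense in $H^{1,2}(\R)$ in its graph norm, the identity \eqref{eq:Identity} immediately implies that $\widehat{\mathscr{C}}$ is $\widehat{\mathscr{E}}_1$-dense in $D(\mathscr{E}) \odot H^{1,2}(\R)$. It then suffices to show that $D(\mathscr{E}) \odot H^{1,2}(\R)$ is $\widehat{\mathscr{E}}_1$-dense in $D(\widehat{\mathscr{E}})$. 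Given $u \in D(\widehat{\mathscr{E}})$, one has $\widehat{P}_t u \to u$ in $\widehat{\mathscr{E}}_1$-norm as $t \downarrow 0$ by the standard semigroup approximation of Dirichlet form elements. Approximating $u$ in $L^2(\widehat{\m})$ by $u^{(n)} \in L^2(\m) \odot L^2(\R)$ (which is dense by Fubini), the factorization yields $\widehat{P}_t u^{(n)} = \sum_i P_t f_i^{(n)} \otimes T_t \varphi_i^{(n)} \in D(\mathscr{E}) \odot H^{1,2}(\R)$; writing $\widehat{P}_t = \widehat{P}_{t/2}\widehat{P}_{t/2}$ and using the operator bound $\|\widehat{P}_{t/2}\|_{L^2(\widehat{\m}) \to D(\widehat{\mathscr{E}})} = O(t^{-1/2})$ shows $\widehat{P}_t u^{(n)} \to \widehat{P}_t u$ in $\widehat{\mathscr{E}}_1$-norm. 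A diagonal argument completes the density.

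\textbf{Main obstacle.} The chief subtlety is the identification of $(\widehat{\mathscr{E}}, D(\widehat{\mathscr{E}}))$ with the Hilbert-space tensor product of $(\mathscr{E}, D(\mathscr{E}))$ and $(\mathscr{E}^{\R}, H^{1,2}(\R))$, concretely the $\widehat{\mathscr{E}}_1$-density of $D(\mathscr{E}) \odot H^{1,2}(\R)$ in $D(\widehat{\mathscr{E}})$. While standard for tensor products of Dirichlet forms, this requires matching the form defined via the product semigroup with the closure of \eqref{eq:Identity} on the algebraic tensor product, which uses the characterization of $D(\widehat{\mathscr{E}})$ as $\{u \in L^2(\widehat{\m}) : \liminf_{t \downarrow 0} t^{-1}(u - \widehat{P}_t u, u)_{L^2(\widehat{\m})} < \infty\}$ together with the factorization $\widehat{P}_t = P_t \otimes T_t$. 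Once this is set up, the heat-smoothing argument above makes the density essentially automatic.
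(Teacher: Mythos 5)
The paper itself gives no proof of this lemma (it defers to \cite{EJK}), so there is nothing internal to compare against; your argument is the standard product-Dirichlet-form proof via the factorization $\widehat{P}_t=P_t\otimes T_t$, and it is correct in its essentials. Two small points deserve tightening. First, when you extend \eqref{eq:Identity} from the algebraic tensor product to the Hilbertian completion "by continuity," you should say in which norm: the Hilbert tensor norm dominates the $\widehat{\mathscr{E}}_1$-norm but is strictly larger on elementary tensors (compare Remark~\ref{rem:NormDifference}), so Cauchy sequences in the tensor norm are $\widehat{\mathscr{E}}_1$-Cauchy and the left side passes to the limit; for the right side you must also justify that a general element $u$ of the completion satisfies $u(\cdot,a)\in D(\mathscr{E})$ for a.e.~$a$ and admits an $L^2(M;\m)$-valued weak derivative in $a$, so that the two integrals are even defined — this is exactly the kind of verification the paper carries out in Lemma~\ref{lem:CharacterizationDomain} for the $1$-form analogue, and it can be done by extracting a subsequence converging $\mathscr{E}_1$-a.e.\ in $a$. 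Second, in the density step the bound $\|\widehat{P}_{t/2}\|_{L^2\to D(\widehat{\mathscr{E}})}=O(t^{-1/2})$ is the standard spectral estimate and is fine, but note it is needed for the full graph norm $\widehat{\mathscr{E}}_1$, not just $\widehat{\mathscr{E}}$; with these points made explicit the proof is complete.
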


The following lemma is proved in \cite[Proposition~3.9]{ShigekawaText}: 

\begin{lem}[{\cite[Proposition~3.9]{ShigekawaText}}]\label{lem:ShigekawaProp3.9}
Let $h$ be a non-negative measurable function on $[0,+\infty[$. Then 
\begin{align}
\E_{\stackrel{\rightarrow}{a}}
\left[\int_0^{\tau}h(B_s)\d s \right]=\int_0^{\infty}(a\land t)h(t)\d t.\label{eq:ShigekawaProp3.9}
\end{align}
\end{lem}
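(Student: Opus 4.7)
The plan is to establish this classical Green's-function identity for one-dimensional Brownian motion with generator $\partial^2/\partial a^2$ killed at the origin. By Fubini's theorem,
\[
\E_{\stackrel{\rightarrow}{a}}\left[\int_0^{\tau} h(B_s)\,\d s\right] = \int_0^{\infty} h(y)\,G(a,y)\,\d y,
\]
where $G(a,y) := \int_0^{\infty} p^0_s(a,y)\,\d s$ and $p^0_s$ is the transition density of the Brownian motion killed on hitting $\{0\}$. Thus it suffices to prove $G(a,y) = a \wedge y$ for $a,y > 0$ (and $a>0$, since the case $a=0$ is trivial as $\tau=0$ $\P_{\stackrel{\rightarrow}{0}}$-a.s.), and then to invoke monotone convergence for general nonnegative measurable $h$.

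The cleanest route is via Dynkin's formula. Set
\[
g(a) := \int_0^{\infty}(a\wedge y)h(y)\,\d y = \int_0^a y h(y)\,\d y + a\int_a^{\infty} h(y)\,\d y,
\]
so that $g(0)=0$, $g'(a) = \int_a^{\infty} h(y)\,\d y$, and $g''(a) = -h(a)$ in the distributional sense on $(0,\infty)$. For $h$ bounded with compact support, $g$ is $C^1$ with bounded derivative, and since $B_t = a + \sqrt{2}\,W_t$ for a standard Brownian motion $W$, It\^o's formula applied to $g(B_{t\wedge \tau})$ yields
\[
g(B_{t\wedge \tau}) - g(a) = \sqrt{2}\int_0^{t\wedge \tau} g'(B_s)\,\d W_s - \int_0^{t\wedge \tau} h(B_s)\,\d s.
\]
The boundedness of $g'$ and the finiteness of $\E_{\stackrel{\rightarrow}{a}}[t\wedge \tau]$ make the stochastic integral a true martingale, which vanishes in expectation. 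Passing to $t \to \infty$ using $\tau < \infty$ $\P_{\stackrel{\rightarrow}{a}}$-a.s.\ for $a>0$, together with bounded convergence (since $g$ is bounded on the a.s.-compact range of $s\mapsto B_{s\wedge \tau}$), gives the identity in this case. The general case then follows by monotone approximation $h_n := (h\wedge n)\1_{[0,n]}\uparrow h$ together with the monotone convergence theorem applied on both sides.

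As an alternative, one may compute $G(a,y)$ directly via the reflection principle: $p^0_s(a,y) = p_s(a,y) - p_s(a,-y)$ with $p_s(a,y) = (4\pi s)^{-1/2}\exp(-(a-y)^2/(4s))$. The substitution $u=1/s$ combined with the identity $\int_0^{\infty} u^{-3/2}(e^{-\alpha u}-e^{-\beta u})\,\d u = 2\sqrt{\pi}(\sqrt{\beta}-\sqrt{\alpha})$ (obtained by integrating $\int_0^{\infty} u^{-1/2}e^{-\alpha u}\,\d u = \sqrt{\pi/\alpha}$ in $\alpha$ from $\alpha$ to $\beta$) yields $G(a,y) = \tfrac12\bigl[(a+y)-|a-y|\bigr] = a\wedge y$. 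The main obstacle in either approach is purely the integrability bookkeeping: in the It\^o/Dynkin route one must justify the uniform integrability needed to send $t\to\infty$ inside the expectation, while in the reflection route the individual integrals $\int_0^{\infty} p_s(a,\pm y)\,\d s$ are infinite and only the difference converges, so the cancellation must be exploited before integrating in $s$.
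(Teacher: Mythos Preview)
Your proof is correct. Note, however, that the paper does not actually prove this lemma: it is quoted as \cite[Proposition~3.9]{ShigekawaText} and used as a black box, so there is no ``paper's own proof'' to compare against. Both routes you give---the Dynkin/It\^o argument identifying $g(a)=\int_0^\infty(a\wedge y)h(y)\,\d y$ as the solution of $g''=-h$, $g(0)=0$, $g'(\infty)=0$ and hence as $\E_{\stackrel{\rightarrow}{a}}[\int_0^\tau h(B_s)\,\d s]$, and the direct Green's-function computation $G(a,y)=a\wedge y$ via reflection---are standard and complete.

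One small technical remark on the It\^o route: for $h$ merely bounded measurable with compact support, $g$ is only $C^{1,1}$, not $C^2$, so the classical It\^o formula does not apply verbatim. You can either invoke the It\^o formula for $W^{2,1}_{\rm loc}$ functions (equivalently, the occupation-times formula), or first take $h$ continuous with compact support (so $g\in C^2$) and then pass to bounded measurable $h$ by a further monotone-class argument before the final monotone approximation $h_n\uparrow h$. Either fix is routine and does not affect the substance of your argument.
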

Based on Lemma~\ref{lem:ShigekawaProp3.9}, we can obtain the following generalization of \cite[Proposition~6.1]{Shigekawa1}

\begin{prop}[{\cite[Proposition~6.1]{Shigekawa1}}]\label{prop:ShigekawaProp6.1}
Let $\nu$ be a non-negative smooth measure on $M$ associated with a PCAF $A^{\nu}$ in Revuz correspondence 
under ${\bf X}$
and $j$ be a non-negative 
measurable function on $X\times[0,+\infty[$. Denote by $\wh{A}^{\,\wh{\nu}}$ the PCAF of $\wh{\bf X}$ associated with the smooth measure $\wh{\nu}:=\nu\otimes m$ with respect to $\wh{\bf X}$. 
Then for all~$a\in]0,+\infty[$ and Borel measurable non-negative function $j$ on $X\times\R$
\begin{align}
\E_{\m\otimes\delta_a}\left[\int_0^{\tau}j(\wh{X}_s)\d \wh{A}_s^{\,\wh{\nu}} \right]=\int_M\int_0^{\infty}(a\land t)j(x,t)\nu(\d x)\d t.\label{eq:ShigekawaProp3.10}
\end{align}  
\end{prop}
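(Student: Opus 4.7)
The plan is to mirror the proof of \cite[Proposition~6.1]{Shigekawa1} in the present abstract setting, using the independence of $X$ and $B$ under $\P_{(x,a)}$, the Revuz formula for $A^{\nu}$ with a time-dependent integrand, and Lemma~\ref{lem:ShigekawaProp3.9} for the Brownian factor. The first step is to identify $\wh{A}^{\,\wh{\nu}}$ with $A^{\nu}$ viewed as a PCAF of the product process $\wh{\bf X}$: for product test functions $g=g_1\otimes g_2$ and $h=h_1\otimes h_2$, the independence of $X$ and $B$ together with the $m$-invariance of the Brownian semigroup yields
\[
\lim_{t\to0}\frac{1}{t}\E_{h\wh{\m}}\Bigl[\int_0^t g(\wh{X}_s)\d A_s^{\nu}\Bigr]=\int_{\wh{M}}g\,h\,\d(\nu\otimes m),
\]
so by uniqueness of the Revuz correspondence $\wh{A}^{\,\wh{\nu}}=A^{\nu}$.

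Next, since $A^{\nu}$ is $\sigma(X)$-measurable and $\mathbf{1}_{\{s<\tau\}}$ is $\sigma(B)$-measurable, Fubini combined with the independence of $X$ and $B$ gives, for every non-negative Borel $j$,
\[
\E_{(x,a)}\Bigl[\int_0^{\tau}j(X_s,B_s)\d A_s^{\nu}\Bigr]=\E_x\Bigl[\int_0^{\infty}j_a(s,X_s)\d A_s^{\nu}\Bigr],\quad j_a(s,y):=\E_{\stackrel{\rightarrow}{a}}\bigl[\mathbf{1}_{\{s<\tau\}}j(y,B_s)\bigr].
\]
Integrating in $x$ against $\m$ and applying the time-dependent Revuz formula
\[
\int_M\m(\d x)\,\E_x\Bigl[\int_0^\infty g(s,X_s)\d A_s^{\nu}\Bigr]=\int_0^\infty\d s\int_M(P_s\1)(y)\,g(s,y)\,\nu(\d y)
\]
to $g=j_a$, and using conservativeness of $\bf X$ (which holds under Assumption~\ref{asmp:Tamed} together with $1\in D(\mathscr{E})$, as noted just after the assumption) so that $P_s\1=\1$, we obtain
\[
\E_{\m\otimes\delta_a}\Bigl[\int_0^{\tau}j(\wh{X}_s)\d\wh{A}_s^{\,\wh{\nu}}\Bigr]=\int_M\nu(\d y)\int_0^\infty j_a(s,y)\d s.
\]
The inner $s$-integral equals $\E_{\stackrel{\rightarrow}{a}}[\int_0^{\tau}j(y,B_s)\d s]=\int_0^\infty(a\wedge t)j(y,t)\d t$ by Lemma~\ref{lem:ShigekawaProp3.9} applied to $h(\cdot):=j(y,\cdot)$, yielding the claim.

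The principal technical issue is the rigorous derivation of the time-dependent Revuz formula used above: it is obtained by first establishing the $\alpha$-resolvent form $\int_M g_0(x)\,\E_x[\int_0^\infty e^{-\alpha s}h(X_s)\d A_s^{\nu}]\,\m(\d x)=\int_M h(y)\,U_\alpha g_0(y)\,\nu(\d y)$ via the standard Revuz correspondence and then inverting the Laplace transform (together with a monotone class argument to pass to general non-negative Borel integrands). A minor subtlety is that without conservativeness the factor $P_s\1(y)$ persists on the right-hand side, which is harmless under the running assumptions of the paper but would force a corresponding modification of the stated identity.
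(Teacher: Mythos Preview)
Your argument follows the natural route—the one implicit in the paper's citation of Shigekawa, since the paper itself gives no proof of this proposition. The identification $\wh{A}^{\,\wh{\nu}}=A^{\nu}$, the Fubini/independence step yielding $j_a(s,y)=\E_{\stackrel{\rightarrow}{a}}[\mathbf{1}_{\{s<\tau\}}j(y,B_s)]$, and the closing application of Lemma~\ref{lem:ShigekawaProp3.9} are all correct.

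The gap is your treatment of conservativeness. You correctly derive the time-dependent Revuz identity
\[
\int_M\m(\d x)\,\E_x\Bigl[\int_0^\infty g(s,X_s)\,\d A_s^{\nu}\Bigr]=\int_0^\infty\!\!\int_M (P_s\1)(y)\,g(s,y)\,\nu(\d y)\,\d s,
\]
and then drop the factor $P_s\1$ by appealing to conservativeness ``under Assumption~\ref{asmp:Tamed} together with $1\in D(\mathscr{E})$.'' But $1\in D(\mathscr{E})$ is \emph{not} a standing hypothesis of the paper; it is mentioned only as one sufficient condition, and the paper explicitly allows ${\bf X}$ to be non-conservative. Your later remark that the persistence of $P_s\1$ is ``harmless under the running assumptions'' is therefore unjustified: without conservativeness one obtains only
\[
\E_{\m\otimes\delta_a}\Bigl[\int_0^{\tau}j(\wh{X}_s)\,\d\wh{A}_s^{\,\wh{\nu}}\Bigr]\le\int_M\int_0^{\infty}(a\land t)\,j(x,t)\,\nu(\d x)\,\d t,
\]
not equality. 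This may well reflect an imprecision in the paper's own statement of the proposition rather than a flaw in your method—indeed, in the proof of Theorem~\ref{thm:LittlewoodPaley1Form} the proposition is invoked once in each direction, so the issue propagates there too—but as a proof of the stated identity your argument is incomplete unless conservativeness is added as an explicit hypothesis.
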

Let ${\rm Reg}(T^*\!M) \odot C_c^{\infty}(\R)$ be the algebraic tensor product of  ${\rm Reg}(T^*\!M)$ and $C_c^{\infty}(\R)$, i.e., any element of ${\rm Reg}(T^*\!M) \odot C_c^{\infty}(\R)$ forms a finite linear combination of $\theta\otimes\varphi$ with 
$\theta\in  {\rm Reg}(T^*\!M)$ and $\varphi\in C_c^{\infty}(\R)$, 
 which is a dense subspace of the tensor product $L^2(T^*\!M)\otimes L^2(\R)$ of Hilbert spaces $L^2(T^*\!M)$ and $L^2(\R)$.  
We now define the quadratic form $\wh{\mathscr{E}}^{\,\rm HK}$ on ${\rm Reg}(T^*\!M) \odot C_c^{\infty}(\R)$ by 
\begin{align*}
\wh{\mathscr{E}}^{\rm HK}(\theta,\eta):=\int_{\R}\mathscr{E}^{\rm HK}(\theta(\cdot,t),\omega(\cdot,t))\d t
+\int_{\R}\langle \dot{\theta}(\cdot,t),\dot{\eta}(\cdot,t)\rangle_{L^2(T^*\!M)}\d t,\quad \theta,\eta\in {\rm Reg}(T^*\!M) \odot C_c^{\infty}(\R).
\end{align*}

\begin{lem}\label{lem:Closable}
The quadratic form $(\wh{\mathscr{E}}^{\,\rm HK}, {\rm Reg}(T^*\!M) \odot C_c^{\infty}(\R))$ is closable on $L^2(T^*\!M)\otimes L^2(\R)$. Denote by $(\wh{\mathscr{E}}^{\,\rm HK}, D(\wh{\mathscr{E}}^{\,\rm HK}))$ its closure on $L^2(T^*\!M)\otimes L^2(\R)$. 
Then, $(\wh{\mathscr{E}}^{\,\rm HK}, D(\wh{\mathscr{E}}^{\,\rm HK}))$ is a closed bilinear form on $L^2(T^*\!M)\otimes L^2(\R)$.
\end{lem}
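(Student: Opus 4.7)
The strategy is to recognize $\wh{\mathscr{E}}^{\,\rm HK}$ as the restriction to ${\rm Reg}(T^*\!M)\odot C_c^{\infty}(\R)$ of the tensor-sum quadratic form of two known closed symmetric forms, from which closability is immediate.

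First, I would recall the two ingredient forms. The Hodge-Kodaira form $q_1:=\HK$ on $L^2(T^*\!M)$ with domain $H^{1,2}(T^*\!M)$ is closed, symmetric, and non-negative; closedness is the content of Subsection~\ref{subsec:SobolevExteriorDual}, and by the very definition of $H^{1,2}(T^*\!M)$ the space ${\rm Reg}(T^*\!M)$ is a form core. The form $q_2(\varphi,\psi):=\int_\R \dot{\varphi}(a)\dot{\psi}(a)\,\d a$ on $L^2(\R)$ with domain $H^{1,2}(\R)$ is the classical closed non-negative Dirichlet form associated with $-\d^2/\d a^2$, with $C_c^\infty(\R)$ a form core. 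Denote by $A_1, A_2\geq 0$ the corresponding non-negative self-adjoint operators.

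Second, I would invoke the standard tensor-product construction for closed forms. The operator $A:=A_1\otimes I+I\otimes A_2$ is essentially self-adjoint on $D(A_1)\odot D(A_2)\subset L^2(T^*\!M)\otimes L^2(\R)$, and its Friedrichs extension determines a unique closed non-negative symmetric form $q$ on this Hilbert space tensor product whose form domain contains $H^{1,2}(T^*\!M)\odot H^{1,2}(\R)$ and which obeys, on elementary tensors,
\begin{equation*}
q(\omega\otimes\varphi,\omega'\otimes\varphi')=q_1(\omega,\omega')\int_\R \varphi(a)\varphi'(a)\,\d a\;+\;(\omega,\omega')_{L^2(T^*\!M)}\,q_2(\varphi,\varphi').
\end{equation*}
A direct computation on $\theta=\omega\otimes\varphi\in{\rm Reg}(T^*\!M)\odot C_c^\infty(\R)$, using that $\theta(\cdot,t)=\varphi(t)\omega$ and $\dot\theta(\cdot,t)=\dot{\varphi}(t)\omega$ together with the quadratic nature of $\HK$, yields
\begin{equation*}
\wh{\mathscr{E}}^{\,\rm HK}(\theta,\theta)=\HK(\omega)\,\|\varphi\|_{L^2(\R)}^2+\|\omega\|_{L^2(T^*\!M)}^2\int_\R |\dot{\varphi}(a)|^2\,\d a,
\end{equation*}
so $\wh{\mathscr{E}}^{\,\rm HK}=q$ on ${\rm Reg}(T^*\!M)\odot C_c^\infty(\R)$ after polarization.

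Finally, since ${\rm Reg}(T^*\!M)\odot C_c^\infty(\R)\subset H^{1,2}(T^*\!M)\odot H^{1,2}(\R)$ is contained in the form domain of the closed form $q$, the identity $\wh{\mathscr{E}}^{\,\rm HK}=q|_{{\rm Reg}(T^*\!M)\odot C_c^\infty(\R)}$ forces closability: if $\theta_n\to 0$ in $L^2(T^*\!M)\otimes L^2(\R)$ with $\wh{\mathscr{E}}^{\,\rm HK}(\theta_n-\theta_m,\theta_n-\theta_m)\to 0$, closedness of $q$ implies $q(\theta_n,\theta_n)\to 0$, hence $\wh{\mathscr{E}}^{\,\rm HK}(\theta_n,\theta_n)\to 0$. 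The closure $(\wh{\mathscr{E}}^{\,\rm HK},D(\wh{\mathscr{E}}^{\,\rm HK}))$ is then a closed non-negative symmetric form on $L^2(T^*\!M)\otimes L^2(\R)$. The main obstacle is the bookkeeping in verifying the coincidence $\wh{\mathscr{E}}^{\,\rm HK}=q$ on the algebraic tensor product; this is entirely parallel to the scalar-valued identity \eqref{eq:Identity} in Lemma~\ref{lem:Core}, and requires only the Hilbert space structure of $L^2(T^*\!M)$, not its module structure.
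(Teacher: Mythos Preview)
Your proof is correct but follows a different route from the paper. The paper argues more directly at the operator level: since ${\rm Reg}(T^*\!M)\subset D(\DD)$ (this is the content of \cite[Lemma~8.1]{Braun:Tamed2021}, quoted in Section~\ref{sec:RicciCurvMeasure}), every $\theta\in{\rm Reg}(T^*\!M)\odot C_c^\infty(\R)$ satisfies $\wh{\Delta\hspace{-0.29cm}\Delta}^{\rm HK}\theta:=(\partial_t^2+\DD)\theta\in L^2(T^*\!M)\otimes L^2(\R)$, and one checks $\wh{\mathscr{E}}^{\,\rm HK}(\theta,\eta)=(-\wh{\Delta\hspace{-0.29cm}\Delta}^{\rm HK}\theta,\eta)_{L^2}$ on the core. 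Closability is then immediate from the standard criterion \cite[Chapter~I, Proposition~3.3]{MR} that any non-negative symmetric form arising from a symmetric operator on a dense domain is closable.

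Your approach works purely at the form level: you realise $\wh{\mathscr{E}}^{\,\rm HK}$ as the restriction of the closed tensor-sum form associated with $A_1\otimes I+I\otimes A_2$, and deduce closability from the general fact that a restriction of a closed form is closable. This buys you a little extra structure (an explicit identification of the closure) and only requires ${\rm Reg}(T^*\!M)\subset H^{1,2}(T^*\!M)$ rather than ${\rm Reg}(T^*\!M)\subset D(\DD)$; on the other hand it invokes the essential self-adjointness of $A_1\otimes I+I\otimes A_2$ on $D(A_1)\odot D(A_2)$, which is standard but heavier machinery than the paper needs. Both arguments are valid here.
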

\begin{proof}[{\bf Proof}] 
For $\theta\in {\rm Reg}(T^*\!M) \odot C_c^{\infty}(\R)$, it is easy to see $\wh{\Delta\hspace{-0.29cm}\Delta}^{\rm HK}\theta:=\left(\frac{\partial^2}{\partial t^2}+\DD \right)\theta\in L^2(T^*\!M)\otimes L^2(\R)$. Then 
\begin{align*}
\wh{\mathscr{E}}^{\,\rm HK}(\theta,\eta)=\left(-\wh{\Delta\hspace{-0.29cm}\Delta}^{\rm HK}\theta,\eta \right)_{L^2(T^*\!M)\otimes L^2(\R)}\quad\text{ for }\quad\theta,\eta\in {\rm Reg}(T^*\!M) \odot C_c^{\infty}(\R).
\end{align*}
Then the closability of $(\wh{\mathscr{E}}^{\,\rm HK}, D(\wh{\mathscr{E}}^{\,\rm HK}))$ on $L^2(T^*\!M)\otimes L^2(\R)$ follows 
\cite[Chapter I, Proposition~3.3]{MR}. 
\end{proof} 
\begin{remark}\label{rem:NormDifference}
{\rm The norm $\|\cdot\|_{H^{1,2}(T^*\!M)\otimes H^{1,2}(\R)}$ of the tensor product $H^{1,2}(T^*\!M)\otimes H^{1,2}(\R)$ of Hilbert spaces $H^{1,2}(T^*\!M)$ and $H^{1,2}(\R)$ does not coincide with the norm $\|\cdot\|_{\wh{\mathscr{E}}_1^{\,\rm HK}}$ derived from 
$(\wh{\mathscr{E}}^{\,\rm HK}, D(\wh{\mathscr{E}}^{\,\rm HK}))$ on $L^2(T^*\!M)\otimes L^2(\R)$. Indeed, for $\theta\in {\rm Reg}(T^*\!M)$ and $\varphi\in  C_c^{\infty}(\R)$.
\begin{align*}
\|\theta\otimes\varphi\|_{H^{1,2}(T^*\!M)\otimes H^{1,2}(\R)}^2&=\|\theta\|_{H^{1,2}(T^*\!M)}^2\|\varphi\|_{H^{1,2}(\R)}^2\\
&=\left(\mathscr{E}^{\rm HK}(\theta,\theta)+\|\theta\|_{L^2(T^*\!M)}^2 \right)\left(\|\dot{\varphi}\|_{L^2(\R)}^2+\|\varphi\|_{L^2(\R)}^2\right)\\
\|\theta\otimes\varphi\|_{\wh{\mathscr{E}}_1^{\,\rm HK}}^2&=
\wh{\mathscr{E}}^{\,\rm HK}_1(\theta\otimes\varphi,\theta\otimes\varphi)\\
&=
\mathscr{E}^{\rm HK}(\theta,\theta)\|\varphi\|_{L^2(\R)}^2+\|\theta\|_{L^2(T^*\!M)}^2\|\dot{\varphi}\|_{L^2(\R)}^2+\|\theta\|_{L^2(T^*\!M)}^2\|\varphi\|_{L^2(\R)}^2.
\end{align*}
}\end{remark}
\begin{lem}\label{lem:TEnsorProduct}
We have the inclusion $H^{1,2}(T^*\!M)\odot H^{1,2}(\R)\subset D(\wh{\mathscr{E}}^{\rm HK})$. The map 
$H^{1,2}(T^*\!M)\times H^{1,2}(\R)\to H^{1,2}(T^*\!M)\odot H^{1,2}(\R)$ is continuous.
\end{lem}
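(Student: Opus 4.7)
The plan is to first treat elementary tensors $\theta\otimes\varphi$ with $\theta\in{\rm Reg}(T^*\!M)$ and $\varphi\in C_c^\infty(\R)$ (which already lie in $D(\wh{\mathscr{E}}^{\rm HK})$ by definition), then upgrade to $\theta\in H^{1,2}(T^*\!M)$ and $\varphi\in H^{1,2}(\R)$ by a density argument using the closedness of $(\wh{\mathscr{E}}^{\rm HK},D(\wh{\mathscr{E}}^{\rm HK}))$, and finally pass to finite linear combinations.

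The key quantitative input is the explicit identity recorded in Remark~\ref{rem:NormDifference}: for $\theta\in{\rm Reg}(T^*\!M)$ and $\varphi\in C_c^\infty(\R)$,
\begin{align*}
\|\theta\otimes\varphi\|_{\wh{\mathscr{E}}^{\rm HK}_1}^2
&=\mathscr{E}^{\rm HK}(\theta,\theta)\|\varphi\|_{L^2(\R)}^2
+\|\theta\|_{L^2(T^*\!M)}^2\|\dot\varphi\|_{L^2(\R)}^2
+\|\theta\|_{L^2(T^*\!M)}^2\|\varphi\|_{L^2(\R)}^2\\
&\leq\bigl(\mathscr{E}^{\rm HK}(\theta,\theta)+\|\theta\|_{L^2(T^*\!M)}^2\bigr)
     \bigl(\|\dot\varphi\|_{L^2(\R)}^2+\|\varphi\|_{L^2(\R)}^2\bigr)
=\|\theta\|_{H^{1,2}(T^*\!M)}^2\,\|\varphi\|_{H^{1,2}(\R)}^2.
\end{align*}
Thus the bilinear map $(\theta,\varphi)\mapsto\theta\otimes\varphi$ is bounded from $({\rm Reg}(T^*\!M),\|\cdot\|_{H^{1,2}(T^*\!M)})\times(C_c^\infty(\R),\|\cdot\|_{H^{1,2}(\R)})$ into $(D(\wh{\mathscr{E}}^{\rm HK}),\|\cdot\|_{\wh{\mathscr{E}}^{\rm HK}_1})$ with operator norm $\leq 1$.

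Next, given $\theta\in H^{1,2}(T^*\!M)$ and $\varphi\in H^{1,2}(\R)$, pick ${\rm Reg}(T^*\!M)\ni\theta_n\to\theta$ in $H^{1,2}(T^*\!M)$ (by the very definition of $H^{1,2}(T^*\!M)$) and $C_c^\infty(\R)\ni\varphi_n\to\varphi$ in $H^{1,2}(\R)$. Applying the estimate above to the differences, the sequence $\{\theta_n\otimes\varphi_n\}$ is Cauchy in the $\wh{\mathscr{E}}^{\rm HK}_1$-norm. On the other hand, in $L^2(T^*\!M)\otimes L^2(\R)$ one has
\[
\|\theta_n\otimes\varphi_n-\theta\otimes\varphi\|_{L^2(T^*\!M)\otimes L^2(\R)}
\leq\|\theta_n-\theta\|_{L^2(T^*\!M)}\|\varphi_n\|_{L^2(\R)}
+\|\theta\|_{L^2(T^*\!M)}\|\varphi_n-\varphi\|_{L^2(\R)}\to 0,
\]
so the $L^2$-limit is $\theta\otimes\varphi$. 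By the closedness of $(\wh{\mathscr{E}}^{\rm HK},D(\wh{\mathscr{E}}^{\rm HK}))$ established in Lemma~\ref{lem:Closable}, we conclude $\theta\otimes\varphi\in D(\wh{\mathscr{E}}^{\rm HK})$ and $\|\theta\otimes\varphi\|_{\wh{\mathscr{E}}^{\rm HK}_1}\leq\|\theta\|_{H^{1,2}(T^*\!M)}\|\varphi\|_{H^{1,2}(\R)}$, which is precisely the continuity of the bilinear map. Extending by linearity gives the inclusion $H^{1,2}(T^*\!M)\odot H^{1,2}(\R)\subset D(\wh{\mathscr{E}}^{\rm HK})$.

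I do not expect a genuine obstacle: the only subtle point is to invoke the closedness of $\wh{\mathscr{E}}^{\rm HK}$ (rather than lower semicontinuity) to identify the limit with the algebraic tensor $\theta\otimes\varphi$, and this is covered by Lemma~\ref{lem:Closable}. The argument yields the explicit bound $\|\theta\otimes\varphi\|_{\wh{\mathscr{E}}^{\rm HK}_1}\leq\|\theta\|_{H^{1,2}(T^*\!M)}\|\varphi\|_{H^{1,2}(\R)}$, which certifies both statements of the lemma.
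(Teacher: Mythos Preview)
Your proof is correct and follows essentially the same approach as the paper: approximate $\theta$ and $\varphi$ by elements of ${\rm Reg}(T^*\!M)$ and $C_c^\infty(\R)$, use the identity from Remark~\ref{rem:NormDifference} to show $\{\theta_n\otimes\varphi_n\}$ is $\wh{\mathscr{E}}^{\,\rm HK}_1$-Cauchy, and conclude via the closedness from Lemma~\ref{lem:Closable}. The paper additionally records the bilinear identity $\wh{\mathscr{E}}^{\,\rm HK}(\theta\otimes\varphi,\eta\otimes\psi)=\mathscr{E}^{\rm HK}(\theta,\eta)(\varphi,\psi)_{L^2(\R)}+\langle\theta,\eta\rangle_{L^2(T^*\!M)}(\dot\varphi,\dot\psi)_{L^2(\R)}$ for general $\theta,\eta\in H^{1,2}(T^*\!M)$ and $\varphi,\psi\in H^{1,2}(\R)$, but this follows immediately from your argument by polarization.
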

\begin{proof}[{\bf Proof}] 
It suffices to show that $\theta\otimes\varphi\in D(\wh{\mathscr{E}}^{\rm HK})$ for $\theta\in H^{1,2}(T^*\!M)$ and $\varphi H^{1,2}(\R)$. Take an approximating sequence $\{\theta_n\}\subset {\rm Reg}(T^*\!M)$ (resp.~$\{\varphi_n\}\subset C_c^{\infty}(\R)$) in $H^{1,2}(T^*\!M)$ (resp.~in $H^{1,2}(\R)$) to $\theta\in H^{1,2}(T^*\!M)$ (resp.~$\varphi\in H^{1,2}(\R)$).
Then it is easy to see that $\{\theta_n\otimes\varphi_n\}\subset H^{1,2}(T^*\!M)\odot H^{1,2}(\R)$  is an 
$\wh{\mathscr{E}}^{\,\rm HK}_1$-Cauchy sequence. This yields the conclusion and the following identity: for $\theta,\eta\in H^{1,2}(T^*\!M)$ and $\varphi,\psi\in H^{1,2}(\R)$, 
\begin{align*}
\wh{\mathscr{E}}^{\,\rm HK}(\theta\otimes\varphi,\eta\otimes\psi)=\mathscr{E}^{\rm HK}(\theta,\eta)(\varphi,\psi)_{L^2(\R)}+
\langle \theta,\eta\rangle_{L^2(T^*\!M)}(\dot{\varphi},\dot{\psi})_{L^2(\R)},
\end{align*}
which implies the desired continuity.
\end{proof} 
\begin{lem}\label{lem:CharacterizationDomain}
Suppose $\eta\in L^2(T^*\!M)\otimes L^2(\R)$. Then the following are equivalent to each other:
\begin{enumerate}
\item[\rm (1)] $\eta\in D(\wh{\mathscr{E}}^{\,\rm HK})$. 
\item[\rm (2)] For a.e.~$t\in\R$, we have $\eta(\cdot, t)\in D({\mathscr{E}}^{\,\rm HK})$ and $\dot{\eta}(\cdot,t)\in L^2(T^*\!M)$. Moreover, $\int_{\R}{\mathscr{E}}^{\,\rm HK}_1(\eta(\cdot,t),\eta(\cdot,t))\d t<\infty$ and $\int_{\R}\|\dot{\eta}(\cdot,t)\|_{L^2(T^*\!M)}^2\d t<\infty$. 
\end{enumerate}
Here $\dot{\eta}(\cdot,t)$ denotes the $L^2(T^*\!M)$-valued 
distributional derivative of $\eta(\cdot,t)$ with respect to $t\in \R$
\end{lem}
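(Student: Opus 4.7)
The plan is to establish the Fubini-type identity
\begin{align*}
\wh{\mathscr{E}}^{\,\rm HK}(\eta,\eta)=\int_{\R}\mathscr{E}^{\rm HK}(\eta(\cdot,t),\eta(\cdot,t))\,\d t
+\int_{\R}\|\dot{\eta}(\cdot,t)\|_{L^2(T^*\!M)}^2\,\d t
\end{align*}
for every $\eta\in D(\wh{\mathscr{E}}^{\,\rm HK})$, which will simultaneously give both implications. This is the standard characterization of the domain of a tensor product of closed quadratic forms (one on $L^2(T^*\!M)$ corresponding to $\mathscr{E}^{\rm HK}$, the other on $L^2(\R)$ corresponding to the Dirichlet integral), and the proof will follow the usual scheme.

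For the implication (1)$\Rightarrow$(2), I would take an $\wh{\mathscr{E}}_1^{\rm HK}$-Cauchy sequence $\{\eta_n\}\subset{\rm Reg}(T^*\!M)\odot C_c^\infty(\R)$ converging to $\eta$. Since each $\eta_n$ is a finite sum $\sum_i \theta_i\otimes\varphi_i$, its sections $\eta_n(\cdot,t)$ manifestly lie in $D(\mathscr{E}^{\rm HK})$ and its $t$-derivative in $L^2(T^*\!M)$, and Lemma~\ref{lem:TEnsorProduct} (together with the identity in its proof) shows that
\begin{align*}
\wh{\mathscr{E}}_1^{\rm HK}(\eta_n-\eta_m,\eta_n-\eta_m)
&=\int_{\R}\mathscr{E}^{\rm HK}_1(\eta_n(\cdot,t)-\eta_m(\cdot,t),\eta_n(\cdot,t)-\eta_m(\cdot,t))\,\d t\\
&\quad+\int_{\R}\|\dot{\eta}_n(\cdot,t)-\dot{\eta}_m(\cdot,t)\|_{L^2(T^*\!M)}^2\,\d t.
\end{align*}
Extracting a subsequence so that $\eta_n(\cdot,t)\to \eta(\cdot,t)$ in $\mathscr{E}^{\rm HK}_1$-norm for a.e.~$t$ and that $\dot{\eta}_n(\cdot,t)$ is $L^2(T^*\!M)$-Cauchy for a.e.~$t$, one identifies the limit with the distributional $t$-derivative of $\eta$ by pairing against test elements $\theta\otimes \varphi$ with $\varphi\in C_c^\infty(\R)$ and applying integration by parts in $t$. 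The integrability bounds follow from Fatou's lemma.

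For the converse (2)$\Rightarrow$(1), the natural strategy is approximation. Given $\eta$ satisfying (2), first mollify in $t$ by convolution with a standard mollifier $\rho_\eps$; the mollified $\eta_\eps$ inherits (2) and is smooth in $t$ with values in $H^{1,2}(T^*\!M)=D(\mathscr{E}^{\rm HK})$ (identified in view of the constructions of Sections~\ref{subsec:ExteriorDerivSobolev}--\ref{subsec:SobolevExteriorDual}). Using a countable orthonormal $\mathscr{E}^{\rm HK}_1$-basis of $H^{1,2}(T^*\!M)$ built from ${\rm Reg}(T^*\!M)$ (dense there), one expands $\eta_\eps(\cdot,t)=\sum_k c_k^\eps(t)\,\theta_k$; Parseval gives $\sum_k\int_{\R}(|c_k^\eps(t)|^2+|\dot{c}_k^\eps(t)|^2)\,\d t<\infty$ together with the analogous sum for the $\mathscr{E}^{\rm HK}$-norms. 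Truncating the sum at level $N$ and approximating each $c_k^\eps\in H^{1,2}(\R)$ by $C_c^\infty(\R)$-functions produces an element of ${\rm Reg}(T^*\!M)\odot C_c^\infty(\R)$ close to $\eta_\eps$ in the $\wh{\mathscr{E}}_1^{\rm HK}$-norm, and finally $\eta_\eps\to \eta$ in the same norm as $\eps\to 0$ by standard mollification estimates applied termwise.

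The main technical obstacle is the interchange of limits needed to identify, for $\eta\in D(\wh{\mathscr{E}}^{\,\rm HK})$, the abstract limit $\dot{\eta}$ obtained from the approximating sequence with the genuine $L^2(T^*\!M)$-valued distributional derivative of $t\mapsto\eta(\cdot,t)$ (and similarly for the sectional $\mathscr{E}^{\rm HK}$-energies). This is handled by testing against elements of ${\rm Reg}(T^*\!M)\odot C_c^\infty(\R)$, which is dense in $D(\wh{\mathscr{E}}^{\,\rm HK})$ by construction; Fubini's theorem then transfers the global identity to the a.e.~sectional statement. Everything else reduces to standard closability/mollification manipulations.
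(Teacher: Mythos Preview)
Your argument for (1)$\Rightarrow$(2) is essentially the paper's: take an approximating sequence from the core, pass to a rapidly convergent subsequence so that the sectional $\mathscr{E}^{\rm HK}_1$-norms and time-derivative $L^2$-norms are Cauchy for a.e.\ $t$, identify the limit of $\dot{\eta}_n(\cdot,t)$ with the distributional derivative by pairing against $\theta\otimes\varphi$, and conclude the integrability bounds via Fatou.

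Your route for (2)$\Rightarrow$(1) is genuinely different. The paper does not construct an approximation directly; instead it uses the semigroup characterization of the form domain. Writing $\wh{P}_t^{\,\rm HK}\eta = P_t^{\rm HK}\bigl(P_t^w\eta\bigr)$ with $P_t^w$ the one-dimensional heat semigroup acting in the $t$-variable, the paper bounds $\wh{\mathscr{E}}^{\,\rm HK}(\wh{P}_t^{\,\rm HK}\eta)$ by the right-hand side of the Fubini identity using only the contractivity of $P_t^{\rm HK}$ on $H^{1,2}(T^*\!M)$, the commutation $\partial_s P_t^w = P_t^w\partial_s$, and the pointwise domination $|P_t^w\d\eta|\le P_t^w|\d\eta|$ (and similarly for $\d_*$). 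Uniform boundedness of $t\mapsto\wh{\mathscr{E}}^{\,\rm HK}(\wh{P}_t^{\,\rm HK}\eta)$ then forces $\eta\in D(\wh{\mathscr{E}}^{\,\rm HK})$ by spectral theory. This is short and avoids any basis expansion.

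Your direct-approximation scheme also works, but note the one delicate point: the orthonormal basis you use is orthonormal for the $\mathscr{E}^{\rm HK}_1$-inner product, while the time-derivative term in $\wh{\mathscr{E}}^{\,\rm HK}_1$ involves the $L^2(T^*\!M)$-norm. The reason the truncated sums still converge in that piece is that after mollification in $t$, $\dot{\eta}_\eps(\cdot,t)=\rho_\eps'\!*\eta(\cdot,t)$ lies in $H^{1,2}(T^*\!M)$ with $\int_{\R}\|\dot{\eta}_\eps(\cdot,t)\|_{H^{1,2}}^2\,\d t<\infty$ (Young's inequality for Bochner convolution, with an $\eps$-dependent bound), so the $H^{1,2}$-tail dominates the $L^2$-tail and dominated convergence applies. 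You should make this explicit, since without mollification $\dot{\eta}(\cdot,t)$ is only in $L^2(T^*\!M)$ and the basis argument would fail. Apart from this, the proposal is sound; the paper's semigroup proof simply gets there with less bookkeeping.
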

\begin{proof}[{\bf Proof}] 
(1) $\Longrightarrow$ (2): Suppose $\eta\in D(\wh{\mathscr{E}}^{\,\rm HK})$. Let $\eta_n\in {\rm Reg}(T^*\!M)\odot C_c^{\infty}(\R)$ be an $\wh{\mathscr{E}}^{\,\rm HK}_1$-approximating sequence to $\eta$. By taking a subsequence if necessary we may assume
\begin{align}
\sum_{k=1}^{\infty}2^k\wh{\mathscr{E}}^{\,\rm HK}_1(\eta_{k+1}-\eta_k, \eta_{k+1}-\eta_k)<\infty.\label{eq:CauchyCriteia}
\end{align} 
Then, for each $T\in]0,+\infty[$, 
\begin{align*}
\int_0^T&\sum_{k=1}^{\infty}\mathscr{E}^{\,\rm HK}_1(\eta_{k+1}(\cdot,t)-\eta_k(\cdot,t), \eta_{k+1}(\cdot,t)-\eta_k(\cdot,t))^{\frac12}\d t\\
&\leq \sum_{k=1}^{\infty} \sqrt{T}2^{-\frac{k}{2}}\left(2^k\int_0^T{\mathscr{E}}^{\,\rm HK}_1(\eta_{k+1}(\cdot,t)-\eta_k(\cdot,t), \eta_{k+1}(\cdot,t)-\eta_k(\cdot,t))\d t\right)^{\frac12}\\
&=\sqrt{T}\left(\sum_{k=1}^{\infty}2^{-k} \right)^{\frac12}\left(\sum_{k=1}^{\infty}2^k\wh{\mathscr{E}}^{\,\rm HK}_1(\eta_{k+1}-\eta_k, \eta_{k+1}-\eta_k)\right)^{\frac12}<\infty.
\end{align*}
Thus, 
\begin{align*}
\sum_{k=1}^{\infty}\mathscr{E}^{\rm HK}_1(\eta_{k+1}(\cdot,t)-\eta_k(\cdot,t), \eta_{k+1}(\cdot,t)-\eta_k(\cdot,t))^{\frac12}<\infty\quad\text{ for\;\; a.e. }\quad t\in\R.
\end{align*}
This means that $\{\eta_n(\cdot,t)\}$ is an $\mathscr{E}^{\,\rm HK}_1$-Cauchy sequence for a.e.~$t\in\R$. On the other hand, 
by taking a further subsequence, 
$\{\eta_n(\cdot,t)\}$ converges to $\eta(\cdot,t)$ in $L^2(T^*\!M)$ for a.e.~$t\in\R$. Therefore, $\eta(\cdot,t)\in H^{1,2}(T^*\!M)=D(\mathscr{E}^{\,\rm HK})$ and $\eta_n(\cdot,t)\to\eta(\cdot,t)$ in $H^{1,2}(T^*\!M)$ for a.e.~$t\in\R$. Moreover, 
\begin{align*}
\int_{\R}\mathscr{E}^{\rm HK}_1(\eta(\cdot,t),\eta(\cdot,t))\d t&=\int_{\R}\lim_{n\to\infty}
\mathscr{E}^{\rm HK}_1(\eta_n(\cdot,t),\eta_n(\cdot,t))\d t\\
&\leq\varliminf_{n\to\infty}\int_{\R}\mathscr{E}^{\rm HK}_1(\eta_n(\cdot,t),\eta_n(\cdot,t))\d t\\
&=\lim_{n\to\infty}\wh{\mathscr{E}}^{\,\rm HK}_1(\eta_n,\eta_n)=\wh{\mathscr{E}}^{\,\rm HK}_1(\eta,\eta)<\infty.
\end{align*} 
From \eqref{eq:CauchyCriteia}, we can similarly deduce 
\begin{align*}
\sum_{k=1}^{\infty}\|\dot{\eta}_{k+1}(\cdot,t)-\dot{\eta}(\cdot,t)\|_{L^2(T^*\!M)}<\infty\quad\text{ for\;\; a.e. }\quad t\in\R 
\end{align*}
as above. 
Hence $\{\dot{\eta}_n(\cdot,t)\}$ is an $L^2(T^*\!M)$-Cauchy sequence  for a.e.~$t\in\R$. Let $v(\cdot,t)$ be its limit in 
$L^2(T^*\!M)$. By using an integration by parts, we see for $\theta\in {\rm Reg}(T^*\!M)$ and $\varphi\in C_c^{\infty}(\R)$
\begin{align*}
\langle v,\theta\otimes\varphi\rangle_{L^2(T^*\!M)\otimes L^2(\R)}&=\int_{\R}\lim_{n\to\infty}\langle \dot{\eta}_n(\cdot,t),\theta\rangle_{L^2(T^*\!M)}\varphi(t)\d t\\
&=-\lim_{n\to\infty}\int_{\R}\langle \eta_n(\cdot,t),\theta\rangle_{L^2(T^*\!M)}\dot{\varphi}(t)\d t\\
&=-\int_{\R}\langle \eta(\cdot,t),\theta\rangle_{L^2(T^*\!M)}\dot{\varphi}(t)\d t\\
&=\left\langle -\int_{\R}\eta(\cdot,t)\dot{\varphi}(t)\d t,\theta \right\rangle_{L^2(T^*\!M)}=\left\langle \int_{\R}\dot{\eta}(\cdot,t){\varphi}(t)\d t,\theta \right\rangle_{L^2(T^*\!M)}\\
&=\langle \dot{\eta},\theta\otimes\varphi\rangle_{L^2(T^*\!M)\otimes L^2(\R)},
\end{align*}
hence $v(\cdot,t)=\dot{\eta}(\cdot,t)$ for a.e.~$t\in\R$. Therefore 
\begin{align*}
\int_{\R}(\|\dot{\eta}(\cdot,t)\|_{L^2(T^*\!M)}^2+\|{\eta}(\cdot,t)\|_{L^2(T^*\!M)}^2)\d t&=\int_{\R}\lim_{n\to\infty}
(\|\dot{\eta}_n(\cdot,t)\|_{L^2(T^*\!M)}^2+\|{\eta}_n(\cdot,t)\|_{L^2(T^*\!M)}^2)\d t\\
&\leq\varliminf_{n\to\infty}\int_{\R}(\|\dot{\eta}_n(\cdot,t)\|_{L^2(T^*\!M)}^2+\|{\eta}_n(\cdot,t)\|_{L^2(T^*\!M)}^2)\d t\\
&\leq \lim_{n\to\infty}\wh{\mathscr{E}}^{\,\rm HK}(\eta_n,\eta_n)=\wh{\mathscr{E}}^{\,\rm HK}(\eta,\eta)<\infty.
\end{align*}
(2) $\Longrightarrow$ (1): Suppose the conditions in (2) for $\eta\in  L^2(T^*\!M)\otimes L^2(\R)$.
Let $(\wh{P}_t^{\,\rm HK})$ be the semigroup on $L^2(T^*\!M)\otimes L^2(\R)$ associated to 
the quadratic form $(\wh{\mathscr{E}}^{\,\rm HK}, D(\wh{\mathscr{E}}^{\,\rm HK}))$ on $L^2(T^*\!M)\otimes L^2(\R)$.  
Then we see 
\begin{align}
\wh{P}_t^{\,\rm HK}\eta(x,s)={P}_t^{\,\rm HK}\left(P_t^w\eta(\cdot,s) \right)(x)\quad \text{ for }\quad \wh\m\text{-a.e.~}(x,s),
\label{eq:SemigroupConincidence}
\end{align}
where 
\begin{align*}
P_t^w\eta(\cdot,s):=\frac{1}{\sqrt{4\pi t}}\int_{\R}e^{-\frac{(u-s)^2}{4\pi t}}\eta(\cdot,u)\d u
\end{align*}
is defined to be a Bochner integral on $L^2(T^*\!M)$. Similarly, we can define $P_t^w\d\eta(\cdot,s)$ and 
$P_t^w\d^*\eta(\cdot,s)$. Then 
\begin{align*}
\wh{\mathscr{E}}^{\,\rm HK}&(\wh{P}_t^{\,\rm HK}\eta,\wh{P}_t^{\,\rm HK}\eta)\\
&=\int_{\R}{\mathscr{E}}^{\rm HK}({P}_t^{\,\rm HK}P_t^w\eta(\cdot,s),{P}_t^{\,\rm HK}P_t^w\eta(\cdot,s))\d s+
\int_{\R}\left|P_t^{\rm HK}\frac{\partial}{\partial s}P_t^w\eta(\cdot,s) \right|_{L^2(T^*\!M)}^2\d s\\
&\leq 
\int_{\R}{\mathscr{E}}^{\rm HK}(P_t^w\eta(\cdot,s),P_t^w\eta(\cdot,s))\d s+
\int_{\R}\left|\frac{\partial}{\partial s}P_t^w\eta(\cdot,s) \right|_{L^2(T^*\!M)}^2\d s\\
&\leq 
\int_{\R}{\mathscr{E}}^{\rm HK}(\eta(\cdot,s),\eta(\cdot,s))\d s+
\int_{\R}\left|\frac{\partial}{\partial s}\eta(\cdot,s) \right|_{L^2(T^*\!M)}^2\d s<\infty.
\end{align*}
In the last inequality, we use $|P_t^w\d\eta|\leq P_t^w|\d \eta|$, $|P_t^w\d^*\eta|\leq P_t^w|\d^* \eta|$, and $ \frac{\partial}{\partial s}P_t^w\eta(\cdot,s)=P_t^w\frac{\partial}{\partial s}\eta(\cdot,s)$ for $
\wh{\m}$-a.e.~$(x,s)$. Thus we have $(\wh{P}_t^{\,\rm HK}\eta)_{t\geq0}$ is $\wh{\mathscr{E}}^{\,\rm HK}_1$-bounded. 
Therfore, $\eta\in  D(\wh{\mathscr{E}}^{\,\rm HK})$. 
\end{proof}

\begin{thm}\label{thm:domain}
Take $\omega=\omega(x,t)\in D(\wh{\mathscr{E}}^{\,\rm HK})$. Then $|\omega|\in D(\wh{\mathscr{E}})$ and 
\begin{align*}
\wh{\mathscr{E}}^{\,\wh\kappa}(|\omega|,|\omega|)\leq\wh{\mathscr{E}}^{\,\rm HK}(\omega,\omega).
\end{align*}
\end{thm}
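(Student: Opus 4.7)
The plan is to lift the Hess--Schrader--Uhlenbrock inequality (Theorem~\ref{thm:HessShcraderUhlenbrock}) from $M$ to the product space $\wh M=M\times\R$ and then to translate the resulting pointwise semigroup domination into a comparison of the two closed forms. The key starting point is the observation that the Feynman--Kac weight $e^{-A_t^\kappa}$ and the pointwise norm of $\omega$ depend only on the $M$-coordinate, while the $\R$-coordinate contributes only the independent one-dimensional Brownian motion $B$. Together with the semigroup representation already used in the proof of Lemma~\ref{lem:CharacterizationDomain}, this yields the factorizations
\begin{align*}
\wh P_t^{\,\rm HK}\omega(x,s)&=P_t^{\rm HK}\bigl(P_t^w\omega(\cdot,s)\bigr)(x),\\
\wh P_t^{\,\wh\kappa}u(x,s)&=P_t^{\kappa}\bigl(P_t^w u(\cdot,s)\bigr)(x),
\end{align*}
where $(P_t^w)_{t\geq 0}$ is the one-dimensional Gaussian semigroup, acting in the first line as a Bochner integral on $L^2(T^*\!M)$-valued functions.

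Next I will combine Theorem~\ref{thm:HessShcraderUhlenbrock}(ii) with the triangle inequality for the Bochner integral (namely $|P_t^w\omega(\cdot,s)|_{\m}\leq P_t^w|\omega|(\cdot,s)$ $\m$-a.e., which follows from the sub-additivity of the pointwise norm on the Hilbert module $L^2(T^*\!M)$) to derive the lifted Hess--Schrader--Uhlenbrock estimate
$$|\wh P_t^{\,\rm HK}\omega|\leq\wh P_t^{\,\wh\kappa}|\omega|\qquad\wh\m\text{-a.e.}$$
Feeding this into the $\m$-a.e.\ pointwise Cauchy--Schwarz inequality for the scalar product on $L^2(T^*\!M)$, integrating against $\wh\m$, and rearranging yields the form-level bound
$$\tfrac{1}{t}\bigl(|\omega|-\wh P_t^{\,\wh\kappa}|\omega|,\,|\omega|\bigr)_{L^2(\wh M;\wh\m)}\leq\tfrac{1}{t}\bigl(\omega-\wh P_t^{\,\rm HK}\omega,\,\omega\bigr)_{L^2(T^*\!M)\otimes L^2(\R)}.$$

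I then let $t\downarrow 0$. Since $\omega\in D(\wh{\mathscr{E}}^{\,\rm HK})$, the right-hand side converges to $\wh{\mathscr{E}}^{\,\rm HK}(\omega,\omega)<\infty$. By the spectral theorem applied to the lower semi-bounded self-adjoint generator of $\wh P_t^{\,\wh\kappa}$, combined with a dominated-convergence argument on the spectral side (the negative part of the spectrum being uniformly controlled via the Stollmann--Voigt inequality \eqref{eq:StollmannVoigt} lifted to $\wh M$ using $\wh\kappa^+\in S_D(\wh{\bf X})$ and $2\wh\kappa^-\in S_{E\!K}(\wh{\bf X})$), the left-hand side converges in $\R\cup\{+\infty\}$ to $\wh{\mathscr{E}}^{\,\wh\kappa}(|\omega|,|\omega|)$. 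The finiteness of the upper bound then forces $|\omega|\in D(\wh{\mathscr{E}}^{\,\wh\kappa})$, and the form equivalence obtained from \eqref{eq:EquivalencePert} lifted to $\wh M$ identifies this with $|\omega|\in D(\wh{\mathscr{E}})$, while the displayed inequality becomes the asserted energy estimate.

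The main technical point is this final limit passage: because $\wh{\mathscr{E}}^{\,\wh\kappa}$ is only lower semi-bounded, the clean monotone convergence $\tfrac{1}{t}(f-T_tf,f)\uparrow\mathscr{E}(f,f)$ available for non-negative quadratic forms is not directly at our disposal. It must be replaced either by first passing to the shifted non-negative form $\wh{\mathscr{E}}^{\,\wh\kappa}+\alpha_0\|\cdot\|_{L^2(\wh M;\wh\m)}^2$, or by a direct dominated-convergence argument on the spectral side exploiting the uniform bound on the negative part of the spectrum. Once this is handled, the remainder of the argument is routine, relying only on the tensor decomposition of the two semigroups on $\wh M$ and the $M$-level Hess--Schrader--Uhlenbrock inequality already established.
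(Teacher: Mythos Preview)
Your argument is correct and constitutes a genuinely different route from the paper's. The paper does not lift the semigroup domination to $\wh M$; instead it invokes the characterization of $D(\wh{\mathscr{E}}^{\,\rm HK})$ from Lemma~\ref{lem:CharacterizationDomain} to split the energy into an $M$-part and an $\R$-part, handles the $M$-part via the form-level consequence of Hess--Schrader--Uhlenbrock, namely $\mathscr{E}^{\kappa}(|\omega|(\cdot,t),|\omega|(\cdot,t))\leq\mathscr{E}^{\rm HK}(\omega(\cdot,t),\omega(\cdot,t))$, and treats the $\R$-part by a direct Kato-type bound $|\partial_t|\omega||\leq|\dot\omega|$ obtained through an $\eps$-regularization $(|\omega|^2+\eps^2)^{1/2}$ together with Fatou's lemma for Dirichlet forms. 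Summing the two pieces and appealing to Lemma~\ref{lem:CharacterizationDomain}(2)$\Rightarrow$(1) yields both the domain statement and the inequality.

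Your synthetic approach---derive the pointwise bound $|\wh P_t^{\,\rm HK}\omega|\leq\wh P_t^{\,\wh\kappa}|\omega|$ from the factorizations and Theorem~\ref{thm:HessShcraderUhlenbrock}(2), then pass to forms via $t^{-1}(f-T_tf,f)$---avoids the explicit slicing and the $\eps$-regularization, at the price of the spectral limit argument you flag. That limit passage is indeed routine once one observes that $t\mapsto t^{-1}(1-e^{-t\lambda})$ is monotone decreasing in $t$ for every real $\lambda$, so that monotone convergence handles the $[0,\infty)$ part of the spectrum while dominated convergence (with the uniform bound coming from the lower semi-boundedness $\lambda\geq-C$ on a fixed interval $t\in(0,1]$) handles $[-C,0)$; the shift-to-nonnegative-form trick you mention works equally well. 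The paper's approach is more elementary in that it never touches the spectral calculus and stays at the level of the base-space estimates already in hand; yours is cleaner conceptually and extends verbatim to any situation where one has a semigroup domination on a product.
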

\begin{proof}[{\bf Proof}] 
Applying the Fatou's lemma for Dirichlet form on $L^2(\R)$ associated to the one-dimensional Brownian motion $(B_t)_{t\geq0}$ 
(see \cite{Schmuland:Fatou} for Fatou's lemma for Dirichlet forms),
\begin{align}
\int_{\R}\left|\frac{\partial |\omega|}{\partial t} \right|^2\d t&\leq\varliminf_{n\to\infty}\int_{\R}\left|\frac{\partial(|\omega|^2+\frac{1}{n^2})^{\frac12}}{\partial t} \right|^2\d t\notag \\
&\leq \varliminf_{n\to\infty}\int_{\R}\frac{\langle \omega,\dot{\omega}\rangle ^2}{|\omega|^2+\frac{1}{n^2}}\d t
\leq \varliminf_{n\to\infty}\int_{\R}\frac{|\omega|^2|\dot{\omega}|^2}{|\omega|^2+\frac{1}{n^2}}\d t\notag\\
&\leq\int_{\R}|\dot{\omega}|^2\d t.\label{eq:HSUTime}
\end{align} 
On the other hand, Hess-Schrader-Uhlenbrock inequality \eqref{eq:HSU} tells us the following (see \cite{Kw:HessSchraderUhlenbrock})
\begin{align}
\mathscr{E}^{\kappa}(|\omega|(\cdot,t),|\omega|(\cdot,t))\leq\mathscr{E}^{\rm HK}(\omega(\cdot,t),\omega(\cdot,t)).\label{eq:HSUForm}
\end{align} 
Combining \eqref{eq:HSUTime} and \eqref{eq:HSUForm}, we have 
\begin{align*}
\wh{\mathscr{E}}^{\,\wh\kappa}(|\omega|,|\omega|)&=\int_{\R}\mathscr{E}^{\kappa}(|\omega|(\cdot,t),|\omega|(\cdot,t))\d t+\int_M\int_{\R}
\left|\frac{\partial|\omega|}{\partial t} \right|^2\d\m\d t\\
&\leq \int_{\R}\mathscr{E}^{\rm HK}(\omega(\cdot,t),\omega(\cdot,t))\d t+\int_M\int_{\R}
\left|\dot\omega\right|^2\m(\d x)\d t\d\m=\wh{\mathscr{E}}^{\rm HK}(\omega,\omega).
\end{align*}
\end{proof} 
\begin{cor}\label{cor:domain}
Let $\eta=\eta(x,t):=Q_{|t|}^{(\alpha),{\rm HK}}\theta(x)$ with $\theta\in D(\DD)\cap L^{\infty}(T^*\!M)$ and $\alpha>C_{\kappa}$. 
Then $\eta\in D(\wh{\mathscr{E}}^{\,\rm HK})$, hence 
$|\eta|\in D(\wh{\mathscr{E}}^{\,\wh\kappa})\cap L^{\infty}(\wh{M};\wh\m)=D(\wh{\mathscr{E}})\cap L^{\infty}(\wh{M};\wh\m)$. 
In particular, $|\eta|^2\in D(\wh{\mathscr{E}})\cap L^{\infty}(\wh{M};\wh\m)$. 
Moreover, for $p\in]0,+\infty[$ and $\ep>0$, $\eta_{\eps}:=(|\eta|^2+\eps^2)^{\frac{p}{2}}-\eps^p$ satisfies $\eta_{\eps}\in D(\wh{\mathscr{E}})\cap L^{\infty}(\wh{M};\wh\m)$.
\end{cor}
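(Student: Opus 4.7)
My plan is to verify $\eta\in D(\wh{\mathscr{E}}^{\,\rm HK})$ via the characterization in Lemma~\ref{lem:CharacterizationDomain}, then deduce the claim on $|\eta|$ from Theorem~\ref{thm:domain} combined with the Hess--Schrader--Uhlenbrock bound \eqref{eq:HSU} for the $L^{\infty}$-estimate, and finally obtain $|\eta|^{2}$ and $\eta_{\eps}$ in $D(\wh{\mathscr{E}})\cap L^{\infty}(\wh M;\wh{\m})$ by standard algebra and chain rule properties of Dirichlet forms.

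Since the quadratic form $\widetilde{\mathscr{E}}_{\rm con}$ is nonnegative, $-\DD\geq 0$ on $L^{2}(T^{*}\!M)$, so the spectral measure $\{E_{\lambda}\}$ of $\sqrt{\alpha-\DD}$ is supported in $[\sqrt{\alpha},+\infty[$. Writing $\eta(\cdot,t)=e^{-|t|\sqrt{\alpha-\DD}}\theta$ and using $\int_{\R}e^{-2|t|\lambda}\,\d t=\lambda^{-1}$, I compute
\begin{align*}
\int_{\R}\|\eta(\cdot,t)\|_{L^{2}(T^{*}\!M)}^{2}\,\d t &= \int \lambda^{-1}\,\d\|E_{\lambda}\theta\|^{2}\leq \alpha^{-1/2}\|\theta\|_{L^{2}(T^{*}\!M)}^{2},\\
\int_{\R}\|\dot\eta(\cdot,t)\|_{L^{2}(T^{*}\!M)}^{2}\,\d t &= \int \lambda\,\d\|E_{\lambda}\theta\|^{2}\leq \alpha^{-1/2}\int\lambda^{2}\,\d\|E_{\lambda}\theta\|^{2},\\
\int_{\R}\mathscr{E}^{\,\rm HK}(\eta(\cdot,t),\eta(\cdot,t))\,\d t &= \int\frac{\lambda^{2}-\alpha}{\lambda}\,\d\|E_{\lambda}\theta\|^{2};
\end{align*}
all three are finite because $\theta\in D(\DD)$ implies $\int\lambda^{2}\,\d\|E_{\lambda}\theta\|^{2}=\mathscr{E}^{\,\rm HK}(\theta,\theta)+\alpha\|\theta\|^{2}<\infty$. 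Together with $\eta(\cdot,t)\in D(\DD)\subset H^{1,2}(T^{*}\!M)=D(\mathscr{E}^{\,\rm HK})$ for every $t\in\R$, Lemma~\ref{lem:CharacterizationDomain} then yields $\eta\in D(\wh{\mathscr{E}}^{\,\rm HK})$.

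For the $L^{\infty}$-bound on $|\eta|$, applying the pointwise Hess--Schrader--Uhlenbrock inequality \eqref{eq:HSU} inside the subordination integral defining $Q^{(\alpha),\rm HK}_{|t|}$ and using \eqref{eq:KatoContraction} yields
\begin{align*}
|\eta(x,t)|_{\m}\leq \int_{0}^{\infty}e^{-\alpha s}P_{s}^{\kappa}|\theta|_{\m}(x)\,\lambda_{|t|}(\d s)\leq C\,\|\theta\|_{L^{\infty}(T^{*}\!M)}\,e^{-\sqrt{\alpha-C_{\kappa}}\,|t|},
\end{align*}
so $|\eta|\in L^{\infty}(\wh M;\wh{\m})$. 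Theorem~\ref{thm:domain} now gives $|\eta|\in D(\wh{\mathscr{E}}^{\,\wh\kappa})$, and since $D(\wh{\mathscr{E}}^{\,\wh\kappa})=D(\wh{\mathscr{E}})$ by the very definition \eqref{eq:quadratic}, we obtain $|\eta|\in D(\wh{\mathscr{E}})\cap L^{\infty}(\wh M;\wh{\m})$.

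Finally, since $D(\wh{\mathscr{E}})\cap L^{\infty}(\wh M;\wh{\m})$ is an algebra (a standard property of Dirichlet forms), $|\eta|^{2}$ lies in it. For $\eta_{\eps}=(|\eta|^{2}+\eps^{2})^{p/2}-\eps^{p}$, the $L^{\infty}$-bound is immediate, while $\eta_{\eps}\in D(\wh{\mathscr{E}})$ follows from the chain rule for Dirichlet forms applied to $\Phi(r):=(r+\eps^{2})^{p/2}-\eps^{p}$, which satisfies $\Phi(0)=0$ and is $C^{1}$ on $[0,+\infty[$ with derivative bounded on the range $[0,\||\eta|^{2}\|_{\infty}]$, hence Lipschitz there. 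I expect Step~1, namely the spectral calculus check that $\eta$ meets the integrability conditions of Lemma~\ref{lem:CharacterizationDomain} (particularly the identification $\mathscr{E}^{\,\rm HK}(\eta(\cdot,t),\eta(\cdot,t))=\langle -\DD\eta(\cdot,t),\eta(\cdot,t)\rangle$ and its time integration via the kink at $t=0$), to be the only point requiring some care; the remaining steps are direct consequences of already-established machinery.
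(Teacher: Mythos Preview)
Your proposal is correct and follows essentially the same architecture as the paper's proof: verify condition (2) of Lemma~\ref{lem:CharacterizationDomain} to get $\eta\in D(\wh{\mathscr{E}}^{\,\rm HK})$, invoke Theorem~\ref{thm:domain} for $|\eta|\in D(\wh{\mathscr{E}})$, and finish with the normal-contraction/chain-rule property of Dirichlet forms. The only difference is cosmetic: where you compute the three time-integrals directly via the spectral resolution of $\sqrt{\alpha-\DD}$, the paper instead bounds $\int_{\R}\|\eta(\cdot,t)\|^{2}\d t$ and $\int_{\R}\|(\alpha-\DD)\eta(\cdot,t)\|^{2}\d t$ using the semigroup estimate \eqref{eq:QtContraction} and then combines them by Cauchy--Schwarz to control $\wh{\mathscr{E}}^{\,\rm HK}_{\alpha}(\eta,\eta)$. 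Your spectral bookkeeping is slightly cleaner and does not rely on \eqref{eq:QtContraction}; conversely the paper's route avoids having to think about the kink at $t=0$. Your treatment of the final step (Lipschitz on the bounded range of $|\eta|^{2}$, valid for all $p>0$) is in fact more careful than the paper's stated global Lipschitz constant $\tfrac{p}{2}\eps^{p-2}$, which as written only holds for $p\le 2$.
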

\begin{proof}[{\bf Proof}] 
From \eqref{eq:QtContraction}, we easily see 
\begin{align*}
\int_{\R}\|\eta(\cdot,t)\|_{L^2(T^*\!M)}^2\d t\leq C^2\|\theta\|_{L^2(T^*\!M)}^2\int_{\R}e^{-2\sqrt{\alpha-C_{\kappa}}|t|}\d t<\infty,
\end{align*}
hence $\eta\in L^2(T^*\!M)\otimes L^2(\R)$. 
For a fixed $t>0$, we see
\begin{align*}
\mathscr{E}^{\rm HK}_{\alpha}(\eta(\cdot,t),\eta(\cdot,t))&=((\alpha-\DD)\eta(\cdot,t),\eta(\cdot,t))_{L^2(T^*\!M)}\\
&=\|\sqrt{\alpha-\DD}\eta(\cdot,t) \|_{L^2(T^*\!M)}\\
&=\int_M|\dot{\eta}(\cdot,t)|^2\d\m,
\end{align*}
where we use $\dot{\eta}=\sqrt{\alpha-\DD}\eta$ in $L^2(T^*\!M)$. 
Applying \eqref{eq:QtContraction} again, 
 \begin{align*}
 \int_{\R}\|(\alpha-\DD)\eta(\cdot,t) \|_{L^2(T^*\!M)}^2\d t&=\int_{\R}\| Q_{|t|}^{(\alpha),{\rm HK}}(\alpha-\DD)\theta\|_{L^2(T^*\!M)}^2\d t\\
 &\leq \|(\alpha-\DD)\theta\|_{L^2(T^*\!M)}^2 C^2\int_{\R}e^{-2\sqrt{\alpha-C_{\kappa}}|t|}\d t<\infty. 
 \end{align*} 
 Therefore, we obtain 
 \begin{align*}
 \wh{\mathscr{E}}^{\,\rm HK}_{\alpha}(\eta,\eta)
 &=\int_{\R}{\mathscr{E}}^{\,\rm HK}_{\alpha}(\eta(\cdot,t),\eta(\cdot,t))\d t+\int_{\R}\|\dot{\eta}(\cdot,t)\|_{L^2(T^*\!M)}^2\d t\\
 &=2\int_{\R}{\mathscr{E}}^{\,\rm HK}_{\alpha}(\eta(\cdot,t),\eta(\cdot,t))\d t\\
 &\leq2\sqrt{\int_{\R}\|(\alpha-\DD)\eta(\cdot,t)\|_{L^2(T^*\!M)}^2\d t}\sqrt{\int_{\R}\|\eta(\cdot,t)\|_{L^2(T^*\!M)}^2\d t}<\infty.
 \end{align*}
 By Lemma~\ref{lem:CharacterizationDomain}, this inequality shows $\eta\in D(\wh{\mathscr{E}}^{\,\rm HK})$. 
 Theorem~\ref{thm:domain} also shows $|\eta|\in D(\wh{\mathscr{E}})\cap L^{\infty}(\wh{M};\wh{\m})$ . 
 The final assertion follows from the fact that $T(x):=\Psi(|x|)=(|x|+\eps^2)^{\frac{p}{2}}-\eps^p$ satisfies $T(0)=0$ and 
 $|T(x)-T(y)|\leq\frac{p}{2}\eps^{p-2}|x-y|$ for $x,y\in \R$ (see \cite[\text{$(\mathscr{E},4)''$}]{FOT}). 
\end{proof} 

By Corollary~\ref{cor:domain}, we can apply Fukushima's decomposition to 
$|\eta|^2$ for $\eta=Q_{|t|}^{(\alpha),{\rm HK}}\theta$ with 
$\theta\in  D(\DD)\cap L^{\infty}(T^*\!M)$ and $\alpha>C_{\kappa}$ with respect to $\wh{\bf X}$. 
That is, there exists 
a martingale additive functional of finite energy $M^{[|\eta|^2]}$ and a continuous additive functional of zero energy $N^{[v]}$ such that 
\begin{align}
\wt{|\eta|^2}(\widehat{X}_t)-\wt{|\eta|^2}(\widehat{X}_0)=\widehat{M}_t^{[|\eta|^2]}+\widehat{N}_t^{[|\eta|^2]}\quad t\geq0\quad \P_{\hat{x}}\text{-a.s.~for }\wh{\mathscr{E}}\text{-q.e.~}\hat{x},\label{eq:FukushimaDecomp} 
\end{align}
where $\wt{|\eta|^2}$ is an $\widehat{\mathscr{E}}$-quasi-continuous $\widehat{\m}$-version of $|\eta|^2$, where  
$\eta=Q_{|t|}^{(\alpha),{\rm HK}}\theta$ with 
$\theta\in  D(\DD)\cap L^{\infty}(T^*\!M)$.  
See \cite[Theorem~5.2.2]{FOT} for Fukushima's decomposition theorem. Thanks to \cite[Theorem~5.2.3]{FOT},  we know that 
\begin{align}
\langle \widehat{M}^{[|\eta|^2]}\rangle_t=\int_0^t \left\{\left|\nabla|\eta|^2\right|^2(\widehat{X}_s)+\left( \frac{\partial |\eta|^2}{\partial a}(\widehat{X}_s)\right)^2 \right\}\d s,\quad t\geq0.\label{eq:quadraticVariation}
\end{align}
See also \cite[Theorem~5.1.3 and Example~5.2.1]{FOT} for details.  
Similarly, for $\eta_{\eps}=\Psi(|\eta|^2)=(|\eta|^2+\eps^2)^{\frac{p}{2}}-\eps^p\in D(\wh{\mathscr{E}})$ with $p\in]0,+\infty[$ we can consider 
\begin{align}
\wt{\eta}_{\eps}(\widehat{X}_t)-\wt{\eta}_{\eps}(\widehat{X}_0)=\widehat{M}_t^{[\eta_{\eps}]}+\widehat{N}_t^{[\eta_{\eps}]}\quad t\geq0\quad \P_{\hat{x}}\text{-a.s.~for }\wh{\mathscr{E}}\text{-q.e.~}\hat{x}\label{eq:FukushimaDecomp*} 
\end{align}
with respect to $\wh{\bf X}$.

\section{Expressions for the CAFs $\widehat{N}_t^{[|\eta|^2]}$ and $\widehat{N}_t^{[\eta_{\eps}]}$ of zero energy}\label{sec:CAF}
\begin{lem}[Bochner formula]\label{lem:LaplacianEta}
Let $\eta\in D(\DD)\cap L^{\infty}{(T^*\!M)}$. 
Then $|\eta|^2\in 
D_f(\text{\boldmath$\Delta$})_+$ with 
\begin{align}
\frac12\text{\boldmath$\Delta$}|\eta|^2=\left[\langle \eta,\DD\eta\rangle+|\nabla \eta^{\sharp}|_{\rm HS}^2\right]\m+{\bf Ric}(\eta^{\sharp},\eta^{\sharp}). \label{eq:MeasureWeitzenboeck}
\end{align}
\end{lem}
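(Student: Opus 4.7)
The natural strategy is to verify the identity first for $\eta$ in the dense subclass ${\rm Reg}(T^*\!M)$, where the formula is essentially encoded in the very construction of the Ricci measure, and then to extend to $D(\DD) \cap L^\infty(T^*\!M)$ by approximation. For $\eta \in {\rm Reg}(T^*\!M)$, one has $\eta^\sharp \in {\rm Reg}(TM)$, so specializing the defining identity \eqref{eq:kappaRicciMeasure} to $X = Y = \eta^\sharp$ yields
\begin{align*}
{\bf Ric}^\kappa(\eta^\sharp,\eta^\sharp) = {\text{\boldmath$\Delta$}}^{2\kappa}\frac{|\eta|^2}{2} - \bigl[\langle \eta,\DD\eta\rangle + |\nabla \eta^\sharp|_{\rm HS}^2\bigr]\m,
\end{align*}
after using $|\eta^\sharp| = |\eta|$ and $\langle \eta^\sharp,(\DD\eta)^\sharp\rangle = \langle \eta,\DD\eta\rangle$. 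Applying Lemma~\ref{lem:Inclusion} to replace ${\text{\boldmath$\Delta$}}^{2\kappa}|\eta|^2$ by ${\text{\boldmath$\Delta$}}|\eta|^2 - 2\widetilde{|\eta|^2}\kappa$, together with the definition \eqref{eq:RicciMeasure} of ${\bf Ric}$, produces exactly \eqref{eq:MeasureWeitzenboeck}. Finiteness of $\widetilde{|\eta|^2}\kappa$ via Stollmann--Voigt \eqref{eq:StollmannVoigt} and finiteness of the $\m$-absolutely continuous terms via Lemma~\ref{lem:Lemma8.8} confirm $|\eta|^2 \in D_f({\text{\boldmath$\Delta$}})_+$ in the regular case.

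For a general $\eta \in D(\DD) \cap L^\infty(T^*\!M)$, the plan is to construct regular approximants $\eta_n \in {\rm Reg}(T^*\!M)$ satisfying, as $n \to \infty$, (a) $\eta_n \to \eta$ and $\DD\eta_n \to \DD\eta$ in $L^2(T^*\!M)$, (b) $\eta_n^\sharp \to \eta^\sharp$ in $H^{1,2}_\sharp(TM)$ (so in particular $\nabla \eta_n^\sharp \to \nabla \eta^\sharp$ in $L^2(T^{\otimes 2}X)$), and (c) $\sup_n \|\eta_n\|_{L^\infty(T^*\!M)} < \infty$. A candidate construction first regularizes via the heat flow: for small $s > 0$, $\tilde\eta_s := P_s^{\rm HK}\eta$ lies in $D(\DD^k)$ for every $k$, satisfies $\|\tilde\eta_s\|_{L^\infty(T^*\!M)} \leq C(\kappa) e^{C_\kappa s}\|\eta\|_{L^\infty(T^*\!M)}$ by Hess--Schrader--Uhlenbrock \eqref{eq:HSU}, and converges to $\eta$ in the graph norm of $\DD$; one then approximates each $\tilde\eta_s$ in $W^{1,2}(T^*\!M)$ by elements of ${\rm Reg}(T^*\!M)$ (density as in Definition~\ref{def:SobolevSpaceH12TX}) with ${\rm Test}(M)$-multipliers to preserve the $L^\infty$ bound, and extracts a diagonal subsequence.

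Once such approximants are available, apply the regular case to each $\eta_n$ and pass to the limit termwise. The term $\langle \eta_n,\DD\eta_n\rangle\m$ converges in $L^1$ by (a) together with uniform $L^\infty$-control, $|\nabla \eta_n^\sharp|_{\rm HS}^2\m$ converges in $L^1$ by (b) and (c), and ${\bf Ric}(\eta_n^\sharp,\eta_n^\sharp) \to {\bf Ric}(\eta^\sharp,\eta^\sharp)$ in total variation by the joint continuity of ${\bf Ric}^\kappa$ on $H^{1,2}_\sharp(TM)^2$ granted by Theorem~\ref{thm:Theorem8.9}, combined with $\widetilde{|\eta_n|^2}\kappa \to \widetilde{|\eta|^2}\kappa$ in $\mathfrak{M}_{\rm f}^\pm(M)_{\mathscr{E}}$ via \eqref{eq:StollmannVoigt} and (c). Consequently $\frac12{\text{\boldmath$\Delta$}}|\eta_n|^2$ converges in total variation to the proposed right-hand side, and the limit functional $h \mapsto -\mathscr{E}(h,|\eta|^2)$ on $D(\mathscr{E}) \cap L^\infty(M;\m)$ is represented by this finite signed smooth measure, yielding $|\eta|^2 \in D_f({\text{\boldmath$\Delta$}})_+$ together with \eqref{eq:MeasureWeitzenboeck}.

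The main obstacle is producing regular approximants that simultaneously converge in the graph norm of $\DD$, in the covariant derivative, and in $L^\infty$: $W^{1,2}(T^*\!M)$-convergence does not \emph{a priori} control $\nabla \eta_n^\sharp$, which is defined through a different closure. The expected remedy is to exploit the bound of Lemma~\ref{lem:Lemma8.8}, namely $\mathscr{E}_{\rm cov}(X) \leq \mathscr{E}_{\rm con}(X^\flat) - \langle \kappa,|X|^2\rangle$, which upgrades $H^{1,2}(T^*\!M)$-convergence of $\eta_n$ (combined with uniform $L^\infty$-bounds on $\eta_n$ used to handle the $\kappa$-term) to the required $H^{1,2}_\sharp(TM)$-convergence of $\eta_n^\sharp$. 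Standard reflexivity and lower semicontinuity arguments for $\mathscr{E}_{\rm cov}$ and $\mathscr{E}_{\rm con}$ then close the limit procedure.
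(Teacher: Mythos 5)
Your treatment of the regular case coincides with the paper's: specialize \eqref{eq:kappaRicciMeasure} to $X=Y=\eta^{\sharp}$, pass from ${\text{\boldmath$\Delta$}}^{2\kappa}$ to ${\text{\boldmath$\Delta$}}$ via Lemma~\ref{lem:Inclusion}, and check finiteness of each term. The gap is in the extension step. Your limit passage is termwise strong: to get $\langle\eta_n,\DD\eta_n\rangle\m\to\langle\eta,\DD\eta\rangle\m$ in $L^1$ (or in total variation) you impose requirement (a), namely $\DD\eta_n\to\DD\eta$ in $L^2(T^*\!M)$ for approximants $\eta_n\in{\rm Reg}(T^*\!M)$. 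But ${\rm Reg}(T^*\!M)$ is only known to be a \emph{form} core for $\HK$ --- that is precisely the definition of $H^{1,2}(T^*\!M)$ --- not an operator core for $\DD$, and your construction does not deliver (a): the first stage $P_s^{\rm HK}\eta$ does converge in graph norm, but the second stage, approximating $P_s^{\rm HK}\eta$ in the $W^{1,2}(\Lambda^1T^*\!M)$-norm by regular forms, controls only $\d$ and $\d_*$ separately and gives no bound whatsoever on $\DD$ of the approximants; the diagonal sequence therefore loses all control on $\DD\eta_n$. The remedy you propose via Lemma~\ref{lem:Lemma8.8} repairs the covariant-derivative half of the obstacle you flagged, but not this half, and essential self-adjointness of $\DD$ on ${\rm Reg}(T^*\!M)$ is not available in this framework.

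The paper closes the argument by never asking for convergence of $\DD\eta_n$: it tests the identity against $\phi\in{\rm Test}(M)$ and uses that $\int_M\phi\,\langle\eta_n,\DD\eta_n\rangle\,\d\m=-\HK(\eta_n,\phi\eta_n)$, which is legitimate because $\phi\eta_n\in H^{1,2}(T^*\!M)$ by the Leibniz rules recalled after Definition~\ref{def:adjointextrioderivative}. This quantity involves only $\eta_n$, $\d\eta_n$, $\d_*\eta_n$ and the bounded data of $\phi$, hence is Cauchy along an $H^{1,2}(T^*\!M)$-approximating, uniformly $L^{\infty}$-bounded sequence; one then identifies the limit of $\mathscr{E}(\phi,|\eta_n|^2)$ and extends from $\phi\in{\rm Test}(M)$ to all $\phi\in D(\mathscr{E})\cap L^{\infty}(M;\m)$ by density, which is exactly the statement $|\eta|^2\in D_f({\text{\boldmath$\Delta$}})_+$ with \eqref{eq:MeasureWeitzenboeck}. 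If you replace your $L^1$/total-variation limit for the $\langle\eta_n,\DD\eta_n\rangle$-term by this tested (weak) formulation, your argument goes through; the total-variation convergence you invoke for $|\nabla\eta_n^{\sharp}|_{\rm HS}^2\,\m$ and for ${\bf Ric}(\eta_n^{\sharp},\eta_n^{\sharp})$ is correct as stated.
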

\begin{proof}[{\bf Proof}] 
Since $\eta\in D(\DD)\cap L^{\infty}(T^*\!M)\subset H^{1,2}(T^*\!M)\cap L^{\infty}(T^*\!M)$,  $|\eta|\in D(\mathscr{E})\cap L^{\infty}(M;\m)$ is proved in \cite{Kw:HessSchraderUhlenbrock}, hence $|\eta|^2\in D(\mathscr{E})\cap L^{\infty}(M;\m)$. 
When $\eta\in {\rm Reg}(T^*\!M)$, by \cite[Lemma~8.2]{Braun:Tamed2021}, $|\eta|^2\in D(\text{\boldmath$\Delta$}^{2\kappa})$, then 
$|\eta|^2\in D(\text{\boldmath$\Delta$})$ by Lemma~\ref{lem:Inclusion}. In this case, we have \eqref{eq:MeasureWeitzenboeck} 
by \eqref{eq:RicciMeasure}. The expression  \eqref{eq:MeasureWeitzenboeck} yields $|\eta|^2\in D_f(\text{\boldmath$\Delta$}^{2\kappa})$, 
i.e., $\text{\boldmath$\Delta$}^{2\kappa}|\eta|^2$ is a finite signed smooth measure, because the right-hand-side of 
\eqref{eq:MeasureWeitzenboeck} is a signed finite smooth measure for any $\eta\in D(\DD)$ by Theorem~\ref{thm:Theorem8.9} and 
$|\eta|\in D(\mathscr{E})$ implies $\int_M\wt{|\eta|^2}\d|\kappa|<\infty$. Therefore, we have $|\eta|^2\in D_f(\text{\boldmath$\Delta$})$ 
by Lemma~\ref{lem:Inclusion} and \eqref{eq:MeasureWeitzenboeck}. This implies that for any $\phi\in D(\mathscr{E})$ and $\eta\in {\rm Reg}(T^*\!M)$  
\begin{align}
-\frac12\mathscr{E}^{2\kappa}(\phi,|\eta|^2)=\int_M\phi\left[\langle \eta,\DD\eta\rangle+|\nabla \eta^{\sharp}|_{\rm HS}^2 \right]\d\m+\int_M\tilde{\phi}\,\d\,{\bf Ric}^{\kappa}(\eta^{\sharp},\eta^{\sharp}),\label{eq:OriginalExp1}\\
-\frac12\mathscr{E}(\phi,|\eta|^2)=\int_M\phi\left[\langle \eta,\DD\eta\rangle+|\nabla \eta^{\sharp}|_{\rm HS}^2 \right]\d\m+\int_M\tilde{\phi}\,\d\,{\bf Ric}(\eta^{\sharp},\eta^{\sharp}).\label{eq:OriginalExp2}
\end{align}  

Finally we prove the assertion $|\eta|^2\in D_f(\text{\boldmath$\Delta$})$ with \eqref{eq:MeasureWeitzenboeck} for the case 
$\eta=Q_t^{(\alpha),{\rm HK}}\theta$, $\theta\in D(\DD)\cap L^{\infty}(T^*\!M)$.  
Let $\{\eta_n\}\subset {\rm Reg}(T^*\!M)(\subset L^{\infty}(T^*\!M))$ be an $H^{1,2}(T^*\!M)$-approximating sequence to 
$\eta$ satisfying $\sup_{n\in\N}\|\eta_n\|_{L^{\infty}(T^*\!M)}<\infty$. Fix $\phi\in {\rm Test}(M)$. 
It is easy to see that $\{|\eta_n|^2\}(\subset D(\mathscr{E}))$ $L^2$-strongly converges to $|\eta|^2\in D(\mathscr{E})$.
We can prove that $\{|\eta_n|^2\}(\subset D(\mathscr{E}))$ is $\mathscr{E}_1$-weakly (and $\mathscr{E}$-weakly) convergent to $|\eta|^2\in D(\mathscr{E})$. Indeed, for $\phi\in {\rm Test}(M)$, we can easily get that $\{\mathscr{E}_1(|\eta_n|^2,\phi)\}$ and $\{\mathscr{E}(|\eta_n|^2,\phi)\}$ are Cauchy sequences 
for such $\eta_n\in{\rm Reg}(T^*\!M)$. 
Since ${\rm Test}(M)$ is $\mathscr{E}_1^{1/2}$-dense in $D(\mathscr{E})$, they are 
Cauchy sequences and converges to $\{\mathscr{E}_1(|\eta|^2,\phi)\}$ and $\{\mathscr{E}(|\eta|^2,\phi)\}$  
for any $\phi\in D(\mathscr{E})$, respectively. Thus, we have the expressions \eqref{eq:OriginalExp1} and \eqref{eq:OriginalExp2} for $\phi\in D(\mathscr{E})$. Therefore, 
\eqref{eq:OriginalExp2}  for $\phi\in D(\mathscr{E})$ implies $|\eta|^2\in D_f(\text{\boldmath$\Delta$})$ and \eqref{eq:MeasureWeitzenboeck}. 
\end{proof} 
\begin{cor}\label{cor:LaplacianEta}
Let $\eta=Q_t^{(\alpha),{\rm HK}}\theta$ with $\theta\in D(\DD)\cap L^{\infty}(T^*\!M)$. Then, for $\alpha\geq C_{\kappa}$ 
\begin{align}
\frac{\;1\;}{2}\left(\frac{\partial^2}{\partial t^2}+\text{\boldmath$\Delta$}\right)|\eta|^2&=\left[\alpha|\eta|^2+
\left|\overline{\nabla}\eta \right|^2 \right]\m+{\bf Ric}(\eta^{\sharp},\eta^{\sharp}),\label{eq:Identity1}\\
\frac{\;1\;}{2}\left(\frac{\partial^2}{\partial t^2}+\Delta_{\ll}\right)|\eta|^2&=\left[\alpha|\eta|^2+
\left|\overline{\nabla}\eta \right|^2 \right]+{\mathfrak{Ric}}(\eta^{\sharp},\eta^{\sharp})\quad\m\text{-a.e.}\label{eq:Identity2}
\end{align}
Here the left hand side of \eqref{eq:Identity1} is understood as 
\begin{align}
\left( \frac{\partial^2}{\partial t^2}+\text{\boldmath$\Delta$}\right)|\eta|^2:=\frac{\partial^2|\eta|^2}{\partial t^2}\m+
\text{\boldmath$\Delta$}|\eta|^2.\label{eq:Formula4}
\end{align}
\end{cor}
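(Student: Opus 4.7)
The plan is to reduce Corollary~\ref{cor:LaplacianEta} to the measure-valued Bochner--Weitzenb\"ock identity of Lemma~\ref{lem:LaplacianEta}, and then compute the missing $t$-derivative explicitly using the subordination relation $\ddot\eta = (\alpha-\DD)\eta$ that characterises the Cauchy semigroup.

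First I would verify that for every fixed $t>0$, $\eta(\cdot,t)=Q_t^{(\alpha),\rm HK}\theta$ satisfies $\eta(\cdot,t)\in D(\DD)\cap L^\infty(T^*\!M)$, so that Lemma~\ref{lem:LaplacianEta} is applicable. Since $\theta\in D(\DD)$ and $Q_t^{(\alpha),\rm HK}$ commutes with $\DD$ on $D(\DD)$, the first inclusion is immediate. For the $L^\infty$-bound, I would combine the pointwise Hess--Schrader--Uhlenbrock estimate \eqref{eq:HSU} with the $L^\infty$-contraction \eqref{eq:KatoContraction} of $(P_s^\kappa)$, together with $\lambda_t$ being a probability measure, to obtain
\[
|\eta(x,t)|\leq\int_0^\infty e^{-\alpha s}P_s^\kappa|\theta|(x)\,\lambda_t(\d s)\leq C\|\theta\|_{L^\infty(T^*\!M)}\int_0^\infty e^{(C_\kappa-\alpha)s}\lambda_t(\d s),
\]
which is finite for $\alpha\ge C_\kappa$. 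Lemma~\ref{lem:LaplacianEta} then yields $|\eta|^2\in D_f(\text{\boldmath$\Delta$})$ with
\[
\tfrac{1}{2}\text{\boldmath$\Delta$}|\eta|^2=\bigl[\langle\eta,\DD\eta\rangle+|\nabla\eta^\sharp|_{\rm HS}^2\bigr]\m+{\bf Ric}(\eta^\sharp,\eta^\sharp).
\]

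Next, since $\eta=e^{-t\sqrt{\alpha-\DD}}\theta$ is smooth in $t$ in the $L^2(T^*\!M)$-sense with $\dot\eta=-\sqrt{\alpha-\DD}\,\eta$ and $\ddot\eta=(\alpha-\DD)\eta$, a direct computation in the pointwise Hilbert-module scalar product gives
\[
\tfrac{1}{2}\tfrac{\partial^2}{\partial t^2}|\eta|^2=|\dot\eta|^2+\langle\eta,\ddot\eta\rangle=|\dot\eta|^2+\alpha|\eta|^2-\langle\eta,\DD\eta\rangle\quad\m\text{-a.e.}
\]
Adding this to the Bochner identity above, the $\langle\eta,\DD\eta\rangle$-terms cancel, and recalling that $|\overline{\nabla}\eta|^2=|\nabla\eta^\sharp|_{\rm HS}^2+|\dot\eta|^2$ together with the convention \eqref{eq:Formula4}, one arrives at \eqref{eq:Identity1}. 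Taking the $\m$-absolutely continuous part of both sides, via Definition~\ref{def:MeasreValuedLaplacian} and \eqref{eq:RicciTensor}, then produces \eqref{eq:Identity2}.

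The main technical obstacle I anticipate is justifying the identity $\frac{\partial^2}{\partial t^2}|\eta|^2=2|\dot\eta|^2+2\langle\eta,\ddot\eta\rangle$ pointwise $\m$-a.e., rather than merely in the $L^2(T^*\!M)$-sense dictated by the spectral calculus for $-\sqrt{\alpha-\DD}$. To handle this I would first establish the identity in $L^2$ from the $C^\infty$-regularity of the Cauchy semigroup on $D(\DD)$, and then upgrade to an $\m$-a.e.~pointwise statement by passing to a suitable subsequence of difference quotients, using the uniform pointwise $L^\infty$-domination of $\eta$ and of $\dot\eta=-\sqrt{\alpha-\DD}\,\eta=Q_t^{(\alpha),\rm HK}\sqrt{\alpha-\DD}\,\theta$ that follows from the same Hess--Schrader--Uhlenbrock argument applied to $\sqrt{\alpha-\DD}\,\theta\in L^2(T^*\!M)$ (and, if necessary, a further smoothing of $\theta$ which is removed at the end by continuity of both sides of \eqref{eq:Identity1} in $\theta$).
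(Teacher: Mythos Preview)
Your proposal is correct and follows essentially the same approach as the paper: compute $\tfrac{1}{2}\partial_t^2|\eta|^2=|\dot\eta|^2+\langle\eta,\ddot\eta\rangle=|\dot\eta|^2+\langle\eta,(\alpha-\DD)\eta\rangle$ from the Cauchy equation $\ddot\eta=(\alpha-\DD)\eta$, then add this (times $\m$) to the Bochner identity \eqref{eq:MeasureWeitzenboeck} from Lemma~\ref{lem:LaplacianEta}. You are in fact more careful than the paper in explicitly checking that $\eta(\cdot,t)\in D(\DD)\cap L^\infty(T^*\!M)$ and in flagging the $L^2$ versus pointwise-$\m$-a.e.\ issue for the $t$-derivative, which the paper passes over silently.
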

\begin{proof}[{\bf Proof}] 
We calculate
\begin{align}
\frac{\,1\,}{2}\frac{\partial^2}{\partial t^2}|\eta|^2=|\dot\eta|^2+\langle \eta,\dot{\eta}\rangle =|\dot{\eta}|^2+\langle \eta,(\alpha-\DD)\eta\rangle, \label{eq:Formula3}
\end{align}
because $\eta=Q_t^{(\alpha),{\rm HK}}\theta$ is the solution of 
\begin{align*}
\frac{\partial^2}{\partial t^2}\eta(x,t)=(\alpha-\DD)\eta(x,t)
\end{align*}
in $L^2(M;\m)$ for $\alpha\geq C_{\kappa}$. 
Adding \eqref{eq:Formula3}$\times\m$ to \eqref{eq:MeasureWeitzenboeck}, we obtain \eqref{eq:Identity1} and \eqref{eq:Identity2}. 
\end{proof} 

\begin{thm}\label{thm:CAFExpression}
Let $\eta=Q_t^{(\alpha),{\rm HK}}\theta$ with $\theta\in D(\DD)\cap L^{\infty}(T^*\!M)$ and $\alpha> C_{\kappa}$.
The CAF of zero energy part $\widehat{N}_t^{[|\eta|^2]}$ appeared in the Fukushima's decomposition \eqref{eq:FukushimaDecomp} 
is given by 
\begin{align}
\widehat{N}_t^{[|\eta|^2]}=2\int_0^t(\alpha|\eta|^2+|\overline{\nabla}\eta|^2)(\wh{X}_s)\d s+2\wh{A}_t^{\,{\bf Ric}
(\eta^{\sharp},\eta^{\sharp})\otimes m}.
\label{eq:CAFExpression}
\end{align}
Here $\wh{A}_t^{\,{\bf Ric}
(\eta^{\sharp},\eta^{\sharp})\otimes m}$ is the PCAF associated with the smooth measure 
${\bf Ric}
(\eta^{\sharp},\eta^{\sharp})\otimes m$ with respect to $\wh{\bf X}$. 
Moreover, for $\eta_{\eps}=(|\eta|^2+\eps^2)^{\frac{p}{2}}-\eps^p$ with $p\in]0,+\infty[$
\begin{align}
\widehat{N}_t^{[\eta_{\eps}]}&=p\int_0^t\left(|\eta|^2(\wh{X}_s)+\eps^2 \right)^{\frac{p}{2}-1}\left(\alpha|\eta|^2(\wh{X}_s)+|\overline{\nabla}\eta|^2(\wh{X}_s)\right)\d s\notag\\
&\hspace{1cm}+p\int_0^t\left(|\eta|^2(\wh{X}_s)+\eps^2 \right)^{\frac{p}{2}-1}\d \wh{A}_s^{\,{\bf Ric}(\eta^{\sharp},\eta^{\sharp})\otimes m}\label{eq:CAFExpression*}\\
&\hspace{2cm}+\frac{p}{4}(p-2)\int_0^t\left(|\eta|^2(\wh{X}_s)+\eps^2 \right)^{\frac{p}{2}-2}
\left(\left|\nabla|\eta|^2 \right|^2(\wh{X}_s)+\left|\frac{\partial |\eta|^2}{\partial a}(\wh{X}_s) \right|^2\right)\d s.\notag
\end{align}
\end{thm}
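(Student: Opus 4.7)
The plan is to identify the zero-energy CAFs $\wh{N}^{[|\eta|^2]}$ and $\wh{N}^{[\eta_{\eps}]}$ with bounded-variation PCAFs obtained via Revuz correspondence from the measure-valued Laplacian on $\wh{M}$. The key intermediate step is to compute $\wh{\text{\boldmath$\Delta$}}|\eta|^2$ and $\wh{\text{\boldmath$\Delta$}}\eta_{\eps}$ explicitly as signed smooth measures on $\wh{M}$.

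First, I would show that $|\eta|^2 \in D_f(\wh{\text{\boldmath$\Delta$}})$ together with
\[
\wh{\text{\boldmath$\Delta$}}|\eta|^2 \;=\; 2\bigl[\alpha|\eta|^2+|\overline{\nabla}\eta|^2\bigr]\,\wh{\m} \;+\; 2\,{\bf Ric}(\eta^{\sharp},\eta^{\sharp})\otimes m.
\]
Testing against $h \in D(\wh{\mathscr{E}})_c\cap L^{\infty}(\wh{M};\wh{\m})$ and using the splitting of $\wh{\mathscr{E}}$ from Lemma~\ref{lem:Core} into an $\m$-directional and $m$-directional piece, the $\m$-directional contribution is handled fiberwise via the Bochner formula (Lemma~\ref{lem:LaplacianEta}), while the $m$-directional contribution is handled by integration by parts in the $a$-variable combined with the Cauchy relation $\partial_t^2 \eta = (\alpha-\DD)\eta$, so that the result assembles as in Corollary~\ref{cor:LaplacianEta}. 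Finiteness of the resulting signed measure uses Corollary~\ref{cor:domain} together with the total-variation bound \eqref{eq:RicciKappaCont} applied fiberwise.

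Next, I would invoke the standard identification underlying Fukushima's decomposition for the quasi-regular strongly local Dirichlet form $(\wh{\mathscr{E}},D(\wh{\mathscr{E}}))$: for $u\in D_f(\wh{\text{\boldmath$\Delta$}})$, the continuous additive functional of zero energy equals the (bounded variation) PCAF obtained from $\wh{\text{\boldmath$\Delta$}}u$ via Revuz correspondence, i.e.\ $\wh{N}_t^{[u]} = \wh{A}_t^{\wh{\text{\boldmath$\Delta$}}u}$ (the quasi-regular analogue of \cite[Theorems~5.4.2 and 5.5.4]{FOT}, transferred using Lemma~\ref{lem:Capacity}). Applied to $u=|\eta|^2$, this yields \eqref{eq:CAFExpression} since the absolutely continuous part integrates to $\int_0^t[\alpha|\eta|^2+|\overline{\nabla}\eta|^2](\wh{X}_s)\,\d s$. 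For \eqref{eq:CAFExpression*}, I would apply the chain rule for measure-valued Laplacian (Lemma~\ref{lem:Composition}) to $\Phi(r):=(r+\eps^2)^{p/2}-\eps^p$, which we may truncate outside the essential range of $|\eta|^2$ (bounded by Corollary~\ref{cor:domain}) so that $\Phi(0)=0$ and $\Phi',\Phi''\in C_b(\R)$. Substituting the formula from Step~1 for $\wh{\text{\boldmath$\Delta$}}|\eta|^2$ into the chain rule and reading off coefficients produces the right-hand side of \eqref{eq:CAFExpression*}.

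The main obstacle is making the identification $\wh{N}_t^{[u]} = \wh{A}_t^{\wh{\text{\boldmath$\Delta$}}u}$ precise in the quasi-regular setting: one must verify that ${\bf Ric}(\eta^{\sharp},\eta^{\sharp})\otimes m$ is a smooth measure with respect to $\wh{\bf X}$ whose Revuz PCAF exists, and that the correspondence between $D_f(\wh{\text{\boldmath$\Delta$}})$ and the zero-energy part of Fukushima's decomposition (classically proved in \cite[Chapter~5]{FOT} under regularity) carries over. Smoothness is handled via Lemma~\ref{lem:Capacity}, which transfers exceptional sets between $M$ and $\wh{M}$, combined with the finite-total-variation estimate \eqref{eq:RicciKappaCont}; the identification itself is proved by approximating $u$ by elements of $D(\wh{\Delta})\cap L^{\infty}$ (e.g.\ via $P_t$) and passing to the limit in both the Fukushima decomposition and the defining variational identity of the measure-valued Laplacian. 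Once these technical points are in place, what remains is algebraic.
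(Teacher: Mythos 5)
Your proposal matches the paper's proof in its essential structure: the paper likewise computes the measure $\bigl(\tfrac{\partial^2}{\partial t^2}+\text{\boldmath$\Delta$}\bigr)|\eta|^2$ on $\wh{M}$ by testing against product functions $\phi\otimes\varphi$ with $\phi\in D(\mathscr{E})$, $\varphi\in C_c^{\infty}(\R)$ (splitting $\wh{\mathscr{E}}$ via Lemma~\ref{lem:Core} and feeding in \eqref{eq:OriginalExp1} and \eqref{eq:Identity1}), and then identifies $\wh{N}^{[|\eta|^2]}$ with the Revuz CAF of that measure by \cite[Theorem~5.4.2]{FOT} — exactly your Steps 1 and 2. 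The only divergence is in \eqref{eq:CAFExpression*}: the paper obtains it by applying the classical It\^o formula to the semimartingale \eqref{eq:FukushimaDecomp} together with \eqref{eq:quadraticVariation}, whereas you derive it analytically from the chain rule for the measure-valued Laplacian on $\wh{M}$ followed by a second application of the Revuz identification. The two routes are equivalent (the chain rule is the analytic shadow of It\^o's formula), and yours has the mild advantage of reproducing the coefficient $\tfrac{p}{4}(p-2)$ directly from $\Phi''$; the only point you should make explicit is that Lemma~\ref{lem:Composition} is stated for the base space $M$, so you need its verbatim analogue for $(\wh{\mathscr{E}},D(\wh{\mathscr{E}}))$ on $\wh{M}$, which holds by the same Leibniz-rule argument, and your truncation of $\Phi$ outside the (bounded, by Corollary~\ref{cor:domain}) range of $|\eta|^2$ correctly handles all $p\in\,]0,+\infty[$.
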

\begin{remark}
{\rm  
Thanks to \eqref{eq:Identity1} and \eqref{eq:Formula*} below, 
\eqref{eq:CAFExpression} and \eqref{eq:CAFExpression*} possesses the following formal expressions:
\begin{align*}
\widehat{N}_t^{[|\eta|^2]}&=\wh{A}_t^{\left(\frac{\partial^2}{\partial t^2}+\text{\boldmath$\Delta$}\right)|\eta|^2}=
\text{\lq\lq}\int_0^t\left(\frac{\partial^2}{\partial t^2}+\text{\boldmath$\Delta$}\right)|\eta|^2(X_s)\d s
\text{\rq\rq},
\\
\widehat{N}_t^{[\eta_{\eps}]}&=\wh{A}_t^{\left(\frac{\partial^2}{\partial t^2}+\text{\boldmath$\Delta$}\right)\eta_{\eps}}=
\text{\lq\lq}\int_0^t\left(\frac{\partial^2}{\partial t^2}+\text{\boldmath$\Delta$}\right)\eta_{\eps}(X_s)\d s\text{\rq\rq}.
\end{align*}
$N_t^{[|\eta|^2]}$ may not be a PCAF (equivalently $|\eta|^2(\wh{X}_t)$ 
may not be a submartingale) unless $\kappa^-=0$. 
}
\end{remark}
\begin{proof}[\bf Theorem~\ref{thm:CAFExpression}]
Take $\phi\in D(\mathscr{E})$ and $\varphi\in C_c^{\infty}(\R)$. By way of expression \eqref{eq:OriginalExp1} for 
$\phi\in D(\mathscr{E})$ and $\eta\in D(\DD)\cap L^{\infty}(T^*\!M)$, we have 
\begin{align*}
-\wh{\mathscr{E}}^{\,2\wh\kappa}(|\eta|^2,\phi\otimes\varphi)&=-\int_{\R}\mathscr{E}^{2\kappa}(|\eta|^2,\phi)\varphi(t)\d t-\int_{\R}\left\langle \frac{\partial|\eta|^2}{\partial t},\phi\right\rangle_{L^2(T^*\!M)}\dot{\varphi}(t)\d t\\
&=\int_{\R}\varphi(t)\int_M\tilde{\phi}\,\d \text{\boldmath$\Delta$}^{2\kappa}|\eta|^2\d t+\int_{\R}\varphi(t)\int_M\phi 
\frac{\partial^2|\eta|^2}{\partial t^2}\d\m\d t\\
&=\int_{\R}\varphi(t)\int_M\tilde{\phi}\,\d \left(\frac{\partial^2}{\partial t^2}+\text{\boldmath$\Delta$}^{2\kappa}\right)|\eta|^2\d t\\
&=\int_{\R}\varphi\left\langle \left(\frac{\partial^2}{\partial t^2}+\text{\boldmath$\Delta$}^{2\kappa}\right)|\eta|^2,\phi \right\rangle_{L^2(T^*\!M)}\d t.
\end{align*}
By \eqref{eq:Identity1}, we have
\begin{align*}
-\frac12\wh{\mathscr{E}}^{\,2\wh\kappa}(|\eta|^2,\phi\otimes\varphi)
&=\int_{\R}\varphi\left\langle (\alpha|\eta|^2+|\overline{\nabla}\eta|^2)\m+{\bf Ric}(\eta^{\sharp},\eta^{\sharp})-\wt{|\eta|^2}\kappa,\phi \right\rangle\d t\\
&=\left\langle  (\alpha|\eta|^2+|\overline{\nabla}\eta|^2)\wh\m+{\bf Ric}^{\kappa}(\eta^{\sharp},\eta^{\sharp})\otimes m,\phi\otimes\varphi\right\rangle
\end{align*}
and 
\begin{align*}
-\frac12\wh{\mathscr{E}}(|\eta|^2,\phi\otimes\varphi)
&=\int_{\R}\varphi\left\langle (\alpha|\eta|^2+|\overline{\nabla}\eta|^2)\m+{\bf Ric}(\eta^{\sharp},\eta^{\sharp} \right\rangle\d t\\
&=\left\langle  (\alpha|\eta|^2+|\overline{\nabla}\eta|^2)\wh\m+{\bf Ric}(\eta^{\sharp},\eta^{\sharp})\otimes m,\phi\otimes\varphi\right\rangle.
\end{align*}
Applying \cite[Theorem~5.4.2]{FOT}, we obtain
\begin{align*}
\widehat{N}_t^{[|\eta|^2]}=2\left[\int_0^t(\alpha|\eta|^2+|\overline{\nabla}\eta|^2)(\wh{X}_s)\d s+\wh{A}_t^{\,{\bf Ric}(\eta^{\sharp}\eta^{\sharp})\otimes m} \right]=\wh{A}_t^{
\left(\frac{\partial^2}{\partial t^2}+\text{\boldmath$\Delta$}^{2\kappa}\right)|\eta|^2}.
\end{align*}
Finally, \eqref{eq:CAFExpression*} can be obtained by way of the usual It\^o's formula for semimartingale \eqref{eq:FukushimaDecomp} with 
\eqref{eq:CAFExpression}.  
\end{proof} 

\begin{lem}\label{lem:QuadraticVariation}
Suppose $\alpha>C_{\kappa}$. 
Let $\eta=Q_t^{(\alpha),{\rm HK}}\theta$ with $\theta\in D(\DD)\cap L^{\infty}(T^*\!M)$  
and set $\eta_{\eps}:=(|\eta|^2+\eps^2)^{\frac{p}{2}}-\eps^p$ with $\ep>0$ and $p\in]0,+\infty[$. Then 
$\E_{(x,a)}[\widehat{M}_{\tau}^{[\eta_{\eps}]}]=0$ for $\wh\m$-a.e.~$(x,a)\in M\times]0,+\infty[$. 
\end{lem}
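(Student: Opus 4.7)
The plan is to exploit that $\widehat{M}^{[\eta_{\eps}]}$ arising from the Fukushima decomposition \eqref{eq:FukushimaDecomp*} is a martingale additive functional of finite energy, hence a genuine $\P_{\wh x}$-martingale vanishing at $0$ for $\wh{\mathscr{E}}$-q.e.~(in particular $\wh\m$-a.e.) $\wh x$, and then to apply optional stopping at~$\tau$. Since $\tau<+\infty$ $\P_{\stackrel{\rightarrow}{a}}$-a.s.~for $a>0$ whereas $\E_{\stackrel{\rightarrow}{a}}[\tau]=+\infty$, a direct appeal to Doob's theorem is not available; instead I will pass to $\widehat{M}_{t\wedge\tau}^{[\eta_{\eps}]}$ and verify its $L^2$-boundedness, so that the $L^2$-martingale convergence theorem forces $\widehat{M}_{t\wedge\tau}^{[\eta_{\eps}]}\to \widehat{M}_{\tau}^{[\eta_{\eps}]}$ in $L^2(\P_{(x,a)})$ as $t\to+\infty$, and the desired identity falls out upon taking expectations.

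The central quantitative step is to bound $\E_{(x,a)}[\langle \widehat{M}^{[\eta_{\eps}]}\rangle_{\tau}]$. Corollary~\ref{cor:domain} yields $\eta_{\eps}\in D(\wh{\mathscr{E}})\cap L^{\infty}(\wh M;\wh\m)$, and the product structure of $(\wh{\mathscr{E}},D(\wh{\mathscr{E}}))$ encoded in Lemma~\ref{lem:Core} identifies the Revuz measure of $\langle \widehat{M}^{[\eta_{\eps}]}\rangle$ with the energy measure
\[
\mu_{\langle \eta_{\eps}\rangle}^{\wh{\mathscr{E}}}=\left[|\nabla \eta_{\eps}|^2+\left(\frac{\partial \eta_{\eps}}{\partial a}\right)^2\right]\wh\m,
\]
of total mass $\wh{\mathscr{E}}(\eta_{\eps},\eta_{\eps})<+\infty$. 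Invoking Proposition~\ref{prop:ShigekawaProp6.1} with $\nu:=\m$ (so that $\d \wh A_s^{\,\wh\m}=\d s$ by $\wh\m$-symmetry of $\wh{\bf X}$) and density $j(x,t):=|\nabla \eta_{\eps}|^2(x,t)+(\partial \eta_{\eps}/\partial a)^2(x,t)$ then gives
\[
\E_{\m\otimes\delta_a}\!\left[\langle \widehat{M}^{[\eta_{\eps}]}\rangle_{\tau}\right]
=\int_M\int_0^{\infty}(a\wedge t)\,j(x,t)\,\m(\d x)\,\d t
\leq a\,\wh{\mathscr{E}}(\eta_{\eps},\eta_{\eps})<+\infty,
\]
and Fubini's theorem delivers $\E_{(x,a)}[\langle \widehat{M}^{[\eta_{\eps}]}\rangle_{\tau}]<+\infty$ for $\wh\m$-a.e.~$(x,a)\in M\times ]0,+\infty[$.

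For any such $(x,a)$ the stopped process $(\widehat{M}_{t\wedge\tau}^{[\eta_{\eps}]})_{t\geq0}$ is an $L^2(\P_{(x,a)})$-bounded martingale, since $\sup_{t\geq0}\E_{(x,a)}[(\widehat{M}_{t\wedge\tau}^{[\eta_{\eps}]})^2]=\E_{(x,a)}[\langle \widehat{M}^{[\eta_{\eps}]}\rangle_{\tau}]<+\infty$ by the previous paragraph; it therefore converges in $L^2$ (hence in $L^1$) to $\widehat{M}_{\tau}^{[\eta_{\eps}]}$ as $t\to+\infty$, and taking expectations yields $\E_{(x,a)}[\widehat{M}_{\tau}^{[\eta_{\eps}]}]=\E_{(x,a)}[\widehat{M}_0^{[\eta_{\eps}]}]=0$, as claimed. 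The most delicate point will be the identification of the Revuz measure of $\langle \widehat{M}^{[\eta_{\eps}]}\rangle$ via the product Dirichlet structure of $\wh{\mathscr{E}}$ combined with the time-dependent application of Proposition~\ref{prop:ShigekawaProp6.1}; the rest is a routine appeal to $L^2$-martingale convergence.
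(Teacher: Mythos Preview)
Your proof is correct and follows essentially the same strategy as the paper: both reduce to showing $\E_{(x,a)}[\langle \widehat{M}^{[\eta_{\eps}]}\rangle_{\tau}]<\infty$ for $\wh\m$-a.e.~$(x,a)$, then appeal to $L^2$-martingale convergence for the stopped martingale. The only difference is cosmetic: where the paper justifies this finiteness in one line by invoking the transience of the killed process $\wh{\bf X}_G$ on $G=M\times]0,+\infty[$ together with $|\nabla\eta_{\eps}|^2+|\partial_a\eta_{\eps}|^2\in L^1(G;\wh\m)$, you instead apply Proposition~\ref{prop:ShigekawaProp6.1} directly to obtain the explicit bound $\E_{\m\otimes\delta_a}[\langle \widehat{M}^{[\eta_{\eps}]}\rangle_{\tau}]\leq a\,\wh{\mathscr{E}}(\eta_{\eps},\eta_{\eps})$ and then use Fubini. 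Your route is slightly more self-contained (it avoids the general transience machinery and stays within the tools already set up in the paper), while the paper's phrasing is terser; substantively the two arguments coincide.
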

\begin{proof}[{\bf Proof}] 
It suffices to prove that 
\begin{align*}
\E_{(x,a)}[\langle \widehat{M}^{[\eta_{\eps}]}\rangle_{\tau}]=\E_{(x,a)}\left[\int_0^{\tau}\left(
|\nabla\eta_{\eps}|^2(\wh{X}_s)+\left|\frac{\partial \eta_{\eps}}{\partial t} \right|^2(\wh{X}_s)\right)\d s
\right]<\infty
\end{align*}
for $\wh\m$-a.e.~$(x,a)$. This clearly holds 
in view of the transience of $\wh{\bf X}_G$ for $G:=M\times]0,+\infty[$ and 
$|\nabla\eta_{\eps}|^2+\left|\frac{\partial \eta_{\eps}}{\partial t} \right|^2\in L^1(G;\wh\m)$ due to $\eta_{\eps}\in D(\wh{\mathscr{E}})$. 
\end{proof} 
\begin{lem}\label{lem:TimeDepLaplacian}
Let $\eta=\eta(x,t)=Q_t^{(\alpha),{\rm HK}}\theta$ with $\theta\in D(\DD)$. 
Suppose $p\in[1,2]$. Then 
\begin{align*}
\left(|\eta|^2+\eps^2 \right)^{1-\frac{p}{2}}\left( \frac{\partial^2}{\partial t^2}+\Delta_{\ll}\right)\left(|\eta|^2+\eps^2\right)^{\frac{p}{2}}\geq 
p(\alpha+k)|\eta|^2+p(p-1)|\overline{\nabla}\eta|^2\quad\m\text{-a.e.}
\end{align*}
Here $k$ is the Radon-Nikodym density of $\kappa_{\ll}$ with respect to $\m${\rm:} $k:=\frac{\d\kappa_{\ll}}{\d\m}$. 
\end{lem}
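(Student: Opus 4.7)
The strategy is to expand both the time second derivative and the spatial Laplacian of $(|\eta|^2+\eps^2)^{p/2}$ via the chain rule, invoke the Bochner-type identity \eqref{eq:Identity2} for $|\eta|^2$, and finally absorb the negative cross-term produced by the chain rule against $p|\overline{\nabla}\eta|^2$ by a Kato-type inequality on the full space-time gradient.

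First, for each fixed $t$, Lemma~\ref{lem:LaplacianEta} gives $|\eta|^2\in D_f({\text{\boldmath$\Delta$}})_+$, so Corollary~\ref{cor:Composition} (specifically \eqref{eq:Composition4}) applied with $\Phi(r)=(r+\eps^2)^{p/2}-\eps^p$ (extended to a $C^2$ function on $\R$) yields the pointwise identity
\begin{align*}
\Delta_{\ll}(|\eta|^2+\eps^2)^{\frac{p}{2}}=\frac{p}{2}(|\eta|^2+\eps^2)^{\frac{p}{2}-1}\Delta_{\ll}|\eta|^2+\frac{p(p-2)}{4}(|\eta|^2+\eps^2)^{\frac{p}{2}-2}|\nabla|\eta|^2|^2\quad \m\text{-a.e.}
\end{align*}
Since $\eta$ is smooth in $t$ with $\frac{\partial^2}{\partial t^2}\eta=(\alpha-\DD)\eta$ in $L^2(T^*\!M)$, the ordinary one-variable chain rule gives the analogous expansion for $\frac{\partial^2}{\partial t^2}(|\eta|^2+\eps^2)^{p/2}$. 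Summing these and multiplying by $(|\eta|^2+\eps^2)^{1-\frac{p}{2}}$ I obtain
\begin{align*}
(|\eta|^2+\eps^2)^{1-\frac{p}{2}}\left(\frac{\partial^2}{\partial t^2}+\Delta_{\ll}\right)(|\eta|^2+\eps^2)^{\frac{p}{2}}=\frac{p}{2}\left(\frac{\partial^2}{\partial t^2}+\Delta_{\ll}\right)|\eta|^2+\frac{p(p-2)}{4}\cdot\frac{|\nabla|\eta|^2|^2+\left|\frac{\partial|\eta|^2}{\partial t}\right|^2}{|\eta|^2+\eps^2}.
\end{align*}

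For the first summand, Corollary~\ref{cor:LaplacianEta} (equation \eqref{eq:Identity2}) rewrites $\frac{p}{2}(\frac{\partial^2}{\partial t^2}+\Delta_{\ll})|\eta|^2$ as $p\alpha|\eta|^2+p|\overline{\nabla}\eta|^2+p\,\mathfrak{Ric}(\eta^{\sharp},\eta^{\sharp})$, and the Ricci bound \eqref{eq:RicciTensor*} gives $\mathfrak{Ric}(\eta^{\sharp},\eta^{\sharp})\geq k|\eta|^2$, producing the lower bound $p(\alpha+k)|\eta|^2+p|\overline{\nabla}\eta|^2$. For the second summand, Kato's inequality (Lemma~\ref{lem:KatoIneq}) applied to $\eta^{\sharp}\in H^{1,2}_{\sharp}(TM)\subset H^{1,2}(TM)$ gives $|\nabla|\eta||\leq|\nabla\eta^{\sharp}|_{\rm HS}$, which combined with the chain rule $\nabla|\eta|^2=2|\eta|\nabla|\eta|$ yields $|\nabla|\eta|^2|^2\leq 4|\eta|^2|\nabla\eta^{\sharp}|_{\rm HS}^2$; a pointwise-in-$t$ Cauchy--Schwarz in the Hilbert module $L^2(T^*\!M)$ applied to $\langle\eta,\frac{\partial\eta}{\partial t}\rangle$ gives $\left|\frac{\partial|\eta|^2}{\partial t}\right|^2\leq 4|\eta|^2\left|\frac{\partial\eta}{\partial t}\right|^2$. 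Summing and replacing $|\eta|^2$ by $|\eta|^2+\eps^2$ produces
\begin{align*}
|\nabla|\eta|^2|^2+\left|\frac{\partial|\eta|^2}{\partial t}\right|^2\leq 4(|\eta|^2+\eps^2)|\overline{\nabla}\eta|^2.
\end{align*}

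Finally, since $p\in[1,2]$ forces $p(p-2)\leq 0$, multiplication of the preceding bound by the negative factor $\frac{p(p-2)}{4(|\eta|^2+\eps^2)}$ reverses the inequality and bounds the second summand from below by $p(p-2)|\overline{\nabla}\eta|^2$. Adding this to the lower bound $p(\alpha+k)|\eta|^2+p|\overline{\nabla}\eta|^2$ for the first summand and noting $p+p(p-2)=p(p-1)$ yields the claim. The main subtlety to manage is that Lemma~\ref{lem:LaplacianEta} is stated under the extra condition $\theta\in L^\infty(T^*\!M)$ (needed for the finiteness of the Ricci measure); the general case $\theta\in D(\DD)$ can be recovered either by an approximation $\theta_n\in D(\DD)\cap L^\infty(T^*\!M)$ with $\theta_n\to\theta$ in $D(\DD)$ combined with the joint continuity of $\mathfrak{Ric}$ on $H^{1,2}_{\sharp}(TM)$, or by noting that the desired inequality is pointwise $\m$-a.e.~and localizing to bounded regions.
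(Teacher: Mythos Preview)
Your proof is correct and follows essentially the same route as the paper: expand via the chain rule using Corollary~\ref{cor:Composition}, invoke the Bochner identity \eqref{eq:Identity2}, bound the cross term through Kato's inequality (Lemma~\ref{lem:KatoIneq}) and Cauchy--Schwarz, and combine using $p(p-2)\leq 0$. You also correctly flag the $L^{\infty}$ hypothesis on $\theta$ implicit in the appeal to Lemma~\ref{lem:LaplacianEta}, which the paper's own proof leaves unaddressed; your suggested approximation argument is the natural fix.
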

\begin{proof}[{\bf Proof}] 
By a simple calculation, we have 
\begin{align}
\frac{\partial^2}{\partial t^2}\left(|\eta|^2+\eps^2 \right)^{\frac{p}{2}}=p(p-2)\left(|\eta|^2+\eps^2 \right)^{\frac{p}{2}-2}
\langle \dot{\eta},\eta\rangle^2+\frac{\;p\;}{2}\left(|\eta|^2+\eps^2\right)^{\frac{p}{2}-1}
\frac{\partial^2}{\partial t^2}|\eta|^2.\label{eq:Formula1}
\end{align}
On the other hand, from Corollary~\ref{cor:Composition} with $|\eta|^2\in D_f(\text{\boldmath$\Delta$})_+$ (Lemma~\ref{lem:LaplacianEta}), we have 
\begin{align}
{\text{\boldmath$\Delta$}}(|\eta|^2+\eps^2)^{\frac{p}{2}}=\frac{p}{2}\left(\frac{p}{2}-1 \right)\left(|\eta|^2+\eps^2\right)^{\frac{p}{2}-2}|\nabla |\eta|^2|^2\m+\frac{p}{2}(\wt{|\eta|^2}+\eps^2)^{\frac{p}{2}-1}{\text{\boldmath$\Delta$}}|\eta|^2.\label{eq:Formula2}
\end{align}
Adding \eqref{eq:Formula1} to \eqref{eq:Formula2} and dividing the both sides by $(|\eta|^2+\eps^2)^{\frac{p}{2}-1}$, we obtain 
\begin{align}
\left(|\eta|^2+\eps^2 \right)^{1-\frac{p}{2}}&
\left( \frac{\partial^2}{\partial t^2}+
\text{\boldmath$\Delta$}\right)\left(|\eta|^2+\eps^2\right)^{\frac{p}{2}}\notag\\
&\hspace{-0.5cm}=\frac{\;p\;}{2}\left( \frac{\partial^2}{\partial t^2}+\text{\boldmath$\Delta$}\right)|\eta|^2
+\frac{p}{2}\left(\frac{p}{2}-1 \right)
\left(|\eta|^2+\eps^2 \right)^{-1}
\left(4\langle \dot{\eta},\eta\rangle^2+\left|\nabla|\eta|^2\right|^2 \right)\m\notag\\
&\hspace{-0.5cm}=\frac{\;p\;}{2}\left( \frac{\partial^2}{\partial t^2}+\text{\boldmath$\Delta$}\right)|\eta|^2
+\frac{p}{2}\left(\frac{p}{2}-1 \right)
\left(|\eta|^2+\eps^2 \right)^{-1}
\left(\left|\frac{\partial}{\partial t}|\eta|^2\right|^2+\left|\nabla|\eta|^2\right|^2 \right)\m\notag\\
&\hspace{-0.5cm}=\frac{\;p\;}{2}\left( \frac{\partial^2}{\partial t^2}+\text{\boldmath$\Delta$}\right)|\eta|^2
+\frac{p}{2}\left(\frac{p}{2}-1 \right)
\left(|\eta|^2+\eps^2 \right)^{-1}
\left|\overline{\nabla}|\eta|^2\right|^2\m\notag\\
&\hspace{-0.5cm}=p\left[(\alpha|\eta|^2+|\overline{\nabla}\eta|^2)\m+{\bf Ric}(\eta^{\sharp},\eta^{\sharp})\right]+
\frac{p}{4}(p-2)(|\eta|^2+\eps^2)^{-1}\left|\overline{\nabla}|\eta|^2\right|^2\m.\label{eq:Formula*}
\end{align} 
Hence
\begin{align*}
\left(|\eta|^2+\eps^2 \right)^{1-\frac{p}{2}}&
\left( \frac{\partial^2}{\partial t^2}+
\Delta_{\ll}\right)\left(|\eta|^2+\eps^2\right)^{\frac{p}{2}}\\
&\hspace{-0.5cm}=\frac{\;p\;}{2}\left( \frac{\partial^2}{\partial t^2}+\Delta_{\ll}\right)|\eta|^2
+\frac{p}{2}\left(\frac{p}{2}-1 \right)
\left(|\eta|^2+\eps^2 \right)^{-1}
\left|\overline{\nabla}|\eta|^2\right|^2\quad\m\text{-a.e.}
\end{align*}
By using Kato's inequality (Lemma~\ref{lem:KatoIneq}), 
\begin{align}
\left|\nabla|\eta|^2 \right|^2&=\left|2|\eta|\nabla|\eta| \right|^2
=4|\eta|^2\cdot\left|\nabla|\eta| \right|^2\leq 4|\eta|^2\cdot\left|\nabla\eta^{\sharp}\right|_{\rm HS}^2.\label{eq:KatoModified}
\end{align}
Then 
\begin{align*}
\left|\overline{\nabla}|\eta|^2 \right|^2&=\left|\frac{\partial^2}{\partial t^2}|\eta|^2 \right|^2+\left|\nabla|\eta|^2 \right|^2\\
&\hspace{-0.3cm}\stackrel{\eqref{eq:KatoModified}}{\leq} 4\langle \eta,\dot{\eta}\rangle+4|\eta|^2\left|\nabla\eta^{\sharp}\right|_{\rm HS}^2\\
&\leq 4|\eta|^2\left(|\dot\eta|^2+ \left|\nabla\eta^{\sharp}\right|_{\rm HS}^2\right)
=4|\eta|^2\left|\overline{\nabla}\eta \right|^2\\
&\leq 4(|\eta|^2+\eps^2)\left|\overline{\nabla}\eta \right|^2.
\end{align*}
Noting $p-2\leq0$, we obtain 
\begin{align*}
\left(|\eta|^2+\eps^2 \right)^{1-\frac{p}{2}}&
\left( \frac{\partial^2}{\partial t^2}+
\Delta_{\ll}\right)\left(|\eta|^2+\eps^2\right)^{\frac{p}{2}}\\
&\hspace{-0.5cm}\geq \frac{\;p\;}{2}\left( \frac{\partial^2}{\partial t^2}+\Delta_{\ll}\right)|\eta|^2
+p\left(p-2\right)
\left|\overline{\nabla}\eta\right|^2\\
&\geq p\left(\alpha|\eta|^2+|\overline{\nabla}\eta|^2+{\mathfrak{Ric}}(\eta^{\sharp},\eta^{\sharp}) \right)+p\left(p-2\right)
\left|\overline{\nabla}\eta\right|^2\\
&=p(\alpha|\eta|^2+{\mathfrak{Ric}}(\eta^{\sharp},\eta^{\sharp})) +p(p-1)\left|\overline{\nabla}\eta\right|^2\\
&\hspace{-0.2cm}\stackrel{\eqref{eq:RicciTensor*}}{\geq} p(\alpha+k)|\eta|^2+p(p-1)\left|\overline{\nabla}\eta\right|^2
\quad\m\text{-a.e.}
\end{align*}
\end{proof} 
\begin{cor}\label{cor:TimeDepLaplacian}
Let $\eta=\eta(x,t)=Q_t^{(\alpha),{\rm HK}}\theta$ with $\theta\in D(\DD)$. 
Suppose $p\in[1,2]$. Then 
\begin{align*}
\left|\overline{\nabla}\eta(x,t)\right|^2&\leq\frac{1}{p(p-1)}\sup_{s>0}|\eta(x,s)|^{2-p}\varliminf_{\eps\to0}
\left( \frac{\partial^2}{\partial t^2}+
\Delta_{\ll}\right)\left(|\eta|^2+\eps^2\right)^{\frac{p}{2}}(x,t)\\
&\hspace{2cm}+\frac{1}{p-1}\sup_{s>0}|\eta(x,s)|^{2-p}|\eta(x,t)|^p(\alpha+k)^-(x)\quad\m\text{-a.e.}~x\in M.
\end{align*}
\end{cor}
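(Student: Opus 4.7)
The plan is to deduce Corollary~\ref{cor:TimeDepLaplacian} directly from Lemma~\ref{lem:TimeDepLaplacian} by dividing, passing to $\varliminf_{\eps\to 0}$, and then enlarging the scalar coefficient $|\eta(x,t)|^{2-p}$ to $\sup_{s>0}|\eta(x,s)|^{2-p}$. The only nontrivial point is that this enlargement requires the quantity being inflated to have a definite sign, and the needed lower bound is a companion consequence of Lemma~\ref{lem:TimeDepLaplacian} itself.

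Concretely, I abbreviate $A_\eps(x,t):=(\partial_t^{2}+\Delta_{\ll})(|\eta|^{2}+\eps^{2})^{p/2}(x,t)$ and $\phi_\eps:=(|\eta(x,t)|^{2}+\eps^{2})^{1-p/2}$, and set $\phi:=|\eta(x,t)|^{2-p}$, $\Phi:=\sup_{s>0}|\eta(x,s)|^{2-p}$, so that $0\le\phi\le\Phi$. Using $(\alpha+k)\ge -(\alpha+k)^{-}$ in Lemma~\ref{lem:TimeDepLaplacian} gives, $\m$-a.e.,
\[
p(p-1)|\overline{\nabla}\eta|^{2}\;\le\; \phi_\eps A_\eps+p(\alpha+k)^{-}|\eta(x,t)|^{2}. \quad (\ast)
\]
Separately, dropping the nonnegative gradient term in that same Lemma and dividing by $\phi_\eps>0$ yields $A_\eps\ge p(\alpha+k)|\eta(x,t)|^{2}/\phi_\eps$; on $\{|\eta(x,t)|>0\}$ we have $\phi_\eps\to\phi>0$ as $\eps\to 0$, so
\[
\varliminf_{\eps\to 0}A_\eps\;\ge\; -p(\alpha+k)^{-}(x)\,|\eta(x,t)|^{p}. \quad (\dagger)
\]

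On $\{|\eta(x,t)|>0\}$ I then take $\varliminf_{\eps\to 0}$ in $(\ast)$. Since $\phi_\eps$ is a deterministic sequence converging to the positive limit $\phi$, the product liminf factors as $\varliminf_\eps(\phi_\eps A_\eps)=\phi\,\varliminf_\eps A_\eps$; writing $|\eta(x,t)|^{2}=\phi|\eta(x,t)|^{p}$ collects the right-hand side into
\[
p(p-1)|\overline{\nabla}\eta|^{2}\;\le\; \phi\bigl[\varliminf_{\eps\to 0}A_\eps+p(\alpha+k)^{-}|\eta(x,t)|^{p}\bigr].
\]
By $(\dagger)$ the bracket is nonnegative, hence replacing $\phi$ with the larger $\Phi$ preserves the inequality; dividing by $p(p-1)>0$ (the Corollary's denominators implicitly require $p\in(1,2]$) produces exactly the stated bound. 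On the degenerate set $\{|\eta(x,t)|=0\}$, if additionally $\Phi=0$ then $\eta(x,\cdot)\equiv 0$ and both sides vanish, while otherwise $(\ast)$ reduces to $\eps^{2-p}A_\eps\ge p(p-1)|\overline{\nabla}\eta|^{2}$, which forces $\varliminf_\eps A_\eps=+\infty$ whenever $|\overline{\nabla}\eta|>0$, so that the right-hand side of the Corollary is trivially infinite.

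The main obstacle is the inflation step $\phi\rightsquigarrow\Phi$ inside the $\varliminf_\eps A_\eps$ term: since $A_\eps$ carries no a priori sign, a naive monotonicity substitution would only be valid when $\varliminf_\eps A_\eps\ge 0$. The companion estimate $(\dagger)$, itself a direct byproduct of Lemma~\ref{lem:TimeDepLaplacian}, is precisely what ensures that the bracket $\varliminf_\eps A_\eps+p(\alpha+k)^{-}|\eta(x,t)|^{p}$ remains nonnegative in every case, so the enlargement by $\Phi$ is always licit. All other manipulations are routine algebraic rearrangements of Lemma~\ref{lem:TimeDepLaplacian}.
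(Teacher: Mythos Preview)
Your derivation is correct and is precisely the direct rearrangement of Lemma~\ref{lem:TimeDepLaplacian} that the paper intends (the corollary is stated there without proof, as an immediate consequence). Your isolation of $(\dagger)$ as the reason the bracket $\varliminf_{\eps}A_{\eps}+p(\alpha+k)^{-}|\eta|^{p}$ is nonnegative, so that the inflation $\phi\rightsquigarrow\Phi$ is legitimate, is exactly the point; in the residual case $\Phi=0$ your claim that the left side vanishes is justified by locality of the covariant derivative in the module framework (on $\{\eta^{\sharp}=0\}$ one has $\nabla\eta^{\sharp}=0$ $\m$-a.e.).
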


Denote by $\mathscr{P}(M)$, the family of all Borel probability measures on $M$, and  
by $\mathscr{B}^*(M)$, the family of all universally measurable sets, that is, $\mathscr{B}^*(M):=\bigcap_{\nu\in\mathscr{P}(M)}\overline{\mathscr{B}(M)}^{\nu}$, where $\overline{\mathscr{B}(M)}^{\nu}$ is the $\nu$-completion of $\mathscr{B}(M)$ for $\nu\in\mathscr{P}(M)$. $\mathscr{B}^*(M)$ also denotes the family of 
universally measurable real valued functions. Moreover, $\mathscr{B}_b^*(M)$ (resp.~$\mathscr{B}_+^*(M)$) denotes the family of bounded (resp.~non-negative) universally measurable functions.  
For $f\in \mathscr{B}_b^*(M)$, or $f\in \mathscr{B}_+^*(M)$, we set
\begin{align}
q_t^{(\alpha),\kappa}f(x):=\E_x\left[\int_0^{\infty}e^{-\alpha s-A_s^{\kappa}}f(X_s)\lambda_t(\d s) \right]
\end{align}
and write $q_t^{(\alpha)}f(x)$ instead of $q_t^{(\alpha),0}f(x)$. Then $q_t^{(\alpha),\kappa}f\in \mathscr{B}_b^*(M)$ (resp.~$q_t^{(\alpha),\kappa}f\in \mathscr{B}_+^*(M)$) under $f\in \mathscr{B}_b^*(M)$ (resp.~$f\in \mathscr{B}_+^*(M)$).  
It is easy to see $q_t^{(\alpha),\kappa}1(x)\leq Ce^{-\sqrt{\alpha-C_{\kappa}}t}$ ($\alpha\geq C_{\kappa}$) and 
$q_t^{(\alpha)}1(x)\leq e^{-\sqrt{\alpha}t}$ ($\alpha\geq0$). 
Take $f\in L^2(X;\m)\cap \mathscr{B}_b^*(M)$ or $f\in L^2(M;\m)\cap \mathscr{B}_+^*(M)$.  
According to the equation 
\begin{align}
(q_t^{(\alpha),\kappa}f,g)_{\m}&=\int_0^{\infty}e^{-\alpha s}(p_s^{\kappa}f,g)_{\m}\lambda_t(\d s)\notag\\
&=\int_0^{\infty}e^{-\alpha s}(P_s^{\kappa}f,g)_{\m}\lambda_t(\d s)\label{eq:Version}\\
&=(Q_t^{(\alpha),\kappa}f,g)_{\m}\quad\text{ for any }\quad g\in L^2(M;\m),\notag
\end{align}
$q_t^{(\alpha),\kappa}f$ is an $\m$-version of $Q_t^{(\alpha),\kappa}f$.

\bigskip

To prove Theorem~\ref{thm:LittlewoodPaley1Form} below, we need the following lemma on the ${\mathscr{E}}$-quasi-continuity of $x\mapsto q_{t}^{\alpha,\kappa}(x)$: 
\begin{lem}\label{lem:QuasiContProduct}
For $u\in L^2(M;\m)\cap \mathscr{B}^*(M)$, $t>0$ and $\alpha>C_{\kappa}$, we set 
\begin{align}
Q_{t}^{(\alpha),\kappa}u(x):&=\int_0^{\infty}e^{-\alpha s}P_s^{\kappa}u(x)\lambda_t(\d s),\label{eq:SpaceTimeFuncFunc}\\
q_{t}^{(\alpha),\kappa}u(x):&=\int_0^{\infty}e^{-\alpha s}p_s^{\kappa}u(x)\lambda_t(\d s).\label{eq:SpaceTimeFunc}
\end{align}
Then $Q_t^{(\alpha),\kappa}u\in D(\mathscr{E})$ and 
$q_t^{(\alpha),\kappa}u$ is an 
${\mathscr{E}}$-quasi-continuous $\m$-version of $Q_t^{(\alpha),\kappa}u$. 
\end{lem}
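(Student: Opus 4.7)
The plan has three parts. First, we show $Q_t^{(\alpha),\kappa}u \in D(\mathscr{E})$ via spectral calculus. Because $(P_s^\kappa)_{s\geq 0}$ is strongly continuous, self-adjoint, and satisfies $\|P_s^\kappa\|_{2,2} \leq Ce^{C_\kappa s}$, its generator $\Delta^\kappa$ is self-adjoint with spectrum contained in $]-\infty, C_\kappa]$. Hence for $\alpha > C_\kappa$ the operator $\alpha - \Delta^\kappa$ is strictly positive, and the Laplace transform identity $\int_0^\infty e^{-\gamma s}\lambda_t(\d s) = e^{-\sqrt{\gamma}\,t}$ identifies $Q_t^{(\alpha),\kappa} = e^{-t\sqrt{\alpha - \Delta^\kappa}}$ on $L^2(M;\m)$. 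Spectral calculus then shows that for $t > 0$ this operator maps $L^2(M;\m)$ into $D((\alpha - \Delta^\kappa)^k)$ for every $k \in \N$, in particular into $D(\mathscr{E}^\kappa) = D(\mathscr{E})$.

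Second, we identify $q_t^{(\alpha),\kappa}u$ as an $\m$-version of $Q_t^{(\alpha),\kappa}u$. For each $s > 0$ and $u \in L^2(M;\m) \cap \mathscr{B}^*(M)$, standard Feynman-Kac theory (of the type recorded for the semigroup $(p_t^{\kappa})_{t\geq0}$ in Section~\ref{subsec:Frame}) ensures that $p_s^\kappa u(x) = \E_x[e^{-A_s^\kappa} u(X_s)]$ is well-defined for $\mathscr{E}$-q.e.\ $x$, is an $\mathscr{E}$-quasi-continuous $\m$-version of $P_s^\kappa u$, and lies in $D(\mathscr{E})$. Combining the bound $\|p_s^\kappa u\|_{L^2(M;\m)} \leq C e^{C_\kappa s}\|u\|_{L^2(M;\m)}$ with $\int_0^\infty e^{(C_\kappa - \alpha)s}\lambda_t(\d s) = e^{-\sqrt{\alpha - C_\kappa}\,t} < \infty$, Fubini's theorem yields $q_t^{(\alpha),\kappa}u = Q_t^{(\alpha),\kappa}u$ $\m$-a.e.

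The third and most delicate step is to promote this $\m$-a.e.\ identity to an identification of $q_t^{(\alpha),\kappa}u$ as the $\mathscr{E}$-quasi-continuous $\m$-version. The strategy is Riemann-sum approximation: for each $N \in \N$ choose a partition $\{\Delta_i^N\}_{i=1}^{k_N}$ of $]0,\infty[$ with mesh tending to zero (and controlling the tail), pick $s_i^N \in \Delta_i^N$, and set
\begin{align*}
q^{(N)}(x) := \sum_{i=1}^{k_N} e^{-\alpha s_i^N}\, p_{s_i^N}^\kappa u(x)\, \lambda_t(\Delta_i^N).
\end{align*}
Each summand is an $\mathscr{E}$-quasi-continuous element of $D(\mathscr{E})$, so $q^{(N)} \in D(\mathscr{E})$ is itself $\mathscr{E}$-quasi-continuous. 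A smoothing estimate on $s \mapsto \mathscr{E}_1(P_s^\kappa u, P_s^\kappa u)^{1/2}$ (derived from spectral calculus via $\sqrt{\alpha - \Delta^\kappa}\,P_s^\kappa = P_s^\kappa\sqrt{\alpha - \Delta^\kappa}$ and bounding $\sup_{\mu \geq -C_\kappa}\sqrt{\alpha + \mu}\,e^{-s\mu}$), together with the fact that $\lambda_t$ has no mass at $0$ and rapid decay at $\infty$ when $t > 0$, yields the $\lambda_t$-integrability needed for $\mathscr{E}_1^{1/2}$-convergence $q^{(N)} \to Q_t^{(\alpha),\kappa}u$. By standard theory (cf.\ \cite[Theorem~2.1.4]{FOT}), a subsequence of the $\mathscr{E}$-quasi-continuous representatives converges $\mathscr{E}$-quasi-uniformly; since $q^{(N)}(x) \to q_t^{(\alpha),\kappa}u(x)$ pointwise by Riemann-sum convergence, the quasi-uniform limit is identified as $q_t^{(\alpha),\kappa}u$, which is therefore $\mathscr{E}$-quasi-continuous.

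The main obstacle is coordinating the partition so that the pointwise Riemann-sum convergence of $q^{(N)}$ to $q_t^{(\alpha),\kappa}u$ and the $\mathscr{E}_1^{1/2}$-convergence of $q^{(N)}$ to $Q_t^{(\alpha),\kappa}u$ hold along a common sequence; this hinges on the smoothing estimate, the absence of $\lambda_t$-mass at $s = 0$, and the $\lambda_t$-integrability of the resulting bounds once $\alpha > C_\kappa$ and $t > 0$.
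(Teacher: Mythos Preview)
Your overall architecture is sound and close to the paper's, but there is a genuine gap in the third step. You assert that $q^{(N)}(x) \to q_t^{(\alpha),\kappa}u(x)$ ``pointwise by Riemann-sum convergence'' for every $u \in L^2(M;\m)\cap\mathscr{B}^*(M)$. Riemann-sum convergence of $\int_0^\infty e^{-\alpha s}p_s^\kappa u(x)\,\lambda_t(\d s)$ requires continuity (or at least $\lambda_t$-a.e.\ continuity) of $s\mapsto p_s^\kappa u(x)=\E_x[e^{-A_s^\kappa}u(X_s)]$ for the given $x$. For merely measurable $u$ this is not available: the path $s\mapsto u(X_s)$ need not be continuous, and no smoothing argument repairs this at the level of a fixed $x$. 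Without that pointwise convergence you cannot identify the $\mathscr{E}$-quasi-uniform limit of (a subsequence of) $q^{(N)}$ with the specific pointwise-defined function $q_t^{(\alpha),\kappa}u$; you only know it agrees with $q_t^{(\alpha),\kappa}u$ $\m$-a.e., which is exactly what you are trying to upgrade.

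The paper closes this gap by a two-stage argument. First it runs essentially your Riemann-sum (in fact dyadic step-function) approximation only for $u\in L^2(M;\m)\cap C_b(M)$: there $s\mapsto p_s^\kappa u(x)$ is continuous for every $x$ (this uses $u\in C_b(M)$ and $2\kappa^-\in S_{E\!K}({\bf X})$), so the pointwise convergence of the approximants to $q_t^{(\alpha),\kappa}u(x)$ is honest, and combined with the $\mathscr{E}_1$-convergence one concludes via \cite[Theorem~2.1.4(ii)]{FOT}. Second, it extends to general $u\in L^2(M;\m)\cap\mathscr{B}(M)$ by a monotone-class argument (\cite[Lemma~4.2.3]{FOT}): the class $H$ of nonnegative $u$ for which the conclusion holds contains $L^2\cap C_b(M)_+$, is stable under increasing limits in $L^2$ (using the smoothing bound $\mathscr{E}_{\alpha_0}^\kappa(P_s^\kappa v,P_s^\kappa v)^{1/2}\leq (2s)^{-1/2}\|v\|_{L^2}$ to get $\mathscr{E}_1$-convergence, and monotone convergence for the pointwise limit), and contains indicators of open sets. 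The passage to universally measurable $u$ is then routine. Your proof would be correct if you inserted this reduction to $C_b$ and the monotone-class extension.
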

\begin{proof}[{\bf Proof}]
The first assertion is clear from the following estimate: for $\alpha_0:=\alpha\|U_{\alpha}\kappa^-\|_{\infty}$ 
in \eqref{eq:Equivalence*}, 
\begin{align}
\|Q_t^{(\alpha),\kappa}u\|_{L^2(M;\m)}&\leq\int_0^{\infty}e^{-\alpha s}\|P_s^{\kappa}u\|_{L^2(M;\m)}\lambda_t(\d s)\notag \\
&\leq C(\kappa)e^{-\sqrt{\alpha-C_{\kappa}}t}\|u\|_{L^2(M;\m)}<\infty\label{eq;Domain0}
\end{align}
and
\begin{align}
\mathscr{E}_{\alpha_0}^{\kappa}(Q_t^{(\alpha),\kappa}u,Q_t^{(\alpha),\kappa}u)^{\frac12}&\leq 
\int_0^{\infty}e^{-\alpha s}\mathscr{E}_{\alpha_0}^{\kappa}(P_s^{\kappa}u, P_s^{\kappa}u)^{\frac12}\lambda_t(\d s)\notag\\
&\leq \left(\int_0^{\infty}e^{-\alpha s}\frac{1}{\sqrt{2s}}\lambda_t(\d s)\right)\|u\|_{L^2(M;\m)}<\infty.\label{eq;Domain1}
\end{align}
Fix $t>0$. Suppose $u\in L^2(M;\m)\cap C_b(M)$.  
Note that $p_s^{\kappa}u$ is an $\mathscr{E}$-quasi continuous $\m$-version of 
$P_s^{\kappa}u$ by \cite[Theorem~7.3]{Kw:stochII}. 
Then it is easy to see that for the following $D(\mathscr{E})$-valued step function $s\mapsto 
p_s^{n,\kappa}u$, $p_s^{n,\kappa}u$  
is $\mathscr{E}$-quasi continuous and $\{p_s^{n,\kappa}u\}_{n\in\mathbb{N}}$ $\mathscr{E}_1$-converges to $P_s^{\kappa}u$ for each $s>0$: 
\begin{align*}
p_s^{n,\kappa}u(x):=\sum_{k=0}^{n2^n-1}p_{\frac{k+1}{2^n}}^{\kappa}u(x)\1_{[\frac{k}{2^n},\frac{k+1}{2^n}[}(s)+p_n^{\kappa}u(x)\1_{[n,+\infty[}(s). 
\end{align*}
We can see 
\begin{align}
\|p_s^{n,\kappa}u\|_{L^2(M;\m)}&=\sum_{k=0}^{n2^n-1}\|p_{\frac{k+1}{2^n}}^{\kappa}u\|_{L^2(M;\m)}\1_{[\frac{k}{2^n},\frac{k+1}{2^n}[}(s)+\|p_n^{\kappa}u\|_{L^2(M;\m)}\1_{[n,+\infty[}(s)\notag\\
&=\sum_{k=0}^{n2^n-1}\|P_{\frac{k+1}{2^n}}^{\kappa}u\|_{L^2(M;\m)}\1_{[\frac{k}{2^n},\frac{k+1}{2^n}[}(s)+\|P_n^{\kappa}u\|_{L^2(M;\m)}\1_{[n,+\infty[}(s)\notag\\
&\leq C(\kappa)e^{C_{\kappa}\left(s+\frac{1}{2^n}\right)}\|u\|_{L^2(M;\m)}
\leq C(\kappa)e^{C_{\kappa}\left(s+1\right)}\|u\|_{L^2(M;\m)}.
\label{eq:Contraction}
\end{align}
Then, 
\begin{align*}
q_t^{(\alpha),n,\kappa}u(x):&=\int_0^{\infty}e^{-\alpha s}p_s^{n,\kappa}u(x)\lambda_t(\d s)\\&=
\sum_{k=0}^{n2^n-1}p_{\frac{k+1}{2^n}}^{\kappa}u(x)\int_{\frac{k}{2^n}}^{\frac{k+1}{2^n}}e^{-\alpha s}\lambda_t(\d s)+p_n^{\kappa}u(x)\int_n^{\infty}e^{-\alpha s}\lambda_t(\d s)
\end{align*}
satisfies $q_t^{(\alpha),n,\kappa}\in D(\mathscr{E})$ and it 
$\mathscr{E}_1$-converges to 
$Q_t^{(\alpha),\kappa}u$. Indeed, for $\alpha_0:=\alpha\|U_{\alpha}\kappa^-\|_{\infty}$ and $C>0$ 
 specified in \eqref{eq:Equivalence*},
\begin{align}
\|q_t^{(\alpha),n,\kappa}u-Q_t^{(\alpha),\kappa}u\|_{L^2(M;\m)}&\leq 
\int_0^{\infty}e^{-\alpha s}\|p_s^{n,\kappa}u-P_s^{\kappa}u\|_{L^2(M;\m)}\lambda_t(\d s)\notag\\
&\hspace{5cm}\to 0\quad\text{ as }\quad n\to\infty\label{eq:Dominated0}
\end{align}
and
\begin{align}
\mathscr{E}_{\alpha_0}^{\kappa}&(q_t^{(\alpha),n,\kappa}u-Q_t^{(\alpha),\kappa}u,q_t^{(\alpha),n,\kappa}u-Q_t^{(\alpha),\kappa}u)^{\frac12}\notag\\&\leq 
\int_0^{\infty}e^{-\alpha s}\mathscr{E}_{\alpha_0}^{\kappa}(p_s^{n,\kappa}u-P_s^{\kappa}u, p_s^{n,\kappa}u-P_s^{\kappa}u)^{\frac12}\lambda_t(\d s)\notag\\
&\leq \sqrt{C}\int_0^{\infty}e^{-\alpha s}\mathscr{E}_1(p_s^{n,\kappa}u-P_s^{\kappa}u, p_s^{n,\kappa}u-P_s^{\kappa}u)^{\frac12}\lambda_t(\d s)\notag\\
&\hspace{5cm}\to 0\quad\text{ as }\quad n\to\infty,\label{eq:Dominated1}
\end{align}
where we use the Lebesgue's dominated convergence theorem 
under the following observation: by \eqref{eq:Contraction}
\begin{align*}
\|p_s^{n,\kappa}u-P_s^{\kappa}u\|_{L^2(M;\m)}&\leq 2C(\kappa)e^{C_{\kappa}(s+1)},
\end{align*}
and 
for any $s>0$ and $n\in\mathbb{N}$, $s\geq n$ or $s\in[\frac{k}{2^n},\frac{k+1}{2^n}[$ for some $k\in\{0,\cdots, n2^n-1\}$. 
If $s\geq n$, then by \eqref{eq:Equivalence*}
\begin{align*}
\mathscr{E}(p_s^{n,\kappa}u-P_s^{\kappa}u, p_s^{n,\kappa}u-P_s^{\kappa}u)^{\frac12}&\leq 
(1-\|U_{\alpha}\kappa^-\|_{\infty})^{-\frac12}\mathscr{E}_{\alpha_0}^{\kappa}(p_n^{\kappa}u-P_s^{\kappa}u, p_n^{n,\kappa}u-P_s^{\kappa}u)^{\frac12}\\
&\leq (1-\|U_{\alpha}\kappa^-\|_{\infty})^{-\frac12}\left(\frac{1}{\sqrt{2n}}+\frac{1}{\sqrt{2s}}
 \right)\|u\|_{L^2(M;\m)}\\
&\leq (1-\|U_{\alpha}\kappa^-\|_{\infty})^{-\frac12}\frac{\sqrt{2}}{\sqrt{s}}\|u\|_{L^2(M;\m)}.
\end{align*}
If $s\in[\frac{k}{2^n},\frac{k+1}{2^n}[$ for some $k\in\{0,\cdots, n2^n-1\}$, then by \eqref{eq:Equivalence*} 
\begin{align*}
\mathscr{E}(p_s^{n,\kappa}u-P_s^{\kappa}u, p_s^{n,\kappa}u-P_s^{\kappa}u)^{\frac12}&\leq 
(1-\|U_{\alpha}\kappa^-\|_{\infty})^{-\frac12}
\mathscr{E}_{\alpha_0}^{\kappa}(p_{\frac{k+1}{2^n}}^{\kappa}u-P_s^{\kappa}u, p_{\frac{k+1}{2^n}}^{n,\kappa}u-P_s^{\kappa}u)^{\frac12}\\
&\leq (1-\|U_{\alpha}\kappa^-\|_{\infty})^{-\frac12}\left(\frac{1}{\sqrt{2(k+1)/2^n}}+\frac{1}{\sqrt{2s}}
 \right)\|u\|_{L^2(M;\m)}\\
&\leq (1-\|U_{\alpha}\kappa^-\|_{\infty})^{-\frac12}\frac{\sqrt{2}}{\sqrt{s}}\|u\|_{L^2(M;\m)}.
\end{align*}
Thus we can justify \eqref{eq:Dominated0} and \eqref{eq:Dominated1}. 
Since $u\in C_b(M)$ and $2\kappa^-\in S_{E\!K}({\bf X})$, 
$s\mapsto p_s^{\kappa}u(x)$ is continuous on $[0,+\infty[$, consequently, $p_s^{n,\kappa}u(x)$ converges to $p_s^{\kappa}u(x)$ as $n\to\infty$ for each $x\in M$. This means that 
$q_t^{(\alpha),n,\kappa}u(x)=\int_0^{\infty}e^{-\alpha s}p_s^{n,\kappa}(x)\lambda_t(\d s)$ converges to $q_t^{(\alpha),\kappa}u(x)=
\int_0^{\infty}e^{-\alpha s}p_s^{\kappa}u(x)(x)\lambda_t(\d s)$ as $n\to\infty$ for each $x\in M$. 
Therefore, $q_t^{(\alpha),\kappa}u$ is an $\mathscr{E}$-quasi continuous $\m$-version of $Q_t^{(\alpha),\kappa}u$ by \cite[Theorem~2.1.4(ii)]{FOT}. 

Next we prove the assertion for general $u\in L^2(M;\m)\cap \mathscr{B}(M)$. 
Let us put $H:=\{u\in L^2(M;\m)\cap \mathscr{B}_+(M) \mid q_t^{(\alpha),\kappa}u\text{ is an }\mathscr{E}\text{-quasi continuous }\m\text{-version of }Q_t^{(\alpha),\kappa}u\}$. We have just proved 
$H\supset L^2(M;\m)\cap C_b(M)_+$, and $H$ satisfies (H.3) of \cite[Lemma~4.2.3]{FOT}. 
Indeed, for any open set $G$, we choose an increasing sequence of $\m$-finite open sets $\{G_n\}$. 
Since $(M,\tau)$ can be metrizable in view of the quasi-regularity of $(\mathscr{E},D(\mathscr{E})$ 
(in fact, $M$ can be replaced with a countable union of compact $\mathscr{E}$-nest), $u_n:=n{\sf d}(x,G_n^c)\land1 \uparrow \1_G$. 
(H.1) of \cite[Lemma~4.2.3]{FOT} is trivially satisfied. To check (H.2) of \cite[Lemma~4.2.3]{FOT}, let $u_n\in H$ increase to $u\in L^2(M;\m)$. Then $Q_t^{(\alpha),\kappa}u_n$ $\mathscr{E}_1$-converges to 
$Q_t^{(\alpha),\kappa}u$. Indeed, for $\alpha_0:=C_{\kappa}$, we see
\begin{align*}
\mathscr{E}_{\alpha_0}^{\kappa}(Q_t^{(\alpha),\kappa}u_n-Q_t^{(\alpha),\kappa}u,&Q_t^{(\alpha),\kappa}u_n-Q_t^{(\alpha),\kappa}u)^{\frac12}\\&\leq \int_0^{\infty}e^{-\alpha s}\mathscr{E}_{\alpha_0}^{\kappa}(P_s^{\kappa}(u_n-u), P_s^{\kappa}(u_n-u))^{\frac12}\lambda_t(\d s)\\
&\leq
\int_0^{\infty}e^{-\alpha s}\frac{1}{\sqrt{2s}}\|u_n-u\|_{L^2(M;\m)}\lambda_t(\d s)\\
&=\|u_n-u\|_{L^2(M;\m)}\int_0^{\infty}e^{-\alpha s}\frac{\lambda_t(\d s)}{\sqrt{2s}}\\
&\hspace{5cm}\to 0\quad\text{ as }\quad n\to\infty
\end{align*}
and 
\begin{align*}
\|Q_t^{(\alpha),\kappa}u_n-Q_t^{(\alpha),\kappa}u\|_{L^2(M;\m)}&\leq
\int_0^{\infty}e^{-\alpha s}\|P_s^{\kappa}(u_n-u)\|_{L^2(M;\m)}\lambda_t(\d s)\\
&\leq C(\kappa)e^{-\sqrt{\alpha-C_{\kappa}}t}\|u_n-u\|_{L^2(M;\m)}\to 0\quad\text{ as }\quad n\to\infty.
\end{align*}
Since $q_t^{(\alpha),\kappa}u_n$ is an $\mathscr{E}$-quasi continuous $\m$-version of $Q_t^{(\alpha),\kappa}u_n$ and converges to $q_t^{(\alpha),\kappa}u$ pointwise, we get $u\in H$ by \cite[Theorem~2.1.4(ii)]{FOT}. 
Then \cite[Lemma~4.2.3]{FOT} implies that the assertion holds for $u\in L^2(M;\m)\cap\mathscr{B}(M)$. 
The extension to the universally measurable $u$ is clear. 
\end{proof}

\begin{thm}\label{thm:LittlewoodPaley1Form}
Suppose $p\in]1,2]$, $\kappa^-\in S_K({\bf X})$ and $\alpha>C_{\kappa}$. Then 
the $G$-function  
$G_{\theta}$ can be extended to be in $L^p(T^*\!M)$ 
for $\theta\in L^p(T^*\!M)$ and the following inequality holds for 
$\theta\in L^p(T^*\!M)$
\begin{align}
\|G_{\theta}\|_{L^p(M;\m)}&\lesssim\|\theta\|_{L^p(T^*\!M)}.\label{eq:LittlewoodPaleyStein1}
\end{align}
\end{thm}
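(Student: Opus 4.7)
The plan is to establish \eqref{eq:LittlewoodPaleyStein1} first for $\theta$ in a dense subclass — say $\theta\in{\rm Reg}(T^*\!M)\cap L^\infty(T^*\!M)\cap L^p(T^*\!M)$ — and then extend to all of $L^p(T^*\!M)$ by density together with the $L^p$-contractivity of $(Q_t^{(\alpha),{\rm HK}})_{t\ge 0}$ furnished by Corollary~\ref{cor:HessShcraderUhlenbrock} and estimate \eqref{eq:QtContraction}. For $\theta$ in this subclass, I will set $\eta(x,t):=Q_{|t|}^{(\alpha),{\rm HK}}\theta(x)$ on $\widehat M = M\times\R$ and, for each $\eps>0$, $\eta_\eps:=(|\eta|^2+\eps^2)^{p/2}-\eps^p$. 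By Corollary~\ref{cor:domain}, $\eta_\eps\in D(\widehat{\mathscr E})\cap L^\infty(\widehat M;\widehat\m)$, so the Fukushima decomposition \eqref{eq:FukushimaDecomp*} for $\eta_\eps$ along $\widehat{\bf X}$ is available, and the identities of Section~\ref{sec:CAF} are at my disposal.

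The core ingredient is the pointwise estimate of Corollary~\ref{cor:TimeDepLaplacian}. Multiplying it by $t$ and integrating over $t\in(0,\infty)$ produces a bound of the form
\[
G_\theta(x)^2\;\le\;\tfrac{1}{p(p-1)}\,\theta^*(x)^{2-p}\,\Phi_\eps(x)\;+\;\tfrac{1}{p-1}\,\theta^*(x)^{2-p}\,(\alpha+k)^-(x)\!\int_0^\infty\!\! t\,|\eta(x,t)|^p\,\d t,
\]
where $\theta^*(x):=\sup_{s>0}|\eta(x,s)|$ and $\Phi_\eps(x):=\int_0^\infty t\,\varliminf_{\eps\to0}(\partial_t^2+\Delta_\ll)(|\eta|^2+\eps^2)^{p/2}(x,t)\,\d t$. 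An application of H\"older's inequality with conjugate exponents $\tfrac{2}{2-p},\tfrac{2}{p}$ gives
\[
\|G_\theta\|_{L^p(M;\m)}^p\;\lesssim\;\|\theta^*\|_{L^p(M;\m)}^{(2-p)p/2}\,\bigl(\|\Phi_\eps\|_{L^1(M;\m)}+R_{\kappa,p}(\theta)\bigr)^{p/2},
\]
with $R_{\kappa,p}(\theta)$ absorbing the $(\alpha+k)^-$-correction. The factor $\|\theta^*\|_{L^p}$ will be controlled by $\|\theta\|_{L^p(T^*\!M)}$ via the maximal estimate of Proposition~\ref{prop:MaxErgo}, while $R_{\kappa,p}(\theta)$ is dominated by $\|\theta\|_{L^p(T^*\!M)}^p$ using the Stollmann--Voigt inequality \eqref{eq:StollmannVoigt} (applicable because $\kappa^-\in S_K({\bf X})$) together with the Hess--Schrader--Uhlenbrock bound on $Q_t^{(\alpha),{\rm HK}}$.

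It remains to control $\|\Phi_\eps\|_{L^1(M;\m)}$ uniformly in $\eps$. For this I will apply the Fukushima decomposition of $\eta_\eps$ stopped at $\tau$: Lemma~\ref{lem:QuadraticVariation} kills the martingale expectation, so
\[
\E_{(x,a)}\bigl[\eta_\eps(\widehat X_\tau)\bigr]-\eta_\eps(x,a)=\E_{(x,a)}\bigl[\widehat N_\tau^{[\eta_\eps]}\bigr].
\]
The explicit expression \eqref{eq:CAFExpression*} for $\widehat N_\tau^{[\eta_\eps]}$ combined with Proposition~\ref{prop:ShigekawaProp6.1} — applied both to Lebesgue time and to the Ricci-measure part — identifies the right-hand side, after integration in $x$ against $\m$ and passage to the limit $a\to\infty$, with a positive combination that dominates $\int_M\!\!\int_0^\infty t\,(\partial_t^2+\Delta_\ll)(|\eta|^2+\eps^2)^{p/2}\,\d t\,\d\m$. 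Simultaneously the left-hand side simplifies, using $\eta(\cdot,0)=\theta$ and $\eta(\cdot,a)\to0$ in $L^p$ as $a\to\infty$ (a consequence of \eqref{eq:QtContraction}), to $\int_M\![(|\theta|^2+\eps^2)^{p/2}-\eps^p]\,\d\m\le\|\theta\|_{L^p(T^*\!M)}^p$. Sending $\eps\downarrow0$ by Fatou/dominated convergence then yields $\|\Phi_\eps\|_{L^1}\lesssim\|\theta\|_{L^p(T^*\!M)}^p$, which combined with the H\"older estimate above closes the proof on the dense subclass.

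The principal obstacle will be the ${\bf Ric}(\eta^\sharp,\eta^\sharp)$-contribution to $\widehat N_\tau^{[\eta_\eps]}$, whose singular part is only controlled by $\widetilde{|\eta|^2}\kappa$ through \eqref{eq:RicciLower}. For $p\in]1,2]$ this term must be simultaneously absorbed into the $(\alpha+k)^-$-correction and monitored under the $\varliminf_{\eps\to0}$ inside the pointwise bound of Corollary~\ref{cor:TimeDepLaplacian}; doing so uniformly requires the Kato assumption $\kappa^-\in S_K({\bf X})$ together with the Hess--Schrader--Uhlenbrock estimate (Theorem~\ref{thm:HessShcraderUhlenbrock}) and the Feynman--Kac contractivity \eqref{eq:KatoContraction}, which together keep all constants depending only on $\kappa$, $p$ and $C_\kappa$. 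Justifying the interchange of $\varliminf_{\eps\to0}$ with $t$-integration, $x$-integration, and the $a\to\infty$ limit, via monotone/dominated convergence underpinned by the $L^\infty$-bounds on $\theta$ and the $L^2$-regularity of $\eta_\eps$ supplied by Corollary~\ref{cor:domain}, will be the most delicate technical point.
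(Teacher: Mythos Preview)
Your overall architecture matches the paper's: start from Corollary~\ref{cor:TimeDepLaplacian}, apply H\"older with exponents $\tfrac{2}{2-p},\tfrac{2}{p}$, control the maximal factor by Proposition~\ref{prop:MaxErgo}, and handle the $\Delta_{\ll}$-integral via the Fukushima decomposition of $\eta_\eps$ together with Lemma~\ref{lem:QuadraticVariation} and Proposition~\ref{prop:ShigekawaProp6.1}. You also correctly anticipate that the singular part ${\bf Ric}_\perp(\eta^\sharp,\eta^\sharp)$ is the obstruction to simply saying ``$\widehat N^{[\eta_\eps]}$ dominates the $\Delta_{\ll}$-part'': since ${\bf Ric}_\perp={\bf Ric}_\perp^\kappa+\widetilde{|\eta|^2}\kappa_\perp$ with $\kappa_\perp$ signed, one must add back $p\widetilde{|\eta|^p}\kappa_\perp^-$, and combining this with the absolutely continuous correction $p|\eta|^p(\alpha+k)^-\m\le p|\eta|^p\kappa_\ll^-$ yields a single residual term $p\int_M\int_0^\infty t\,\widetilde{|\eta|^p}(x,t)\,\d t\,\kappa^-(\d x)$ that has to be bounded by $\|\theta\|_{L^p(T^*\!M)}^p$.

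The genuine gap is in how you propose to bound this residual. Stollmann--Voigt \eqref{eq:StollmannVoigt} is an $L^2$-energy inequality and does not close here: applying it to $|\eta|^{p/2}$ produces $\mathscr{E}_\alpha(|\eta|^{p/2})$, which is not controlled by $\|\theta\|_{L^p}^p$. The integral is against the \emph{measure} $\kappa^-$, so $\m$-a.e.\ information is useless; one needs the $\mathscr{E}$-q.e.\ inequality $\widetilde{|\eta|}(\cdot,t)\le q_t^{(\alpha),\kappa}|\theta|$, which is exactly what Lemma~\ref{lem:QuasiContProduct} (together with Lemma~\ref{lem:Capacity}) is built to deliver from the pointwise Hess--Schrader--Uhlenbrock bound. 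After this, the paper uses Jensen's inequality, the subordinator identity \eqref{eq:Expectation}, and the generalized Revuz correspondence to rewrite $\int_M\int_0^\infty t\,q_t^{(\alpha),\kappa}|\theta|^p\,\d t\,\d\kappa^-$ as $\int_M|\theta|^p\,\E_x\!\bigl[\int_0^\infty e^{-p\alpha s+pA_s^{\kappa^-}}\,\d A_s^{\kappa^-}\bigr]\,\d\m$, and then bounds the inner expectation by a Khas'minskii-type argument giving $\bigl(1-p\|U_{p\alpha}\kappa^-\|_\infty\bigr)^{-1}$, finite by Lemma~\ref{lem:ExtendeddKato1} since $p\alpha>C_{p\kappa^-}$. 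None of Stollmann--Voigt, Feynman--Kac contractivity \eqref{eq:KatoContraction}, or the $L^p$-bound on $Q_t^{(\alpha),{\rm HK}}$ substitutes for this chain; you should replace that step by the quasi-continuity Lemma~\ref{lem:QuasiContProduct} plus the Revuz/Khas'minskii computation.
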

\begin{proof}[{\bf Proof}] 
First we assume 
$\theta\in D(\DD)
\cap L^p(T^*\!M)$. By setting $\eta(x,t):=Q_t^{(\alpha),{\rm HK}}\theta(x)$, we see from Corollary~\ref{cor:TimeDepLaplacian}
\begin{align*}
G_{\theta}(x)^2&=\int_0^{\infty}t\left|\overline{\nabla}\eta(x,t)\right|^2\d t
\\
&=
\frac{1}{p(p-1)}\sup_{s>0}|\eta(x,s)|^{2-p}\int_0^{\infty}t \varliminf_{\eps\to0}
\left( \frac{\partial^2}{\partial t^2}+
\Delta_{\ll}\right)\left(|\eta|^2+\eps^2\right)^{\frac{p}{2}}(x,t)\d t,\\
&\hspace{2cm}+\frac{1}{p-1}\sup_{s>0}|\eta(x,s)|^{2-p}(\alpha+k)^-(x)\int_0^{\infty}t |\eta(x,t)|^p\d t.
\end{align*}
Using the H\"older inequality with exponent $\frac{2}{2-p}$ and $\frac{2}{p}$, we have 
\begin{align*}
\|G_{\theta}\|_{L^p(M;\m)}^p&\leq \left(\frac{1}{p(p-1)}\right)^{\frac{p}{2}}\int_M\sup_{s>0}|\eta(x,s)|^{\frac{p(2-p)}{2}}\\
&\hspace{1cm}\times
\left(\int_0^{\infty}t\left[\varliminf_{\eps\to0}\left(\frac{\partial^2}{\partial t^2}+\Delta_{\ll} \right)(|\eta|^2+\eps^2)^{\frac{p}{2}}+p|\eta|^p(\alpha+k)^-\right]\d t \right)^{\frac{p}{2}}\d\m\\
&\leq \left(\frac{1}{p(p-1)}\right)^{\frac{p}{2}}\|\sup_{s>0}|\eta(\cdot,s)|\|_{L^p(M;\m)}^{\frac{p(2-p)}{2}}\\
&\hspace{1cm}\times
\left(\int_M\int_0^{\infty}t\left[ \varliminf_{\eps\to0}
\left(\frac{\partial^2}{\partial t^2}+\Delta_{\ll} \right)(|\eta|^2+\eps^2)^{\frac{p}{2}}+p|\eta|^p(\alpha+k)^-\right]\d t\d\m
 \right)^{\frac{p}{2}}.
\end{align*}
From Proposition~\ref{prop:MaxErgo}, we have
\begin{align*}
\|G_{\theta}\|_{L^p(M;\m)}^p\leq C_p\|\theta\|_{L^p(T^*\!M)}^{\frac{p(2-p)}{2}}
\left(\int_M\int_0^{\infty}t\left[ \varliminf_{\eps\to0}
\left(\frac{\partial^2}{\partial t^2}+\Delta_{\ll} \right)(|\eta|^2+\eps^2)^{\frac{p}{2}}+p|\eta|^p(\alpha+k)^-\right]\d t\d\m
 \right)^{\frac{p}{2}}.
\end{align*}
Then, it suffices to show that 
\begin{align}
\int_M\int_0^{\infty}t\left[ \varliminf_{\eps\to0}
\left(\frac{\partial^2}{\partial t^2}+\Delta_{\ll} \right)(|\eta|^2+\eps^2)^{\frac{p}{2}}+p|\eta|^p(\alpha+k)^-\right]\d t\d\m
\lesssim\|\theta\|_{L^p(T^*\!M)}^p.\label{eq:FainalEst}
\end{align}
The left-hand side of \eqref{eq:FainalEst} is estimated as follows: 
\begin{align*}
\int_M\int_0^{\infty}&t\left[ \varliminf_{\eps\to0}
\left(\frac{\partial^2}{\partial t^2}+\Delta_{\ll} \right)(|\eta|^2+\eps^2)^{\frac{p}{2}}+p|\eta|^p(\alpha+k)^-\right]\d t\d\m
\\&\leq\varliminf_{\eps\to0}\lim_{a\to\infty}
\int_M\int_0^{\infty}(t\land a)\left[
\left(\frac{\partial^2}{\partial t^2}+\Delta_{\ll} \right)(|\eta|^2+\eps^2)^{\frac{p}{2}}+p|\eta|^p(\alpha+k)^-
 \right]\d t\d\m\\
 &=\varliminf_{\eps\to0}\lim_{a\to\infty}\E_{\m\otimes\delta_a}\left[
 \int_0^{\tau}\left[\left(\frac{\partial^2}{\partial t^2}+\Delta_{\ll} \right)(|\eta|^2+\eps^2)^{\frac{p}{2}}(\wh{X}_s)+p|\eta|^p(\alpha+k)^-(\wh{X}_s) \right]\d s \right].
\end{align*}
Here we use Proposition~\ref{prop:ShigekawaProp6.1} (\cite[Proposition~6.1]{Shigekawa1}). 
On the other hand, the signed finite smooth measure
\begin{align*}
\left(\frac{\partial^2}{\partial t^2}+\Delta_{\ll} \right)(|\eta|^2+\eps^2)^{\frac{p}{2}}\m+p|\eta|^p(\alpha+k)^-\m
\end{align*}
is estimated above by 
\begin{align*}
\left(\frac{\partial^2}{\partial t^2}+\text{\boldmath$\Delta$}\right)(|\eta|^2+\eps^2)^{\frac{p}{2}}+
p\wt{|\eta|^p}\kappa^-.
\end{align*}
Indeed, 
\begin{align*}
\left(\frac{\partial^2}{\partial t^2}+\Delta_{\ll} \right)&(|\eta|^2+\eps^2)^{\frac{p}{2}}\m+p|\eta|^p(\alpha+k)^-\m\\
&\hspace{-1cm}=\left(\frac{\partial^2}{\partial t^2}+\text{\boldmath$\Delta$}\right)(|\eta|^2+\eps^2)^{\frac{p}{2}}-
\left(\frac{\partial^2}{\partial t^2}+\text{\boldmath$\Delta$}\right)_{\perp}(|\eta|^2+\eps^2)^{\frac{p}{2}}+p|\eta|^p(\alpha+k)^-\m\\
&\hspace{-1cm}=\left(\frac{\partial^2}{\partial t^2}+\text{\boldmath$\Delta$}\right)(|\eta|^2+\eps^2)^{\frac{p}{2}}-
p(\wt{|\eta|^2}+\eps^2)^{\frac{p}{2}-1}{\bf Ric}_{\perp}(\eta^{\sharp},\eta^{\sharp})+p|\eta|^p(\alpha+k)^-\m\\
&\hspace{-1cm}=\left(\frac{\partial^2}{\partial t^2}+\text{\boldmath$\Delta$}\right)(|\eta|^2+\eps^2)^{\frac{p}{2}}-
p(\wt{|\eta|^2}+\eps^2)^{\frac{p}{2}-1}{\bf Ric}_{\perp}^{\kappa}(\eta^{\sharp},\eta^{\sharp})\\
&\hspace{4cm}+p(\wt{|\eta|^2}+\eps^2)^{\frac{p}{2}-1}\wt{|\eta|^2}\kappa_{\perp}
+p|\eta|^p\kappa_{\ll}^-\\
&\hspace{-1cm}\leq\left(\frac{\partial^2}{\partial t^2}+\text{\boldmath$\Delta$}\right)(|\eta|^2+\eps^2)^{\frac{p}{2}}
+p(\wt{|\eta|^2}+\eps^2)^{\frac{p}{2}-1}\wt{|\eta|^2}\kappa^-_{\perp}+ p|\eta|^p\kappa_{\ll}^-\\
&\hspace{-1cm}\leq\left(\frac{\partial^2}{\partial t^2}+\text{\boldmath$\Delta$}\right)(|\eta|^2+\eps^2)^{\frac{p}{2}}
+p\wt{|\eta|^p}\kappa^-.
\end{align*}
Here we use $p\leq2$, 
${\bf Ric}_{\perp}^{\kappa}(\eta^{\sharp},\eta^{\sharp})\geq0$ and ${\bf Ric}_{\perp}(\eta^{\sharp},\eta^{\sharp})={\bf Ric}^{\kappa}_{\perp}(\eta^{\sharp},\eta^{\sharp})+\wt{|\eta|^2}\kappa_{\perp}$. Then the left-hand side of \eqref{eq:FainalEst} is estimated above by 
\begin{align*}
\varliminf_{\eps\to0}&\lim_{a\to\infty}\left(\E_{\m\otimes\delta_a}\left[\wh{A}_{\tau}^{\left(\frac{\partial^2}{\partial t^2}+\text{\boldmath$\Delta$}\right)\eta_{\eps}}\right]+p\,\E_{\m\otimes\delta_a}\left[\int_0^{\tau}\wt{|\eta|^p}(\wh{X}_s)\d 
\wh{A}_s^{\,\wh\kappa^-} \right]\right)\\
&=
\varliminf_{\eps\to0}\lim_{a\to\infty}\left(\E_{\m\otimes\delta_a}\left[\widehat{N}_{\tau}^{[\eta_{\eps}]}
\right]+p\,\E_{\m\otimes\delta_a}\left[\int_0^{\tau}\wt{|\eta|^p}(\wh{X}_s)\d 
\wh{A}_s^{\,\wh\kappa^-} \right]\right)\\
&=
\varliminf_{\eps\to0}\lim_{a\to\infty}\left(\E_{\m\otimes\delta_a}\left[\wt{{\eta}_{\eps}}(\wh{X}_{\tau})-\wt{{\eta}_{\eps}}(\wh{X}_0)
\right]+p\,\E_{\m\otimes\delta_a}\left[\int_0^{\tau}\wt{|\eta|^p}(\wh{X}_s)\d 
\wh{A}_s^{\,\wh\kappa^-} \right]\right)\quad (\text{Lemma~\ref{lem:QuadraticVariation}})\\
&\leq
\varliminf_{\eps\to0}\lim_{a\to\infty}\left(\E_{\m\otimes\delta_a}\left[(\wt{|\eta|^2}(\wh{X}_{\tau})+\eps^2)^{\frac{p}{2}}-\eps^p\right]+p\,\E_{\m\otimes\delta_a}\left[\int_0^{\tau}\wt{|\eta|^p}(\wh{X}_s)\d 
\wh{A}_s^{\,\wh\kappa^-} \right]\right)\quad (\wt{\eta_{\eps}}(\wh{X}_0)\geq0)\\
&\leq \lim_{a\to\infty}\left(\E_{\m\otimes\delta_a}
\left[\wt{|\eta|^p}(\wh{X}_{\tau})\right]+
p\,\E_{\m\otimes\delta_a}\left[\int_0^{\tau}\wt{|\eta|^p}(\wh{X}_s)\d 
\wh{A}_s^{\,\wh\kappa^-} \right]\right)\quad ((a^2+b^2)^{\frac{p}{2}}-b^p\leq a^p)\\
&\leq \|\theta\|_{L^p(T^*\!M)}^p+ \lim_{a\to\infty}
p\,\E_{\m\otimes\delta_a}\left[\int_0^{\tau}\wt{|\eta|^p}(\wh{X}_s)\d 
\wh{A}_s^{\,\wh\kappa^-}\right]\\
&\hspace{-0.2cm}\stackrel{\eqref{eq:ShigekawaProp3.10}}{=}\|\theta\|_{L^p(T^*\!M)}^p+ \lim_{a\to\infty}
p\int_M\int_0^{\infty}(a\land t)\wt{|\eta|^p}(x,t)\d t\kappa^-(\d x)\\
&=\|\theta\|_{L^p(T^*\!M)}^p+p\int_M\int_0^{\infty}t\wt{|\eta|^p}(x,t)\d t\kappa^-(\d x).
\end{align*}
Here we use $\E_{\m\otimes\delta_a}[\wt{|\eta|^2}(\wh{X}_{\tau})]=\E_{\m\otimes\delta_a}[\wt{|\eta|^2}({X}_{\tau},0)]=\int_M\wt{|\eta|^p}(x,0)\d\m=\int_M|\theta|^p\d\m$ in the last inequality, and 
implicitly use a discrete limit for $a\to\infty$ avoiding Lebesgue measure zero set appeared in Lemma~\ref{lem:QuadraticVariation}. 
We can not use Lenglart-Lepingle-Pratelli inequality, because $\eta_{\eps}(\wh{X}_t)$ may not be a submartingale.  
Since the Bochner integral $\eta(x,t)=\int_0^{\infty}e^{-\alpha s}P_s^{\rm HK}\theta(x)\lambda_t(\d s)$ is an 
$L^2(T^*\!M)$-limit of 
\begin{align*}
\sum_{k=0}^{n2^n-1}P_{\frac{k}{2^n}}^{\rm HK}\theta(x)\int_{\frac{k}{2^n}}^{\frac{k+1}{2^n}}e^{-\alpha s}\lambda_t(\d s)+P_n^{\rm HK}\theta(x)\int_n^{\infty}e^{-\alpha s}\lambda_t(\d s), 
\end{align*}
its pointwise norm converges to $|\eta(x,t)|$ in $\m$-measure with respect to $x\in M$ as $n\to\infty$. Thus we see
\begin{align*}
|\eta(x,t)|&=\left|\int_0^{\infty}e^{-\alpha s}P_s^{\rm HK}\theta(x)\lambda_t(\d s) \right|\\
&\leq \int_0^{\infty}e^{-\alpha s}|P_s^{\rm HK}\theta(x)|\lambda_t(\d s)\quad\m\text{-a.e.}\\
&\hspace{-0.2cm}\stackrel{\eqref{eq:HSU}}{\leq}
\int_0^{\infty}e^{-\alpha s}P_s^{\kappa}|\theta|(x)\lambda_t(\d s)=Q_t^{\alpha,\kappa}|\theta|(x).
\end{align*} 
From this, 
we have $\wt{|\eta|}(x,t)\leq q_t^{\alpha}|\theta|(x)$ $\mathscr{E}$-q.e.~$x\in M$ for a.e.~$t>0$ by Lemma~\ref{lem:QuasiContProduct},. 
Note here that for a.e.~$t>0$ the restriction $\wt{|\eta|}(\cdot, t)$ 
of $\wh{\mathscr{E}}$-quasi continuous function 
$\wt{|\eta|}$ is $\mathscr{E}$-quasi continuous by Lemma~\ref{lem:Capacity}. Moreover, we take $|\theta|$ as a Borel $\m$-version. 
Then the second term of the right hand side above is 
\begin{align*}
p\int_M\int_0^{\infty}t\,\wt{|\eta|^p}(x,t)\d t\,\kappa^-(\d x)&\leq p\int_M\int_0^{\infty}t\,
q_t^{\alpha}|\theta|(x)^p
\d t\,\kappa^-(\d x)\\
&\leq p\int_M\int_0^{\infty}t\int_0^{\infty}e^{-\alpha ps}p_s^{\kappa}|\theta|(x)^p\lambda_t(\d s)\d t\,\kappa^-(\d x)\\
&= p\int_M\int_0^{\infty}\left\{\int_0^{\infty}t
\left(\frac{t}{2\sqrt{\pi}}e^{-\frac{t^2}{4s}}s^{-\frac32} \right)
\d t \right\}\\
&\hspace{3cm}\times
\left(\E_x\left[e^{-\alpha s-A_s^{\kappa}}|\theta|(X_s) \right] \right)^p\d s\,\kappa^-(\d x)\\
&\hspace{-0.2cm}\stackrel{\eqref{eq:Expectation}}{\leq} p\int_M\int_0^{\infty}\E_x\left[e^{-p(\alpha s+A_s^{\kappa})}|\theta|^p(X_s) \right]\d s\,\kappa^-(\d x)\\
&\leq p\int_M\E_x\left[\int_0^{\infty}e^{-p\alpha s+pA_s^{\kappa^-}}|\theta|^p(X_s)\d s\right]\kappa^-(\d x)\\
&=p\int_M|\theta|^p(x)\E_x\left[\int_0^{\infty}e^{-p\alpha s+pA_s^{\kappa^-}}\d A_s^{\kappa^-} \right]\m(\d x),
\end{align*}
where 
we use the generalization of Revuz correspondence (see \cite[(3.11)]{AM:AF}) and 
\begin{align}
\int_0^{\infty}t
\left(\frac{t}{2\sqrt{\pi}}e^{-\frac{t^2}{4s}}s^{-\frac32} \right)
\d t =1.\label{eq:Expectation}
\end{align}
By using the probability law  $\P_x^{(p\alpha)}$ for the killed process ${\bf X}^{(p\alpha)}$ characterized by $\E_x^{(p\alpha)}[f(X_t)]=\E_x[e^{-p\alpha t}f(X_t)]$, $f\in\mathscr{B}_+(M)$, we have
\begin{align*}
p\,\E_x\left[\int_0^{\infty}e^{-p\alpha s+pA_s^{\kappa^-}}\d A_s^{\kappa^-} \right]&=\E_x\left[\int_0^{\infty}e^{-p\alpha s}\d e^{pA_s^{\kappa^-}}\right]
=\E_x^{(p\alpha)}\left[e^{pA_{\infty}^{\kappa^-}} \right]\\
&\leq \frac{1}{1-p\left\|\E_{\cdot}^{(p\alpha)}\left[A_{\infty}^{\kappa^-} \right]\right\|_{L^{\infty}(M;\m)}}\\
&\leq \frac{1}{1-p\left\|\E_{\cdot}\left[\int_0^{\infty}e^{-p\alpha s}\d A_s^{\kappa^-} \right]\right\|_{L^{\infty}(M;\m)}}<\infty,\quad \m\text{-a.e.~}x\in M.
\end{align*} 
Then, we can conclude that the left-hand side of \eqref{eq:FainalEst} is estimated by 
\begin{align*}
\left(1+\frac{1}{1-p\left\|\E_{\cdot}\left[\int_0^{\infty}e^{-p\alpha s}\d A_s^{\kappa^-} \right]\right\|_{L^{\infty}(M;\m)}} \right)\|\theta\|_{L^p(T^*\!M)}^p.
\end{align*}
Note here that under $\alpha>C_{\kappa}$, we have $p\alpha>C_{p\kappa}$, hence from Lemma~\ref{lem:ExtendeddKato1}
\begin{align*}
\left\|\E_{\cdot}\left[\int_0^{\infty}e^{-p\alpha s}\d A_s^{\kappa^-} \right]\right\|_{L^{\infty}(M;\m)}<1/p.
\end{align*}
Any $\theta\in L^p(T^*\!M)\cap L^2(T^*\!M)$ can be 
 approximated by a sequence $\{P_{1/n}^{\rm HK}\theta\}$ 
 of $D(\DD)\cap L^p(T^*\!M)$ in $L^p(T^*\!M)$-norm and in $L^2(T^*\!M)$-norm by 
 Corollary~\ref{cor:HessShcraderUhlenbrock}.
Then one can conclude the statement of 
Theorem~\ref{thm:LittlewoodPaley1Form} holds
for general $\theta \in  L^p(T^*\!M)\cap L^2(T^*\!M)$ in view of the triangle inequality
$|G_{\theta_1}
-G_{\theta_2}
| \leq G_{\theta_1-\theta_2}
$. 
Moreover, since any $\theta\in   L^p(T^*\!M)$ can be approximated by a sequence of $L^p(T^*\!M)\cap L^2(T^*\!M)$ in $L^p(T^*\!M)$, we can conclude the assertion. 
\end{proof} 

Now we define the $H$-function by 
\begin{align*}
H_{\theta}^{\rightarrow}(x):&=\left\{\int_0^{\infty}t Q_t^{(\alpha),\kappa}\left(g_{\theta}^{\rightarrow}(\cdot,t)^2 \right)(x)\d t \right\}^{\frac12},\\
H_{\theta}^{\uparrow}(x):&=\left\{\int_0^{\infty}t Q_t^{(\alpha),\kappa}\left(g_{\theta}^{\uparrow}(\cdot,t)^2 \right)(x)\d t \right\}^{\frac12},\\
H_{\theta}(x):&=\left\{\int_0^{\infty}t Q_t^{(\alpha),\kappa}\left(g_{\theta}(\cdot,t)^2 \right)(x)\d t \right\}^{\frac12}.
\end{align*}
\begin{lem}\label{lem:GlessH}
We have $G_{\theta}^{\rightarrow}(x)\leq 2\sqrt{C(\kappa)}H_{\theta}^{\rightarrow}(x)$. 
\end{lem}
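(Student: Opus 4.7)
The plan is to exploit the semigroup property of the Cauchy semigroup $(Q_t^{(\alpha),\rm HK})_{t\geq0}$ together with the pointwise Hess-Schrader-Uhlenbrock domination \eqref{eq:HSU} and a Cauchy-Schwarz inequality with respect to the subprobability kernel $Q_{t/2}^{(\alpha),\kappa}$. Recalling that $\frac{\partial}{\partial t}Q_t^{(\alpha),\rm HK}\theta = -\sqrt{\alpha - \DD}\,Q_t^{(\alpha),\rm HK}\theta$ in $L^2(T^*\!M)$, the semigroup identity $Q_t^{(\alpha),\rm HK} = Q_{t/2}^{(\alpha),\rm HK}\circ Q_{t/2}^{(\alpha),\rm HK}$ yields, upon differentiating and using commutativity, the factorization
\begin{align*}
\frac{\partial}{\partial t}Q_t^{(\alpha),\rm HK}\theta = Q_{t/2}^{(\alpha),\rm HK}\left(\frac{\partial}{\partial s}Q_s^{(\alpha),\rm HK}\theta\Big|_{s=t/2}\right).
\end{align*}

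Taking pointwise norms and applying the Hess-Schrader-Uhlenbrock inequality \eqref{eq:HSU} inside the Bochner integral representation of $Q_{t/2}^{(\alpha),\rm HK}$, I obtain the pointwise domination
\begin{align*}
g_{\theta}^{\rightarrow}(x,t) \leq Q_{t/2}^{(\alpha),\kappa}\bigl(g_{\theta}^{\rightarrow}(\cdot,t/2)\bigr)(x) \quad \m\text{-a.e.}
\end{align*}
Next, using the Feynman-Kac representation $Q_{t/2}^{(\alpha),\kappa}f(x) = \E_x\!\left[\int_0^\infty e^{-\alpha s - A_s^{\kappa}}f(X_s)\,\lambda_{t/2}(\d s)\right]$ for nonnegative $f$, I apply the Cauchy-Schwarz inequality with respect to the positive measure $e^{-\alpha s - A_s^\kappa}\lambda_{t/2}(\d s)\otimes\P_x$ to deduce
\begin{align*}
\bigl(Q_{t/2}^{(\alpha),\kappa}f(x)\bigr)^2 \leq Q_{t/2}^{(\alpha),\kappa}1(x)\cdot Q_{t/2}^{(\alpha),\kappa}(f^2)(x) \leq C(\kappa)\,Q_{t/2}^{(\alpha),\kappa}(f^2)(x),
\end{align*}
where the last bound uses $Q_{t/2}^{(\alpha),\kappa}1(x)\leq C(\kappa)e^{-\sqrt{\alpha-C_\kappa}t/2}\leq C(\kappa)$ for $\alpha\geq C_\kappa$.

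Combining the two estimates with $f = g_{\theta}^{\rightarrow}(\cdot,t/2)$ and integrating, the change of variable $s=t/2$ gives
\begin{align*}
G_{\theta}^{\rightarrow}(x)^2 = \int_0^\infty t\,g_{\theta}^{\rightarrow}(x,t)^2\,\d t \leq C(\kappa)\int_0^\infty t\,Q_{t/2}^{(\alpha),\kappa}\bigl(g_{\theta}^{\rightarrow}(\cdot,t/2)^2\bigr)(x)\,\d t = 4C(\kappa)\,H_{\theta}^{\rightarrow}(x)^2,
\end{align*}
which is exactly the desired inequality after taking square roots. The only delicate point is justifying the first step (the derivative-factorization of the Cauchy semigroup) at the level of $L^p(T^*\!M)$ or $L^2(T^*\!M)$ for $\theta$ in a dense class and then passing to the limit; this is routine because $(Q_t^{(\alpha),\rm HK})_{t\geq0}$ is a $C_0$-semigroup on these spaces by Corollary~\ref{cor:HessShcraderUhlenbrock} and by the construction of $(Q_t^{(\alpha),\rm HK})_{t\geq0}$ in Section~\ref{sec:LittewoodPaley}.
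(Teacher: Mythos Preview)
Your proof is correct and follows essentially the same approach as the paper's: both exploit the semigroup factorization $Q_t^{(\alpha),\rm HK}=Q_{t/2}^{(\alpha),\rm HK}\circ Q_{t/2}^{(\alpha),\rm HK}$ (the paper writes it as $Q_{2t}=Q_t\circ Q_t$), apply the Hess--Schrader--Uhlenbrock domination to pass to $Q^{(\alpha),\kappa}$, use Cauchy--Schwarz against $Q^{(\alpha),\kappa}1\leq C(\kappa)$, and perform the same change of variable to produce the factor $4$. The only cosmetic difference is the choice of parametrization ($t=t/2+t/2$ versus $2t=t+t$).
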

\begin{proof}[{\bf Proof}] 
We calculate 
\begin{align*}
g_{\theta}^{\rightarrow}(x,2t)^2&=\left|\left.\frac{\partial }{\partial s}Q_s^{(\alpha),{\rm HK}}\theta(x)\right|_{s=2t} \right|^2=\left|\sqrt{\alpha-\DD}Q_{2t}^{(\alpha),{\rm HK}}\theta(x) \right|^2\\
&=\left|Q_t^{(\alpha),{\rm HK}}\sqrt{\alpha-\DD}Q_t^{(\alpha),{\rm HK}}\theta(x) \right|^2\\
&\leq\left(Q_t^{(\alpha),\kappa}\left|\sqrt{\alpha-\DD}Q_t^{(\alpha),{\rm HK}}\theta \right|(x) \right)^2\\
&\leq Q_t^{(\alpha),\kappa}1\cdot Q_t^{(\alpha),\kappa}\left|\frac{\partial }{\partial t}Q_t^{(\alpha),{\rm HK}}\theta \right|^2(x).
\end{align*}
Then 
\begin{align*}
G_{\theta}^{\rightarrow}(x)&=\left\{\int_0^{\infty}t g_{\theta}^{\rightarrow}(x,t)^2\d t \right\}^{\frac12}=\left\{4\int_0^{\infty}t g_{\theta}^{\rightarrow}(x,2t)^2\d t \right\}^{\frac12}\\
&\leq 2\left\{\int_0^{\infty}t Q_t^{(\alpha),\kappa}1\cdot Q_t^{(\alpha),\kappa}g_{\theta}^{\rightarrow}(\cdot,t)^2
(x)\d t \right\}^{\frac12}\\
&\leq 2\sqrt{C(\kappa)}\left\{\int_0^{\infty}t Q_t^{(\alpha),\kappa}g_{\theta}^{\rightarrow}(\cdot,t)^2(x)\d t \right\}^{\frac12}\\
&=2\sqrt{C(\kappa)}H_{\theta}^{\rightarrow}(x).
\end{align*}
\end{proof} 

\begin{thm}\label{thm:LittlewoodPaley2Form}
Suppose $p\in]2,+\infty[$, $\kappa^-_{\ll}=-R\m$ with $R\geq0$, $\kappa_{\perp}^-=0$ and $\alpha>C_{\kappa}$. Then 
the $G$-function  
$G_{\theta}^{\rightarrow}$ can be extended to be in $L^p(T^*\!M)$ 
for $\theta\in L^p(T^*\!M)$ and the following inequality holds for 
$\theta\in L^p(T^*\!M)$
\begin{align}
\|G_{\theta}^{\rightarrow}\|_{L^p(M;\m)}&\lesssim\|\theta\|_{L^p(T^*\!M)}.\label{eq:LittlewoodPaleyStein2}
\end{align}
\end{thm}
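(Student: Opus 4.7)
The plan is to reduce the problem, via Lemma~\ref{lem:GlessH}, to the bound $\|H_\theta^\rightarrow\|_{L^p(M;\m)}\lesssim\|\theta\|_{L^p(T^*\!M)}$, and then to exploit $L^{p/2}$-$L^{(p/2)'}$-duality together with a Bochner-type pointwise inequality that becomes favorable precisely when $p\geq 2$. By Lemma~\ref{lem:GlessH} we have $G_\theta^\rightarrow\leq 2\sqrt{C(\kappa)}\,H_\theta^\rightarrow$ pointwise, so it suffices to estimate $\|H_\theta^\rightarrow\|_{L^p}$. Since $(p/2)'=p/(p-2)\in(1,\infty)$, by duality
\[
\|H_\theta^\rightarrow\|_{L^p}^2=\sup_{h\geq 0,\;\|h\|_{L^{(p/2)'}}=1}\int_M(H_\theta^\rightarrow)^2 h\,d\m,
\]
and by the $\m$-symmetry of $Q_t^{(\alpha),\kappa}$ (inherited from that of $P_s^\kappa$) together with Fubini,
\[
\int_M(H_\theta^\rightarrow)^2 h\,d\m=\int_M\!\!\int_0^\infty t\,|\partial_t\eta(x,t)|^2\,Q_t^{(\alpha),\kappa}h(x)\,dt\,d\m,
\]
where $\eta(x,t):=Q_t^{(\alpha),\mathrm{HK}}\theta(x)$. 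Thus the task reduces to bounding this mixed integral by $\|\theta\|_{L^p}^2\cdot\|h\|_{L^{(p/2)'}}$.

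The analogue of Lemma~\ref{lem:TimeDepLaplacian} for $p\geq 2$ is obtained by noting that the coefficient $\tfrac{p}{4}(p-2)$ in Corollary~\ref{cor:LaplacianEta} now has the favorable sign, so the term $\tfrac{p}{2}(\tfrac{p}{2}-1)(|\eta|^2+\varepsilon^2)^{-1}|\overline{\nabla}|\eta|^2|^2$ may simply be discarded; combining this with the hypothesis $\kappa_\ll^-=-R\m$, $\kappa_\perp^-=0$ (which by \eqref{eq:RicciTensor*} gives $\mathfrak{Ric}(\eta^\sharp,\eta^\sharp)\geq -R|\eta|^2$) and with $\alpha>C_\kappa\geq R$, one obtains the pointwise $\m$-a.e.~estimate
\[
p\,(|\eta|^2+\varepsilon^2)^{p/2-1}|\partial_t\eta|^2\;\leq\;p\,(|\eta|^2+\varepsilon^2)^{p/2-1}|\overline{\nabla}\eta|^2\;\leq\;\bigl(\partial_t^2+\Delta_\ll\bigr)(|\eta|^2+\varepsilon^2)^{p/2}.
\]
We then split $|\partial_t\eta|^2=\bigl[|\partial_t\eta|^2(|\eta|^2+\varepsilon^2)^{p/2-1}\bigr]\cdot(|\eta|^2+\varepsilon^2)^{1-p/2}$ and apply H\"older's inequality on $M\times(0,\infty)$ with exponents $p/2$ and $(p/2)'$ to factor the first piece (bounded by the Bochner expression above) against the tail $\sup_{s>0}|\eta(\cdot,s)|^{p-2}$, which lies in $L^{p/(p-2)}$ by the maximal estimate Proposition~\ref{prop:MaxErgo}. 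This leaves the integral
\[
\iint_{M\times(0,\infty)}t\,\bigl(\partial_t^2+\Delta_\ll\bigr)(|\eta|^2+\varepsilon^2)^{p/2}\cdot Q_t^{(\alpha),\kappa}h\,d\m\,dt,
\]
which is controlled exactly as in the proof of Theorem~\ref{thm:LittlewoodPaley1Form}: one recognises it, via the auxiliary spacetime diffusion $\widehat{\mathbf{X}}$ and Proposition~\ref{prop:ShigekawaProp6.1}, as an expectation $\E_{\m\otimes\delta_a}[\widehat{N}_\tau^{[\eta_\varepsilon]}]$ with $\eta_\varepsilon=(|\eta|^2+\varepsilon^2)^{p/2}-\varepsilon^p$, applies Fukushima's decomposition \eqref{eq:FukushimaDecomp*}, kills the martingale part by Lemma~\ref{lem:QuadraticVariation}, and bounds the boundary term by $\|\theta\|_{L^p}^p$ together with a controllable Kato correction coming from $\kappa^-\in S_K({\bf X})$ via Lemma~\ref{lem:ExtendeddKato1} (using $p\alpha>C_{p\kappa}$).

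The main obstacle is the factor $Q_t^{(\alpha),\kappa}h$, which now depends nontrivially on $t$ and must be transported through the spacetime computation. Unlike in Theorem~\ref{thm:LittlewoodPaley1Form}, where a single application of H\"older and the maximal ergodic inequality sufficed, here the dualisation forces an auxiliary estimate for the subordinated Feynman-Kac semigroup acting on the scalar function $h\in L^{(p/2)'}$ with $(p/2)'\in(1,2)$. The hypotheses $\kappa_\ll^-=-R\m$ and $\kappa_\perp^-=0$ are used precisely at this step: they remove the singular Feynman-Kac contribution, so that $q_t^{(\alpha),\kappa}h$ is pointwise comparable to $e^{(R-\alpha)t}P_t|h|$, and the scalar maximal estimate of Proposition~\ref{prop:MaxErgo} (equivalently, Stein's maximal theorem for the underlying symmetric semigroup) closes the chain and yields the claimed inequality for all $\theta\in L^2(T^*\!M)\cap L^p(T^*\!M)$; density of ${\rm Test}(T^*\!M)$ in $L^p(T^*\!M)$ and the sublinearity $|G_{\theta_1}^\rightarrow-G_{\theta_2}^\rightarrow|\leq G_{\theta_1-\theta_2}^\rightarrow$ then extend the bound to arbitrary $\theta\in L^p(T^*\!M)$.
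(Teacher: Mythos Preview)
Your approach has two genuine gaps.

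First, the H\"older step is based on a sign error. For $p>2$ the exponent $1-\tfrac{p}{2}$ is \emph{negative}, so the factor $(|\eta|^2+\varepsilon^2)^{1-p/2}$ is large where $|\eta|$ is small and is certainly not dominated by $\sup_{s>0}|\eta(\cdot,s)|^{p-2}$ (a positive power). In the $p\le 2$ argument of Theorem~\ref{thm:LittlewoodPaley1Form} the analogous factor $(|\eta|^2+\varepsilon^2)^{1-p/2}$ has a \emph{nonnegative} exponent and is controlled by $\sup_s|\eta|^{2-p}$; that mechanism simply does not survive the passage to $p>2$.

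Second, even granting your pointwise Bochner inequality (which is correct), the weighted integral
\[
\iint_{M\times(0,\infty)} t\,\bigl(\partial_t^2+\Delta_{\ll}\bigr)(|\eta|^2+\varepsilon^2)^{p/2}\cdot Q_t^{(\alpha),\kappa}h\,\d\m\,\d t
\]
is \emph{not} equal to $\E_{\m\otimes\delta_a}\bigl[\widehat{N}_\tau^{[\eta_\varepsilon]}\bigr]$. Proposition~\ref{prop:ShigekawaProp6.1} converts $\iint (a\wedge t)\,j(x,t)\,\nu(\d x)\,\d t$ into $\E_{\m\otimes\delta_a}\bigl[\int_0^\tau j(\widehat{X}_s)\,\d\widehat{A}_s^{\widehat{\nu}}\bigr]$; with the extra factor $Q_t^{(\alpha),\kappa}h$ you obtain a weighted stochastic integral against $\d\widehat{N}^{[\eta_\varepsilon]}$, not the terminal value $\widehat{N}_\tau^{[\eta_\varepsilon]}$ itself, so the Fukushima decomposition and Lemma~\ref{lem:QuadraticVariation} no longer eliminate the martingale part.

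The paper takes a different route that avoids both issues. It works with the full $H_\theta$, uses a conditional-expectation identity (from \cite[(3.34)]{EJK}) together with Jensen to pass from $\|H_\theta\|_{L^p}^p$ directly to
\[
\E_{\m\otimes\delta_{a}}\Bigl[\Bigl(\int_0^\tau g_\theta^2(\widehat{X}_s)\,\d s\Bigr)^{p/2}\Bigr],
\]
and then observes that the hypotheses $\kappa_{\ll}^-=R\m$, $\kappa_{\perp}^-=0$, $\alpha>C_\kappa=R$ force $|\eta|^2(\widehat{X}_t)$ to be a genuine \emph{submartingale}: by \eqref{eq:CAFExpression} and $\mathbf{Ric}(\eta^\sharp,\eta^\sharp)\geq -R|\eta|^2\m$ one gets $\d\widehat{N}^{[|\eta|^2]}\geq 2g_\theta^2\,\d s\geq 0$. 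This submartingale structure, combined with the Lenglart--L\'epingle--Pratelli inequality (valid precisely because $p/2>1$) and Doob's inequality, gives the bound by $\|\theta\|_{L^p}^p$. The constant-$\kappa^-$ hypothesis is used here, not to tame a Feynman--Kac weight on a test function $h$, but to secure the submartingale property itself.
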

\begin{proof}[{\bf Proof}] 
By Lemma~\ref{lem:GlessH}, it suffices to prove $\|H_{\theta}\|_{L^p(M;\m)}\lesssim\|\theta\|_{L^p(T^*\!M)}$. 
\begin{align}
\|H_{\theta}\|_{L^p(M;\m)}&=\left\|\left\{\int_0^{\infty}t Q_t^{(\alpha),\kappa}\left(g_{\theta}(\cdot,t)^2 \right)\d t \right\}^{\frac{p}{2}} \right\|_{L^1(M;\m)}\notag\\
&=
\int_M\left\{\int_0^{\infty}t Q_t^{(\alpha),\kappa}\left(g_{\theta}(\cdot,t)^2 \right)\d t \right\}^{\frac{p}{2}}\d \m\label{eq:Htheta}\\
&=\lim_{n\to\infty}
\int_M\left\{\int_0^{\infty}(t\land a_n) Q_t^{(\alpha),\kappa}\left(g_{\theta}(\cdot,t)^2 \right)\d t \right\}^{\frac{p}{2}}\d\m\notag\\
&=\lim_{n\to\infty}\E_{\m\otimes\delta_{a_n}}\left[\left\{\int_0^{\infty}(t\land a_n) Q_t^{(\alpha),\kappa}\left(g_{\theta}(\cdot,t)^2(X_{\tau}) \right)\d t \right\}^{\frac{p}{2}} \right].\notag
\end{align}
By using \cite[(3.34)]{EJK}, the right hand side of \eqref{eq:Htheta} is estimated above by
\begin{align*}
\varliminf_{n\to\infty}&\E_{\m\otimes\delta_{a_n}}\left[\E_{\m\otimes\delta_{a_n}}\left[
\left. \int_0^{\tau}g_{\theta}^2(\wh{X}_s)\d s\,\right|\, X_{\tau}
\right]^{\frac{p}{2}} \right]\\
&=
\varliminf_{n\to\infty}\E_{\m\otimes\delta_{a_n}}\left[\E_{\m\otimes\delta_{a_n}}\left[
\left. \left(\int_0^{\tau}g_{\theta}^2(\wh{X}_s)\d s\right)^{\frac{p}{2}}\,\right|\, X_{\tau}
\right] \right]\\
&=\varliminf_{n\to\infty}\E_{\m\otimes\delta_{a_n}}\left[\left(\int_0^{\tau}g_{\theta}^2(\wh{X}_s)\d s\right)^{\frac{p}{2}} \right].
\end{align*}
On the other hand, under the conditions $\kappa^-_{\ll}=-R\m$ with $R\geq0$ and $\kappa^-_{\perp}=0$, the semimartingale 
\eqref{eq:FukushimaDecomp} is actually a submartingale, because
\begin{align*}
\wh{A}_t^{\,\left(\frac{\partial^2}{\partial a^2}+\text{\boldmath$\Delta$} \right)|\eta|^2}&=
N_t^{|\eta|^2}=2\int_0^t(\alpha|\eta|^2+|\overline{\nabla}\eta|^2)(\wh{X}_s)\d s
+2\wh{A}_t^{\,{\bf \bf Ric}(\eta^{\sharp},\eta^{\sharp})\otimes m}\\
&\geq 2\int_0^t((\alpha-R)|\eta|^2+|\overline{\nabla}\eta|^2)(\wh{X}_s)\d s\\
&\geq 2\int_0^t|\overline{\nabla}\eta|^2(\wh{X}_s)\d s=2\int_0^tg_{\theta}^2(\wh{X}_s)\d s
\end{align*} 
under $\alpha>C_{\kappa}=R$. Then one can apply Lenglart-Lepingle-Pratelli inequality (see \cite[(3.39)]{EJK}) and Doob's inequality so that 
\begin{align*}
\E_{\m\otimes\delta_{a_n}}\left[\left(\int_0^{\tau}g_{\theta}^2(\wh{X}_s)\d s\right)^{\frac{p}{2}} \right]&\leq 
2^{-\frac{p}{2}}\E_{\m\otimes\delta_{a_n}}\left[\left(\wh{A}_t^{\,\left(\frac{\partial^2}{\partial a^2}+\text{\boldmath$\Delta$} \right)|\eta|^2}\right)^{\frac{p}{2}} \right]\\
&\leq \left(\frac{p}{2}\right)^{\frac{p}{2}}\E_{\m\otimes\delta_{a_n}}\left[\left||\eta|^2(\wh{X}_{\tau}) -|\eta|^2(\wh{X}_0)\right|^{\frac{p}{2}} \right]\\
&=\left(\frac{p}{2}\right)^{\frac{p}{2}}\E_{\m\otimes\delta_{a_n}}\left[\left||Q_0^{(\alpha),{\rm  HK}}(X_{\tau})|^2 -|Q_{a_n}^{(\alpha),{\rm  HK}}(X_0)|^2\right|^{\frac{p}{2}} \right]\\
&\leq \left(\frac{p}{2}\right)^{\frac{p}{2}}\E_{\m\otimes\delta_{a_n}}\left[|Q_0^{(\alpha),{\rm  HK}}(X_{\tau})|^p +|Q_{a_n}^{(\alpha),{\rm  HK}}(X_0)|^p \right]\\
&\leq \left(\frac{p}{2}\right)^{\frac{p}{2}}\left(\|\theta\|_{L^p(T^*\!M)}+\| Q_{a_n}^{(\alpha),{\rm HK}}\theta\|_{L^p(M;\m)}^p \right)\\
&\leq \left(\frac{p}{2}\right)^{\frac{p}{2}}\left(\|\theta\|_{L^p(T^*\!M)}+Ce^{-\sqrt{\alpha-C_{\kappa}}a_n}\|\theta\|_{L^p(M;\m)}^p \right)\\
&\lesssim \|\theta\|_{L^p(T^*\!M)}.
\end{align*}
\end{proof} 
\section{Proof of Theorem~\ref{thm:main3}}\label{sec:Proof} 
For $f\in L^2(M;\m)\cap L^p(M;\m)$ and $\alpha>0$, we introduce the $G$-functions for $f$ as follows: 
\begin{align*}
{g_{\stackrel{\rightarrow}{f}}}(x,t)&:=\left|\frac{\partial}{\partial t}(Q_t^{(\alpha)}f)(x) \right|,\qquad\quad\quad {G_{\stackrel{\rightarrow}{f}}}(x):=\left(\int_0^{\infty}t{g_{\stackrel{\rightarrow}{f}}}(x,t)^2\d t  \right)^{1/2},\\
{g_f^{\uparrow}}
(x,t)&:=\left|\d (Q_t^{(\alpha)}f)(x)\right|,\qquad\qquad\quad {G_f^{\uparrow}}(x):=\left(\int_0^{\infty}t{g_f^{\uparrow}}(x,t)^2\d t  \right)^{1/2},\\
g_f(x,t)&:=\sqrt{{g_{\stackrel{\rightarrow}{f}}}(x,t)^2+
{g_f^{\uparrow}}
(x,t)^2}, \qquad\quad G_f(x):=\left(\int_0^{\infty}tg_f(x,t)^2\d t \right)^{1/2}.
\end{align*}
It is proved in \cite[Theorem~1.2]{EJK} that $G$-functions $g_f^{\rightarrow}(\cdot,t)$, $g_f^{\uparrow}(\cdot,t)$, $g_f(\cdot,t)$, 
$G_f^{\rightarrow}$, $G_f^{\uparrow}$ and $G_f$ can be extended for $f\in L^p(M;\m)$ and 
$\|G_f\|_{L^p(M;\m)}\lesssim\|f\|_{L^p(M;\m)}$, in particular, $\|G_f^{\uparrow}\|_{L^p(M;\m)}\lesssim\|f\|_{L^p(M;\m)}$.

\begin{lem}\label{lem:RieszBdd}
Take $p\in]1,+\infty[$ and its conjugate exponent $q:=p/(p-1)\in ]1,+\infty[$. For $f\in L^p(M;\m)$, we define the 
Riesz transform $\mathscr{R}_{\alpha}(\Delta)f$ by 
\begin{align*}
\mathscr{R}_{\alpha}(\Delta)f:=\d(\alpha-\Delta_p)^{-\frac12}f.
\end{align*}
Then we have 
\begin{align}
\left|{}_{L^p(T^*\!M)}\left\langle \mathscr{R}_{\alpha}(\Delta)f,\theta\right\rangle_{L^q(T^*\!M)} \right|\leq 4\int_MG_f^{\uparrow}(x)G_{\theta}^{\rightarrow}(x)\m(\d x),\quad\theta\in L^q(T^*\!M).\label{eq:FundamentalIneq}
\end{align}
\end{lem}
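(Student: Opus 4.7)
The plan is to implement Stein's classical Littlewood--Paley duality argument, using the intertwining property between the scalar and Hodge Cauchy semigroups as the key tool.

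First I would reduce by density: by Lemma~\ref{lem:DensenessTestFunc} and the density statement in Definition~\ref{def:Test1Forms}, it suffices to establish \eqref{eq:FundamentalIneq} for $f \in {\rm Test}(M)_{fs}$ and $\theta \in {\rm Reg}(T^*\!M)\cap L^q(T^*\!M)$, whence both sides are continuous in the relevant $L^p/L^q$-norms by the boundedness of $V_\alpha$ on $L^p$ and the $L^p$-boundedness of $G_f^\uparrow$ from \cite[Theorem 1.2]{EJK}. On this subspace $g:=(\alpha-\Delta)^{-1/2}f = V_\alpha f$ is smooth enough that $\d g\in D(\DD)$ and the scalar identity $\partial_t Q_t^{(\alpha)}g = -\sqrt{\alpha-\Delta}\,Q_t^{(\alpha)}g = -Q_t^{(\alpha)}f$ is justified in $L^2(M;\m)$.

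The core step is the identity
\begin{align*}
\langle \d g,\theta\rangle_{L^2(T^*\!M)} = -4\int_0^\infty t\,\bigl\langle \d Q_t^{(\alpha)}f,\;\partial_t Q_t^{(\alpha),\rm HK}\theta\bigr\rangle_{L^2(T^*\!M)}\,\d t.
\end{align*}
To derive it, set $F(t):=\langle Q_t^{(\alpha),\rm HK}\d g,\,Q_t^{(\alpha),\rm HK}\theta\rangle_{L^2(T^*\!M)}$. Differentiating twice and using the self-adjointness of $\sqrt{\alpha-\DD}$ together with $\partial_t^2 Q_t^{(\alpha),\rm HK}=(\alpha-\DD)Q_t^{(\alpha),\rm HK}$, the two second-order terms $\langle(\alpha-\DD)Q_t^{\rm HK}\d g,Q_t^{\rm HK}\theta\rangle$ and $\langle Q_t^{\rm HK}\d g,(\alpha-\DD)Q_t^{\rm HK}\theta\rangle$ each equal $\langle \partial_t Q_t^{\rm HK}\d g,\partial_t Q_t^{\rm HK}\theta\rangle$, producing $F''(t) = 4\langle \partial_t Q_t^{\rm HK}\d g,\partial_t Q_t^{\rm HK}\theta\rangle$. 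Integration by parts then yields $\int_0^\infty t F''(t)\,\d t = F(0) = \langle \d g,\theta\rangle$, the boundary terms at $0$ and $\infty$ vanishing thanks to the exponential decay \eqref{eq:QtContraction} and the analyticity estimate $\|\partial_t Q_t^{(\alpha),\rm HK}\eta\|_{L^2(T^*\!M)}\lesssim t^{-1/2}\|\eta\|_{L^2(T^*\!M)}$. Combining this with the intertwining identity \eqref{eq:intertwining1} in the subordinated form $\d Q_t^{(\alpha)} = Q_t^{(\alpha),\rm HK}\d$ and with $\partial_t Q_t^{(\alpha)}g = -Q_t^{(\alpha)}f$ gives $\partial_t Q_t^{(\alpha),\rm HK}\d g = -\d Q_t^{(\alpha)}f$, which produces the displayed identity.

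With the identity in hand, apply the pointwise Cauchy--Schwarz inequality in $t$ to
\begin{align*}
\int_0^\infty t\, g_f^\uparrow(x,t)\,g_\theta^\rightarrow(x,t)\,\d t \;\leq\; G_f^\uparrow(x)\,G_\theta^\rightarrow(x),
\end{align*}
and then integrate in $x\in M$ by Fubini to obtain the desired bound $|\langle \d g,\theta\rangle|\leq 4\int_M G_f^\uparrow G_\theta^\rightarrow\,\d\m$.

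The main obstacle is justifying the formal second-derivative computation in the $L^2$ pairing and the vanishing of the integration-by-parts boundary terms, since it requires $\d g\in D(\DD)$ (equivalently $\d Q_t^{(\alpha)}f\in D(\DD)$ uniformly enough in $t$) and sufficient decay of $F(t),tF'(t)$ as $t\to\infty$. Both are controlled by restricting $f$ to ${\rm Test}(M)_{fs}$, where $\d f\in D(\DD)\cap L^\infty(T^*\!M)$, and by invoking the exponential decay \eqref{eq:QtContraction} of $Q_t^{(\alpha),\rm HK}$ on $L^2(T^*\!M)$ (recall $\alpha>C_\kappa$); density then extends the inequality to all $f\in L^p(M;\m)$ and $\theta\in L^q(T^*\!M)$.
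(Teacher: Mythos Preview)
Your proposal is correct and follows essentially the same route as the paper: prove the $L^2$ duality identity for the Cauchy semigroups, invoke the intertwining $\partial_t Q_t^{(\alpha),{\rm HK}}\d g=-\d Q_t^{(\alpha)}f$, apply pointwise Cauchy--Schwarz in $t$, and extend by density. The only cosmetic difference is that you obtain the identity by computing $F''(t)$ and integrating by parts, whereas the paper derives the equivalent formula $4\int_0^\infty t\langle\partial_t Q_t^{(\alpha),{\rm HK}}\theta,\partial_t Q_t^{(\alpha),{\rm HK}}\eta\rangle\,\d t=\langle\theta,\eta\rangle$ directly via Fubini and the relation $\int_0^\infty t(\alpha-\DD)Q_t^{(\alpha),{\rm HK}}\theta\,\d t=\theta$; note also that the density step on the $\theta$-side requires $\|G_\theta^{\rightarrow}\|_{L^q}\lesssim\|\theta\|_{L^q}$ from Theorems~\ref{thm:LittlewoodPaley1Form}/\ref{thm:LittlewoodPaley2Form}, not just \cite{EJK}.
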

\begin{proof}[{\bf Proof}] 
First we prove the assertion for $p=q=2$. In this case, we have the following identity: For $\theta,\eta\in L^2(T^*\!M)$,
\begin{align}
4\int_0^{\infty}t\,\left\langle \frac{\partial }{\partial t}Q_t^{(\alpha),{\rm HK}}\theta, \frac{\partial }{\partial t}Q_t^{(\alpha),{\rm HK}}\eta \right\rangle_{L^2(T^*\!M)}\d t=\langle \theta,\eta\rangle_{L^2(T^*\!M)}. \label{eq:Identity*}
\end{align}
Indeed, the left hand side of \eqref{eq:Identity*} is 
\begin{align*}
4\int_0^{\infty}t\left\langle \sqrt{\alpha-\DD}\right. &Q_t^{(\alpha),{\rm HK}}\theta,\left.\sqrt{\alpha-\DD}Q_t^{(\alpha),{\rm HK}}\eta\right\rangle_{L^2(T^*\!M)}\d t\\
&=4\int_0^{\infty}t\,\langle (\alpha-\DD)Q_{2t}^{(\alpha),{\rm HK}}\theta, \eta\rangle_{L^2(T^*\!M)}\d t\\
&=\left\langle \int_0^{\infty}t\,(\alpha-\DD)Q_t^{(\alpha),{\rm HK}}\theta \d t,\eta \right\rangle_{L^2(T^*\!M)}\\
&=\langle \theta,\eta\rangle_{L^2(T^*\!M)},
\end{align*} 
where we use 
\begin{align*}
\int_0^{\infty}t\,(\alpha-\DD)Q_t^{(\alpha),{\rm HK}}\theta \,\d t&=\int_0^{\infty}e^{-\alpha s}
(\alpha-\DD)P_s^{\rm HK}\theta \left(\int_0^{\infty}t\left(\frac{t}{2\sqrt{\pi}}e^{-\frac{t^2}{4s}}s^{-\frac32} \right)\d t\right)\d s\\
&\hspace{-0.3cm}\stackrel{\eqref{eq:Expectation}}{=}\int_0^{\infty}e^{-\alpha s}
(\alpha-\DD)P_s^{\rm HK}\theta\,\d s\\
&=-\int_0^{\infty}\frac{\partial}{\partial s}\left(e^{-\alpha s}P_s^{\rm HK}\theta \right)\d s\\
&=-\left[e^{-\alpha s}P_s^{\rm HK}\theta \right]_0^{\infty}=\theta. 
\end{align*}
On the other hand, the intertwining property \eqref{eq:intertwining1} yields that 
$Q_t^{(\alpha),{\rm HK}}(\d g)=\d Q_t^{(\alpha)}g$ holds for $g=(\alpha-\Delta)^{-\frac12}f\in D(\mathscr{E})$. 
Differentiating the both sides with respect to $t$, we have 
\begin{align*}
(\alpha-\DD)^{\frac12}Q_t^{(\alpha),{\rm HK}}(\d ((\alpha-\Delta)^{-\frac12}f))&
=\d(\alpha-\Delta)^{\frac12}Q_t^{(\alpha)}(\alpha-\Delta)^{-\frac12}f=\d Q_t^{(\alpha)}f.
\end{align*} 
Thus
\begin{align}
-\frac{\partial}{\partial t}Q_t^{(\alpha),{\rm HK}}\left(\d(\alpha-\Delta)^{-\frac12}f \right)&=
(\alpha-\DD)^{\frac12}Q_t^{(\alpha),{\rm HK}}\left(\d(\alpha-\Delta)^{-\frac12}f \right)=\d Q_t^{(\alpha)}f.\label{eq:Identity**}
\end{align}
Set $\eta:=\mathscr{R}_{\alpha}(\Delta)f=\d(\alpha-\Delta)^{-\frac12}f$ for $f\in L^2(M;\m)$. From 
\eqref{eq:Identity*} and \eqref{eq:Identity**}, 
\begin{align*}
\left|\langle \mathscr{R}_{\alpha}(\Delta)f,\theta\rangle_{L^2(T^*\!M)}\right|&=4\left|\int_0^{\infty}t\,\left\langle \d Q_t^{(\alpha)}f, \frac{\partial}{\partial t}
Q_t^{(\alpha),{\rm HK}}\theta \right\rangle_{L^2(T^*\!M)}\d t\right|\\
&\leq 4\int_0^{\infty}t\,\int_M
\left|\d Q_t^{(\alpha)}f(x) \right|\cdot
\left|\frac{\partial}{\partial t}Q_t^{(\alpha),{\rm HK}}\theta(x)\right|\m(\d x)\d t\\
&\leq 4\int_M G_f^{\uparrow}(x)G_{\theta}^{\rightarrow}(x)\m(\d x).
\end{align*}
Then we have \eqref{eq:FundamentalIneq} for $f\in L^2(M;\m)$ and $\theta\in L^2(T^*\!M)$ with $p=q=2$. 

Finally, we prove \eqref{eq:FundamentalIneq} for general $f\in L^p(M;\m)$ and $\theta\in L^q(T^*\!M)$ with $p\in]1,+\infty[$ and $q:=p/(p-1)\in ]1,+\infty[$. Take $f\in L^p(M;\m)$ and $\theta\in L^q(T^*\!M)$. Let $\{f_n\}\subset L^p(M;\m)\cap L^2(M;\m)$ 
(resp.~$\{\theta_n\}\subset L^q(T^*\!M)$) be an approximating sequence to $f\in L^p(M;\m)$ (resp.~$\theta\in L^q(T^*\!M)$) in 
$L^p(M;\m)$ (resp.~in $L^q(T^*\!M)$).  Then, for any $g\in L^p(M;\m)\cap L^2(M;\m)$
\begin{align*}
\left|{}_{L^p(T^*\!M)}\left\langle \mathscr{R}_{\alpha}(\Delta)g,\theta_n-\theta_m \right\rangle_{L^q(T^*\!M)} \right|&\leq 4\int_M G_{g}^{\uparrow}(x)G_{\theta_n-\theta_m}^{\rightarrow}(x)\m(\d x)\\
&\leq 4\|G_{g}^{\uparrow}\|_{L^p(M;\m)}\|G_{\theta_n-\theta_m}^{\rightarrow}\|_{L^q(M;\m)}\\
&\lesssim \|g\|_{L^p(M;\m)}\|\theta_n-\theta_m\|_{L^q(T^*\!M)}\to0\quad\text{ as }\quad n, m\to\infty.
\end{align*}
We set for $\theta\in L^q(T^*\!M)$
\begin{align*}
{}_{L^p(T^*\!M)}\left\langle \mathscr{R}_{\alpha}(\Delta)g,\theta \right\rangle_{L^q(T^*\!M)}:=\lim_{n\to\infty}{}_{L^p(T^*\!M)}
\left\langle \mathscr{R}_{\alpha}(\Delta)g,\theta_n\right\rangle_{L^q(T^*\!M)}.
\end{align*}
Then, one can deduce
\begin{align*}
\left|{}_{L^p(T^*\!M)}\left\langle \mathscr{R}_{\alpha}(\Delta)g,\theta \right\rangle_{L^q(T^*\!M)} \right|&\leq 
4\int_M G_{g}^{\uparrow}(x)G_{\theta}^{\rightarrow}(x)\m(\d x)\\
&\leq 4\|G_{g}^{\uparrow}\|_{L^p(M;\m)}\|G_{\theta}^{\rightarrow}\|_{L^q(M;\m)}\\
&\lesssim \|g\|_{L^p(M;\m)}\|\theta\|_{L^q(T^*\!M)},
\end{align*}
where we use $|G_{\theta_1}^{\rightarrow}-G_{\theta_2}^{\rightarrow}|\leq G_{\theta_1-\theta_2}^{\rightarrow}$ for $\theta_1,\theta_2\in L^q(T^*\!M)$. 
Similarly, for $\theta\in L^q(T^*\!M)$
\begin{align*}
\left|{}_{L^p(T^*\!M)}\left\langle \mathscr{R}_{\alpha}(\Delta)(f_n-f_m),\theta \right\rangle_{L^q(T^*\!M)} \right|&\leq 4\int_M G_{f_n-f_m}^{\uparrow}(x)G_{\theta}^{\rightarrow}(x)\m(\d x)\\
&\leq 4\|G_{f_n-f_m}^{\uparrow}\|_{L^p(M;\m)}\|G_{\theta}^{\rightarrow}\|_{L^q(M;\m)}\\
&\lesssim \|f_n-f_m\|_{L^p(M;\m)}\|\theta\|_{L^q(T^*\!M)}\to0\quad\text{ as }\quad n,m\to\infty.
\end{align*}
Hence, we have \eqref{eq:FundamentalIneq} for $f\in L^p(M;\m)$ and $\theta\in L^q(T^*\!M)$ with $p\in]1,+\infty[$ and $q=p/(p-1)\in ]1,+\infty[$. 
\end{proof} 
\begin{proof}[\bf Proof of Theorem~\ref{thm:main3}]
By Theorems~\ref{thm:LittlewoodPaley1Form} and \ref{thm:LittlewoodPaley2Form}, under the present conditions, we always have 
\begin{align*}
\|G_{\theta}^{\rightarrow}\|_{L^p(M;\m)}\lesssim\|\theta\|_{L^p(T^*\!M)}\quad \text{ for }\quad \theta\in L^p(T^*\!M),\quad p\in ]1,+\infty[. 
\end{align*}
By Lemma~\ref{lem:RieszBdd}, for $f\in L^p(M;\m)$ and $\theta\in L^q(T^*\!M)$ with $p\in]1,+\infty[$ and $q:=p/(p-1)\in ]1,+\infty[$, 
\begin{align*}
\left|{}_{L^p(T^*\!M)}\left\langle \mathscr{R}_{\alpha}(\Delta)f,\theta\right\rangle_{L^q(T^*\!M)} \right|\lesssim\|G_f^{\uparrow}\|_{L^p(M;\m)}\|G_{\theta}^{\rightarrow}\|_{L^q(M;\m)}\lesssim\|f\|_{L^p(M;\m)}\|\theta\|_{L^q(T^*\!M)}.
\end{align*} 
This implies 
\begin{align*}
\|{R}_{\alpha}(\Delta)f\|_{L^p(M;\m)}=\|\mathscr{R}_{\alpha}(\Delta)f\|_{L^p(T^*\!M)}\lesssim\|f\|_{L^p(M;\m)}\quad\text{ for }\quad p\in]1,+\infty[\quad\text{ and }\quad \alpha>C_{\kappa}.
\end{align*}
\end{proof} 
\section{Examples}\label{sec:examples}
It is well-known that (abstract) Wiener space $(B,H,\mu)$ satisfies Littlewood-Paley-Stein inequality for $L^p(B,K;\mu)$-functions with a real separable Hilbert space $K$ (see Shigekawa~\cite[Chapter~3]{ShigekawaText}). 
Here $L^p(B,K;\mu)$ is a $K$-valued $L^p$-integrable functions.  
Though our Theorem~\ref{thm:main3} is not new for $(B,H,\mu)$, we observe the conditions:   
Let $(\mathscr{E}^{\rm OU},D(\mathscr{E}^{\rm OU}))$ be the Dirichlet form on 
$L^2(B;\mu)$ associated to the Ornstein-Uhlenbeck process ${\bf X}^{\rm OU}$ and $(T_t^{\rm OU})_{t\geq0}$ its associated semigroup on $L^2(B;\mu)$. 
Let $D_H$ be the $H$-derivative, that is, $\langle D_HF(z), h\rangle_H=\lim_{\eps\to0}\frac{F(z+\eps h)-F(z)}{\eps}$, for a cylindrical function $F$. It is known that  
${\sf BE}_2(1,\infty)$-condition holds for $(B,\mathscr{E}^{\rm OU},\mu)$ (see \cite[13.2]{AES}), that is, 
$(B,\mathscr{E}^{\rm OU},\mu)$ is tamed by $\mu\in S_K({\bf X}^{\rm OU})$. 
In this case, the measure-valued Ricci lower bound $\kappa=\mu$ satisfies $\kappa_{\ll}=\mu$ and $\kappa_{\perp}=0$. 
It is easy to see that $L^p(B,K;\mu)$ forms an $L^p$-normed $L^{\infty}$-module over $(B,\mu)$. Indeed, 
for $f,g\in L^{\infty}(B;\mu)$ and $v\in L^p(B,K;\mu)$, we see $f\cdot v\in L^p(B,K;\mu)$ 
satisfying $(fg)\cdot v=f\cdot(g\cdot v)$ and $\1_B\cdot v=v$, and 
$|\cdot|_K: L^p(B,K;\mu)\to L^p(B;\mu)$ is actually a point-wise norm, i.e.  
$|f\cdot v|_K=|f|\cdot|v|_K$ $\mu$-a.e. and $\|v\|_{L^p(B,K;\mu)}=\||v|_K\|_{L^p(B;\mu)}$ hold. 
The conclusions in Theorems~\ref{thm:LittlewoodPaley1Form} and \ref{thm:LittlewoodPaley2Form}, hence Theorem~\ref{thm:main3} hold for $(B,H,\mu)$ and $\alpha>C_{\kappa}=0$ in terms of $L^p(B,H,\mu)$. Thus 
we have the boundedness of 
Riesz operator $R_{\alpha}(\Delta^{\rm OU})$ on $L^p(B;\mu)$. 

\bigskip

Moreover, in Kawabi~\cite{Kawabi}, 
the content of Theorem~\ref{thm:main3} is also announced in the framework of diffusion semigroup of ${\rm P}(\phi)_1$-quantum fields in infinite volume. 
More concretely, let $E$ be a Hilbert space defined by $E:=L^2(\R,\R^d;e^{-2\lambda\chi(x)}\d x)$ with a fixed $\chi\in C^{\infty}(\R)$ satisfying $\chi(x)=|x|$ for $|x|\geq1$ and another Hilbert space $H:=L^2(\R,\R^d;\d x)$.   
They consider a Dirichlet form $(\mathscr{E}, D(\mathscr{E}))$ on $L^2(E;\mu)$ associated with the diffusion process ${\bf X}$ on an infinite volume 
 path space $C(\R,\R^d)$ with ($U$-)Gibbs measures $\mu$ associated with the (formal) Hamiltonian
 \begin{align*}
 \mathcal{H}(w):=\frac12\int_{\R}|w'(x)|^2_{\R^d}\d x+\int_{\R}U(w(x))\d x, 
 \end{align*}  
 where $U: \R^{d} \to \R$ is an interactions potential satisfying ${\nabla^2}U\geq -K_{1}$
with some $K_{1}\in\R$  
 (see \cite[4.1]{KawabiMiyokawa}). 
 Then the $L^2$-semigroup $(P_t)_{t\geq0}$ associated to $(\mathscr{E}, D(\mathscr{E}))$ satisfies the following gradient estimate
\begin{align*}
|D(P_tf)(w)|_H\leq e^{K_1t}P_t(|Df|_H)(w)\quad \text{ for }\quad \mu\text{-a.e.~}w\in E,
\end{align*}
which is equivalent to ${\sf BE}_2(-K_1,\infty)$-condition. 
Here $Df$ is a closed extension of the Fr\'echet derivative $Df:E\to H$ for cylindrical function $f$.    
Hence $(E,\mathscr{E},\mu)$ is tamed by $\kappa:=-K_1\mu$ with $K_1\mu\in S_K({\bf X})$. 
In this case, the measure-valued Ricci lower bound $\kappa$ satisfies $\kappa_{\ll}=-K_1\mu$ and 
$\kappa_{\perp}=0$. Denote by $L^p(E,H;\mu)$ an $H$-valued $L^p$-integrable functions. 
 $L^p(E,H;\mu)$ is an $L^p$-normed $L^{\infty}$-module as remarked above. 
Then, Theorem~\ref{thm:main3} in terms of $L^p(E,H;\mu)$
holds for $(E,\mathscr{E},\mu)$ under any $\alpha>C_{\kappa}=K_1\lor 0$ and $p\in]1,+\infty[$. 

\bigskip

Denote by $(M,g,\m)$, a complete weighted Riemannian manifold without boundary, where $\m:=e^{-\phi}\mathfrak{v}$ is the 
weighted measure with a potential function $\phi\in C^2(M)$ and volume measure $\mathfrak{v}$. Let $(\mathscr{E},D(\mathscr{E}))$ be the Dirichlet form on $L^2(M;\m)$ defined by 
the closure of $(\mathscr{E},C_c^{\infty}(M))$: 
\begin{align*}
\mathscr{E}(f,g)=\int_M\langle \nabla f,\nabla g\rangle\d\m, \quad f,g\in  C_c^{\infty}(M).
\end{align*}
The $L^2(M;\m)$-generator $L$ associated with $(\mathscr{E},D(\mathscr{E}))$ is given by 
$Lf=\Delta f-\langle \nabla f,\nabla\phi\rangle$ for $f\in C_c^{\infty}(M)$. 
Denote by ${\bf X}$ the $\m$-symmetric diffusion process associated with $(\mathscr{E},D(\mathscr{E}))$. 
Let ${\rm Ric}_\phi^{\infty}$ be Bakry-\'Emery Ricci curvature defined by 
\begin{align*}
{\rm Ric}_\phi^{\infty}(u,v):={\rm Ric}_g(u,v)+{\rm Hess}\,\phi(u,v),\quad u,v\in  TM.
\end{align*}
We assume ${\rm Ric}_\phi^{\infty}\geq kg$ for a measurable function $k$ on $M$. In this case, measure-valued Ricci lower bound $\kappa$ satisfies 
$\kappa_{\ll}=k\m$ and $\kappa_{\perp}=0$. Theorem~\ref{thm:LittlewoodPaley1Form} in the present setting is proved by X.-D.~Li~\cite[Theorem~4.3]{Xdli:RieszTrans}. Based on this, X.-D.~Li~\cite[Theorem~2.1]{Xdli:RieszTrans} obtained the boundedness of Riesz transform $\mathscr{R}_{\alpha}(L)$ on $L^p(M;\m)$ for $p\in[2,+\infty[$ under $k\in S_K({\bf X})$ (or under a weaker assumption).  Theorem~\ref{thm:LittlewoodPaley2Form} in the present setting is proved by Bakry~\cite{Bakry1}. 
Thus we have the boundedness of Riesz transform $\mathscr{R}_{\alpha}(L)$ on $L^p(M;\m)$ for $p\in]1,2[$ and $\alpha>C_{\kappa}:=(-\inf_Mk)\lor0$ under 
the lower boundedness of $k$.

\bigskip

In the rest, we expose new examples. 
\begin{example}[{RCD spaces}]
{\rm A metric measure space $(M,{\sf  d},\m)$ is a complete separable metric space $(M,{\sf d})$ with a $\sigma$-finite Borel measure with $\m(B)<\infty$ for any bounded Borel set $B$. We assume $\m$ has full topological support, i.e., $\m(G)>0$ for non-empty open set $G$. 

Any metric open ball is denoted by $B_r(x):=\{y\in M\mid {\sf d}(x,y)<r\}$ for $r>0$ and $x\in M$. 
A subset $B$ of $M$ is said to be bounded if it is included in a metric open ball.    
Denote by $C([0,1],M)$ the space of continuous curve defined on the unit interval $[0,1]$ equipped the distance ${\sf d}_{\infty}(\gamma,\eta):=\sup_{t\in[0,1]}{\sf d}(\gamma_t,\eta_t)$ for every $\gamma,\eta\in C([0,1],M)$. This turn $C([0,1],M)$ into complete separable metric space.   
Next we consider the set of $2$-absolutely continuous curves, denoted by $AC^q([0,1],M)$, is the subset of $\gamma\in C([0,1],M)$ so that there exists  $g\in L^q(0,1)$ satisfying 
\begin{align*}
{\sf d}(\gamma_t,\gamma_s)\leq\int_s^tg(r)\d r,\quad s<t\text{ in }[0,1].
\end{align*}
Recall that for any $\gamma\in AC^2([0,1],M)$, there exists a minimal a.e.~function $g\in L^2(0,1)$ satisfying the above, called {\it metric speed} denoted by $|\dot\gamma_t|$, which is defined as 
$|\dot\gamma_t|:=\lim_{h\downarrow0}{\sf d}(\gamma_{t+h}, \gamma_t)/h$ for $\gamma\in AC^2([0,1],M)$, 
$|\dot\gamma_t|:=+\infty$ otherwise. 
We define the kinetic energy functional $C([0,1],M)\ni\gamma\mapsto {\sf Ke}_t(\gamma):=\int_0^1|\dot{\gamma}_t|^2\d t$, if $\gamma\in AC^2([0,1],M)$, ${\sf Ke}_t(\gamma):=+\infty$ otherwise. 
\begin{defn}[$2$-test plan]\label{def:$q$-test}
{\rm Let $(M,{\sf d},\m)$ be a metric measure space. 
A measure {\boldmath$\pi$}$\in\mathscr{P}(C([0,1],M))$ is said to be a $2$-test plan, provided 
\begin{enumerate}
\item[(i)]\label{item:qtest1} there exists $C>0$ so that $({\sf e}_t)_{\sharp}${\boldmath$\pi$}$\leq C\m$ for every $t\in[0,1]$;
\item[(ii)]\label{item:qtest2} we have $\int_{C([0,1],M)}{\sf Ke}_2(\gamma)${\boldmath$\pi$}$(\d\gamma)<\infty$. 
\end{enumerate} 
}
\end{defn}

\begin{defn}[Sobolev space $W^{\hspace{0.03cm}1,2}(M)$]\label{def:W1pSobolev}
{\rm
A Borel function $f\in L^2(M;\m)$ belongs to $W^{1,2}(M)$, 
provided there exists a $G\in L^2(M;\m)$,  
called {\it $2$-weak upper gradient} of $f$ so that 
\begin{align}
\int_{C([0,1],M)}|f(\gamma_1)-f(\gamma_0)|\text{\boldmath$\pi$}(\d\gamma)\leq \int_{C([0,1],M)}\int_0^1
G(\gamma_t)|\dot{\gamma}_t|\d t\text{\boldmath$\pi$}(\d\gamma),\quad \text{$\forall${\boldmath$\pi$} 
$2$-test plan}. \label{eq:SpSobolev}
\end{align} 
The assignment $(t,\gamma)\mapsto G(\gamma_t)|\dot{\gamma}_t|$ is Borel measurable (see \cite[Remark~2.1.9]{GPLecture}) and the right hand side of 
\eqref{eq:SpSobolev} is finite for $G\in L^2(M;\m)_+$ (see \cite[(2.5)]{GigliNobili}). These shows not only the finiteness of the right hand side of \eqref{eq:SpSobolev} but also the continuity of the assignment 
$L^2(M;\m)\ni G\mapsto  \int_{C([0,1],M)}\int_0^1 G(\gamma_t)|\dot{\gamma}_t|\d t\text{\boldmath$\pi$}(\d\gamma)$. This, combined with the closedness of the convex combination of the $2$-weak upper gradient, shows that the set of $2$-weak upper gradient of a given Borel function $f$ is a closed convex subset of $L^2(M;\m)$. The minimal $p$-weak upper gradient, denoted by $|Df|_2$ is then the element of 
minimal $L^p$-norm in this class. Also, by making use of the lattice property of the set of $2$-weak 
upper gradient, such a minimality is also in the $\m$-a.e. sense (see \cite[Proposition~2.17 and Theorem~2.18]{Ch:metmeas}). 

Then, $W^{1,2}(M)$ forms a Banach space equipped with the following norm:
equipped with the norm 
\begin{align*}
\|f\|_{W^{1,2}(M)}:=\left(\|f\|^2_{L^2(M;\m)}+\| |Df|_2\|^2_{L^2(M;\m)} \right)^{\frac{1}{2}} ,\quad f\in  W^{1,2}(M). 
\end{align*}
 
}
\end{defn}

It is in general false 
that  $(W^{1,2}(M),\|\cdot\|_{W^{1,2}(M)})$ is a Hilbert space. 
When this occurs, we say that $(M,{\sf d},\m)$ is {\it infinitesimally Hilbertian} (see 
\cite{Gigli:OntheDifferentialStr}). Equivalently, we call $(X,{\sf d},\m)$  infinitesimally Hilbertian 
provided the following {\it parallelogram identity} holds:
\begin{align}
2|Df|_2^2+2|Dg|_2^2=|D(f+g)|_2^2+|D(f-g)|_2^2, \quad \m\text{-a.e.} \quad \forall f,g\in W^{1,2}(M).\label{eq:parallelogram}
\end{align}

For simplicity, when $p=2$, we omit the suffix $2$ from $|Df|_2$ for $f\in W^{1,2}(M)$, i.e. we write $|Df|$ instead of $|Df|_2$ for $f\in W^{1,2}(M)$.  
Under \eqref{eq:parallelogram}, we can give a bilinear form $\langle D\cdot ,D\cdot\rangle:W^{1,2}(M)\times W^{1,2}(M)\to L^1(M;\m)$ which is defined by 
\begin{align*}
\langle Df,Dg\rangle:=\frac14|D(f+g)|^2-\frac14|D(f-g)|^2,\quad f,g\in W^{1,2}(M).
\end{align*}
Moreover, under the infinitesimally Hilbertian condition, 
the bilinear form $(\mathscr{E},D(\mathscr{E}))$ defined by 
\begin{align*}
D(\mathscr{E}):=W^{1,2}(M),\quad \mathscr{E}(f,g):=\frac12\int_M\langle Df,Dg\rangle\d \m
\end{align*}
is a strongly local Dirichlet form on $L^2(M;\m)$. 
Denote by $(P_t)_{t\geq0}$ the $\m$-symmetric semigroup on $L^2(M;\m)$ associated with $(\mathscr{E},D(\mathscr{E}))$. 
Under \eqref{eq:parallelogram}, let $(\Delta, D(\Delta))$ be the 
$L^2$-generator associated with $(\mathscr{E},D(\mathscr{E}))$ similarly defined as in \eqref{eq:generatorL2} before. 
\begin{defn}[RCD-spaces]
{\rm A metric measure space $(X,{\sf d},\m)$ is said to be an {\it {\sf RCD}$(K,\infty)$-space} if 
it satisfies 
the following conditions: 
\begin{enumerate}
\item[\rm(1)]
$(M, {\sf d}, \m)$ is infinitesimally Hilbertian. 

\item[\rm(2)]
There exist $x_0 \in M$ and constants $c, C > 0$ such that 
$\m(B_r(x_0)) \le C \e^{c r^2}$. 

\item[\rm(3)]
If $f \in W^{1,2}(M)$ satisfies 
$| D f | \le 1$ $\m$-a.e., then $f$ has a $1$-Lipschitz representative. 

\item[\rm(4)]
For any $f \in D ( \Delta )$ 
with $\Delta f \in W^{1,2}(M)$ 
and $g \in D ( \Delta ) \cap L^\infty (M; \m)$ 
with $g \ge 0$ and $\Delta g \in L^\infty (M; \m)$, 
\begin{align*}
\frac12 \int_M | D f |^2 \Delta g \, \d \m 
- \int_M \langle D f, D \Delta f \rangle g \, \d \m 
\ge 
K \int_M | D f |^2 g \, \d \m 
\end{align*}
\end{enumerate}
Let $N\in[1,+\infty[$. 
A metric measure space $(M,{\sf d},\m)$ is said to be an {\it {\sf RCD}${}^*(K,N)$-space} if 
it is an {\sf RCD}$(K,\infty)$-space and 
for any $f \in D ( \Delta )$ 
with $\Delta f \in W^{1,2}(M)$ 
and $g \in D( \Delta ) \cap L^\infty (M; \m)$ 
with $g \ge 0$ and $\Delta g \in L^\infty (M; \m)$, 
\begin{align*}
\frac12 \int_M | D f |^2 \Delta g \, \d \m 
- \int_M \langle D f, D \Delta f \rangle g \, \d \m 
\ge 
K \int_M | D f |^2 g \, \d \m 
+ \frac{1}{N} \int_M ( \Delta f )^2 g \, \d \m. 
\end{align*}

}
\end{defn}
\begin{remark}
{\rm 
\begin{enumerate}
\item[\rm(1)] It is shown in \cite{Cav-Mil,ZLi} that for 
$N\in[1,+\infty[$ {\sf RCD}$(K,N)$-space is equivalent to 
{\sf RCD}${}^*(K,N)$-space, where {\sf RCD}${}^*(K,N)$-space  (resp.~{\sf RCD}${}^*(K,N)$-space) is defined to be a {\sf CD}${}^*(K,N)$-space 
(resp.~{\sf CD}${}^*(K,N)$-space) having infinitesimal Hilbertian condition. 
Originally, for $N\in[1,+\infty]$, the notion of {\sf CD}$(K,N)$-spaces was defined by Lott-Villani~\cite{LV2} and Sturm~\cite{StI,StII}. Later, Bacher-Sturm~\cite{Bacher-Sturm} gave the notion of 
{\sf CD}${}^*(K,N)$-space as a variant for $N\in[1,+\infty[$. Finally, Erbar-Kuwada-Sturm~\cite{EKS} invented the notion of 
{\sf CD}${}^e(K,N)$-space, so-called the metric measure space satisfying entropic curvature dimension condition under $N\in[1,+\infty[$ and prove that {\sf RCD}${}^*(K,N)$-space coincides with {\sf RCD}${}^e(K,N)$-space, i.e., 
{\sf CD}${}^e(K,N)$-space satisfying infinitesimal Hilbertian condition. 
The notion {\sf RCD}$(K,\infty)$-space, i.e., {\sf CD}$(K,\infty)$-space satisfying 
infinitesimal Hilbertian condition, was firstly considered in \cite{AGS_Riem,AGS_BakryEmery}, and 
the {\sf RCD}$(K,N)$-space under $N\in[1,+\infty[$ was also given by \cite{EKS,Gigli:OntheDifferentialStr}.
Moreover, under $N\in[1,+\infty[$, {\sf RCD}$(K,N)$-space (or {\sf RCD}${}^*(K,N)$-space) is a locally compact separable metric space, consequently, $\m$ becomes a Radon measure.  
\item[\rm(2)] If $(M,{\sf d},\m)$ is an {\sf RCD}$(K,N)$-space under $N\in[1,+\infty[$, 
it enjoys the Bishop-Gromov inequality: 
Let $\kappa:=K/(N-1)$ if $N>1$ and $\kappa:=0$ if $N=1$. We set $\omega_N:=\frac{\pi^{N/2}}{\int_0^{\infty}t^{N/2}e^{-t}\d t}$ (volume of unit bal in $\R^N$ provided $N\in\mathbb{N}$) and 
$V_{\kappa}(r):=\omega_N\int_0^r\s_{\kappa}^{N-1}(t)\d t$.  
 Then 
\begin{align}
\frac{\m(B_R(x))}{V_{\kappa}(R)}\leq \frac{\m(B_r(x))}{V_{\kappa}(r)},\qquad x\in M, \qquad 0<r<R. \label{eq:BishopGromov}
\end{align}
Here  
$\s_{\kappa}(s)$ is the solution to Jacobi equation $\s_{\kappa}''(s)+\kappa\s_{\kappa}(s)=0$ with 
$\s_{\kappa}(0)=0$, $\s_{\kappa}'(0)=1$. More concretely, $\s_{\kappa}(s)$ is given by
\begin{align*}
\s_{\kappa}(s):=\left\{\begin{array}{cc}\frac{\sin \sqrt{\kappa}s}{\sqrt{\kappa}} & \kappa>0, \\ s & \kappa=0, \\ \frac{\sinh \sqrt{-\kappa}s}{\sqrt{-\kappa}} & \kappa<0.\end{array}\right.
\end{align*}
\item[\rm(3)] If $(M,{\sf d},\m)$ is an $\mathsf{RCD}(K,\infty)$-space, it satisfies the following Bakry-\'Emery estimate:
\begin{align}
|DP_tf|\leq e^{-Kt}P_t|Df|\quad \m\text{-a.e.}\quad for \quad f\in W^{1,2}(M),\label{eq:BakryEmery}
\end{align}
in particular, $P_tf\in W^{1,2}(M)$
(see \cite[Corollary~3.5]{Sav14}, \cite[Proposition~3.1]{GigliHan}).
\item[\rm(4)] 
If $(M,{\sf d},\m)$ is an $\mathsf{RCD}(K,N)$-space with $N\in[1,+\infty[$, then,  
for any $f\in {\rm Lip}(M)$ 
${\rm lip}(f)=|Df|$ $\m$-a.e. holds, 
because  $\mathsf{RCD}(K,N)$ with $N\in[1,+\infty[$ admits the local volume doubling and a weak local $(1,2)$-Poincar\'e inequality (see \cite[\S5 and \S6]{Ch:metmeas}). 
\end{enumerate}
}
\end{remark}
By definition, for ${\sf RCD}(K,N)$-space $(M,{\sf d},\m)$,  $(M,\mathscr{E},\m)$ 
satisfies ${\sf BE}_2(K,N)$-condition and $K^{\pm}\m$ is always a Kato class smooth measure, 
hence it is a tamed Dirichlet space by $K\m$. 
In this case, the measure-valued Ricci lower bound $\kappa=K\m$ satisfies $\kappa_{\ll}=K\m$ and $\kappa_{\perp}=0$. 
So Theorem~\ref{thm:main3} holds for $(M,{\sf d},\m)$ and $\alpha>C_{\kappa}=(-K)\lor0$.

}
\end{example}

\begin{example}[Riemannian manifolds with boundary]
{\rm \quad Let $(M,g)$ be a smooth Riemannian manifold with boundary $\partial M$. 
Denote by $\mathfrak{v}:={\rm vol}_g$ the Riemannian volume measure induced by $g$, and by 
$\mathfrak{s}$ the surface measure on $\partial M$
(see \cite[\S1.2]{Braun:Tamed2021}). 
If $\partial M\ne \emptyset$, then $\partial M$ is a smooth co-dimension $1$ submanifold of $M$ and it becomes Riemannian 
when endowed with the pulback metric 
\begin{align*}
\langle \cdot,\cdot\rangle_j:=j^*\langle \cdot,\cdot\rangle,\qquad \langle u,v:=g(u,v)\quad\text{ for }\quad u,v\in TM.
\end{align*}  
under the natural inclusion $j:\partial M\to M$. The map $j$ induces a natural inclusion $d_j:T\partial M\to TM|_{\partial M}$ which is not surjective. In particular, the vector bundles $T\partial M$ and $TM|_{\partial M}$ do not coincide. 

Let $\m$ be a Borel measure on $M$ which is locally equivalent to $\mathfrak{v}$. 
Let $D(\mathscr{E}):=W^{1,2}(M^{\circ})$ be the Sobolev space with respect to 
$\m$ defined in the usual sense on $M^{\circ}:=M\setminus\partial M$. Define $\mathscr{E}:W^{1,2}(M^{\circ})\to [0,+\infty[$ 
by 
\begin{align*}
\mathscr{E}(f):=\int_{M^{\circ}}|\nabla f|^2\d \m
\end{align*}  
and the quantity $\mathscr{E}(f,g)$, $f,g\in W^{1,2}(M^{\circ})$, by polarization. Then $(\mathscr{E}, D(\mathscr{E}))$ becomes a strongly local regular Dirichlet form on $L^2(M;\m)$, since $C_c^{\infty}(M)$ is a dense set of $D(\mathscr{E})$. 
Let $k:M^{\circ}\to\R$ and $\ell:\partial M\to\R$ be continuous functions providing lower bounds on the Ricci curvature and the second fundamental form of $\partial M$, respectively. 
Suppose that $M$ is compact and $\m=\mathfrak{v}$. Then $(M,\mathscr{E},\m)$ is tamed by 
\begin{align*}
\kappa:=k\mathfrak{v}+\ell\mathfrak{s},
\end{align*}
because $\mathfrak{v},\mathfrak{s}\in S_K({\bf X})$ (see \cite[Theorem~2.36]{ERST} and \cite[Theorem~5.1]{Hsu:2001}). Then one can apply Theorem~\ref{thm:main3} to $(M,\mathscr{E},\m)$. 
Remark that Theorems~\ref{thm:LittlewoodPaley1Form} and \ref{thm:LittlewoodPaley2Form} 
are proved by 
Shigekawa~\cite[Propositions~6.2 and 6.4]{Shigekawa1} in the framework of compact smooth Riemannian manifold with boundary. 

More generally, if $M$ is regularly exhaustible, i.e., there exists an increasing sequence $(M_n)_{n\in\N}$ of domains $M_n\subset M^{\circ}$ with smooth boundary $\partial M_n$ such that $g$ is smooth on $M_n$ and the following properties hold:
\begin{enumerate}
\item[(1)] The closed sets $(\overline{M}_n)_{n\in\N}$ constitute an $\mathscr{E}$-nest for $(\mathscr{E},W^{1,2}(M))$.  
\item[(2)] For all compact sets $K\subset M^{\circ}$ there exists $N\in\N$ such tht $K\subset M_n$ for all $n\geq N$.
\item[(3)] There are lower bounds $\ell_n:\partial M_n\to\R$ for the curvature of $\partial M_n$ with $\ell_n=\ell$ on $\partial M\cap \partial M_n$ such that the distributions $\kappa_n=k\mathfrak{v}_n+\ell_n\mathfrak{s}_n$ are uniformly $2$-moderate in 
the sense that 
\begin{align}
\sup_{n\in\N}\sup_{t\in[0,1]}\sup_{x\in M_n}\E^{(n)}\left[e^{-2A_t^{\kappa_n}} \right]<\infty,\label{eq:2moderate}
\end{align}
where $\mathfrak{v}_n$ is the volume measure of $M_n$ and $\mathfrak{s}_n$ is the surface measure of $\partial M_n$.
\end{enumerate}
Suppose $\m=\mathfrak{v}$. 
Then $(M,\mathscr{E},\m)$ is tamed by $\kappa=k\mathfrak{v}+\ell\mathfrak{s}$ 
(see \cite[Theorem~4.5]{ERST}). In this case, $\kappa_{\ll}=k\mathfrak{v}$ and $\kappa_{\perp}=\ell\mathfrak{s}$, hence Theorem~\ref{thm:main3} holds for $(M,\mathscr{E},\m)$ provided 
$\kappa^+\in S_D({\bf X})$, $\kappa^-\in S_{K}({\bf X})$ for $p\in[2,+\infty[$, 
or $k$ is bounded below and $\ell\geq0$ for $p\in]1,2[$. 

Let $Y$ be the domain defined by 
\begin{align*}
Y:=\{(x,y,z)\in\R^3\mid z>\phi(\sqrt{x^2+y^2})\},
\end{align*}
where $\phi:[0,+\infty[\to[0,+\infty[$ is $C^2$ on $]0,+\infty[$ with $\phi(r):=r-r^{2-\alpha}$, $\alpha\in]0,1[$ for 
$r\in[0,1]$, $\phi$ is a constant for $r\geq2$ and $\phi''(r)\leq0$ for $r\in[0,+\infty[$. 
Let $\m_Y$ be the $3$-dimensional Lebesgue measure restricted to $Y$ and 
$\sigma_{\partial Y}$ the $2$-dimensional Hausdorff measure on $\partial Y$. Denote by 
$\mathscr{E}_Y$ the Dirichlet form on $L^2(Y;\m)$ with Neumann boundary conditions. 
The smallest eigenvalue of the second fundamental form of $\partial Y$ can be given by 
\begin{align*}
\ell(r,\phi)=\frac{\phi''(r)}{(1+|\phi'(r)|^2)^{3/2}} (\leq0),
\end{align*}
for $r\leq 1$ and $\ell=0$ for $r\geq2$. It is proved in \cite[Theorem~4.6]{ERST} that 
the Dirichlet space $(Y,\mathscr{E}_Y,\m_Y)$ is tamed by 
\begin{align*}
\kappa=\ell\sigma_{\partial Y}.
\end{align*}
In this case, the measure-valued Ricci lower bound $\kappa$ satisfies 
$|\kappa|=|\ell|\sigma_{\partial Y}\in S_K({\bf X})$ (see \cite[Lemma~2.34, Theorem~2.36, Proof of Theorem~4.6]{ERST}), $\kappa_{\ll}=0$ and $\kappa_{\perp}=\ell\sigma_{\partial Y}$. Then 
Theorem~\ref{thm:main3} holds for $(Y,\mathscr{E}_Y,\m_Y)$ and $\alpha>C_{\kappa}$ if $p\in[2,+\infty[$, or $p\in]1,2]$ under $\ell\geq0$. 
}
\end{example}
\begin{example}[Configuration space over metric measure spaces]
{\rm Let $(M,g)$ be a complete smooth Riemannian manifold without boundary.   
The configuration space $\Upsilon$ over $M$ is the space of all locally finite point measures, that is, 
\begin{align*}
\Upsilon:=\{\gamma\in\mathcal{M}(M)\mid \gamma(K)\in\N\cup\{0\}\quad \text{ for all compact sets}\quad K\subset M\}. 
\end{align*}
In the seminal paper Albeverio-Kondrachev-R\"ockner~\cite{AKR} identified a natural geometry on $\Upsilon$ by lifting the geometry of $M$ to $\Upsilon$. In particular, there exists a natural gradient $\nabla^{\Upsilon}$, divergence ${\rm div}^{\Upsilon}$ and 
Laplace operator $\Delta^{\Upsilon}$ on $\Upsilon$. It is shown in \cite{AKR} that the Poisson point measure $\pi$ on 
$\Upsilon$ is the unique (up to intensity) measure on $\Upsilon$ under which the gradient and divergence become 
dual operator in $L^2(\Upsilon;\pi)$. Hence, the Poisson measure $\pi$ is the natural volume measure on $\Upsilon$ 
and $\Upsilon$ can be seen as an infinite dimensional Riemannian manifold. The canonical Dirichlet form 
\begin{align*}
\mathscr{E}(F)=\int_{\Upsilon}|\nabla^{\Upsilon}F|_{\gamma}^2\pi(\d\gamma)
\end{align*}
 constructed in \cite[Theorem~6.1]{AKR} is quasi-regular and strongly local
 and it induces the heat semigroup $T_t^{\Upsilon}$ and a Brownian motion ${\bf X}^{\Upsilon}$ on $\Upsilon$ which can be identified with the independent infinite particle process. If ${\rm Ric}_g\geq K$ on $M$ with $K\in \R$, then $(\Upsilon,\mathscr{E}^{\Upsilon},\pi)$ is tamed by 
 $\kappa:=K\pi$ with $|\kappa|\in S_K({\bf X}^{\Upsilon})$ (see \cite[Theorem~4.7]{EKS} and \cite[Theorem~3.6]{ERST}). 
 Then one can apply Theorem~\ref{thm:main3} for $(\Upsilon,\mathscr{E}^{\Upsilon},\pi)$. 
 
 More generally, in Dello Schiavo-Suzuki~\cite{DelloSuzuki:ConfigurationI}, configuration space $\Upsilon$ over proper complete and separable metric space $(M,{\sf d})$ is considered. The configuration space $\Upsilon$ is endowed with the \emph{vage topology} $\tau_V$, induced by duality with continuous compactly supported functions on $M$, and with a reference Borel probability measure $\mu$ satisfying \cite[Assumption~2.17]{DelloSuzuki:ConfigurationI}, commonly understood as the law of a proper point process on $M$. In \cite{DelloSuzuki:ConfigurationI}, 
 they constructed the strongly local Dirichlet form $\mathscr{E}^{\Upsilon}$ defined to be the $L^2(\Upsilon;\mu)$-closure of a certain pre-Dirichlet form on a class of certain cylinder functions and prove its quasi-regularity for a wide class of measures $\mu$ and base spaces (see \cite[Proposition~3.9 and Theorem~3.45]{DelloSuzuki:ConfigurationI}). Moreover, in 
 Dello Schiavo-Suzuki~\cite{DelloSuzuki:ConfigurationII}, for any fixed $K\in \R$ they prove that a Dirichlet form $(\mathscr{E},D(\mathscr{E}))$ with its 
 carr\'e-du-champ $\Gamma$ satisfies ${\sf BE}_2(K,\infty)$ if and only if the Dirichlet form $(\mathscr{E}^{\Upsilon},D(\mathscr{E}^{\Upsilon}))$ on $L^2(\Upsilon;\mu)$ with its carr\'e-du-champ $\Gamma^{\Upsilon}$ satisfies ${\sf BE}_2(K,\infty)$.
Hence, if $(M,\mathscr{E},\m)$ is tamed by $K\m$ with $|K|\m\in S_K({\bf X})$, then  $(\Upsilon,\mathscr{E}^{\Upsilon},\mu)$ is tamed by $\kappa:=K\mu$ with $|\kappa|\in S_K({\bf X}^{\Upsilon})$. In this case, the measure-valued Ricci lower bound $\kappa$ satisfies $\kappa_{\ll}=K\mu$ and $\kappa_{\perp}=0$. 
Then Theorem~\ref{thm:main3} holds for $(\Upsilon,\mathscr{E}^{\Upsilon},\mu)$ and any $p\in]1,+\infty[$ 
under the suitable class of measures $\mu$ defined in \cite[Assumption~2.17]{DelloSuzuki:ConfigurationI}.  
}
\end{example}
\bigskip

\noindent
{\bf Acknowledgment.} 
The authors would like to thank Professor  Luca Tamanini for telling us 
his related on-going work on the boundedness of Riesz transforms in the 
framework of RCD$(K,\infty)$-spaces during the preparation of this paper. 
\bigskip

\noindent
{\bf Conflict of interest.} The authors have no conflicts of interest to declare that are relevant to the content of this article.

\bigskip
\noindent
{\bf Data Availability Statement.} Data sharing is not applicable to this article as no datasets were generated or analyzed during the current study.

\providecommand{\bysame}{\leavevmode\hbox to3em{\hrulefill}\thinspace}
\providecommand{\MR}{\relax\ifhmode\unskip\space\fi MR }
\providecommand{\MRhref}[2]{%
  \href{http://www.ams.org/mathscinet-getitem?mr=#1}{#2}
}
\providecommand{\href}[2]{#2}

\end{document}